\documentclass[11pt]{article}
\usepackage[margin=1in]{geometry}
\usepackage{yk, dsfont, mathrsfs, hyperref}
\usepackage{algorithm}
\usepackage{algpseudocode}
\usepackage{graphicx}
\usepackage{subcaption}
\usepackage{booktabs,multirow}

\RequirePackage{graphicx}

\usepackage[numbers,sort&compress]{natbib}

\algrenewcommand\algorithmicrequire{\textbf{Input:}}
\algrenewcommand\algorithmicensure{\textbf{Output:}}

\newcommand{\sfS}{\mathsf{S}}
\newcommand{\htdb}{\hat{\theta}^{\mathrm{db}}}
\newcommand{\sG}{\mathsf{G}}
\newcommand{\sS}{\mathsf{S}}

\title{PDE-constrained inverse problems at the $\sqrt{n}$ rate via debiased physics-informed neural networks}
\author{Yves Atchade and Debarghya Mukherjee \\ Department of Mathematics and Statistics, Boston University}
\date{}
\begin{document}
\maketitle
%\noindent
\begin{abstract}
    We study the problem of estimating unknown parameters in PDE-constrained inverse problems from noisy observations, where the PDE solution is approximated using Physics-Informed Neural Networks (PINNs). While PINNs have demonstrated remarkable empirical success, existing estimators often inherit the slow nonparametric convergence rate of the neural-network solution, leading to biased and statistically inefficient inference for the finite-dimensional parameters of interest. To address this, we propose a two-step debiased estimation procedure that combines neural-network-based nonparametric estimation with an influence-function-based bias correction. By eliminating the first-order sensitivity of the estimator to errors in the nuisance function, our procedure yields a $\sqrt{n}$-consistent and asymptotically normal estimator without requiring undersmoothing of the neural network component. We further extend this framework to Bayesian inference by replacing the original likelihood with a debiased quasi-likelihood and establish a Bernstein–von Mises theorem showing that the resulting posterior contracts at the $\sqrt{n}$-rate with an asymptotic covariance matching that of the frequentist estimator. As a by-product of our analysis, we establish near-minimax optimal convergence rates for estimating a nonparametric regression function and its derivatives in Sobolev spaces using neural networks. Extensive numerical experiments corroborate our theoretical findings and demonstrate the necessity of the proposed debiasing procedure for valid statistical inference in PDE-constrained inverse problems.
\end{abstract}
\section{Introduction}
Partial differential equations (PDEs) are widely used to describe the evolution of physical quantities over space and time, serving as the foundational modeling framework in many areas of science and engineering \citep{burgers1948mathematical,turing1990chemical,keller1971model,richards1931capillary,danabasoglu2020community,schneider2014migrations}. In numerous real-world applications, the general forms of the governing PDEs are known from domain-specific knowledge, but certain key parameters relevant to the underlying physical laws remain unknown. The estimation and inference on these unknown parameters from data is of paramount importance in many applications \citep{depina2022application,iglesias2014well,huang2024inverse,kadeethum2020physics}. %Accurate inference in this setting is of paramount importance, as errors or incorrect inferences in parameter estimation may lead to misleading scientific conclusions or poor predictions.
For concreteness, consider the steady-state heat-conduction equation
$$
-\nabla \cdot (a_{\theta_\star}(x) \nabla u_\star(x)) = f(x), \qquad x \in \Omega \,,
$$
along with appropriate boundary conditions. Here $u_\star$ denotes the temperature field, $f$ is the known source function, and $a_{\theta^\star}$ is the thermal conductivity of the material. For simplicity, assume that $\log a_{\theta_\star}(x) = \sum_{j = 1}^q \theta_{\star,j} \phi_j(x)$, i.e., it belongs to the span of $q$ (known) functions $\{\phi_j: 1\le j \le q\}$. Given $n$ noisy temperature measurements of the form $Y_i = u_\star(X_i) + \eps_i$, the goal is to recover the finite-dimensional conductivity parameter $\theta_\star$. The unknown temperature field $u_\star$ must also be estimated from the data, and hence acts as an infinite-dimensional nuisance parameter.  
%, potentially affecting subsequent engineering designs, safety assessments, and environmental models.

Recovering $\theta_*$ from noisy observations in problems of this kind falls under the purview of the inverse problem \cite{monard:etal:21,nickl2020convergence,kekkonen2022consistency,giordano2020consistency,nickl2024posterior,bissantz2004consistency,bissantz2007convergence,nickl2020bernstein,stuart2010inverse,calvetti2018inverse,benning2018modern,arridge2019solving,ghattas2021learning}. Methodologies developed to tackle this from both frequentist and Bayesian perspectives can be divided into two main categories: direct (one-step) methods and indirect (regularized or two-step) methods. The direct approach involves parameterizing the distribution of observations directly in terms of the unknown parameter and the \textit{forward operator} (\cite{bissantz2007convergence,nickl:etal:2020,nickl2024posterior,nickl2020bernstein,blanchard:etal:18}). While theoretically appealing, implementing this approach is notoriously challenging in practice due to the complexity of the forward operator. For example, computing the gradients needed for gradient-based optimization is computationally demanding, and constructing a surrogate model/optimization problem to bypass this often introduces additional approximation errors in the estimation process that are typically not well understood.

To bypass the need to query the forward operator or its derivatives, the indirect approach simplifies the optimization problem by estimating the infinite-dimensional PDE solution and the finite-dimensional parameter via regularization. Although the indirect approach has been studied extensively in the statistical literature (e.g., see \cite{xun2013parameter,qi2010asymptotic,ramsay2007parameter,dattner2015optimal,bellman1971use,varah1982spline,gugushvili2012sqrtn,liang2008parameter} and references therein), there has been a recent resurgence of interest in this formulation as researchers have begun employing deep neural networks to solve these PDE-constrained optimization problems under the name of Physics-Informed Neural Networks (PINNs) \citep{raissi:etal:19,yu2018deep,sirignano2018dgm,yang:bpinn:21,karniadakis2021physics,wang2021understanding,krishnapriyan:etal:2021}. Leveraging recent advances in deep learning and automatic differentiation, PINNs provide a flexible and computationally attractive tool for jointly estimating parameters and the PDE solution from noisy observations. Despite their empirical success across various fields, the theoretical understanding of the statistical properties of PINN-based estimators remains limited \citep{mishra:etal:22, sun2024estimation,de2022error}. %While they are highly effective for prediction, they are primarily black-box models that typically lack valid uncertainty quantification for inference on the underlying PDE parameters.

A critical limitation of the standard PINN-based approach is that the estimate of the finite-dimensional parameter of interest typically inherits the slower, non-parametric convergence rate from the neural network-based estimation of the PDE solution. As a consequence, the widely used PINN estimators for PDE parameters are typically biased and statistically inefficient. In classical semi-parametric inference, such biases are often mitigated by undersmoothing the non-parametric component (e.g., by selecting a smaller bandwidth in kernel regression or using more knots in spline-based estimators) \citep{van2023efficient,mcgrath2022nuisance,paninski2008undersmoothed,newey1998undersmoothing,gine2008simple,bickel2003nonparametric,goldstein1992optimal,gugushvili2012sqrtn}. However, when employing deep neural networks, it is unclear which parameters (e.g., depth, width, or learning rate) should be carefully tuned to achieve the necessary undersmoothing, especially when fine-tuning pre-trained networks (see \cite{yuundersmoothing} for a recent work in that direction). This paper aims to bridge this critical gap by addressing the following question: \textit{Is it possible to estimate the unknown parameters of the PDE at a parametric $\sqrt{n}$ rate while using PINNs to estimate the solution?}

To answer this, we study PINN-based inverse problems through the lens of the theory of efficient estimation of functionals in semiparametric models \cite{bickel1993efficient,tsiatis2006semiparametric}. Inspired by the idea of bias correction in double machine learning/Neyman orthogonalization \cite{neyman1959optimal,neyman1979c,chernozhukov2018double,chernozhukov2022locally,foster2023orthogonal}, we propose a novel debiasing technique that yields an estimator that does not require the initial neural network estimator to be undersmoothed. Specifically, we develop a simple two-step estimation procedure that combines neural-network-based nonparametric estimation of the solution with an influence-function-based debiasing strategy \citep{newey1994asymptotic,robins1994estimation,ichimura2022influence}. By performing a one-step Taylor expansion on the first-order condition of the PDE-based risk functional, we carefully construct a debiased score function whose first-order sensitivity to errors in the nuisance function is eliminated. This bias correction ensures that the effect of the first-stage estimation error enters only at a higher order, yielding a statistically valid, $\sqrt{n}$-consistent, and asymptotically normal (CAN) estimator of the finite-dimensional PDE parameters.

The estimation of the finite-dimensional parameters at the root-n rate in a semiparametric model is also an actively investigated problem in the Bayesian literature \citep{castillo:12, Bickel:kleijn:2012, Castillo:rousseau:2015,rousseau:16, Minwoo:etal:2019}. By replacing the likelihood with a suitably debiased quasi-likelihood function, we develop a Bayesian extension of our debiasing technique.  The resulting debiased posterior distribution bears similarities with two recent works on Bayesian posterior correction in semiparametric models. In \cite{yiu:etal:2025}, the authors leverage influence functions and the one-step update estimator to develop a posterior correction method for models with a probability measure nuisance parameter. More closely related to our work is the general Bayesian Neyman orthogonalization framework proposed by \cite{sabbagh:stephens:2026}. An important limitation of \citep{sabbagh:stephens:2026} is that the conditions under which their corrected posterior defines a valid probability measure must be verified on a case-by-case basis, and it is unclear whether their framework works in the current setting. By contrast, our debiased posterior distributions are always well-defined and can be readily explored via MCMC, and in instances involving linear PDEs with a Gaussian prior $\pi_0$, exact Monte Carlo sampling is even feasible. On the theory side, we establish a Bernstein–von Mises theorem with a limiting covariance matrix that matches that of the corresponding frequentist estimator, thereby ensuring asymptotic agreement between Bayesian and frequentist inference. In summary, the statistically valid inferential procedures developed in this work address a critical methodological need, enabling researchers across disciplines to draw reliable scientific conclusions when applying modern machine learning tools to PDE-constrained inverse problems. 
We summarize our contributions below: 
\begin{enumerate}

\item We develop a novel debiasing approach to obtain a $\sqrt{n}$-consistent and asymptotically normal (CAN) estimator of the unknown parameters in PDE-constrained models when the underlying solution is approximated using neural networks, such as PINNs. Our methodology is computationally efficient and straightforward to implement. To the best of our knowledge, this is the first work that provides rigorous $\sqrt{n}$-rate statistical guarantees for parameter estimation in PDE-constrained inverse problems using flexible machine learning models.

\item As a by-product, we establish near-minimax-optimal convergence rates for neural-network estimators of a nonparametric regression function and its derivatives simultaneously over the larger Sobolev class $W^{\beta,2}$, in contrast to much of the existing neural-network regression theory, which typically assumes more restrictive finite H\"{o}lder-type smoothness \citep{schmidt2020nonparametric,bauer2019deep,kohler2021rate,kohler2016nonparametric,fan2024factor,bhattacharya2024deep,belomestny2023simultaneous}.
Statistical theory for derivative estimation using neural networks is comparatively scarce.
One approach is to smooth a flexible first-stage regression estimator before differentiating it: \citep{fan2024conditional} estimate partial derivatives by convolving a deep neural network regression estimator with a smoothing kernel, while \citep{klyne2026average} propose a kernel-resmoothing scheme for arbitrary machine-learning regression estimators in the
estimation of average partial effects.
These approaches, however, do not directly establish simultaneous convergence rates for the derivatives of the same fitted neural-network estimator in Sobolev norms. 
Perhaps closest in spirit is \citep{ding2025semi}, which studies a semi-supervised deep Sobolev regressor based on ReQU networks and provides guarantees for estimating both the regression function and its gradient; however, their derivative-estimation rate is not minimax optimal.
Although simultaneous approximation results for neural networks have been established in H\"{o}lder spaces \cite{belomestny2023simultaneous,de2021approximation}, analogous results in Sobolev spaces, covering both approximation and stochastic errors to yield a statistical guarantee, appear to be missing in the literature. 
To the best of our knowledge, this is the first work to establish simultaneous minimax-optimal rates (up to a log factor) for estimating the regression function and its derivatives in Sobolev spaces using neural networks.
% \item As a by-product of our analysis, we establish the near-minimax optimal rate of convergence for estimating a nonparametric regression function and its derivatives using neural networks when the function belongs to a Sobolev space $W^{\beta,2}$, i.e., it has finite $L_2$ Sobolev norm. This setting contrasts with the finite Hölder norm assumptions that are commonly adopted in the literature \cite{schmidt2020nonparametric,bauer2019deep,kohler2021rate,kohler2016nonparametric,fan2024factor,bhattacharya2024deep,belomestny2023simultaneous}. Furthermore, to the best of our knowledge, this is the first work to establish near-minimax-optimal rates for estimating derivatives of the mean function in Sobolev spaces using neural networks. \DM{Add \cite{ding2025semi} carefully and check whether other papers are there.}

\item We further develop a novel Bayesian debiasing framework in which the likelihood function is suitably modified to incorporate the debiased score function. Under a mild log-concavity assumption on the prior distribution we show that the marginal posterior distribution of the parameter $\theta$ contracts at the $\sqrt{n}$ rate. Moreover, we establish a Bernstein--von Mises theorem demonstrating that the resulting Bayesian posterior distribution has the same asymptotic covariance as the corresponding frequentist estimator, thereby ensuring asymptotic agreement between Bayesian and frequentist inference. Our analysis provides a unified treatment of PDE-constrained parameter estimation when (i) the underlying mean function is estimated via neural networks, and (ii) a suitable debiasing procedure is incorporated to remove the bias arising from nonparametric estimation and achieve $\sqrt{n}$-consistent inference for the parameter of interest.

\item Finally, we complement our theoretical results with extensive numerical experiments. These studies corroborate our theoretical findings and, in particular, demonstrate the necessity of the proposed debiasing procedure to obtain valid statistical inference for the parameter of interest.

\end{enumerate}
The rest of the manuscript is organized as follows. In Section~\ref{sec:method}, we introduce the problem formulation and present our methodology in detail from both frequentist and Bayesian perspectives. In Section~\ref{sec:theory}, we establish the theoretical guarantees of our approach, including $\sqrt{n}$-consistency and asymptotic normality from the frequentist perspective (Section~\ref{sec:theory_freq}), as well as a Bernstein--von Mises theorem from the Bayesian perspective. In Section~\ref{sec:simulation}, we present numerical experiments that validate our theoretical results and demonstrate the necessity of the proposed debiasing procedure. Finally, in Section~\ref{sec:conclusion}, we conclude the paper and discuss several directions for future research. 

\section{Methodology}
\label{sec:method}
%In this section, we present our methodology for constructing a $\sqrt{n}$-consistent and asymptotically normal (CAN) estimator of the PDE parameter $\theta_\star$ (see Equation~\eqref{eq:pde_model_2}) from a frequentist perspective. As discussed in the Introduction, our goal is to efficiently estimate $\theta_\star$ based on noisy observations of the PDE solution $u_\star$ at random design points. While physics-informed neural networks (PINNs) provide flexible and computationally attractive tools for jointly estimating $(\theta_\star, u_\star)$, they are primarily designed for prediction and typically lack valid frequentist uncertainty guarantees for inference on $\theta_\star$.  To address this limitation, we develop a two-step debiased estimation procedure that combines neural network-based nonparametric estimation of $u_\star$ with a novel debiasing strategy. This approach yields a $\sqrt{n}$-CAN estimator of $\theta_\star$. We begin by describing the problem setup.

%\subsection{Problem setup}
\label{sec:setup}
Let $\Omega \subset \mathbb{R}^d$ be a bounded open domain with a sufficiently regular boundary. For ease of exposition, we assume throughout that $\Omega = (0,1)^d$, although our theoretical framework extends almost verbatim to any bounded domain with smooth (Lipschitz) boundary. Let $u_\star$ denote an unknown deterministic function that solves the parametrized partial differential equation
\begin{equation}
\label{eq:pde_model_2}
K_0 u_\star(x) + \Psi_{\theta_\star}\!\left(x,K_1 u_\star(x)\right) = 0, \quad \mathcal{B}(u_\star) = 0, \quad \forall x \in \Omega.
\end{equation}
Here, $K_0$ is a differential operator of order at most $\tau$ (see Section \ref{sec:theory} below for precise definition), and $K_1 = (K_{11}, \ldots, K_{1p})^\top$ is a vector of $p$ differential operators, each of order at most $\tau$. The operator $\mathcal{B}$ encodes the initial and boundary conditions\footnote{The initial and boundary conditions are required for $u_\star$ to be well-defined. However, since $\mathcal{B}$ does not depend on $\theta$ (the main quantity of interest in our setup), it will be ignored in our analysis. %In practice, including $\mathcal{B}$ in the loss function often yields qualitative improvements in the estimation of $u_\star$, particularly in the small $n$ regime. 
}. The parameter of interest $\theta_\star \in \Theta \subseteq \mathbb{R}^q$ is the unknown finite-dimensional vector that governs the underlying physical law. The function $\Psi_{\theta}(x,K_1u(x)) \equiv \Psi(x,K_1 u(x), \theta)$ with  $\Psi:\;\rset^d \times \mathbb{R}^p  \times \mathbb{R}^q\to\mathbb{R}$ that is assumed to be sufficiently smooth; see Assumption~\ref{assm:psi_smoothness} for precise regularity conditions. In a slight abuse of notation we will at times write 
\[ 
(\Psi_\theta\circ K_1 u)(x) \eqdef \Psi_\theta(x,K_1u(x)) = \Psi(x, K_1 u(x), \theta).
\]
Furthermore for any $m \in \{x, z, \theta\}$, we use the notation $\nabla_m (\Psi_\theta\circ K_1 u)(x)$ to denote the partial derivative of $\Psi$ with respect to $m$ evaluated at $(x, K_1(u)(x), \theta)$, i.e.
$$
\nabla_m (\Psi_\theta\circ K_1 u)(x) = \nabla_m\Psi(x, z, \theta) \vert_{(x, K_1 u(x), \theta)} \,.
$$
The higher-order derivatives are also defined analogously. 

An experimenter observes noisy measurements of the solution $u_\star$ at random design points. Specifically, the observed data consist of $\mathcal{D}_n = \{(X_i, Y_i)\}_{i=1}^n$, where $X_i \in \Omega$ are sampled independently from a known design distribution $p_0$ supported on $\Omega$ (that we take without loss of generality to be the uniform distribution), and $Y_i = u_\star(X_i) + \eps_i$, with $\{\eps\}_{i=1}^n$ being independent sub-Gaussian noise variables. Our setting is thus closer in spirit to experimental design rather than observational studies, as the design distribution $p_0$ is assumed to be known. While our framework can be extended to the case of unknown design distribution by estimating $p_0$ from the data, we leave this extension as an interesting future avenue to explore.

Our primary objective is to construct a $\sqrt{n}$-CAN estimator of $\theta_\star$ based on the observed data $\mathcal{D}_n$. The function $u_\star$ serves as an infinite-dimensional nuisance parameter. In our proposed method, we estimate it using flexible function classes, such as deep neural networks, while carefully correcting for the resulting bias in the estimation of $\theta_\star$. To motivate our proposed methodology, we first describe a standard PINN-based approach and explain why it may fail to achieve valid inference for $\theta_\star$. Define the PDE-based risk functional
\begin{equation}
\label{eq:pde_risk}
\mathcal{R}(\theta, u) 
= \frac{1}{2}\int_\Omega \left\{ K_0 u(x) + \left(\Psi_\theta\circ K_1 u\right)(x) \right\}^2 \ \omega(x) \, dx,
\end{equation}
where $\omega(x)$ is a strictly positive weight function on $(0,1)^d$ that smoothly decays to zero near the boundary; see Assumption~\ref{assm:w} for concrete examples. Under some minimal identifiability assumptions (see Assumption \ref{assm:curvature_pde_risk}) $\cR(\theta, u_\star)$ is uniquely minimized at $\theta = \theta_\star$, and in particular, $\mathcal{R}(\theta_\star, u_\star) = 0$ (immediately follows from Equation \eqref{eq:pde_model_2}). A standard PINN approach \citep{raissi:etal:19,yang:bpinn:21,karniadakis2021physics,sun2024estimation}  estimates $(\theta_\star, u_\star)$ jointly by solving the penalized optimization problem
\begin{equation}
\label{eq:PINN_org}
(\hat u_{\rm PINN}, \hat \theta_{\rm PINN}) = \argmin_{\substack{u \in \cF_{\rm NN} \\ \theta \in \Theta}} \ \frac1n \sum_{i = 1}^n (Y_i - u(X_i))^2 + \lambda_{n, 1} \cR(\theta, u) + \lambda_{n, 2} \rho_n(u) 
\end{equation}
where $\mathcal{F}_{\rm NN}$ denotes a class of neural networks with prescribed architecture (e.g., depth, width, sparsity), and $\rho_n(u)$ is a regularization term on the network parameters, such as weight decay. In essence, PINNs solve a nonparametric regression problem augmented with a PDE-based penalty, that encourages the estimated solution to satisfy the governing equation, while estimating $\theta$ and $u$ jointly via gradient-based optimization.

Although the standard PINN approach is widely used, largely because of its computational convenience, it has an important limitation for inference on $\theta_\star$. In particular, the resulting estimator $\hat \theta_{\rm PINN}$ typically inherits the nonparametric estimation error of the first-stage solution estimator $\hat u_{\rm PINN}$ \citep{sun2024estimation}. Consequently, even though $\hat \theta_{\rm PINN}$ is consistent, it generally converges more slowly than the parametric $\sqrt{n}$-rate and may exhibit substantial finite-sample bias. Moreover, its limiting distribution is typically unavailable, precluding the direct construction of theoretically valid analytical confidence intervals. To the best of our knowledge, there is also no general theory establishing the validity of bootstrap- or Monte Carlo-based uncertainty quantification procedures for standard PINN estimators in this setting. In many scientific and physical applications, however, reliable inference on $\theta_\star$ is the primary objective. This motivates the development of a principled debiasing procedure that removes the first-order effect of the estimation error in $\hat u_{\rm PINN}$ yields a $\sqrt{n}$-consistent and asymptotically normal estimate of $\theta_*$ while remains computationally straightforward to implement.
% Although the above approach is widely used, primarily because of its computational ease, the key limitation of this approach for inference on $\theta_\star$ is that the resulting estimator $\hat \theta_{\rm PINN}$ typically inherits the nonparametric estimation error of $\hat u_{\rm PINN}$ \citep{sun2024estimation}. As a consequence, $\hat \theta_{\rm PINN}$, though typically yields a consistent estimate of $\theta_\star$, generally converges at a rate slower than $\sqrt{n}$, leading to substantial bias and overly wide confidence intervals in finite samples. 
% Furthermore, the limiting distribution of $\hat \theta_{\rm PINN}$ is typically unknown and consequently it is not possible to construct valid analytical confidence intervals, and we do not have any established theory guarantee-ing consistency of monte-carlo/bootstrap based approach. 
% However, in many physical and scientific applications, accurate inference on $\theta_\star$ is the primary objective. This motivates the development of a principled debiasing strategy that effectively removes the impact of first-stage estimation error and yields a $\sqrt{n}$-CAN estimator of $\theta_\star$, while being computationally easy to implement.

\begin{remark}
In the preceding formulation, we defined the PDE penalty $\cR(\theta, u)$ at the \emph{population level}, involving integration over the spatial domain $\Omega$. In practice, this integral can be approximated using Monte Carlo integration by drawing samples over $\Omega$, or via other standard numerical integration techniques. However, under mild regularity conditions, the resulting numerical approximation error is typically negligible compared to the statistical error arising from the squared loss term, and hence does not affect the asymptotic rate of estimation of $(u_\star, \theta_\star)$. For clarity of exposition, we therefore work directly with the population-level penalty throughout the paper.
\end{remark}

\medskip
\begin{remark}\label{rem:source:f}
    It is worth pointing out that our framework and results extend verbatim to PDEs of the form $K_0 (u)(x) + (\Psi_{\theta} \circ K_1(u))(x) = F(x)$ for some known source function $F$, by redefining the function $\Psi_\theta(x,z)$ as  $\Psi_\theta(x,z)  - F(x)$. 
    We focus on the case $F\equiv 0$ for ease of exposition.
\end{remark}

\subsection{Debiased estimation of $\theta_\star$}
\label{sec:debias_method}
In this subsection, we present our methodology for constructing a $\sqrt{n}$-CAN estimator of the PDE parameter $\theta_\star$. 
As elaborated above, the PINN-based estimator $\hat \theta_{\rm PINN}$ typically inherits bias from the estimation of the non-parametric function $u_*$ and consequently converges at a slower rate, leading to a sub-optimal estimator of $\theta_\star$. 
Our proposed method overcomes this barrier by carefully constructing a \emph{debiased} estimating equation whose first-order sensitivity to the estimation error of $u_\star$ is (approximately) eliminated. Consequently, the first-stage error in estimating $u_\star$ affects inference on $\theta_\star$ only through higher-order terms, allowing the resulting estimator to attain the parametric $\sqrt{n}$ rate. 
The entire procedure is summarized in Algorithm~\ref{alg:two_step_freq}. At a high level, our method consists of two main steps. In the first step, we obtain a consistent preliminary estimator of $\theta_\star$ together with a ``good enough''(to be elaborated in Step I below) estimator of the nuisance function $u_\star$. In the second step, we construct and optimize an appropriate debiased score function, using these first-stage estimators, to obtain our final $\sqrt{n}$-CAN estimator of $\theta_\star$. We now describe each step in detail.
\\\\
\noindent
{\bf Stage I: }The objective of the first step is twofold. First, we aim to obtain an initial estimator $\hat\theta^{\rm PI}$ of $\theta_\star$ (defined precisely in Equation~\eqref{eq:theta_PI}) that is merely required to be consistent; its rate of convergence may be slow (e.g., nonparametric rates). Second, we require an estimator $\hat u$ of the PDE solution $u_\star$ that recovers both $u_\star$ and its derivatives at the minimax-optimal rate, up to logarithmic factors, with respect to the smoothness parameter $\beta$ and the dimension $d$ of the domain. One may opt to use the joint PINN estimator $(\hat u_{\rm PINN}, \hat\theta_{\rm PINN})$, as defined in Equation~\eqref{eq:PINN_org}. However, below we propose a simpler alternative that is \emph{not only computationally more efficient but also more amenable to theoretical analysis}. Specifically, we estimate $u_\star$ by regressing $Y$ on $X$ using a class of deep neural networks $\mathcal{F}_{\rm NN}$ (see Assumption \ref{assm:function_class} for details on the architecture). While existing theory for neural networks (e.g., \cite{schmidt2020nonparametric,kohler2021rate,bauer2019deep,kohler2016nonparametric}) establishes minimax-optimal convergence rates (up to logarithmic factors) for estimating smooth or compositional functions, it does not immediately yield corresponding guarantees for estimating derivatives of the regression function. To address this issue, we propose estimating $u_\star$ by solving the penalized optimization problem
\begin{equation}
\label{eq:u_est_NN}
\hat u = \argmin_{u \in \cF_{\rm NN}} \ \left\{\frac1n \sum_{i = 1}^n (Y_i - u(X_i))^2 + \lambda_n \|u\|^2_{W^{\beta, 2}}\right\},
\end{equation}
where $\|u\|^2_{W^{\beta, 2}}$ denotes the Sobolev norm of $u$ (with precise definition given below). In Theorem~\ref{thm:deep_sobolev}, we show that (i) $\hat u$ has finite $W^{\beta,2}$ norm, and (ii) for every multi-index $\balpha$ satisfying $0 \le |\balpha| < \beta$, the derivative $\partial^{\balpha} \hat u$ estimates $\partial^{\balpha} u_\star$ at the minimax-optimal rate (up to logarithmic factors). 
\emph{To the best of our knowledge, this is the first result establishing that a neural network-based estimator can simultaneously estimate a nonparametric regression function and its derivatives at minimax-optimal rates (up to logarithmic factors).} Having obtained $\hat u$ from Equation \eqref{eq:u_est_NN}, we define a preliminary plug-in estimator of $\theta_\star$ as
\begin{equation}
\label{eq:theta_PI}
\hat \theta^{\rm PI} = \argmin_{\theta \in \Theta} \ \cR(\theta, \hat u) \,,
\end{equation}
where $\cR(\theta, u)$ is as defined in Equation \eqref{eq:pde_risk}. 
In Lemma \ref{lem:consistency_PI}, we establish that $\hat\theta^{\rm PI}$ is a consistent estimator of $\theta_\star$. However, as discussed earlier, $\hat\theta^{\rm PI}$ generally converges at a rate slower than $\sqrt{n}$ due to its sensitivity to first-stage estimation error. In the next step, we address this issue by constructing a debiased score function, which leads to our final $\sqrt{n}$-CAN estimator of $\theta_\star$.

Thus far, we have constructed $\hat u$ and $\hat \theta^{\rm PI}$ as the minimizers of the penalized loss functions in Equations \eqref{eq:u_est_NN} and \eqref{eq:theta_PI}, respectively. This differs slightly from the original PINN estimator $(\hat u_{\rm PINN}, \hat \theta_{\rm PINN})$ defined in \eqref{eq:PINN_org}, where both quantities are obtained simultaneously in a single optimization step. Nevertheless, the two approaches are closely related; from Equation \eqref{eq:PINN_org}, one can equivalently characterize $(\hat u_{\rm PINN}, \hat \theta_{\rm PINN})$ as the solution to the following system of equations, which is essentially a profiled version of Equation \eqref{eq:PINN_org}: 
\begin{align}
\label{eq:u_est_NN_PINN}
\hat u_{\rm PINN} & = \argmin_{u \in \cF_{\rm NN}} \ \left\{\frac1n \sum_{i = 1}^n (Y_i - u(X_i))^2 + \lambda_n \|u\|^2_{W^{\beta, 2}} + \tilde \lambda_n \cR(u)\right\}, \\
\text{with } \ \ \cR(u) & = \min_{\theta \in \Theta} \cR(\theta, u), \notag \\
\label{eq:theta_est_NN_PINN}
\hat \theta_{\rm PINN} & = \argmin_{\theta \in \Theta} \cR(\theta, \hat u_{\rm PINN})
\end{align}
%as this two-step formulation is a profiled version of Equation \eqref{eq:PINN_org}. 
Therefore, the difference between $(\hat u, \hat \theta^{\rm PI})$ and $(\hat u_{\rm PINN}, \hat \theta_{\rm PINN})$ essentially lies in the first step: in the PINN formulation, the estimator $\hat u_{\rm PINN}$ is obtained via \eqref{eq:u_est_NN_PINN}, which includes the additional penalty term $\cR(u)$, whereas our approach used Equation \eqref{eq:u_est_NN}. Nonetheless, by adapting the arguments used in the proof of Theorem~\ref{thm:deep_sobolev}, one can show that $\hat u_{\rm PINN}$ enjoys similar theoretical guarantees as $\hat u$; we state this formally in Corollary~\ref{cor:deep_sobolev_PINN}. 
Consequently, one may equivalently use $\hat u_{\rm PINN}$ as an estimator of $u_\star$ and $\hat \theta_{\rm PINN}$ in place of $\hat \theta^{\rm PI}$ in the subsequent step, with all theoretical guarantees remaining essentially unchanged.
However, from a practical standpoint, our two-step approach is simpler to implement and facilitates a more transparent theoretical analysis. For this reason, we adopt it as our primary procedure. 
%However, one may equivalently use $\hat u_{\rm PINN}$ as an estimator of $u_\star$ and $\hat \theta_{\rm PINN}$ in place of $\hat \theta^{\rm PI}$, with all theoretical guarantees remaining essentially unchanged.

\begin{remark}
    \label{rem:minimaxity_pinn}
    At this point a natural question is whether the additional regularization $\cR(u)$ in the PINN estimator in Equation~\eqref{eq:u_est_NN_PINN} improves the rate of convergence. Observe that the true solution $u_\star$ satisfies $\cR(u_\star) = 0$. Indeed, the subset
    \[
    \cU_0 = \{u \in W^{\beta,2}(\Omega) : \cR(u) = 0\}
    \]
    consists precisely of those functions $u$ that solve the underlying PDE for some $\theta \in \Theta$. As shown in our previous work \cite{sun2024estimation}, under a mild injectivity condition on the forward map (which maps $\theta'$ to $u_{\theta'}$, the solution of the PDE with parameter $\theta = \theta'$), the set $\cU_0$ forms essentially a $q$-dimensional manifold of $W^{\beta, 2}(\Omega)$, where $q = \dim(\theta)$. Moreover, it was demonstrated in \cite{sun2024estimation} that, under suitable Hölder-type smoothness conditions on the forward map, one can estimate $u_\star$ at a near-parametric rate if the function class $\cF$ is restricted to $\cU_0$. 

    Although such an approach is computationally intractable in practice, this observation raises the natural question of whether the additional penalty $\cR(u)$ in Equation~\eqref{eq:u_est_NN_PINN} can improve the estimation rate of $\hat u_{\rm PINN}$ by encouraging solutions to concentrate near the low-dimensional manifold $\cU_0$. We conjecture that, under standard sieve-based implementations such as neural network classes, the inherent approximation error prevents the PINN penalty from improving the leading-order rate for estimating $u_\star$. Since our primary objective is inference on $\theta_\star$, we leave a sharper analysis of possible rate improvements for $\hat u$ as an interesting direction for future work.
 \end{remark}
 
\medskip
\noindent
{\bf Stage II: }A key concept underlying our debiasing strategy is \emph{Neyman orthogonality} \cite{neyman1959optimal,neyman1979c,chernozhukov2018double,foster2023orthogonal}, which we illustrate briefly here for the ease of the readers. Let $\ell(\theta,\eta)$ denote a loss function depending on a finite-dimensional parameter of interest $\theta$ and an infinite-dimensional nuisance parameter $\eta$ and $(\theta_\star, \eta_{\star})$ denotes the minimizer of the loss function. Let $s(\theta,\eta) = \nabla_\theta \ell(\theta,\eta)$ be the associated score function. We say that $s$ satisfies Neyman orthogonality at $(\theta_\star,\eta_{\star})$ if the Gateaux derivative of $s(\theta_\star,\eta)$ with respect to $\eta$ vanishes (or more generally, is of order $o(n^{-1/2})$) at $\eta=\eta_{\star}$, that is,
$$
\left.\frac{\partial}{\partial t}\, s\bigl(\theta_\star, \eta_{\star} + t h\bigr)\right|_{t=0} = 0,
$$
for all $h$ in a shrinking neighborhood around $\eta_{\star}$ with respect to $n$. Under standard regularity conditions, this orthogonality property ensures that the estimator of $\theta$ obtained by solving the empirical version of the equation $s(\theta,\hat\eta)=0$ is $\sqrt{n}$-consistent and asymptotically normal, provided the nuisance estimator $\hat\eta$ converges to $\eta_{\star}$ at a rate faster than $n^{-1/4}$. Intuitively, Neyman orthogonality essentially removes the first-order impact of nuisance estimation error, so that the leading asymptotic behavior of the estimator for $\theta$ is unaffected by replacing $\eta_{\star}$ with $\hat\eta$.

One of the main contribution of this work is the construction of an (approximately) Neyman-orthogonal score function associated with the PDE risk functional $\mathcal{R}(\theta,u)$ defined in Equation~\eqref{eq:pde_risk}. The score function corresponding to $\mathcal{R}(\theta,u)$ is given by
$$
(\nabla_\theta \cR)(\theta,u) =  \int_\Omega \left\{ K_0 u(x) + \left(\Psi_\theta\circ K_1 u\right)(x)\right\} \nabla_\theta \left(\Psi_\theta\circ K_1 u\right)(x)\omega(x)\rmd x.
$$
To achieve Neyman orthogonality, we require that the Gateaux derivative of $\nabla_\theta \mathcal{R}(\theta,u)$ with respect to the nuisance argument $u$, evaluated at the true parameter pair $(\theta_\star,u_\star)$, be (approximately) zero. Toward this goal, one of our key results (Proposition~\ref{prop:def_G}) establishes the existence of a vector-valued function $\mathsf{G}_{\theta,u} : \Omega \to \mathbb{R}^q$ such that
$$
\rmd (\nabla_\theta \cR)(\theta,u)[h] \eqdef \frac{\rmd }{\rmd t}(\nabla_\theta \cR)(\theta,u + t h )\vert_{t=0} = \int_{\Omega} h(x) \mathsf{G}_{\theta,u}(x)\rmd x, \quad \forall \ h \in W^{\beta, 2}(\Omega) \,.
$$
That is, $\mathsf{G}_{\theta,u}$ serves as a representer for the linear functional defined by the Gateaux derivative of $\nabla_\theta \mathcal{R}(\theta,u)$ with respect to $u$. Furthermore, $\sG_{\theta, u}$ is \emph{smooth} with respect to both $(\theta, u)$, as estbalished in Proposition \ref{prop:def_G}. 
%for details. %\Yves{I think we might need to work in $W^{\beta,q}$ for some $q\geq 2$. I came to that conclusion when I try to check our assumptions on Burger's equation. I see that you work around this issue by using Sobolev embedding theorems. Which forces $\beta$ to be slightly larger than $2\tau$. Perhaps we lose a bit there in terms of conditions on $\beta$, but the simplicity in the proof that we get is worth it, I think. } 
This representation immediately suggests the construction of a debiased score of the form
$$
\sS^{\rm db}(\theta, u) = \nabla_\theta \cR(\theta, u) + \int (u_\star(x) - u(x)) \ \sG_{\theta, u}(x) \ dx 
$$
for which the Gateaux derivative at $(\theta_\star,u_\star)$ vanishes in the direction $u-u_\star$. Leveraging this observation, together with the fact that $\mathbb{E}[Y\mid X=x]=u_\star(x)$, we define the empirical debiased score function as
\begin{equation}
\label{eq:debiased_score}
\sS_n^{\rm db}(\theta, u) = \nabla_\theta \cR(\theta, u) + \frac1n \sum_{i = 1}^n (Y_i - u(X_i)) \sG_{\theta, u}(X_i) \,,
\end{equation}
noting that $\mathbb{E}[Y\,\mathsf{G}_{\theta,u}(X)] = \mathbb{E}[u_\star(X)\,\mathsf{G}_{\theta,u}(X)]$. Our ultimate goal is to solve the estimating equation $\mathsf{S}_n^{\rm db}(\theta,\hat u)=0$ for $\theta$, where $\hat u$ is obtained from Equation~\eqref{eq:u_est_NN}. However, for finite samples, it is not guaranteed that such a root exists or is unique. We therefore define our final estimator $\htdb$ as the solution to the following optimization problem:
\begin{equation}
\label{eq:def_htdb}
\hat \theta^{\rm db} \in  \argmin_{\theta \in \Theta_n} \ \|\sS_n^{\rm db}(\theta, \hat u)\|_2^2 \,, \quad \Theta_n = \{\theta \in \Theta: \|\theta - \hat \theta^{\rm PI}\|_2 \le \delta_n\}
\end{equation}
where $\{\delta_n\}\downarrow 0$ is a deterministic sequence satisfying i) $\delta_n \gg n^{-\beta/(2\beta + d)}(\log{n})^{(3\beta + d)/(2\beta + d)}$ and $\bbP(\theta_\star \in \Theta_n) \to 1$. We refer the readers to Lemma \ref{lem:consistency_PI} and Lemma S.5.1 of the supplementary document for a range of choices of such $\{\delta_n\}$. Since $\Theta_n$ is compact and $\mathsf{S}_n^{\rm db}(\theta,\hat u)$ is continuous in $\theta$ under Assumption \ref{assm:psi_smoothness} and Proposition \ref{prop:def_G}, the minimizer in \eqref{eq:def_htdb} is guaranteed to exist. Moreover, the shrinking nature of $\Theta_n$ immediately implies consistency of $\hat\theta^{\rm db}$ for $\theta_\star$. In Section~\ref{sec:theory}, we prove that $\hat\theta^{\rm db}$ is $\sqrt{n}$-consistent and asymptotically normal.

\begin{algorithm}[htbp]
\caption{Two-step debiased estimation of $\theta_\star$ from frequentist perspective}
\label{alg:two_step_freq}
\begin{algorithmic}[1]
\Require Data $\{(X_i,Y_i)\}_{i=1}^{n}$, the PDE map, and a collection of DNNs.
\Ensure A rate optimal estimator $\hat u$ of $u_\star$ (along with derivatives), and a $\sqrt{n}$-CAN estimator $\htdb$ of $\theta_\star$. 
\State Obtain $\hat u$ by solving the minimization problem Equation \eqref{eq:u_est_NN} over a collection of DNNs to obtain $\hat u$. 
\State Compute $\hat \theta^{\rm PI} = \argmin_{\theta} \cR(\theta, \hat u)$ as in Equation \eqref{eq:theta_PI}. 
\State Define a shrinking neighborhood $\Theta_n = B_{\delta_n}(\hat \theta^{\rm PI})$, where $\delta_n$ is defined in Equation \eqref{eq:def_htdb}. 
\State Construct the \emph{debiased} score function $\sS_n^{\rm db}(\theta, u)$ as in Equation \eqref{eq:debiased_score}. 
\State Obtain the debiased estimator $\htdb$ as $\htdb = \argmin_{\theta \in \Theta_n} \|\sS_n^{\rm db}(\theta, \hat u)\|_2^2$. 
\State \Return $(\hat u, \htdb)$. 
\end{algorithmic}
\end{algorithm}

\subsection{Debiased Bayesian inference}
A Bayesian PINN formulation of the PDE parameter estimation learns $(\theta,u)$ jointly. Following  \cite{yang:bpinn:21} and the setup studied in \cite{sun2024estimation}, let $\bar{\Pi}_0$ be a given joint prior distribution on $\rset^q\times W^{\beta,2}(\Omega)$. First, we turn $\bar\Pi_0$ into the so-called PINN prior by considering the probability measure with density proportional to $(\theta,u)\mapsto e^{-\lambda\cR(\theta,u)/2}\bar\Pi(\rmd\theta,\rmd u)$, where $\lambda>0$ is a regularization parameter. Here, the term $\lambda \cR(\theta,u)/2$ is used to enforce the PDE structure. Using this PINN prior distribution and the data distribution, the joint PINN posterior distribution of $(\theta,u)$ on $\rset^q\times W^{\beta,2}(\Omega)$ is given by
\begin{equation}
\label{def:post:1}
\Pi(\rmd\theta,\rmd u\vert\D_n)\propto \exp\left(-\frac{1}{2\sigma^2}\sum_{i=1}^n(Y_i-u(X_i))^2 -\frac{\lambda}{2} \cR(\theta,u) \right)\bar\Pi_0(\rmd\theta,\rmd u).
\end{equation}
This posterior distribution has been recently studied in \cite{sun2024estimation} where the authors show that the marginal posterior distribution of $u$ contracts at the minimax nonparametric rate, and the marginal posterior of $\theta$ contracts at the corresponding plug-in rate.  
%As in the frequentist setting a parametric sieve family (e.g., a neural network function class) is used as a model for $u$. 
Let $\Pi^{(\theta)}(\cdot\vert\D_n)$ (resp. $\Pi^{(\theta\vert u)}(\cdot\vert\D_n)$, and $\Pi^{(u)}(\cdot\vert\D_n)$) denote the marginal posterior distribution of $\theta$ (resp. the conditional posterior distribution of $\theta$ given $u$, and the marginal posterior distribution of $u$) under the joint distribution in Equation \eqref{def:post:1}. We can re-express the marginal posterior distribution of $\theta$ as
\[\Pi^{(\theta)}(\rmd\theta\vert\D_n)=\int_{W^{\beta,2}(\Omega)}\Pi^{(\theta\vert u)}(\rmd\theta\vert u,\D_n)\;\Pi^{(u)}(\rmd u\vert\D_n),\]
which shows that indeed, as in the frequentist setting, Bayesian inference for $\theta$ is a plug-in method that suffers from the same slow convergence rate in estimating $u$. This raises the question: \emph{is it possible to correct the bias and achieve the $\sqrt{n}$-estimation rate in the Bayesian estimation of $\theta$?}

The estimation of finite-dimensional parameters at the $\sqrt{n}$-rate in semiparametric models has been actively investigated in the Bayesian literature \citep{castillo:12, Bickel:kleijn:2012,Castillo:rousseau:2015,rousseau:16,Minwoo:etal:2019}. As in the frequentist setting, a commonly used approach to achieve the $\sqrt{n}$-rate relies on the local asymptotic normality (LAN) expansion of the log-likelihood together with careful tuning of the prior distribution to control bias (see, e.g., \cite{rousseau:16} for a survey). And as pointed out above, it is unclear how this approach applies in settings such as ours, where the exact bias-variance trade-off of deep neural network-based models is not fully understood.

We extend our debiasing technique into a novel approach for debiasing posterior distributions. Let $\pi_0$ be a prior density on $\rset^q$ with respect to the Lebesgue measure (it need not coincide with the $\theta$-marginal of $\bar\Pi_0$ introduced above). Given a symmetric positive definite matrix $V_n\in\rset^{q\times q}$, and using the de-biased score function $\mathsf{S}_n^{\rm db}$ (Equation \eqref{eq:debiased_score}), we consider the following \textit{debiased conditional posterior distribution of $\theta$ given $u$} given by 
\begin{equation}\label{def:cond:post:db}
\Pi_{\rm db}^{(\theta\vert u)}(\rmd\theta\vert u,\D_n) \propto\exp\left(-\frac{n}{2} \sS^{\rm db}_n(\theta,u)^\top V_n^{-1} \sS^{\rm db}_n(\theta,u)\right)\pi_0(\theta)\rmd \theta.\end{equation}
Given an initial estimator $\hat u$ of $u_\star$, our proposed  debiased Bayesian approach consists in drawing inference on $\theta$ using the (plug-in) probability measure $\Pi_{\rm db}^{(\theta\vert u)}(\cdot\vert \hat u,\D_n)$. A fully Bayesian flavor of this procedure can also be defined by employing an initial marginal posterior distribution of $u$ (such as the $u$-marginal obtained from Equation \eqref{def:post:1}), leading to the alternative de-biased posterior
\begin{equation}\label{def:marg:post:db}
\Pi^{(\theta)}_{\rm db}(\rmd\theta\vert\D_n)=\int_{W^{\beta,2}(\Omega)}\Pi^{(\theta\vert u)}_{\rm db}(\rmd\theta\vert u,\D_n)\;\Pi^{(u)}(\rmd u\vert\D_n).\end{equation}
One important limitation of (\ref{def:marg:post:db}) is that it can be substantially more challenging to handle computationally than (\ref{def:cond:post:db}). Hence, out of computational considerations, we will focus our analysis on the plug-in posterior $\Pi_{\rm db}^{(\theta\vert u)}(\cdot\vert \hat u,\D_n)$. However, provided that the marginal posterior $\Pi^{(u)}(\cdot\vert\D_n)$ employed has contraction properties similar to those of $\hat u$, our results extend readily to (\ref{def:marg:post:db}). A summary of the resulting debiased Bayesian inference is given in Algorithm \ref{alg:bayesian}. Regarding $V_n$, the debiased posterior can be constructed with any positive definite matrix $V_n$, however a careful choice of $V_n$ is needed for the resulting posterior inference to match the frequentist inference, and we address this question in the theoretical discussions. 

%Our approach is related to two recent works on Bayesian posterior correction in semiparametric models. In \cite{yiu:etal:2025}, the authors leverage influence functions and the one-step update estimator to develop a posterior correction method for models in which part of the data distribution is the nuisance parameter. More closely related to our work is the Bayesian inference framework under Neyman orthogonality proposed by \cite{sabbagh:stephens:2026}. An important limitation of their framework, however, is that the conditions under which the corrected posterior defines a valid probability measure remain unclear and must be verified on a case-by-case basis. By contrast, our debiased posterior distributions are always well-defined and can be readily explored via MCMC; in some instances involving linear PDEs with a Gaussian prior $\pi_0$, exact Monte Carlo sampling is also feasible.

\begin{algorithm}[ht]
\caption{Bayesian two-step debiased estimation of $\theta_\star$}
\label{alg:bayesian}
\begin{algorithmic}[1]
\Require Data $\{(X_i,Y_i)\}_{i=1}^{n}$, the PDE map, a collection of DNNs $\{u,\;u\in\F_{\rm NN}\}$ and, if available, a symmetric positive definite matrix $V_n$.
\Ensure A rate optimal estimator $\hat u$ of $u_\star$, and posterior samples $\{\theta^{(0)},\ldots,\theta^{(K)}\}$. 
\State Obtain $\hat u$ by solving the minimization problem Equation \eqref{eq:u_est_NN} over $\F_{\rm NN}$ to obtain $\hat u$. 
\State Compute $\hat \theta^{\rm PI} = \argmin_{\theta} \cR(\theta, \hat u)$ as in Equation \eqref{eq:theta_PI}. 
\State If needed, use $(\hat u, \hat \theta^{\rm PI})$ to compute the weight matrix $V_n$.
\State Use MCMC to produce sample $\{\theta^{(0)},\ldots,\theta^{(K)}\}$ from the debiased posterior distribution $\Pi_{\rm db}^{(\theta\vert u)}(\cdot\vert \hat u,\D_n)$ as defined in (\ref{def:cond:post:db}).
\item Return $\hat u$ and $\{\theta^{(0)},\ldots,\theta^{(K)}\}$.
\end{algorithmic}
\end{algorithm}

\section{Theoretical results}
\label{sec:theory}
In this section, we present our main theoretical results from both the frequentist and Bayesian perspectives. We start with some notations and definitions. Throughout, we use the following standard notion of multi-index derivatives: given $\balpha =(\alpha_1,\ldots,\alpha_d)$, where $\alpha_i\geq 0$ is an integer, the $\balpha^{th}$ derivative operator $D^{\balpha}$ is defined on a function $u: \reals^d \to \reals$ as: 
\[D^{\balpha} u(x) \eqdef \frac{\partial^{|{\balpha}|}u(x)}{\partial x_1^{\alpha_1}\ldots \partial x_d^{\alpha_d}},\]
where $|{\balpha}| =\sum_i \alpha_i$ and by convention, $\partial x_j^{0}=1, D^{{\bf 0}} u =u$.
\begin{definition}[$L^p$ space]
    \label{def:L_p_space}
    Given any measurable set $\Omega \subseteq \reals^d$ and $p \in [1, \infty)$, the collection $L^p(\Omega, \reals)$ is the collection of measurable functions from $\Omega$ to $\reals$, which has finite $L_p$ norm with respect to the Lebesgue measure, i.e. 
    $$
    L^p(\Omega,\reals) \eqdef \left\{f: \Omega \mapsto \reals: \|f\|_{L^p(\Omega)}\eqdef \left(\int_\Omega |f(x)|^p \ \rmd x \right)^{1/p}< \infty \right\}, \,
    $$ 
    For $p = \infty$, we have $\|f\|_{L^\infty(\Omega)}\eqdef \sup_{x\in\Omega}|f(x)|$.
    When there is no risk of confusion with the standard Euclidean norm, we will use the shorthand $\|f\|_p$ instead of $\|f\|_{L^p(\Omega)}$. Furthermore, we use the notation $L^p(\Omega: \reals^j)$ to denote the set of all function $f = (f_1, \dots, f_j): \Omega \mapsto \reals^j$ such that $f_i \in L^p(\Omega, \reals)$ for $1 \le i \le j$.
\end{definition}
Given integer $\beta\geq 0$, $C^\beta(\Omega)$ denotes the space of $\beta$-times continuously differentiable functions on $\Omega$. For $u\in C^\beta(\Omega)$, we define 
\[ \|u\|_{C^\beta(\Omega)}\eqdef \max_{\bm{\alpha}:\;|\bm{\alpha}|\leq \beta}\; \|D^{\bm{\alpha}} u\|_\infty. \]
In the following $C^\infty_c(\Omega)$ denotes the space of infinitely differentiable functions on $\Omega$ with compact support in $\Omega$ (hence taking  value $0$ on the boundary of $\Omega$).

\begin{definition}[H\"{o}lder Space]
    \label{def:Holder_space}
    Given integer $\beta\geq 0$, and $\gamma\in (0,1]$ we define the  H\"{o}lder function space $C^{\beta, \gamma}(\Omega)$ as the collection of all measurable functions $u: \Omega \to \reals$, such that $D^{\balpha} u$ exists for all $\balpha: |\balpha| \le \beta$ and are uniformly bounded over $\Omega$ and   
    \[
    \|u\|_{C^{\beta,\gamma}(\Omega)}\eqdef \sum_{\bm{\alpha}:\;|\bm{\alpha}|\leq \beta}\; \|D^{\bm{\alpha}} u\|_\infty + \sum_{\bm{\alpha}:\;|\bm{\alpha}| = \beta}\; \sup_{x\neq y}\frac{|D^{\bm{\alpha}} u(y) - D^{\bm{\alpha}} u(x)|}{\|y-x\|_2^{\gamma}} <\infty.
    \]
 We call $\|u\|_{C^{\beta,\gamma}(\Omega)}$ the H\"{o}lder norm of $u$, and given $L>0$, the notation $C^{\beta,\gamma}(\Omega,L)$ denotes the sub-space of functions $u$ such that $\|u\|_{C^{\beta,\gamma}(\Omega)}\leq L$.
\end{definition}

%Given a multi-index ${\bf k}=(k_1,\ldots,k_d)$, and a locally integrable function $u:\Omega\to\rset$, the ${bf k}$-th weak derivative of $u$ is the unique function $g\in\rset^d\to\rset$ such that for all $\phi\in C_c^\infty(\Omega)$,
%\[\int_\Omega u(x) D^{{\bf k}}\phi(x) \rmd x = (-1)^{|{\bf k}|} \int_\Omega \phi(x) g(x) \rmd x.\]
%Clearly if $u$ has a ${bf k}$-th derivative in the classical sense, then its ${bf k}$-th weak derivative exists and the two functions are equal.

\begin{definition}[Sobolev Space]
    \label{def:sobolev}
    Given integer $\beta \in \bbN$ and $p \ge 1$, we define the Sobolev function space $W^{\beta, p}(\Omega)$ as the collection of all measurable functions $u: \Omega \to \reals$, such that for any multi-index $\balpha=(\alpha_1,\ldots,\alpha_d)$, with $|\balpha| \le \beta$, there exists $g_{\balpha} \in L^p(\Omega,\rset)$ that satisfies 
    \[ \int_\Omega u(x) D^{\balpha}\phi(x)\rmd x = (-1)^{|\balpha|} \int_\Omega g_{\balpha}(x)\phi(x)\rmd x,\;\;\mbox{ for all } \;\; \phi\in C_c^\infty(\Omega).\] 
    The function $g_\alpha$ is called the weak $\balpha$-index derivative of $u$, and denoted as $D^{\balpha} u$. The Sobolev norm of $u$ is
    $$
    \|u\|_{W^{\beta,p}(\Omega)} \eqdef \left(\sum_{{\bf k}:\;|{\bf k}|\leq \beta}\; \int_\Omega |D^{{\bf k}} u(x)|^p \rmd x\right)^{1/p} < \infty.
    $$
    In particular, we use the notation $W^{\beta, p}(\Omega, L)$ to be all functions $u$ such $ \|u\|_{W^{\beta,p}(\Omega)} \le L$. Finally we will write $W_0^{\beta,p}(\Omega)$ to denote the closure of $C_c^\infty(\Omega)$ in $W^{\beta,p}(\Omega)$.
\end{definition}

\begin{remark}
    It is well-known that if $u:\;\Omega\to\rset$ has a derivative $D^{\balpha} u$ in the classical sense, then it has a weak ${\balpha}$-index derivative $g_\balpha$ as defined above and moreover $g_\balpha = D^{\balpha} u$ almost everywhere. In this work, we will say that $u\in W^{\beta,p}(\Omega)$ in the classical sense if $D^{\balpha}u$ exists in the classical sense and $D^{\balpha}u\in L^p(\Omega,\rset)$, for all $|\balpha|\leq \beta$.
\end{remark}

We will say that an operator $K:\;L^2(\Omega,\rset)\to L^2(\Omega,\rset)$ is a differential operator of order $\tau$ if $K = \sum_{\balpha:|\balpha| \le \tau} a_{\bm{\alpha}} D^{\bm{\alpha}}$ where the coefficients $\{a_{\bm{\alpha}}: \reals^d \mapsto \reals,  |\balpha| \le \tau\}$ belongs to $C^{\beta}(\bar \Omega)$. Any such differential operator $K$ admits an adjoint $K^\dagger$ defined as
\begin{equation}
\label{eq:adjoint:def}  
K^\dagger u \eqdef \sum_{\bm{\alpha}:\;|\bm{\alpha}|\leq \tau} (-1)^{|\bm{\alpha}|}  D^{\bm{\alpha}}(a_{\bm{\alpha}} u),
\end{equation}
such that
\begin{equation}
\label{eq:adjoint} 
\pscal{Kf}{g} = \pscal{f}{K^\dagger g},\;\;\; f\in W^{\tau,2}(\Omega), \; g\in C_c^\infty(\Omega).
\end{equation}
Equation (\ref{eq:adjoint}) follows from the integration by parts formula (the boundary terms vanish as $g \in  C_c^\infty(\Omega)$). Moreover, it continues to hold for $g \in W_0^{\tau,2}(\Omega)$ by passing to the limit, since $W_0^{\tau,2}(\Omega)$ is the closure of $C_c^\infty(\Omega)$ in $W^{\tau,2}(\Omega)$. 
A special case is $K = D^{\balpha_*}$, where $a_{\balpha} \equiv 0$ for $\balpha \neq \balpha_*$ and $a_{\balpha_*} \equiv 1$. 
Let $K_0^\dagger,K_{1,1}^\dagger,\ldots,K_{1,p}^\dagger$ denote the adjoints of $K_0,K_{1,1},\ldots,K_{1,p}$ respectively as defined in (\ref{eq:adjoint:def}).  We set $K_1^\dagger\eqdef (K_{1,1}^\dagger,\cdots,K_{1,p}^\dagger)^\top$ as the adjoint of the operator $K_1$.  We will allow $K_1^\dagger$ to operate on matrix-valued functions as follows. Let  $A:\Omega \to\rset^{q\times p}$, where each component $A_{ij} \in W_0^{\tau,2}(\Omega)$, we define $K_1^\dagger A:\;\Omega\to\rset^q$ as $(K_1^\dagger A)_i \eqdef \sum_{k=1}^p K_{1,k}^\dagger A_{ik}$. With this definition, it is easy to check that the following generalization of (\ref{eq:adjoint}) holds:
\[ \int_\Omega A(x) K_1 u(x)\rmd x = \int_\Omega u(x) K_1^\dagger A(x)\rmd x.\] 
Before stating the main theorems, we first introduce the assumptions required for their proofs.

\begin{assumption}[Data generating process]
\label{assm:dgp}
    The design points $\{X_i\}_{1 \le i \le n}$ are assumed to be generated uniformly from $\Omega = (0, 1)^d$. The responses are generated as $Y_i = u_\star(X_i) + \eps_i$, where $u_\star \in W^{\beta, 2}(\Omega, L)$ in the classical sense and satisfy the PDE equation \eqref{eq:pde_model_2} for some $\theta = \theta_\star$ and the smoothness index satisfy $\beta - 2\tau > d$, where $\tau\geq 1$ is the maximum order of the derivatives in the PDE equation. The errors $\{\eps_i\}_{1 \le i \le n}$ are i.i.d. subgaussian random variables with mean $0$ and variance $\sigma^2_\eps$ and are independent of $X_i$. 
\end{assumption}
Assumption~\ref{assm:dgp} represents a standard setup in nonparametric 
statistical learning. The compact support condition on $\Omega$ and the 
sub-Gaussian assumption on $\eps$ can, in principle, be relaxed at the cost 
of additional technical arguments, without altering the fundamental 
statistical insights of the problem. We therefore adopt these conventional 
assumptions to streamline the presentation. 
% Moreover, to ensure that the PDE constraint is well-defined, we require 
% the smoothness parameter $\beta$ of $u$ to satisfy $\beta > \tau$, where 
% $\tau$ denotes the maximal order of derivatives appearing in the PDE 
% constraint \eqref{eq:pde_model_2}. 
% It is worth emphasizing that much of the existing literature on neural 
% network-based nonparametric estimation assumes the regression function 
% belongs to a H\"{o}lder class. In contrast, we work under a Sobolev 
% smoothness assumption, which introduces additional analytical challenges. 
% In particular, the neural network approximation results 
% for Sobolev functions provide optimal rates in the $L_2$ norm, rather than 
% in the $L_\infty$ norm that is frequently employed in the H\"{o}lder setting. 
\begin{remark}
\label{rem:smoothness_tau}
For the debiasing step and to establish asymptotic normality of the 
debiased estimator $\htdb$, we impose the condition $\beta - 2\tau > d$. If instead we assume $u_\star \in C^{\beta,\gamma}(\Omega, L)$, that is, 
$u_\star$ belongs to a H\"older class, then the corresponding requirement 
would reduce to $\beta - 2\tau > d/2$. 
At present, it is not clear whether these smoothness conditions are optimal. 
Investigating their sharpness remains an important direction for future work.
\end{remark}
Next, we describe the architecture of the neural network collection. We will use a standard RePU($\nu$) activation function (with $\nu \ge \beta$) with width $N$, depth $L$, and total number of trainable weights $W$. To keep the neural network outputs bounded as well as smooth, we apply a smooth truncation. In particular, let $T_m \in C^{\nu_*}$ ($\nu_* \ge \beta$) be a piecewise polynomial function that smoothly truncates $x$ at level $m$, i.e. $T_m(x) = x$ for all $|x| \le m-\delta$ and $T_m(x) = \sign(x)m$ for $|x| \ge m$, where $m$ is a large constant (typically one may take $m > 2\|u_{\star}\|_\infty$). See Lemma S.9.1 of the supplementary document for the exact definition. We slightly enlarge our collection $\cF_{\rm NN}$ as $\tilde \cF_{\rm NN} = \{T_m \circ u: \ u\}$, where $u$ is a RePU($\nu$)  network with width $N$ depth $L$ and total number of parameters $W$. 
% Note that any function in this new collection $\cF_{\rm NN}$ can also be realized a neural network with depth $L+1$, width $N$ and total number of weights $W$, and the architecture is essentially: 
% $$
% x \to \text{Linear} \to \text{RePU} \to \cdots \to \text{Linear} \to \text{RePU} \to \text{Linear} \to T_m \to \text{Output} \,.
% $$
% The only difference from standard architecture is that the activation function in the last layer is different, but both the activation functions (RePU($\nu$) and $T_m$) are piecewise polynomial functions. 
Assuming that neural network outputs are truncated or uniformly bounded is standard in the theory of neural networks (e.g., see Equation (4) of \cite{schmidt2020nonparametric}, or the definition of $T_\beta$ in \cite{kohler2021rate} or the definition of $\bar T_m$ in Definition 2 of \cite{fan2024factor}). Traditionally, this is achieved by the hard truncation map $T(x)=\sign(x)(|x|\wedge m)$. However, this map is non-smooth, and hence the derivatives of functions in $\cF_{\rm NN}$ are not well defined at the truncation points. Since our goal is to estimate both the target function and its derivatives simultaneously, we use a smooth truncation scheme instead. Because \(\nu \ge \beta\), the resulting truncated neural networks continue to belong to the Sobolev space $W^{\beta,2}$.

\begin{assumption}[Function class $\cF_{\rm NN}$]
\label{assm:function_class}
We choose $\cF_{\rm NN}$ as a collection of feedforward neural network functions $\Omega\to\rset$, with width of the order $N = c_1(n/\log{n})^{d/(2\beta + d)}$, depth of the order $c_2$, and total number of weights of the order $c_3(n/\log{n})^{d/(2\beta + d)}$, along with RePU($\nu$) activation function for some $\nu \ge \beta$, where $c_1,c_2,c_3$ are constants independent of $n$. Furthermore, we set $\tilde \cF_{\rm NN} = \{T_m \circ u: u \in \cF_{\rm NN}\}$.   
\end{assumption}

\begin{remark}
    For large values of $\nu$, the RePU($\nu$) activation function is known to be numerically unstable, and the modified rectified power unit (mRePU) has been proposed as an alternative \citep{mrepu:2026}. The mRePU($\nu$) is defined as ${\rm mRePU}_\nu(z) = z(z+1)^\nu$ if $z\geq -1$, and zero otherwise, which can be written as the difference of two RePU: ${\rm mRePU}_\nu(z) = {\rm RePU}_{\nu+1}(z+1) - {\rm RePU}_{\nu}(z+1)$. Therefore our results directly apply to mRePU activation functions as well.
\end{remark}

\begin{assumption}[Local identifiability and curvature]
\label{assm:curvature_pde_risk}
Assume that $\Theta\subset\mathbb R^q$ is compact and that
$\theta_\star\in \operatorname{int}(\Theta)$. Moreover, assume the following.
\begin{enumerate}
\item[(i)] \textbf{Forward well-posedness and local injectivity.}
There exists a neighborhood $\mathcal N_\theta$ of $\theta_\star$ such that, for every $\theta\in\mathcal N_\theta$, the PDE system $K_0(u) + \Psi_\theta \circ K_1(u)=0$ along with $\cB(u)=0$, admits a unique solution $u(\theta)\in W^{\beta,2}(\Omega)$.
The forward map $\theta\mapsto u(\theta)$ is locally injective at $\theta_\star$ and continuous at $\theta_\star$ in the $W^{2\tau,2}$ norm.

\item[(ii)] \textbf{Identification of the inverse problem.}
The true parameter $\theta_\star$ is the unique minimizer of $\theta\mapsto \cR(\theta,u_\star)$ over $\Theta$. Equivalently, for every $\eta>0$,
\[
\inf_{\theta\in\Theta:\ \|\theta-\theta_\star\|_2\ge \eta}
\cR(\theta,u_\star)>0 .
\]

\item[(iii)] \textbf{Local curvature.}
There exist constants $\delta_0,\delta_1,\lambda_->0$ such that $B(\theta_\star,\delta_1)\subset \Theta$ and
\[
\inf_{\|u-u_\star\|_{W^{2\tau,2}}\le \delta_0}
\inf_{\|\theta-\theta_\star\|_2\le \delta_1}
\lambda_{\min}\left\{\nabla_\theta^2 \cR(\theta,u)\right\}
\ge \lambda_- .
\]
\end{enumerate}
\end{assumption}
Assumption~\ref{assm:curvature_pde_risk} consists of three mild requirements: local well-posedness of the forward problem, identifiability of the inverse problem, and local curvature of the PDE risk functional. Recall that $\cB$ encodes the boundary and/or initial conditions needed to make the PDE solution well defined, while $\cR(\theta,u)$ denotes the weighted $L^2$ norm of the interior PDE residual. Since the weight function $w$ is strictly positive in the interior of $\Omega$ (see Assumption~\ref{assm:w}), the condition $\cR(\theta,u)=0$ implies that the PDE residual vanishes almost everywhere on $\Omega$. The first condition states that for every $\theta$ in a neighborhood of $\theta^\star$, the PDE admits a unique solution $u(\theta)\in W^{\beta,2}(\Omega)$. Thus the forward map $\theta\mapsto u(\theta)$ is locally well defined around $\theta_\star$. The additional local injectivity and continuity of this map around $\theta_\star$ are standard stability requirements in inverse problems: nearby parameters produce nearby solutions, and no two nearby parameter values generate the same solution \citep{stuart2010inverse,nickl2023bayesian}.

The second requirement is an identifiability condition for the inverse problem. Since our preliminary estimator $\hat \theta^{\rm PI}$ is defined as a minimizer of $\theta\mapsto \cR(\theta,\hat u)$ over $\Theta$, it is not enough to assume only local injectivity of the forward map. We also require that $\theta_\star$ be the unique minimizer of the population risk $\theta\mapsto \cR(\theta,u_\star)$ over $\Theta$. This condition rules out distant parameters that fit the true solution equally well and ensures consistency of the plug-in estimator $\hat\theta^{\rm PI}$ as long as $\hat u$ is a consistent estimate of $u_{\star}$.

The third requirement imposes a local curvature condition on the PDE risk functional. Specifically, it requires that the Hessian $\nabla_\theta^2 \cR(\theta,u)$ remain uniformly positive definite for $(\theta,u)$ in a neighborhood of $(\theta_\star,u_\star)$. This is a mild local nondegeneracy condition on the sensitivity of the PDE residual with respect to $\theta$, analogous to the positive-definiteness of the Fisher information matrix in classical parametric inference. It guarantees that the risk is locally strongly convex in $\theta$, so that perturbations of the solution $u_\star$ lead only to stable perturbations of the local minimizer in $\theta$. Assumption~\ref{assm:curvature_pde_risk} is satisfied by many standard
parametric PDE models (see Section \ref{sec:simulation} and Section S.1 of the supplementary document for some illustrative examples).
\begin{assumption}[Smoothness and local boundedness of $\Psi$]
\label{assm:psi_smoothness}
Let $\Theta_0 \subseteq \reals^q$ be an open convex set containing the parameter space $\Theta$, and let $\cO \subseteq \reals^d$ be an open set such that $\overline{\Omega} \subseteq \cO$. The nonlinear map
\[
\Psi:\cO\times \reals^p\times \Theta_0 \to \reals, \qquad (x,z,\theta)\mapsto \Psi(x,z,\theta) = \Psi_\theta(x, z),
\]
is sufficiently smooth in the spatial variable $x$, the feature variable $z$, and the parameter $\theta$. Specifically,, for multi-indices $a\in\mathbb{N}_0^q$, $b\in\mathbb{N}_0^d$, and $c\in\mathbb{N}_0^p$, with $|a|\leq 3$, $|b| + |c|\leq \beta$, we assume that the corresponding mixed derivative $(x,z,\theta)\mapsto \partial_\theta^{(a)} \partial_x^{(b)} \partial_z^{(c)} \Psi(x,z,\theta)$ is well-defined and continuous. % denote the corresponding mixed derivative of $\Psi$. Assume that for every $|a|\le 3$, the map $(x,z)\mapsto \partial_\theta^{(a)} \Psi(x,z,\theta)$ belongs to $C^\beta(K)$, for any compact set $K \subseteq \cO \times \reals^p$ and for every fixed $\theta\in\Theta_0$. 
Moreover, there exist continuous functions $ c:\Theta_0\to [0,\infty)$ and continuous nondecreasing functions $L_0:[0,\infty)\to [0,\infty)$ such that the following condition hold: for all $\theta\in\Theta_0$, $x\in\overline{\Omega}$, and $z\in\reals^p$,
\[
\max_{\substack{|a|\le 3\\ |b|+|c|\le \beta}} \left|\partial_\theta^{(a)} \partial_x^{(b)} \partial_z^{(c)} \Psi(x,z,\theta)\right| \le c(\theta)L_0(\|x\|_2 + \|z\|_2).
\]
% \begin{enumerate}
% \item 
% \item For all $r>0$, all $\theta\in\Theta_0$, all $x,x'\in\overline{\Omega}$, and all $z,z'\in\reals^p$ with $ \|z\|_2\vee \|z'\|_2 \le r$, we have
% \begin{align*}
% & \max_{\substack{|a|\le 2\\ |b|+|c|\le \beta-1}} \left| \partial_\theta^{(a)} \partial_x^{(b)} \partial_z^{(c)} \Psi(x,z,\theta)  - \partial_\theta^{(a)} \partial_x^{(b)} \partial_z^{(c)} \Psi(x',z',\theta)\right|
% \\
% & \qquad \qquad \qquad \le c(\theta)L(r) \left(\|x-x'\|_2+\|z-z'\|_2\right).
% \end{align*}
% \end{enumerate}
\end{assumption}
\begin{remark}
Since $\Theta_0$ is convex, the above assumption immediately implies that for any convex compact set $K\subseteq\Theta_0$ and every $r<\infty$, there exists a constant $C_{K,r}<\infty$ such that, for all $\theta,\theta'\in K$, $x\in\overline{\Omega}$, and $\|z\|_2\le r$,
$$
\max_{\substack{|a|\le 2\\ |b|+|c|\le \beta}} \left|\partial_\theta^{(a)}\partial_x^{(b)}\partial_z^{(c)}\Psi(x,z,\theta) - \partial_\theta^{(a)}\partial_x^{(b)}\partial_z^{(c)}\Psi(x,z,\theta')\right| \le C_{K,r}\|\theta-\theta'\|_2 \,.
$$
\end{remark}

\medskip 
Assumption~\ref{assm:psi_smoothness} imposes standard regularity and growth conditions on the nonlinear map $\Psi_\theta$, ensuring that the PDE residual and the associated risk functional are sufficiently smooth in both the parameter $\theta$ and the state variable. 
The requirement that $\theta \mapsto \Psi_\theta(x)$ be thrice continuously differentiable guarantees that the PDE risk $\mathcal{R}(\theta,u)$ admits well-defined second- and third-order derivatives in $\theta$, which are required for establishing local curvature, performing Taylor expansions, and deriving asymptotic normality of the debiased estimator. 
%The condition that $x \mapsto \nabla_\theta \Psi_\theta(x)$ belongs to $C^\tau(\Omega)$ ensures that $G_{\theta, u}$ the Gateux-derivative of $\nabla_\theta \cR(\theta, u)$ with respect to $u$, as defined in Stage II of Section \ref{sec:method}, is well-defined (see Proposition \ref{prop:def_G} for the exact definition of $\sG_{\theta, u}$). In particular, this implies, for any $u \in W^{\beta, 2}(\Omega, L)$, $\Psi_\theta \circ u, \partial_{\theta_j} \Psi_\theta \circ u$, $\partial_{\theta_i \theta_j}\Psi_\theta \circ u$ belongs to $W^{\tau, p}(\Omega)$, see Lemma \ref{lem:smoothness_comp_psi} for details. 
%The growth condition rules out excessively rapid growth (e.g., exponential growth) while still permitting a broad class of commonly used nonlinear functions.
Importantly, these conditions are mild and standard in nonlinear PDE-constrained estimation and semiparametric inference: they merely ensure controlled growth and local smoothness, rather than imposing restrictive structural assumptions on $\Psi_\theta$.

% \begin{assumption}[Compactness]
% \label{assm:compactness}
%     We assume that $X \in \Omega$, for $\Omega$ is a compact subset of $\reals^d$. Furthermore $u_\star \in \cH(\beta, L, \Omega)$, where $\cH(\beta, L, \Omega)$ is a collection of $\beta$-Holder functions from $\Omega$ to $\reals$, and the Holder norm is bounded by $L$. As a consequence, $K_1u: \Omega \mapsto [-L, L]^p$ for any $u \in \cH(\beta, L, \Omega)$. Furthermore, we assume $\theta \in \Theta$, where $\Theta$ is a compact subset of $\reals^q$. As consequence $\Psi_\theta(x) = \Psi(x, \theta)$ is a function from $\dom(\Psi) := [-L, L]^p \otimes \Theta$ to $\reals$. We assume that $\Phi$ is thrice continuously differentiable on $\dom(\Psi)$. As $\dom(\Psi)$ is compact, the derivatives of $\Psi$ are uniformly bounded on $\dom(\Phi)$. For notational simplicity, we use $M$ to denote a common uniform upper bound on $\Psi$, the norm of its derivatives, and the operator norm of its Hessian and 3rd order derivative.  
% \end{assumption}

\begin{assumption}[Weight function]
\label{assm:w}
The weight function $\omega$ belongs to $C^{\beta}_c(\bar \Omega)$, i.e.,
\[
\sup_{|\alpha|\le \beta} \|\partial^\alpha \omega\|_\infty < \infty
\quad \text{and} \quad
\partial^\alpha \omega(x) = 0
\quad \forall\, |\alpha|\le \beta,\ \forall\, x \in \partial\Omega \,,
\]
and $\omega(x) > 0$ for all $x \in \Omega$. Since $\Omega = (0,1)^d$, one such example is $\omega(x) = \Pi_{j = 1}^d [x_j(1-x_j)]^{(\beta + 1)}$. 
% \DM{Do we need $w \in C^\infty_c(\Omega_F)$ where $\Omega_F$ is a compact subset of $\Omega$ for Bayesian analysis?} \Yves{I don't think so.}
\end{assumption}
Assumption~\ref{assm:w} ensures that the weight function is sufficiently smooth and vanishes at the boundary with high enough order to eliminate boundary terms arising from integration by parts up to order $2\tau$. 
The positivity of $w$ in the interior guarantees that the PDE residual is meaningfully penalized throughout $\Omega$, preserving identifiability. 

\subsection{A representation theorem for the Gateaux derivative of $\nabla_\theta\cR$}
The construction of our debiased score rests upon the existence of an integral representation for the Gateaux derivative of $\nabla_\theta\cR(\theta, u)$ with respect to $u$. In the following Proposition, we show that under Assumption \ref{assm:psi_smoothness} and Assumption \ref{assm:w}, such a representation indeed exists. Recall that, for $\theta\in\rset^q$, $u,h\in W^{\beta,2}(\Omega)$, the Gateaux derivative of $\nabla_\theta\cR$ at $(\theta,u)$ in the direction $h$ is defined as: 
\[
\rmd (\nabla_\theta \cR)(\theta,u)[h] \eqdef \lim_{t \to 0}\; \frac{(\nabla_\theta \cR)(\theta,u + t h ) - (\nabla_\theta \cR)(\theta,u)}{t}.
\]
The following result establishes the existence of the integral representation of the Gateaux derivative, whose proof can be found in Section S.3 of the supplementary document: 
\begin{proposition}
\label{prop:def_G}
Assume Assumptions (\ref{assm:psi_smoothness}-\ref{assm:w}), and consider the PDE risk functional $\cR$ in (\ref{eq:pde_risk}). If $K_0$ and $K_1$ are differential operators of order $\tau$ and $\beta>2\tau + d/2$, then for all $L>0$, all $\theta\in \Theta$ and all $u\in W^{\beta,2}(\Omega,L)$,
\[ \rmd (\nabla_\theta \cR)(\theta,u)[h] = \int_{\Omega} h(x) \mathsf{G}_{\theta,u}(x)\rmd x, \quad \forall \ \; h \in W^{\beta, 2}(\Omega) \,\]
where $\sG_{\theta, u}\in W^{\beta-2\tau,2}(\Omega)$ takes the following form: 
\begin{align*}
\mathsf{G}_{\theta,u}(x) =& K_0^\dagger\left(\omega \ \nabla_\theta  (\Psi_\theta \circ K_1 u)\right)(x)  + K_1^\dagger\left(\omega \ \left\{\nabla_\theta  (\Psi_\theta \circ K_1 u) \nabla_z (\Psi_\theta \circ K_1 u)^\top\right\}\right)(x) \\
& \qquad + K_1^{\dagger}\left(\omega \ \left\{ K_0 u +  (\Psi_\theta \circ K_1 u)\right\} \nabla^{(2)}_{z\theta}  (\Psi_\theta \circ K_1 u)\right)(x),\;\;\;\; x\in\Omega,
\end{align*}
where $K_0^\dagger$ and $K_1^\dagger$ are the adjoint of $K_0$ and $K_1$ respectively. Furthermore, the representer $\sG_{\theta,u}$ satisfies the following smoothness properties.
\begin{enumerate}
\item For all $L>0,r>0$, there exists a constant $c_0= c_0(L,r)$ such that for all $u\in W^{\beta,2}(\Omega,L)$, and all $\theta\in\rset^q$ such that $\|\theta\|_2\leq r$, it holds $\|G_{\theta,u}\|_\infty \leq c_0$.
\item For all $L>0,r>0$, there exists a constant $c_1= c_1(L,r)$ such that for all $u,u'\in W^{\beta,2}(\Omega,L)$, all $\theta,\theta'\in\rset^q$ such that $\max(\|\theta\|_2,\|\theta'\|_2)\leq r$, and for all $h\in W^{\tau,2}(\Omega)$, we have
\begin{equation}\label{stab:G:2}
\max_{1 \le j \le q}\;\left |\int_\Omega h(x)\left(\mathsf{G}_{\theta,u,j}-\mathsf{G}_{\theta',u',j}\right)(x)\rmd x\right |  \leq c_1 \|h\|_{W^{\tau,2}}  \left( \|u - u'\|_{W^{\tau,2}} + \|\theta-\theta'\|_2\right)\,
\end{equation}
and
\begin{equation}
\label{stab:G:3}
\max_{1 \le j \le q}\; \left\|\mathsf{G}_{\theta,u, j}-\mathsf{G}_{\theta',u', j}\right\|_{L_\infty(\Omega)} \le c_1 \left(\|u - u'\|_\infty^{\alpha_*} + \|\theta-\theta'\|_2\right), \quad \alpha_* = \frac{\beta - 2\tau - d/2}{\beta - d/2} \,,
\end{equation}
where $\mathsf{G}_{\theta,u, j}$ denotes the $j$-th component of $\mathsf{G}_{\theta,u}$. \end{enumerate}
\end{proposition}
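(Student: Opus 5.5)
The plan is to compute the Gateaux derivative directly and then move all derivatives off $h$ by integration by parts. I would first establish the basic regularity tool. Since $\beta>2\tau+d/2$, Sobolev embedding gives $W^{\beta-\tau,2}(\Omega)\hookrightarrow C^{\tau}(\bar\Omega)$, and $W^{\beta-\tau,2}$ is a Banach algebra. Hence for $u\in W^{\beta,2}(\Omega,L)$ we have $K_1u\in W^{\beta-\tau,2}$ with $\|K_1u\|_\infty\le C(L)$. A composition (Moser-type) lemma, using Assumption \ref{assm:psi_smoothness} and the local bound $c(\theta)L_0(\cdot)$ on the compact range of $(x,K_1u(x))$, then shows that $\Psi_\theta\circ K_1u$, $\nabla_\theta(\Psi_\theta\circ K_1 u)$, $\nabla_z(\Psi_\theta\circ K_1 u)$ and $\nabla^{(2)}_{z\theta}(\Psi_\theta\circ K_1 u)$ all lie in $W^{\beta-\tau,2}$. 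Their norms are bounded uniformly over $u\in W^{\beta,2}(\Omega,L)$ and $\|\theta\|_2\le r$.

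For the derivative itself, set $F(u)=K_0u+\Psi_\theta\circ K_1u$. Then $(\nabla_\theta\cR)(\theta,u)=\int F(u)\,\nabla_\theta(\Psi_\theta\circ K_1u)\,\omega$. Differentiating $t\mapsto \cR$-score at $u+th$ under the integral sign, which is justified by dominated convergence since everything is bounded uniformly in $|t|\le1$, gives
\[
\int \Bigl\{ (K_0h+\nabla_z(\Psi_\theta\circ K_1u)^\top K_1h)\,\nabla_\theta(\Psi_\theta\circ K_1u) + F(u)\,\nabla^{(2)}_{z\theta}(\Psi_\theta\circ K_1u)K_1h\Bigr\}\omega \,\rmd x .
\]
Each term has the form $\int (Kh)\,g\,\omega$ with $K$ of order $\tau$ and $g\in W^{\beta-\tau,2}$. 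By Assumption \ref{assm:w}, $\omega g\in W_0^{\tau,2}$, because all derivatives of $\omega$ up to order $\beta\ge\tau$ vanish on $\partial\Omega$. The adjoint identity \eqref{eq:adjoint}, in its matrix form for $K_1^\dagger$, then moves $K$ onto $\omega g$ and yields exactly the stated $\sG_{\theta,u}$. Since $K^\dagger$ has order $\tau$ with $C^\beta$ coefficients, $\sG_{\theta,u}\in W^{\beta-2\tau,2}$.

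Item 1 follows from the embedding $W^{\beta-2\tau,2}\hookrightarrow L^\infty$, which holds because $\beta-2\tau>d/2$, combined with the uniform norm bounds above.

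For \eqref{stab:G:2}, I would not integrate by parts. Instead I would write $\int h(\sG_{\theta,u}-\sG_{\theta',u'})$ back in the form $\int (K h)(g-g')\omega$. The required bound is then $\|g-g'\|_{L^2}\lesssim \|u-u'\|_{W^{\tau,2}}+\|\theta-\theta'\|_2$, where $g$ ranges over the products appearing above. This follows from the mean value theorem in $(z,\theta)$: the derivatives of $\Psi$ are bounded on the compact range, and the $L^\infty$ bounds control the remaining factors, so $\|K_1u-K_1u'\|_{L^2}\lesssim\|u-u'\|_{W^{\tau,2}}$. For \eqref{stab:G:3}, $\sG$ involves derivatives up to order $2\tau$ of $u$ pointwise, so I would bound $\|\sG_{\theta,u}-\sG_{\theta,u'}\|_\infty$ by $C\|u-u'\|_{C^{2\tau}}$ plus a Lipschitz term in $\theta$. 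A Gagliardo–Nirenberg interpolation between $L^\infty$ and $W^{\beta,2}$ (scale $\beta-d/2$) then gives $\|v\|_{C^{2\tau}}\lesssim \|v\|_\infty^{\alpha_*}\|v\|_{W^{\beta,2}}^{1-\alpha_*}$ with $\alpha_*=(\beta-2\tau-d/2)/(\beta-d/2)$, and $\|v\|_{W^{\beta,2}}\le 2L$.

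I expect the main obstacle to be this last step and the composition lemma. The interpolation inequality between $L^\infty$ and $W^{\beta,2}$ into $C^{2\tau}$ on a bounded domain needs an extension operator and care at integer endpoints; if it fails at the exact exponent, I would prove it in Hölder form $C^{2\tau}$ versus $C^{\beta-d/2-\epsilon}$. Tracking $2\tau$ derivatives of the nonlinear composite, via the Faà di Bruno formula, in sup norm is also delicate.
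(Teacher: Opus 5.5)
Your proposal is correct. For the representation formula, item 1 and \eqref{stab:G:2} it follows the paper essentially step for step:
\begin{itemize}
\item a difference quotient with dominated convergence;
\item the adjoint identity, using $\omega g\in W_0^{\tau,2}$;
\item the embedding $W^{\beta-2\tau,2}\hookrightarrow L^\infty$;
\item moving $K_0,K_1$ back onto $h$, followed by Cauchy--Schwarz.
\end{itemize}

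The genuine difference is in \eqref{stab:G:3}. The paper interpolates twice.
\begin{itemize}
\item First, at the level $W^{\beta-\tau,2}$ for the outer adjoint: $\|K^\dagger(a_{\theta,u}-a_{\theta,u'})\|_\infty\lesssim\|a_{\theta,u}-a_{\theta,u'}\|_\infty^{(\beta-2\tau-d/2)/(\beta-\tau-d/2)}$. Here $a_{\theta,u}$ is a coefficient such as $\omega\,\partial_{\theta_j}(\Psi_\theta\circ K_1u)$.
\item Then, at the level $W^{\beta,2}$ for the inner term: $\|K_1(u-u')\|_\infty\lesssim\|u-u'\|_\infty^{(\beta-\tau-d/2)/(\beta-d/2)}$.
\end{itemize}
The two exponents multiply to $\alpha_*$.

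You instead expand $\sG_{\theta,u}$ pointwise by the chain rule. This gives a polynomial in $\{D^{\balpha}u\}_{|\balpha|\le 2\tau}$ whose coefficients are derivatives of $\Psi$ evaluated at $(x,K_1u(x),\theta)$, multiplied by derivatives of $\omega$ and of the operator coefficients. Telescoping and the mean value theorem in $z$ then give $\|\sG_{\theta,u}-\sG_{\theta,u'}\|_\infty\lesssim\|u-u'\|_{C^{2\tau}}$. You then interpolate once, with $j=2\tau$, $p_1=2$, $p_0=r=\infty$.

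What each route needs:
\begin{itemize}
\item The paper's route is modular. For this step it relies on the coefficients being uniformly bounded in $W^{\beta-\tau,2}$, a composition lemma that the paper asserts rather than proves.
\item Your route needs, for this step, only $W^{\beta,2}\hookrightarrow C^{2\tau}$ and bounded derivatives of $\Psi$ of order up to $\tau+2$ in $(x,z)$. These are available because $\beta\ge\tau+2$. It also makes the Lipschitz dependence on $\theta$ immediate.
\end{itemize}

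The endpoint worry you raise does not arise. The interpolation exponent $2\tau/(\beta-d/2)$ is strictly below $1$ exactly because $\beta>2\tau+d/2$, so Theorem~\ref{thm:interpolation} applies as stated. For derivatives of order $j<2\tau$, the power of $\|v\|_\infty$ is $1-j/(\beta-d/2)>\alpha_*$. You lower it to $\alpha_*$ using $\|v\|_\infty\lesssim\|v\|_{W^{\beta,2}}\le 2L$.

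Your argument also uses only the stated hypothesis $\beta>2\tau+d/2$. The paper's write-up instead appeals to $\beta-2\tau>d$ from Assumption~\ref{assm:dgp} for the embedding into $C^{2\tau}$, which is more than is needed.
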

\medskip
The contribution of this proposition is twofold. First, it yields the exact form of $\mathsf{G}_{\theta,u}$, the integral representer of the Gateaux derivative of $\nabla_\theta \cR(\theta,u)$ with respect to $u$, which is necessary for constructing the debiased score function $\sS_n^{\rm db}(\theta, u)$ (Equation \eqref{eq:debiased_score}). Second, it establishes the requisite smoothness properties of $\mathsf{G}_{\theta,u}$, which play a crucial role in the proof of our main results. 
Having established this proposition, we are now in a position to present our main results from both the frequentist and Bayesian perspectives.
% \begin{remark}
%     In addition to the existence of $\mathsf{G}_{\theta,u}$, Proposition \ref{prop:def_G} also establishes some important smoothness properties of the map $u\mapsto \mathsf{G}_{\theta,u}$ that will play an important role in the analysis.
% \end{remark}

\subsection{Frequentist approach}
\label{sec:theory_freq}
Recall that in our frequentist framework, we tackle the problem in two stages. 
In the first stage, we estimate the function $u$ by solving Equation \eqref{eq:u_est_NN} using 
neural networks, and based on the resulting estimator $\hat u$, we construct 
an initial plug-in estimator $\hat \theta^{\rm PI}$ (see Equation \eqref{eq:theta_PI}). 
In the second stage, we introduce a debiased score (or loss) function 
$\sS_n^{\rm db}(\theta, \hat u)$ (Equation \eqref{eq:debiased_score}), and define our final estimator $\htdb$ 
as the minimizer of its norm over a shrinking parameter set $\Theta_n$, 
where $\Theta_n$ is chosen as a neighborhood of $\hat \theta^{\rm PI}$. 
% The overall procedure can be summarized schematically: 
% % \[
% % \hat u  \;\longrightarrow\;  \hat \theta^{\rm PI} \;\longrightarrow\;  \sS_n^{\rm db}(\theta, \hat u)  \;\longrightarrow\;  \htdb \,.
% % \]
% \[ \begin{tikzcd}[ row sep=.0em, column sep=5em ] & \sS_n^{\rm db} \arrow[dr] & \\[0.5em] \hat u \arrow[ur] \arrow[dr] && \htdb \\[0.1em] & \hat\theta^{\rm PI} \arrow[ur] & \end{tikzcd} \]
%Having stated our assumptions, we are now in a position to state our main results. 
As per the chronology, in our first result, we show that we can estimate $u_\star$ and its derivatives at a minimax optimal rate (up to a log factor) using RePU neural networks. The proof is given in Section S.4 of the supplementary document. 
\begin{theorem}
    \label{thm:deep_sobolev}
    Consider the estimation of $u$ via solving Equation \eqref{eq:u_est_NN} with $\cF_{\rm NN}$ replaced by $\tilde \cF_{\rm NN}$ (Assumption \ref{assm:function_class}) and $\lambda_n \asymp (n/\log{n})^{-2\beta/(2\beta + d)}\log{n}$. Then, under Assumption \ref{assm:dgp}, the estimator $\hat u$ satisfies with probability $\ge 1 - n^{-C}$ for some large $C > 0$: 
    \begin{align*}
        \|\hat u\|_{W^{\beta, 2}} & \le 8\|u_\star\|_{W^{\beta, 2}} + K_1 \le 8L + K_1, \quad \text{for some } K_1 > 0, \\
        \left\|\partial^{\balpha} \hat u- \partial^{\balpha} u_\star\right\|_{L_2(P_X)} & \le K_2 \left(\log{n}\right)^{\frac12\left(1 - \frac{|\balpha|}{\beta}\right)}\left(\frac{n}{\log{n}}\right)^{-\frac{\beta - |\balpha|}{2\beta + d}}, \quad \forall \ 0 \le |\balpha| < \beta \,.
    \end{align*}
\end{theorem}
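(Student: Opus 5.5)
The plan is to follow the standard penalized least-squares oracle inequality route, in three steps: approximation, a basic inequality with localized empirical-process control, and interpolation to pass from $L_2$ and Sobolev control to control of every derivative.

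\emph{Approximation.} We first invoke a simultaneous approximation result for RePU($\nu$) networks in Sobolev norms. The goal is a network $u_N\in\tilde\cF_{\rm NN}$ with
\[
\|u_N-u_\star\|_{L_2}\lesssim N^{-\beta/d}
\quad\text{and}\quad
\|u_N\|_{W^{\beta,2}}\le C\|u_\star\|_{W^{\beta,2}}+C'.
\]
With $W\asymp N\asymp (n/\log n)^{d/(2\beta+d)}$, this gives $\|u_N-u_\star\|_2^2\lesssim (n/\log n)^{-2\beta/(2\beta+d)}$. RePU networks represent local polynomials exactly, for example via a Bramble--Hilbert/averaged Taylor quasi-interpolant built on a partition of unity, and this is what keeps the $W^{\beta,2}$ norm bounded. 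Since $\beta-2\tau>d$ makes $u_\star$ bounded by $\|u_\star\|_\infty<m$, the smooth truncation $T_m$ acts as the identity on the relevant range. Composition with $T_m\in C^{\nu_*}$ preserves the Sobolev bound through the chain rule and Sobolev embedding.

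\emph{Basic inequality.} Comparing $\hat u$ with $u_N$ in \eqref{eq:u_est_NN} gives
\[
\|\hat u-u_\star\|_n^2+\lambda_n\|\hat u\|_{W^{\beta,2}}^2\le \|u_N-u_\star\|_n^2+\lambda_n\|u_N\|_{W^{\beta,2}}^2+\tfrac2n\sum_i\eps_i(\hat u-u_N)(X_i).
\]
The noise term is controlled uniformly over the class, which is bounded by $m$ and has pseudo-dimension $\lesssim WL\log W$. A localized sub-Gaussian chaining or peeling bound then shows it is at most $\tfrac12\|\hat u-u_N\|_n^2+C\,W\log n/n$ with probability $\ge 1-n^{-C}$. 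We pass between empirical and population $L_2(P_X)$ norms with a uniform ratio-type concentration inequality of Talagrand or Bernstein type over the bounded class, at the same rate. Since $W\log n/n\asymp\lambda_n\asymp (n/\log n)^{-2\beta/(2\beta+d)}\log n$, dividing by $\lambda_n$ gives
\[
\|\hat u\|_{W^{\beta,2}}^2\lesssim \|u_N\|_{W^{\beta,2}}^2+O(1),
\]
which is the first claim with the explicit constants. The same inequality also gives
\[
\|\hat u-u_\star\|_{L_2}^2\lesssim (n/\log n)^{-2\beta/(2\beta+d)}\log n .
\]

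\emph{Derivatives.} Set $\Delta=\hat u-u_\star$, so that $\|\Delta\|_{W^{\beta,2}}\le 9L+K_1$. We apply the Gagliardo--Nirenberg interpolation inequality on the Lipschitz domain $(0,1)^d$:
\[
\|\partial^{\balpha}\Delta\|_2\lesssim \|\Delta\|_2^{1-|\balpha|/\beta}\,\|\Delta\|_{W^{\beta,2}}^{|\balpha|/\beta}+\|\Delta\|_2 .
\]
Inserting the $L_2$ rate $\big((n/\log n)^{-\beta/(2\beta+d)}(\log n)^{1/2}\big)^{1-|\balpha|/\beta}$ gives exactly
\[
(\log n)^{\frac12(1-|\balpha|/\beta)}(n/\log n)^{-(\beta-|\balpha|)/(2\beta+d)},
\]
which is the stated bound. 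The uniform design means $L_2(P_X)$ coincides with Lebesgue $L_2$.

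The main obstacle is the approximation step. Standard RePU results are stated in H\"older norms or give only $L_\infty$/$W^{k,\infty}$ bounds, so we need a construction that approximates $W^{\beta,2}$ functions in $L_2$ at rate $N^{-\beta/d}$ while keeping the network's own $W^{\beta,2}$ norm uniformly bounded. Both the localized Taylor polynomials and the RePU realization of the partition of unity must be controlled. A secondary difficulty is checking that $T_m\circ u$ stays in a uniformly Sobolev-bounded class; the truncation must not inflate derivatives, which we would handle with the chain rule together with $L_\infty$ bounds on the lower-order derivatives of the network obtained from Sobolev embedding.
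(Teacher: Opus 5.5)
Your proposal is correct and follows essentially the same route as the paper. Both arguments use a basic inequality against a truncated RePU approximant with bounded $W^{\beta,2}$ norm, pseudo-dimension control of the noise term, division by $\lambda_n$ to bound $\|\hat u\|_{W^{\beta,2}}$, an empirical-to-population norm comparison, and Gagliardo--Nirenberg interpolation for the derivatives. There are two minor differences. The paper builds its approximant from a tensor-product B-spline quasi-interpolant realized exactly by RePU($\nu$) units, rather than from averaged Taylor polynomials on a partition of unity. It also normalizes the noise term by $\|\hat u\|_{W^{\beta,2}}+\|\tilde u_n\|_{W^{\beta,2}}$, whereas you use the uniform bound $m$ on the truncated class, which is more direct. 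One small overstatement: your argument gives at best $\|\hat u\|_{W^{\beta,2}}\le \|u_N\|_{W^{\beta,2}}+O(1)$. The multiplicative constant in the first claim is therefore whatever constant your approximation step delivers, not a verified $8$; the paper's proof does not pin it down either.
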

\begin{remark}
The above theorem establishes the rate of convergence of a neural network–based estimator of $u_\star$ and its derivatives obtained by minimizing a penalized squared error loss (see \eqref{eq:u_est_NN}). 
We highlight several aspects of this result:

\begin{itemize}
    \item To the best of our knowledge, this is the first work that analyzes a neural network estimator constructed via a general Sobolev-penalized squared loss and proves that both the mean function and its derivatives can be estimated simultaneously at the minimax optimal rate \citep{stone1980optimal}. 
    The presence of a Sobolev penalty distinguishes our approach from standard DNN-based regression analyses (e.g., \cite{kohler2016nonparametric,bauer2019deep,kohler2021rate,schmidt2020nonparametric,bhattacharya2024deep,fan2024factor} to name a few) and requires different technical arguments. The most closely related work is \cite{ding2025semi}, which incorporates a first-order derivative penalty. 
    However, that work does not establish minimax optimal rates for derivative estimation. 
    Moreover, \cite{ding2025semi} focuses on the ReQU network framework as they only penalize the gradient, whereas we consider a general RePU($\nu$) network with $\nu \ge \beta$ to ensure that the estimator lies in $W^{\beta,2}(\Omega)$. Furthermore, in \cite{ding2025semi}, the authors assumed the mean function belongs to a H\"{o}lder class, whereas we assume they belong to a Sobolev class, which substantially broadens the scope of the result. 

    \item Our analysis relies on a new approximation theorem for Sobolev spaces using RePU($\nu$) networks (Theorem S.8.1 of the supplementary document). 
    Although approximation results for RePU networks exist in the literature (e.g., \cite{shen2023differentiable,li2019better}), those typically require the network depth to grow with $n$, which introduces additional logarithmic factors in the convergence rate. 
    In contrast, our approximation result allows the depth to remain fixed while only the width and the total number of active parameters increase with $n$ at the optimal rate.

    \item Finally, much of the existing neural network literature assumes the regression function belongs to a H\"{o}lder class. 
    Working under a Sobolev smoothness assumption presents additional analytical challenges. 
    In particular, available approximation results for Sobolev functions are typically established in the $L_2$ norm, rather than the $L_\infty$ norm commonly used in the H\"older setting, which further complicates the analysis.
\end{itemize}
\end{remark}

% \paragraph{Discussion on the assumptions.}
% Assumption \ref{assm:dgp} is a standard statistical setup used for non-parametric statistical learning. The assumption that $X$ is compactly supported and $\eps$ is sub-Gaussian can be relaxed with more technical bookkeeping without adding anything to the main understanding of the real problem. Therefore, we resort to these standard assumptions to make our presentation simple. Furthermore, we need the smoothness of $u$, namely $\beta$ to be larger than $\tau$ for the PDE constraint to be well-defined (recall that $\tau$ is the maximum order of the derivatives in our PDE constraint Equation \eqref{eq:pde_model_2}). Note that, in most of the papers in non-parametric estimation using neural networks, the underlying mean function is assumed to be H\"{o}lder, whereas here we are working on the Sobolev function class, which brings more challenge in the analysis. As a starter, we only have optimal approximation error of the Sobolev function using neural networks in $L_2$ norms instead of the $L_\infty$ norm frequently used in other papers. 
% For debiasing and achieving asymptotic normality of the debiased estimator $\htdb$, we require the condition $\beta - 2\tau > d$. If we instead assume $u_\star \in C^{\beta, \gamma}(\Omega, L)$, i.e. H\"{o}lder function, then we would need $\beta - 2\tau > d/2$. However, it is not immediately clear to us whether this condition is optimal and we leave it for an important future avenue to explore. 
The previous theorem establishes the convergence rate of $\hat u$, defined in Equation~\eqref{eq:u_est_NN}. 
%when $\cF_{\rm NN}$ is replaced by its truncated counterpart $\tilde \cF_{\rm NN}$. 
The following corollary provides an analogous result for $\hat u_{\rm PINN}$, as defined in Equation~\eqref{eq:u_est_NN_PINN}. The proof is provided in Section S.4 of the supplementary document.
\begin{corollary}
    \label{cor:deep_sobolev_PINN}
    % Let $\hat u_{\rm PINN}$ denote the estimator of $u_*$ obtained from Equation~\eqref{eq:u_est_NN_PINN}, with $\cF_{\rm NN}$ replaced by its truncated version $\tilde \cF_{\rm NN}$. Then, under Assumptions~\ref{assm:dgp} and~\ref{assm:function_class}, $\hat u_{\rm PINN}$ satisfies the same bounds as those obtained for $\hat u$ in Theorem~\ref{thm:deep_sobolev}.
    Let \(\hat u_{\rm PINN}\) denote the estimator of \(u_\star\) obtained from Equation~\eqref{eq:u_est_NN_PINN}, with \(\widetilde{\mathcal F}_{\rm NN}\). 
    Assume the conditions of Theorem~\ref{thm:deep_sobolev} along with  Assumptions~\ref{assm:psi_smoothness} and \ref{assm:w}. Suppose \\ \(\lambda_n \asymp (n/\log{n})^{-2\beta/(2\beta + d)}\log{n}\), and \(\tilde \lambda_n\) satisfies $\tilde\lambda_n N^{-2(\beta-\tau)/d} \lesssim \lambda_n,$ where \(N\asymp (n/\log n)^{d/(2\beta+d)}\) denotes the network width. 
    Then \(\hat u_{\rm PINN}\) satisfies the same bounds as those obtained for \(\hat u\) in Theorem~\ref{thm:deep_sobolev}. 
\end{corollary}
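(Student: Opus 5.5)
The plan is to rerun the proof of Theorem~\ref{thm:deep_sobolev}, treating $\tilde\lambda_n \cR(u)$ as an extra nonnegative penalty. We only need to control how much this penalty costs at the comparison function. In the proof of Theorem~\ref{thm:deep_sobolev}, the basic inequality compares $\hat u$ with an approximant $u_N \in \tilde\cF_{\rm NN}$. This approximant comes from the RePU Sobolev approximation result (Theorem S.8.1), which gives simultaneous approximation of $u_\star$ in $W^{k,2}$ at rate $N^{-(\beta-k)/d}$ for $k\le\beta$, with $\|u_N\|_{W^{\beta,2}}\lesssim \|u_\star\|_{W^{\beta,2}}$. Optimality of $\hat u_{\rm PINN}$ gives
\[
\frac1n\sum_i (Y_i-\hat u_{\rm PINN}(X_i))^2 + \lambda_n\|\hat u_{\rm PINN}\|_{W^{\beta,2}}^2 + \tilde\lambda_n\cR(\hat u_{\rm PINN}) \le \frac1n\sum_i (Y_i-u_N(X_i))^2 + \lambda_n\|u_N\|_{W^{\beta,2}}^2 + \tilde\lambda_n\cR(u_N).
\]
Since $\cR\ge 0$, we may drop $\tilde\lambda_n\cR(\hat u_{\rm PINN})$ on the left. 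The resulting inequality is exactly the one in Theorem~\ref{thm:deep_sobolev}, except for the additive term $\tilde\lambda_n\cR(u_N)$ on the right.

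Next we bound $\cR(u_N)$. We have $\cR(u_N)\le \cR(\theta_\star,u_N)$ and $\cR(\theta_\star,u_\star)=0$. Writing the residual as $K_0u_N+\Psi_{\theta_\star}\circ K_1u_N - (K_0u_\star+\Psi_{\theta_\star}\circ K_1u_\star)$ gives two pieces. The linear piece is bounded by $\|K_0(u_N-u_\star)\|_2\lesssim\|u_N-u_\star\|_{W^{\tau,2}}$. For the nonlinear piece, the mean value theorem gives a bound by $\sup|\nabla_z\Psi|\,\|K_1(u_N-u_\star)\|_2$. The factor $\sup|\nabla_z\Psi|$ is bounded through Assumption~\ref{assm:psi_smoothness}, because $K_1u_N$ and $K_1u_\star$ are uniformly bounded. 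That boundedness follows from the Sobolev embedding $W^{\beta,2}\hookrightarrow C^{\tau}$, which holds since $\beta-\tau>d/2$, together with the uniform $W^{\beta,2}$ bound on $u_N$ and $u_\star$. The weight $\omega$ is bounded by Assumption~\ref{assm:w}. Together these give
\[
\cR(u_N)\lesssim \|u_N-u_\star\|_{W^{\tau,2}}^2\lesssim N^{-2(\beta-\tau)/d},
\]
so the hypothesis $\tilde\lambda_n N^{-2(\beta-\tau)/d}\lesssim\lambda_n$ gives $\tilde\lambda_n\cR(u_N)\lesssim\lambda_n$.

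The last step is to check that an extra $O(\lambda_n)$ on the right-hand side does not change the conclusions. In Theorem~\ref{thm:deep_sobolev} the right-hand side already contains $\lambda_n\|u_N\|^2_{W^{\beta,2}}\asymp\lambda_n$. The new term therefore only enlarges constants: $K_1$ in the $W^{\beta,2}$ bound, which comes from dividing the inequality by $\lambda_n$, and $K_2$ in the $L_2$ rate, where $\lambda_n\asymp$ the squared rate up to logs. The stochastic-error control is unchanged because it uses only the Sobolev ball containing $\hat u_{\rm PINN}$ and the complexity of $\tilde\cF_{\rm NN}$. The derivative rates then follow by the same interpolation argument between the $L_2$ error and the $W^{\beta,2}$ bound, applied to $\hat u_{\rm PINN}-u_\star$.

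The main obstacle I expect is ensuring that the approximant $u_N$ used in Theorem~\ref{thm:deep_sobolev} approximates in $W^{\tau,2}$ at rate $N^{-(\beta-\tau)/d}$ and not only in $L_2$, and that the smooth truncation $T_m$ preserves this. The truncation is harmless because $T_m$ is the identity on the range of $u_N$ once $\|u_N\|_\infty<m-\delta$, which follows from uniform convergence via embedding. If Theorem S.8.1 only gives $L_2$ approximation plus a $W^{\beta,2}$ bound, I would instead interpolate: $\|v\|_{W^{\tau,2}}\lesssim\|v\|_2^{1-\tau/\beta}\|v\|_{W^{\beta,2}}^{\tau/\beta}$, which again yields $N^{-(\beta-\tau)/d}$.
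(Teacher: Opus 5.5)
Your proposal is correct and follows the paper's proof essentially step for step. You drop the nonnegative $\tilde\lambda_n\cR(\hat u_{\rm PINN})$ term, bound $\cR(u_N)\le\cR(\theta_\star,u_N)\lesssim\|u_N-u_\star\|_{W^{\tau,2}}^2\lesssim N^{-2(\beta-\tau)/d}$ using the local Lipschitz property of $\Psi_{\theta_\star}$ in $z$ together with Sobolev embedding, and absorb the resulting $O(\lambda_n)$ term into the constants of Theorem~\ref{thm:deep_sobolev}. Your explicit checks that the smooth truncation $T_m$ is inactive on the approximant, and the interpolation fallback for the $W^{\tau,2}$ rate, fill in details the paper leaves implicit when it simply cites ``the Sobolev approximation result.''
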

%Upon establishing the convergence result for $\hat u$, 
We now turn to our next theorem, which concerns the initial plug-in estimator $\hat \theta^{\rm PI}$ defined in Equation \eqref{eq:theta_PI}. 
We show that $\hat \theta^{\rm PI}$ is consistent and attains at least a nonparametric rate of convergence. 
%This preliminary rate result plays a crucial role in constructing the shrinking neighborhood $\Theta_n$ used in the subsequent debiasing step. 
The proof is given in Section S.4 of the supplementary document.

\begin{lemma}[Consistency of $\hat \theta^{\rm PI}$]
    \label{lem:consistency_PI}
    Let $\hat \theta^{\rm PI}$ be the initial estimator as defined in Equation \eqref{eq:theta_PI}. Then under Assumption \ref{assm:dgp}-\ref{assm:w} and $\Theta$ is a compact convex subset of $\reals^q$, we have:
    $$
    \|\hat \theta^{\rm PI} - \theta_\star\|_2 = o_p(r_n^{-1/2}), \quad \text{ for any } \ 
    r_n = o\left(n^{\frac{\beta - \tau}{2\beta + d}}(\log{n})^{-a(\beta, d, \tau)}\right) \,.
    $$
\end{lemma}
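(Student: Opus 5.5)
We prove the lemma in two stages. First we establish plain consistency via an argmin argument. Then we localize and use the curvature condition in Assumption~\ref{assm:curvature_pde_risk}(iii) to convert the rate at which $\hat u$ converges in $W^{2\tau,2}$ into a rate for $\hat\theta^{\rm PI}$.

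\emph{Step 1: uniform convergence of the risk.} The key quantity is $\sup_{\theta\in\Theta}|\cR(\theta,\hat u)-\cR(\theta,u_\star)|$.
\begin{itemize}
\item Write the residual as $r_{\theta,u}=K_0u+\Psi_\theta\circ K_1u$. Then $\cR(\theta,\hat u)-\cR(\theta,u_\star)=\tfrac12\int (r_{\theta,\hat u}-r_{\theta,u_\star})(r_{\theta,\hat u}+r_{\theta,u_\star})\,\omega$.
\item By Theorem~\ref{thm:deep_sobolev}, on an event of probability $\ge 1-n^{-C}$ both $\hat u$ and $u_\star$ lie in $W^{\beta,2}(\Omega,8L+K_1)$. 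Since $\beta-\tau>d/2$, Sobolev embedding gives $\|K_1\hat u\|_\infty,\|K_1u_\star\|_\infty\le C$.
\item Assumption~\ref{assm:psi_smoothness} with compact $\Theta$ then bounds $\Psi_\theta$ and $\nabla_z\Psi_\theta$ uniformly on the relevant range. The mean value theorem gives $\|r_{\theta,\hat u}-r_{\theta,u_\star}\|_2\lesssim \|\hat u-u_\star\|_{W^{\tau,2}}$, uniformly in $\theta$.
\item Theorem~\ref{thm:deep_sobolev} with $|\balpha|\le\tau$ yields $\|\hat u-u_\star\|_{W^{\tau,2}}\lesssim n^{-(\beta-\tau)/(2\beta+d)}$ up to logarithms.
\end{itemize}
Hence the supremum is $o_p(1)$. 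Combined with the well-separated minimum in Assumption~\ref{assm:curvature_pde_risk}(ii), the standard argmin consistency argument gives $\hat\theta^{\rm PI}\to\theta_\star$ in probability.

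\emph{Step 2: rate via curvature.} Consistency places $\hat\theta^{\rm PI}$ in $B(\theta_\star,\delta_1)$ with probability tending to one. The same bound with $|\balpha|\le2\tau<\beta$ gives $\|\hat u-u_\star\|_{W^{2\tau,2}}\le\delta_0$ with probability tending to one. Because $\theta_\star$ is interior and $\hat\theta^{\rm PI}$ eventually lies in the interior ball, the first-order condition $\nabla_\theta\cR(\hat\theta^{\rm PI},\hat u)=0$ holds. A mean value expansion in $\theta$ along the segment, which stays in the convex ball, then gives
\[
\lambda_-\|\hat\theta^{\rm PI}-\theta_\star\|_2\le \|\nabla_\theta\cR(\theta_\star,\hat u)\|_2=\|\nabla_\theta\cR(\theta_\star,\hat u)-\nabla_\theta\cR(\theta_\star,u_\star)\|_2,
\]
using $\nabla_\theta\cR(\theta_\star,u_\star)=0$, which holds because the residual vanishes. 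Strictly, the vector mean value theorem must be applied componentwise or in integral form, $\int_0^1\nabla^2_\theta\cR\,dt$. The integral form is fine, since its minimum eigenvalue is still at least $\lambda_-$.

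The right-hand side is controlled by Proposition~\ref{prop:def_G}. Write the difference as $\int_0^1 \rmd(\nabla_\theta\cR)(\theta_\star,u_\star+s(\hat u-u_\star))[\hat u-u_\star]\,ds=\int_0^1\int (\hat u-u_\star)\sG_{\theta_\star,u_s}\,dx\,ds$. Each $u_s$ lies in $W^{\beta,2}(\Omega,L')$, so the $\|\sG\|_\infty\le c_0$ bound applies and the difference is $\lesssim\|\hat u-u_\star\|_{L_2}$. A cruder alternative is the $W^{\tau,2}$ bound obtained directly as in Step 1. Either way,
\[
\|\hat\theta^{\rm PI}-\theta_\star\|_2=O_p\!\left(n^{-(\beta-\tau)/(2\beta+d)}(\log n)^{a}\right).
\]
Taking the conservative $W^{\tau,2}$ version yields the stated exponent $(\beta-\tau)/(2\beta+d)$. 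The bound is then $o_p(r_n^{-1/2})$ for any $r_n$ in the stated range, taking $a(\beta,d,\tau)$ to absorb the log factors from Theorem~\ref{thm:deep_sobolev}. Note that the lemma's $r_n$ enters as $r_n^{-1/2}$, so this bound is stronger than needed.

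\emph{Main obstacle.} The delicate step is Step 1's uniform control on the good event. The growth functions $L_0$ and $c(\theta)$ must be handled via $L_\infty$ bounds on $K_1\hat u$, which is where the embedding $\beta-\tau>d/2$ and the $W^{\beta,2}$ bound on $\hat u$ from Theorem~\ref{thm:deep_sobolev} are essential. Justifying the first-order condition also needs care; if $\Theta$ has a boundary issue, the convexity of $\Theta$ together with $\theta_\star$ being interior handles it.
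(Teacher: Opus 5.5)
Your proposal is correct. Step 1 matches the paper: you control $\sup_{\theta}|\mathcal{R}(\theta,\hat u)-\mathcal{R}(\theta,u_\star)|$ at order $\epsilon_n:=\|\hat u-u_\star\|_{W^{\tau,2}}$ and then use the well-separated minimum to get consistency.

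Step 2, however, takes a genuinely different route. The paper stays within M-estimation. It uses only the Hessian bound at $u=u_\star$, which gives quadratic growth $\mathcal{R}(\theta,u_\star)\gtrsim\|\theta-\theta_\star\|_2^2$ near $\theta_\star$. It then argues that $\|\hat\theta^{\rm PI}-\theta_\star\|_2^2\lesssim\mathcal{R}(\hat\theta^{\rm PI},u_\star)\le 2\sup_\theta|\mathcal{R}(\theta,\hat u)-\mathcal{R}(\theta,u_\star)|=O_p(\epsilon_n)$. This yields only $O_p(\epsilon_n^{1/2})$, which is exactly why the lemma is phrased with $r_n^{-1/2}$.

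You instead treat $\hat\theta^{\rm PI}$ as a Z-estimator. The interior first-order condition, combined with the integrated Hessian (whose smallest eigenvalue is controlled by the $u$-uniform part of Assumption~\ref{assm:curvature_pde_risk}(iii)), gives a \emph{linear} bound $\lambda_-\|\hat\theta^{\rm PI}-\theta_\star\|_2\le\|\nabla_\theta\mathcal{R}(\theta_\star,\hat u)-\nabla_\theta\mathcal{R}(\theta_\star,u_\star)\|_2$.

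The price is that you need $\hat u\to u_\star$ in $W^{2\tau,2}$ (fine, since $2\tau<\beta$) and interiority of $\hat\theta^{\rm PI}$. The paper uses only derivatives up to order $\tau$ and no first-order condition.

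The payoff is a strictly stronger conclusion. The direct bound gives $O_p(\epsilon_n)$, since the term multiplied by the residual $r_{\theta_\star,u_\star}\equiv0$ drops out. Going through the representer of Proposition~\ref{prop:def_G}, with the uniform bound $\|\mathsf{G}_{\theta,u}\|_\infty\le c_0$, gives even $O_p(\|\hat u-u_\star\|_{L_2})$, i.e.\ the rate $n^{-\beta/(2\beta+d)}$ up to logarithms. That is the rate one expects for a plug-in estimator of a smooth functional. It would also permit a much smaller radius $\delta_n$ in $\Theta_n$ than the choice $\delta_n=r_n^{-1/2}$ suggested in Lemma~\ref{lem:min_interior}.

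One small remark: convexity of $\Theta$ plays no role in your interiority argument. The facts $\theta_\star\in\operatorname{int}(\Theta)$ and $B(\theta_\star,\delta_1)\subset\Theta$ already suffice.
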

The above lemma guarantees that the plug-in estimator $\hat \theta^{\rm PI}$ consistently estimates the target parameter $\theta_\star$. 
Its convergence rate is at least of nonparametric order and, more precisely, is driven by the estimation rate of the $\tau$-th derivative of $u_\star$, since the PDE constraint linking $u$ and $\theta$ (Equation \eqref{eq:pde_model_2}) involves differential operators of order at most $\tau$. 
Notably, this rate essentially coincides with the posterior contraction rates established in \cite[Theorem 3.1]{sun2024estimation}. 
However, as elaborated in Section~\ref{sec:method}, we do not require rate optimality for $\hat \theta^{\rm PI}$ in the present analysis. 
Our objective is merely to construct a shrinking neighborhood around $\theta_\star$ to facilitate the subsequent debiasing step. In this regard, the (potentially slow) nonparametric rate is sufficient for our purposes. 

We now turn to the debiasing step. It is evident from the closed-form expression of the representer $\sG_{\theta, u}$ (Proposition \ref{prop:def_G}) that it depends on derivatives of $x \mapsto \Psi_\theta(x, K_1(u)(x))$ up to order $\tau$, and consequently involves derivatives of $u$ up to order $2\tau$. 
% Recall from Section~\ref{sec:method} that constructing the debiased estimator requires identifying the representer $\sG_{\theta,u}$ of the G\^ateaux derivative of $\nabla_\theta \cR(\theta,u)$. 
% This representer plays a central role, as it is used explicitly in forming the debiased score function $\sS_n^{\rm db}(\theta,u)$. 
% From the closed-form expression of $\sG_{\theta,u}$ (Proposition \ref{prop:def_G}), it is evident that $\sG_{\theta,u}$ depends on derivatives of $x \mapsto \Psi_\theta(x, K_1(u)(x))$ and, consequently, involves derivatives of $u$ up to order $2\tau$.  
This explains the requirement $\beta - 2\tau > d$ in our analysis, which ensures that these higher-order derivatives can be estimated consistently,  thereby allowing consistent estimation of the representer. As discussed in Remark~\ref{rem:smoothness_tau}, if instead we assume  $u_\star \in C^{\beta}(\Omega)$, then the weaker condition $\beta - 2\tau > d/2$ would suffice. 
This corresponds to a standard first-order condition in double/debiased machine learning (DML) \citep{chernozhukov2018double,chernozhukov2022locally,foster2023orthogonal}, ensuring that the $2\tau$-th derivative of $u$ belongs to a Donsker class.
% \DM{Do we need a bit more discussion?}
The following result is the first key result of the paper, which states that under the above-mentioned assumption, the debiased estimator $\htdb$ is indeed $\sqrt{n}$-CAN. The proof is given in Section \ref{appn}.

\begin{theorem}
    \label{thm:main_freq}
    Let $\htdb$ be the debiased estimator, obtained via Algorithm \ref{alg:two_step_freq}. Then under Assumption \ref{assm:dgp}-\ref{assm:w} and $\Theta$ is a compact convex subset of $\reals^q$, the estimator $\htdb$ satisfies: 
    $$
    \sqrt{n}(\htdb - \theta_\star) \overset{\mathscr{L}}{\implies} \cN(0, \sigma^2_\eps M^{-1}\Sigma M^{-1}), \qquad  \mbox{ where } $$
    $$ M = \nabla^2_\theta \cR(\theta_\star, u_\star), \;\;\mbox{ and }\;\; \Sigma = \bbE[\sG_{\theta_\star, u_\star}(X)\sG_{\theta_\star, u_\star}(X)^\top]  \,.
    $$

\end{theorem}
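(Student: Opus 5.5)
The argument is a standard Z-estimator expansion around the approximately orthogonal score $\sS_n^{\rm db}(\theta,\hat u)$. Throughout, we work on the event $\mathcal E_n$ on which the conclusions of Theorem~\ref{thm:deep_sobolev} hold and $\theta_\star\in\Theta_n$. By Theorem~\ref{thm:deep_sobolev} and Lemma~\ref{lem:consistency_PI}, $\bbP(\mathcal E_n)\to1$. On $\mathcal E_n$, $\|\hat u\|_{W^{\beta,2}}\le 8L+K_1$. Since $\beta-2\tau>d$, Sobolev embedding combined with interpolation between the $L_2$ rate and the $W^{\beta,2}$ bound gives $\|\hat u-u_\star\|_{W^{2\tau,2}}\to0$ and $\|\hat u-u_\star\|_{C^{\tau}}\to0$ with explicit polynomial rates. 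In particular, $\hat u$ lies in the curvature neighborhood of Assumption~\ref{assm:curvature_pde_risk}(iii).

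\textbf{Step 1 (expansion at $\theta_\star$).} Write
\[
\sS_n^{\rm db}(\theta_\star,\hat u)=\frac1n\sum_i\eps_i\sG_{\theta_\star,u_\star}(X_i)+T_1+T_2+T_3 .
\]
The three remainder terms are as follows.
\begin{itemize}
\item $T_1=\frac1n\sum_i\eps_i(\sG_{\theta_\star,\hat u}-\sG_{\theta_\star,u_\star})(X_i)$.
\item $T_2=(P_n-P)[(u_\star-\hat u)\sG_{\theta_\star,\hat u}]$.
\item $T_3=\nabla_\theta\cR(\theta_\star,\hat u)+\int(u_\star-\hat u)\sG_{\theta_\star,\hat u}$.
\end{itemize}
The leading term is a centered i.i.d.\ average with covariance $\sigma_\eps^2\Sigma$, so the CLT gives $\sqrt n\,\cdot\to\cN(0,\sigma_\eps^2\Sigma)$. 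Since $\sG_{\theta_\star,u_\star}$ is bounded by Proposition~\ref{prop:def_G}(1), the CLT applies.

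For $T_3$, we use $\nabla_\theta\cR(\theta_\star,u_\star)=0$, because the residual vanishes at $(\theta_\star,u_\star)$. Taylor-expanding $t\mapsto\nabla_\theta\cR(\theta_\star,u_\star+t(\hat u-u_\star))$ and applying Proposition~\ref{prop:def_G} gives
\[
T_3=\int_0^1\!\int(\hat u-u_\star)\bigl(\sG_{\theta_\star,u_t}-\sG_{\theta_\star,\hat u}\bigr)\,\rmd x\,\rmd t,
\]
where $u_t=u_\star+t(\hat u-u_\star)$. By \eqref{stab:G:2}, this is bounded by $c_1\|\hat u-u_\star\|_{W^{\tau,2}}^2$. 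Theorem~\ref{thm:deep_sobolev} makes this of order $n^{-2(\beta-\tau)/(2\beta+d)}$ up to logs, which is $o(n^{-1/2})$ because $\beta>2\tau+d$ implies $2(\beta-\tau)>\beta+d/2$.

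For $T_1$ and $T_2$, condition on the data used to fit $\hat u$ is not available, since there is no sample splitting. I would therefore control them by empirical-process bounds over the class
\[
\mathcal G_n=\{(u_\star-u)\sG_{\theta,u},\ \sG_{\theta,u}-\sG_{\theta_\star,u_\star}:\ u\in\tilde\cF_{\rm NN},\ \|u\|_{W^{\beta,2}}\le 8L+K_1,\ \|u-u_\star\|_{L_2}\le r_n,\ \theta\in\Theta_n\}.
\]
The key facts are these. By \eqref{stab:G:3} and the $L_\infty$ convergence of $\hat u$, the envelope of these functions shrinks. Their entropy is that of a $W^{\beta-2\tau,2}$-ball, or alternatively of the neural class with $W\asymp (n/\log n)^{d/(2\beta+d)}$ parameters. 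Since $\beta-2\tau>d$, that ball is Donsker, so the local modulus of continuity of the empirical process gives $\sqrt n(T_1+T_2)=o_p(1)$. Alternatively, one can use the parametric entropy $W\log n$ of the network class, which yields a bound of order $\sqrt{W\log n/n}\cdot r_n^{\alpha_*}$ that is $o(n^{-1/2})$ only after combining with the Sobolev-ball argument. I expect this uniform control, without sample splitting and with $\hat u$ data-dependent, to be the main obstacle. It is exactly where $\beta-2\tau>d$ enters, through the Hölder exponent $\alpha_*$ in \eqref{stab:G:3} and the Donsker property of $W^{\beta-2\tau,2}$ balls.

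\textbf{Step 2 (Jacobian and the minimizer).} For $\theta\in\Theta_n$, decompose
\[
\sS_n^{\rm db}(\theta,\hat u)-\sS_n^{\rm db}(\theta_\star,\hat u)=\bigl[\nabla_\theta\cR(\theta,\hat u)-\nabla_\theta\cR(\theta_\star,\hat u)\bigr]+P_n[(Y-\hat u)(\sG_{\theta,\hat u}-\sG_{\theta_\star,\hat u})].
\]
The first bracket equals $(M+o_p(1))(\theta-\theta_\star)$, using the continuity of $\nabla^2_\theta\cR$ in $(\theta,u)$ guaranteed by Assumption~\ref{assm:psi_smoothness} together with $\delta_n\to0$. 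For the second term, differentiate in $\theta$: $\sG_{\theta,u}$ is $C^1$ in $\theta$ by the third-order smoothness of $\Psi$, and $P_n[(Y-\hat u)\nabla_\theta\sG]=o_p(1)$ by the same empirical-process bound together with $\|\hat u-u_\star\|_\infty\to0$. Hence, uniformly over $\Theta_n$,
\[
\sS_n^{\rm db}(\theta,\hat u)=\sS_n^{\rm db}(\theta_\star,\hat u)+(M+o_p(1))(\theta-\theta_\star),
\]
and $M$ is invertible by Assumption~\ref{assm:curvature_pde_risk}(iii).

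The candidate root $\tilde\theta=\theta_\star-M^{-1}\sS_n^{\rm db}(\theta_\star,\hat u)$ satisfies $\|\tilde\theta-\theta_\star\|=O_p(n^{-1/2})\ll\delta_n$. Since also $\|\hat\theta^{\rm PI}-\theta_\star\|=o_p(\delta_n)$ by Lemma~\ref{lem:consistency_PI} and the choice of $\delta_n$, the point $\tilde\theta$ lies in $\Theta_n$ with probability tending to one. A Brouwer/degree argument then produces an exact zero in a small ball, so the minimum value of $\|\sS_n^{\rm db}(\cdot,\hat u)\|^2$ over $\Theta_n$ is $o_p(n^{-1})$. Therefore $\htdb$ satisfies $\|\sS_n^{\rm db}(\htdb,\hat u)\|=o_p(n^{-1/2})$, and inverting the linearization gives
\[
\sqrt n(\htdb-\theta_\star)=-M^{-1}\sqrt n\,\sS_n^{\rm db}(\theta_\star,\hat u)+o_p(1).
\]
Slutsky's lemma then yields the limit $\cN(0,\sigma_\eps^2M^{-1}\Sigma M^{-1})$.
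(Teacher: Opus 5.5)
Your proof is correct, but it is organized differently from the paper's.

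\textbf{Comparison with the paper's route.} The paper first shows, in Lemmas~\ref{lem:min_interior} and~\ref{lem:zero_root_db}, that the minimizer over $\Theta_n$ is interior and that the Jacobian of $\sS_n^{\rm db}(\cdot,\hat u)$ at $\htdb$ is nonsingular. Hence $\sS_n^{\rm db}(\htdb,\hat u)=0$ exactly, and the paper expands this identity at $(\htdb,\hat u)$. Its noise remainder $\frac1n\sum_i\eps_i(\sG_{\htdb,\hat u}-\sG_{\theta_\star,u_\star})(X_i)$ involves the random $\htdb$, so it needs two passes. A global Dudley bound first gives $O_p(n^{-1/2})$ and hence $\sqrt n$-consistency; a localized bound on $\|\theta-\theta_\star\|_2\le\sqrt{\log n/n}$ then gives $o_p(n^{-1/2})$.

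You instead freeze $\theta=\theta_\star$. Your nuisance remainders then involve only $u$-variation and are $o_p(n^{-1/2})$ in one step. All $\theta$-dependence goes into a uniform linearization over $\Theta_n$. Its key input, $\bbP_n[(Y-\hat u)\nabla_\theta\sG_{\theta,\hat u}]=o_p(1)$ uniformly in $\theta$, is essentially what Lemma~\ref{lem:zero_root_db} proves. This lets you replace the interior-minimizer and first-order-condition argument by a near-root argument.

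Your bias bound $c_1\|\hat u-u_\star\|_{W^{\tau,2}}^2$ via \eqref{stab:G:2} is the same bound the paper uses in its Bayesian proof. It is sharper than the paper's $T_{142}$ bound via \eqref{stab:G:3}, which needs $\beta>2\tau+d$; yours needs only $\beta>2\tau+d/2$. Combined with your $W^{\beta-2\tau,2}$-ball entropy, your route seems to require only $\beta-2\tau>d/2$ throughout. So your remark that this step is where $\beta-2\tau>d$ enters is not really accurate for your own argument.

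\textbf{Two small remarks.} First, the Brouwer step is unnecessary. Write $\sS_n^{\rm db}(\theta,\hat u)=\sS_n^{\rm db}(\theta_\star,\hat u)+(M+A_n(\theta))(\theta-\theta_\star)$ with $\sup_{\Theta_n}\|A_n\|_{\rm op}=o_p(1)$. Then $\sS_n^{\rm db}(\tilde\theta,\hat u)=-A_n(\tilde\theta)M^{-1}\sS_n^{\rm db}(\theta_\star,\hat u)=o_p(n^{-1/2})$. Since $\htdb$ minimizes over a set containing $\tilde\theta$, it inherits $\|\sS_n^{\rm db}(\htdb,\hat u)\|_2=o_p(n^{-1/2})$, which is all you need.

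Second, $\|\hat\theta^{\rm PI}-\theta_\star\|_2=o_p(\delta_n)$ does not follow from Lemma~\ref{lem:consistency_PI} for every admissible $\delta_n$. That lemma gives only a slow rate, and the algorithm only requires $\bbP(\theta_\star\in\Theta_n)\to1$. You can recover it from your own linearization. Apply it on the segment between $\theta_\star$ and $\hat\theta^{\rm PI}$, which lies in $\Theta_n$ with probability tending to one. Use $\nabla_\theta\cR(\hat\theta^{\rm PI},\hat u)=0$, valid once $\hat\theta^{\rm PI}\in\operatorname{int}(\Theta)$. This gives $\|\hat\theta^{\rm PI}-\theta_\star\|_2=O_p(n^{-1/2}+\|\hat u-u_\star\|_2)$, which is $o_p(\delta_n)$ under the paper's lower bound on $\delta_n$.
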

The limiting variance of $\htdb$ admits a sandwich form, involving both 
the Hessian of $\cR(\theta,u)$ with respect to $\theta$ and the variance 
of $\sG_{\theta_\star,u_\star}$.  
Such a variance structure is common in nuisance-adjusted estimation 
problems and in pseudo-likelihood-based procedures (e.g., see Theorems 3.1 and 3.3. of \cite{chernozhukov2018double} for a general result). 
However, we cannot immediately claim that this variance is semiparametrically efficient, even under Gaussian noise. 
To see this, note that one may alternatively consider a direct estimator 
based on the forward map operator $\Gamma:\Theta \to W^{\beta,2}(\Omega)$, which maps $\theta \to u(\theta)$, the solution of the PDE Equation \eqref{eq:pde_model_2}  \citep{nickl:etal:2020}. 
% by mapping
% $$
% \theta \longmapsto \text{the solution } u \text{ of } 
% \quad K_0(u) + \Psi_{\theta} \circ K_1(u) = 0, 
% \quad \mathcal{B}(u)=0,
% $$
% that is, the PDE solution corresponding to $\theta$. 
%This operator $\Gamma$ is referred to as the \emph{forward map} in the inverse problems literature (e.g., see \cite{nickl:etal:2020}). 
Under Gaussian noise, one may then define
$$
\hat\theta 
= \argmin_{\theta \in \Theta} 
\frac{1}{n}\sum_{i=1}^n \bigl(Y_i - \Gamma(\theta)(X_i)\bigr)^2,
$$
which coincides with the maximum likelihood estimator and, under suitable 
smoothness conditions on $\Gamma$, would be expected to achieve 
semiparametric efficiency. Nevertheless, this approach has two major limitations:  (i) the estimator is computationally intractable in practice, as it requires repeatedly solving a nonlinear PDE for each $\theta$, and (ii) the regularity conditions required on the forward map $\Gamma$ to 
establish asymptotic efficiency may be substantially stronger than those 
imposed in our analysis of $\htdb$. Consequently, it is not immediately clear whether the limiting variance 
of $\htdb$ is directly comparable to that of the likelihood-based estimator. We leave establishing such a comparison as an interesting direction for future work.
\\\\
\noindent
{\bf Consistent estimation of the limiting variance.}
To draw inference on $\theta_\star$, we need to construct a consistent estimator of the limiting variance of $\htdb$. From the smoothness of $\nabla^2 \cR(\theta, u)$ (see Assumption \ref{assm:psi_smoothness}), one may use $\hat M = \nabla^2_\theta \cR(\htdb, \hat u)$ as a consistent estimator of $M$. On the other hand, from the boundedness of $\sG_{\theta, u}$ on the compact domain (Proposition \ref{prop:def_G}), one may use the sample analog of $\Sigma$ as its estimator, i.e. 
$$
\hat \Sigma = \frac1n \sum_{i = 1}^n \sG_{\htdb, \hat u}(X_i)\sG_{\htdb, \hat u}(X_i)^\top. $$ % - \left(\frac1n \sum_{i = 1}^n \sG_{\htdb, \hat u}(X_i)\right)\left(\frac1n \sum_{i = 1}^n \sG_{\htdb, \hat u}(X_i)\right)^\top$$
%\Yves{I removed the centering as it is not needed. Unless it is shown to help in finite sample.}
Under the smoothness properties of $\sG_{\theta, u}$, as established in Proposition \ref{prop:def_G}, one can easily show that $\sigma_\epsilon^2\hat \Sigma$ is a consistent estimator of $\textsf{var}(\epsilon G_{\theta_\star,u_\star}(X))$. Finally, one may use the mean of squared residuals to obtain a consistent estimator of $\sigma^2_\eps$: 
$$
\hat \sigma^2_\eps = \frac1n\sum_i (Y_i - \hat u(X_i))^2 \,.
$$
Combining them, $\hat \sigma_\eps^2 \hat M^{-1} \hat \Sigma \hat M^{-1}$ is a consistent estimator of the limiting variance of $\htdb$. 

\subsection{Bayesian theoretical results}
\label{sec:theory_bayesian}
As with our frequentist estimator, we show here that by plug-in of the first stage estimator $\hat u$, the debiased posterior distribution $\Pi_{{\rm db}}^{(\theta\vert u)}(\cdot\vert\hat u,\D_n)$ contracts around $\theta_\star$ at the $\sqrt{n}$-rate under similar regularity assumptions. We assume that the prior $\pi_0$ is strongly log-concave. Specifically:
\begin{assumption}\label{assump:prior:pi0}
\begin{enumerate} 
\item $\log\pi_0$ is of class $C^1$ on $\rset^q$, and there exist constants $0<\underline{v}\leq\bar v$ such that for all $\theta,\theta'\in\rset^q$,
\begin{align*}
-\frac{\bar v\log(n)}{2}\|\theta-\theta'\|_2^2 & \leq \log\pi_0(\theta') -\log\pi_0(\theta) -\pscal{\nabla\log\pi_0(\theta)}{\theta'-\theta} \\
& \qquad \qquad \qquad \qquad \qquad \qquad \leq -\frac{\underline{v}\log(n)}{2}\|\theta' - \theta\|_2^2.
\end{align*}
Furthermore, we assume that $\underline{v}\log(n)\geq 1$, and $\|\nabla\log\pi_0(\theta_\star)\|_2\leq \underline{v}\log(n)\sqrt{q}$.
\item The matrix $V_n\in\rset^{q\times q}$ is symmetric and positive definite, and $\lambda_{\max}(V_n)$ is bounded in $n$ from above, and $\lambda_{\min}(V_n)$ is bounded in $n$ from below.
\end{enumerate}
\end{assumption}
\begin{remark} 
The main example that we consider is $\pi_0$ taken as the density of $\textbf{N}_q(0,(1/(v_0\log n))I_q)$ for some $v_0>0$, which satisfies Assumption \ref{assump:prior:pi0} with $\underline{v}=\bar v= v_0$. The shrinking variance $1/(v_0\log(n))$ plays a similar role as the shrinking neighborhood $\Theta_n$ used in the frequentist approach and focuses the posterior mass on compact neighborhoods of $\theta_\star$. In fact, we show in Lemma S.6.3 of the supplementary document that under Assumption \ref{assump:prior:pi0}, the debiased posterior distribution concentrates on balls of radius $c\sqrt{q}$ around $\theta_\star$ for some constant $c$. This result then means that to establish the desired concentration of the debiased posterior, we only need to assume that $\cR(\cdot,u_\star)$ is strongly convex on such balls. This is the purpose of the next set of conditions.
\end{remark}
For $b>0$, we denote $\mathbf{B}_b\eqdef\{\theta\in\rset^q:\;\|\theta-\theta_\star\|_2\leq b\}$. We set
\begin{equation}\label{def:M} 
M_{\theta}\eqdef (\nabla^{(2)}_\theta\cR)(\theta,u_\star), \;\;\;\mbox{ and }\;\;\; M\eqdef M_{\theta_\star}.\end{equation}
For some constant $b>0$, we define
\[\lambda_-(b) \eqdef \inf\left\{ z^\top\left(M_{\theta}^\top V_n^{-1} M_{\theta}\right) z,\;\;\theta\in\mathbf{B}_b,\;\; z\in\rset^q,\;\|z\|_2 =1\right\},\]
and 
\[\lambda_+(b) \eqdef \sup\left\{ z^\top\left(M_{\theta}^\top V_n^{-1} M_{\theta}\right) z,\;\;\theta\in\mathbf{B}_b,\;\; z\in\rset^q,\;\|z\|_2 =1\right\}.\]
We further define
\begin{equation}\label{def:taun}\tau_n  \eqdef  \frac{c}{\sqrt{\lambda_-(b)}}\sqrt{\frac{\lambda_+(b)}{\lambda_-(b)}} \sqrt{\frac{\lambda_{\max}(V_n^{-1})q\log(n)}{n}},
\end{equation}
for some positive constants $b,c$. 
The following theorem is our second key theorem, which establishes that the debiased posterior distribution contracts at the rate $\tau_n$, and the Bernstein-von Mises theorem holds. The proof can be found in Section S.6 of the supplementary document. 

\begin{theorem}\label{thm:cond:db:rate}
Suppose that Assumption \ref{assm:dgp}-\ref{assm:function_class}, Assumption \ref{assm:psi_smoothness}-\ref{assump:prior:pi0} hold. We can find constants $b,c>0$ in (\ref{def:taun}) such that for all $n$ large enough, if $\lambda_-(b)>0$, it holds
\[\PE\left[\Pi_{\rm db}^{(\theta\vert u)}\left(\{\theta\in\rset^q:\;\|\theta-\theta_\star\|_2 > \tau_n \}\vert \hat u, \D_n\right)\right] \leq  n^{-C}, \]
for some constant $C>0$. Furthermore, let $\Psi_n$ be the probability distribution of $\sqrt{n}(\vartheta-\theta_\star)$, where $\vartheta\sim \Pi_{\rm db}^{(\theta\vert u)}(\cdot\vert\hat u, \D_n)$, then
\[\lim_{n\to \infty}\;\PE\left[\| \Psi_n - \mathbf{Q}_n\|_\tv\right]  = 0\, ,\]
where $\mathbf{Q}_n$ denotes the Gaussian measure $\textbf{N}_q(\Delta_n, (MV_n^{-1} M)^{-1})$ on $\rset^q$, with 
\[\Delta_n \eqdef -\frac{1}{\sqrt{n}}\sum_{i=1}^n \epsilon_i (MV_n^{-1} M)^{-1} M V_n^{-1} \mathsf{G}_{\theta_\star,u_\star}(X_i).\]
\end{theorem}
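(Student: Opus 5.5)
The plan is to work with the log-density of the debiased posterior, $\ell_n(\theta) \eqdef -\frac{n}{2}\sS_n^{\rm db}(\theta,\hat u)^\top V_n^{-1}\sS_n^{\rm db}(\theta,\hat u) + \log\pi_0(\theta)$. We linearize $\sS_n^{\rm db}(\theta,\hat u)$ around $\theta_\star$ and reduce everything to a quadratic form. We write
\[
\sS_n^{\rm db}(\theta,\hat u) = \sS_n^{\rm db}(\theta_\star,\hat u) + M(\theta-\theta_\star) + R_n(\theta),
\]
where $R_n(\theta)$ collects two pieces. The first is $(\nabla^2_\theta\cR(\bar\theta,\hat u) - M)(\theta-\theta_\star)$. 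The second is the $\theta$-variation of the correction term $\frac1n\sum_i (Y_i-\hat u(X_i))(\sG_{\theta,\hat u}-\sG_{\theta_\star,\hat u})(X_i)$.

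\emph{Step 1: the score at $\theta_\star$.} We expand $\sS_n^{\rm db}(\theta_\star,\hat u)$ exactly as in the frequentist analysis (Theorem~\ref{thm:main_freq}). Using $\nabla_\theta\cR(\theta_\star,u_\star)=0$ and a second-order Gateaux expansion in $u$, we obtain
\[
\sS_n^{\rm db}(\theta_\star,\hat u) = \frac1n\sum_i \epsilon_i \sG_{\theta_\star,u_\star}(X_i) + o_p(n^{-1/2}).
\]
The remainder has three parts:
\begin{itemize}
\item a second-order term bounded by $\|\hat u-u_\star\|_{W^{\tau,2}}^2$;
\item an empirical-process term $(P_n-P)[(u_\star-\hat u)\sG_{\theta_\star,\hat u}]$;
\item the term $\frac1n\sum\epsilon_i(\sG_{\theta_\star,\hat u}-\sG_{\theta_\star,u_\star})(X_i)$.
\end{itemize}
These are controlled by Theorem~\ref{thm:deep_sobolev}, by \eqref{stab:G:2}--\eqref{stab:G:3}, and by the entropy of the network class. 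The condition $\beta-2\tau>d$ makes each remainder $o(n^{-1/2})$; in fact we want bounds holding with probability $1-n^{-C}$, which the same arguments give up to log factors. Sub-Gaussian concentration additionally gives $\|\sS_n^{\rm db}(\theta_\star,\hat u)\|_2\lesssim\sqrt{q\log n/n}$ with probability $1-n^{-C}$.

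\emph{Step 2: localization and contraction.} First we show the posterior puts mass $1-n^{-C}$ on $\mathbf{B}_b$ (Lemma S.6.3, from strong log-concavity of $\pi_0$ with curvature $\asymp\log n$). On $\mathbf{B}_b$ we bound $\|R_n(\theta)\|_2 \le c(\|\hat u-u_\star\|_{W^{2\tau,2}}+\|\theta-\theta_\star\|_2 + \text{small})\|\theta-\theta_\star\|_2$. For this we use the Lipschitz bound on $\nabla^2_\theta\cR$ in $(\theta,u)$ (Assumption~\ref{assm:psi_smoothness}) together with \eqref{stab:G:3} and $\|\frac1n\sum(Y_i-\hat u(X_i))\|$ bounds. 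Here the curvature is measured through $M_\theta$ rather than $M$, which is why $\lambda_\pm(b)$ enters. Then the quadratic $-\frac n2\|V_n^{-1/2}\sS_n^{\rm db}\|^2$ is bounded above by
\[
-\tfrac n4\lambda_-(b)\|\theta-\theta_\star\|^2 + c\, n\lambda_{\max}(V_n^{-1})\|\sS_n^{\rm db}(\theta_\star,\hat u)\|^2,
\]
and bounded below near $\theta_\star$ by its counterpart using $\lambda_+(b)$. A standard ratio argument then yields the rate $\tau_n$: we compare the numerator mass outside $\{\|\theta-\theta_\star\|>\tau_n\}$ against the denominator restricted to a ball of radius $\asymp\tau_n\sqrt{\lambda_-/\lambda_+}$. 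The prior terms contribute at most $\log n\,\tau_n^2$ and $\|\nabla\log\pi_0(\theta_\star)\|\tau_n$, which are absorbed.

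\emph{Step 3: BvM.} We change variables to $h=\sqrt n(\theta-\theta_\star)$. On $\|h\|\le\sqrt n\tau_n$ we show that
\[
\ell_n(\theta_\star+h/\sqrt n) - \ell_n(\theta_\star) = -\tfrac12(h-\Delta_n)^\top MV_n^{-1}M(h-\Delta_n) + C_n + o_p(1)
\]
uniformly. This follows from Step 1, with $\Delta_n$ arising from completing the square with the leading linear term, and from $\sup\sqrt n\|R_n\| = o_p(1)$ because $\sqrt n\,\tau_n\cdot(\tau_n+\|\hat u-u_\star\|_{W^{2\tau,2}}) \to 0$ up to logs. The prior is nearly flat on this scale: $\log\pi_0$ varies by $O(\log n\cdot\tau_n^2+\sqrt{\log n}\,\tau_n)=o(1)$. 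The standard lemma bounding the TV distance between normalized densities by the sup of log-ratio deviations on a set of mass $1-o(1)$, combined with the contraction from Step 2, gives $\PE\|\Psi_n-\mathbf Q_n\|_\tv\to0$.

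The main obstacle is Step 1 and the uniform control of the $\theta$-dependence of the correction term in $R_n$. This requires high-probability (not just $o_p$) empirical-process bounds for $\epsilon_i(\sG_{\theta,\hat u}-\sG_{\theta,u_\star})(X_i)$ uniformly in $\theta$, with $\hat u$ data-dependent. I would handle this through a chaining bound over the RePU class, whose metric entropy is $\lesssim W\log n$, combined with the Hölder-type estimate \eqref{stab:G:3}. The exponent $\alpha_*$ there is exactly where $\beta-2\tau>d$ is needed.
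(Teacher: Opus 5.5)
Your plan is correct and follows the same architecture as the paper's proof. Both arguments localize to $\mathbf{B}_b$ using the $\log n$-strongly log-concave prior, lower-bound the normalizing constant, and obtain contraction at rate $\tau_n$ through $\lambda_\pm(b)$. Both then prove BvM by showing the log-ratio to the Gaussian is uniformly nearly constant on $\{\|h\|_2\le\sqrt n\,\tau_n\}$, followed by a Jensen-type total-variation bound.

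The first difference is where you expand. The paper does not expand around $(\theta_\star,\hat u)$. It uses the exact identity $\mathsf{S}_n^{\rm db}(\theta,u)=(\nabla_\theta\mathcal{R})(\theta,u_\star)+T_n^{(1)}(\theta,u)+T_n^{(2)}(\theta,u)$ of Lemma~\ref{prop:Sn:decomp}. Consequently:
\begin{itemize}
\item every Taylor expansion in $\theta$ is of the deterministic map $\theta\mapsto(\nabla_\theta\mathcal{R})(\theta,u_\star)$, with Hessian $M_\theta$ (which is why $\lambda_\pm(b)$ are defined at $u_\star$);
\item $T_n^{(2)}$ is deterministic given $u$ and is $\lesssim r_n^2$ on $\mathcal{U}_n$ by \eqref{stab:G:2};
\item all randomness and $\hat u$-dependence sit in the single centered increment $T_n^{(1)}(\theta,u)-T_n^{(1)}(\theta_\star,u_\star)$.
\end{itemize}
That increment is bounded uniformly by $C_1 b\sqrt{A\log n/n}$ over $\mathbf{B}_b(\theta_\star)\times\mathcal{U}_n$ (for contraction), and by $C_1 b_n\sqrt{A\log n/n}$ with $b_n\to0$ over $\mathbf{B}_{\tau_n}(\theta_\star)\times\mathcal{U}_n$ (for BvM). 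Your Step~1 plus a random Hessian $\nabla^2_\theta\mathcal{R}(\cdot,\hat u)$ is equivalent but splits this into more pieces.

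Be careful with one of those pieces. The $\theta$-variation of the correction term cannot be controlled by factoring out $\frac1n\sum_i(Y_i-\hat u(X_i))$, because $\frac1n\sum_i|\epsilon_i|$ is of order one. It must be treated as a multiplier empirical process, and an additive bound of the type above is all that is required; the multiplicative ``small$\cdot\|\theta-\theta_\star\|_2$'' form is not needed.

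The second difference is how the high-probability empirical-process bounds are obtained. The paper never uses the network structure. Since $\hat u\in\mathcal{U}_n$ with probability $1-n^{-C}$, it covers the deterministic set $\mathcal{U}_n$ in $L_\infty$ via Birman--Solomyak ($\log N\lesssim\delta^{-d/\beta}$) and transfers this to $\mathsf{G}_{\theta,u}$ through \eqref{stab:G:3}. It then applies van de Geer's Bernstein-norm bracketing inequality (Lemma~\ref{lem:tailbbound:dbScore}), whose entropy integral is finite once $\beta\alpha_*>d/2$.

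Your RePU-class route does close: the local modulus is of order $b_n\sqrt{W\log n}\asymp n^{-(\beta-2\tau-d)/(2\beta+d)}$ up to logarithms. However, the $W\log n$ entropy is not available off the shelf in the form you need. Applying \eqref{stab:G:3} requires a sup-norm cover of $u$ on $\Omega$, since $\mathsf{G}_{\theta,u}$ involves derivatives of $u$ up to order $2\tau$. The pseudo-dimension bound behind Theorem~\ref{thm:deep_sobolev} only controls empirical covers of function values, and the network weights are unbounded. Covering $\mathcal{U}_n$ directly avoids this and makes the argument independent of the architecture.
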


An important practical implication of the theorem is that the prior $\pi_0$, as well as the first stage estimator $\hat u$ have asymptotically vanishing impact on the posterior, which concentrates around $\theta_\star$ at the parametric rate (up to log terms). The theorem also implies that the $100(1-\alpha)\%$ posterior interval for the $j$-th component of $\theta$, say, is asymptotically equivalent to the confidence interval
\[\underbrace{\theta_{\star,j} + \frac{1}{\sqrt{n}}\Delta_{n,j}}_{\check\theta_j} \pm \frac{\sigma_j z_{1-\alpha/2}}{\sqrt{n}},\]
where $\sigma_j$ is the square root of the $j$-th diagonal of $(MV_n^{-1}M)^{-1}$, and $\Delta_{n,j}$ is the $j$-th component of $\Delta_n$, and $z_a$ is the $a$ quantile of the standard Gaussian distribution. However, these confidence intervals may not have good frequentist coverage. To guarantee that the Bayesian and the frequentist inference match requires taking 
\[V_n = \sigma_\epsilon^2 \bbE[\sG_{\theta_\star, u_\star}(X)\sG_{\theta_\star, u_\star}(X)^\top]. \]
With this particular choice, we readily see that the estimator $\check\theta_j$ introduced above is asymptotically equivalent to the debiased frequentist estimator (see (Equation \eqref{eq:exp_main_2})) and therefore, for $n$ large the Bayesian and frequentist inference fully match.

\section{Numerical Experiments}
\label{sec:simulation}
In this section, we present a simulation study to investigate the finite-sample performance of the estimators introduced in Section~\ref{sec:method}, with particular emphasis on illustrating the necessity of the proposed debiasing step for valid inference on the PDE parameter. We consider a one--dimensional elliptic diffusion equation: 
\[ -\nabla\cdot(a_\theta(x)\nabla u(x)) +\theta_3 u(x) = f(x), \;\;\; x\in (0,1),\] 
with boundary conditions $u(0) = u(1) = 0$, where the function $a_\theta$ is parametric of the form
\[a_\theta(x) = \exp\left(\theta_1 \cos(\pi x) + \theta_2\cos(2\pi x)\right).\]
The source function $f$ is assumed known, and taken in this example as $f(x) = 5.0 +2.5 \sin(3.0\pi x)$. This equation is a canonical model for a wide range of physical phenomena in which an unknown medium property must be recovered from indirect measurements. For instance, in thermal conductivity estimation, $u$ is the temperature field of a material with location-dependent thermal conductivity $a_\theta$. %The problem of recovering $a$ (that is, $\theta$) from noisy measurements of $u$ is also important in material science and geothermal prospecting. 
This PDE falls in our general setting with $\theta= (\theta_1,\theta_2,\theta_3)^\top$, 
\[ K_0 = 0, \;\; K_1 = (\mathbb{I},\partial_x, \partial_{xx})^\top,\;\; \mbox{ and }\;\; \Psi_\theta(x,(z_1,z_2,z_3)) = \theta_3 z_1 - a_\theta'(x) z_2 - a_\theta(x) z_3 - f(x), \]
which is infinitely differentiable with bounded derivatives of any compact set. Hence, Assumption \ref{assm:psi_smoothness} holds. 
%In contrast to the transport and advection examples, the map $\theta\mapsto a_\theta$ is nonlinear, so both the plug-in problem and the debiased score below are nonlinear in $\theta$. 

%and we derive the expression the representer of the Gateaux derivative of $u\mapsto \nabla_\theta\cR(\theta,u)$ from Proposition \ref{prop:def_G}: for $j=1,2$,
%We check that for $j=1,2$
%\[\frac{\partial}{\partial\theta_j} \cR(\theta,u) =-\int_0^1(a_{\theta,j}(x) u'(x))'r_\theta(x)\omega(x)\rmd x,\;\;\mbox{ and }\;\; \frac{\partial}{\partial\theta_3}\cR(\theta,u)  = \int_0^1u(x) r_\theta(x)\omega(x)\rmd x,\]
%where $a_{\theta,j}(x) \eqdef \partial a_\theta(x)/\partial \theta_j$, $j=1,2$, and $r_\theta(x) = \Psi_\theta(x,K_1 u(x)) = -(a_\theta(x)u'(x))' +\theta_3 u(x) - f(x)$, and we reading from Proposition \ref{prop:def_G}, the expression of the representer of the Gateaux derivative of $u\mapsto %\nabla_\theta\cR(\theta,u)$ are given by
%\[\mathsf{G}_{\theta,u,j}(x) = \frac{\rmd}{\rmd x}\left(\omega(x) \frac{\partial}{\partial {\theta_j}}\left(a_\theta'(x) r_\theta(x)\right)\right)  - \frac{\rmd^2}{\rmd x^2}\left(\omega(x) \frac{\partial}{\partial{\theta_j}}\left(a_\theta(x) r_\theta(x)\right)\right),\]
%where $r_\theta(x) =  -(a_\theta(x)u'(x))' - f(x)$.
%\DM{I think $\textcolor{red}{\theta_3 \ \omega(x) u(x)}$ is missing from the expression of $G_{\theta, u, 3}(x)$. This term is coming from 
$$
%K_{11}^\dagger\left(\omega \ \partial_{\theta_3} (\Psi_\theta \circ K_1(u)) \partial_{z_1}(\Psi_\theta \circ K_1(u)) \right)(x) = \theta_3 K_{11}^\dagger(\omega \ u )(x) = \theta_3 \omega(x) u(x) \,.
$$
%The last equality follows from the fact that $K_{11}$ is the identity operator. 
%}

\subsection{Data-generating process} The forward solution $u_\theta$ has no closed form and is approximated by the finite
element method (piecewise-linear elements on a uniform mesh of $4000$ elements, see e.g. \cite{brenner2008mathematical}). We set
the true parameter to $\theta_\star=(0.5,0.3,2.0)^\top$ and generate observations as follows:
\begin{enumerate}
  \item generate $X_i\stackrel{\text{i.i.d.}}{\sim}\mathrm{Unif}(0,1)$, $1\le i\le n$;
  \item set $Y_i=u_{\theta_\star}(X_i)+\varepsilon_i$, with
        $\varepsilon_i\stackrel{\text{i.i.d.}}{\sim}N(0,\sigma^2)$, $\sigma=0.02$.
\end{enumerate}
We report results for two sample sizes, $n\in\{500,\,5000\}$.

\subsection{Implementation of the first-stage estimators} 
Following Section~2.1 we define the PDE risk with the boundary-vanishing weight $\omega(x)=[x(1-x)]^2$,
\begin{equation}
  \mathcal{R}(\theta,u)
  \;=\;\tfrac12\int_0^1 \omega(x)\,r_\theta(x)^2\,\mathrm{d}x,
  \qquad r_\theta=\theta_3 u -\bigl(a_\theta u'\bigr)'-f .
  \label{eq:diff-risk}
\end{equation}
Throughout the example $u_\star$ is approximated by a fully connected neural network with scalar input, three hidden layers of width $64$, and scalar output, using the modified rectified power unit activation of order $k=5$, $\mathrm{mRePU}_k$, which is $C^{k-1}=C^4$ and hence guarantees continuity of $u$ through its fourth-order derivative --- required for the representer $G_{\theta,u}$. Derivatives are evaluated by automatic differentiation in single precision. The Sobolev penalty and the expectations defining $\mathcal{R}(\theta,u)$ are approximated by Monte Carlo collocation using $4000$ points drawn uniformly on $(0,1)$. To estimate $\widehat u$ we minimise the empirical squared loss augmented with a $W^{4,2}$ Sobolev penalty (on derivatives up to order four) and a Dirichlet boundary penalty enforcing $u(0)=u(1)=0$; the Sobolev weight is selected by cross-validation on the PDE residual separately for each sample size, giving $\lambda_{\mathrm{sob}}=10^{-11}\times (n/500)^{-10/11}$. The PINN estimator adds the profiled PDE penalty with weight $\lambda_{\mathrm{pde}}=0.1$. Networks are trained with Adam (learning rate $2\times10^{-3}$ for $\widehat u$, $10^{-3}$ for the PINN objective) with global-norm gradient clipping, for $10{,}000$ epochs, with early stopping (patience $1000$) for the PINN estimator. The plug-in estimator $\widehat\theta^{\mathrm{PI}}$ is obtained by solving this nonlinear least-squares problem with $u$ replaced by the first-stage estimator $\widehat u$, using a trust-region, initialised at $\theta = (0, 0, 0)^\top$.

\subsection{Construction of the debiased estimator} 
From Proposition~3.4, we worked out the expression of the representer
$G_{\theta,u}=(G_{\theta,u,1},G_{\theta,u,2},G_{\theta,u,3})^\top$ of the G\^ateaux derivative of
$u\mapsto\nabla_\theta\mathcal{R}(\theta,u)$ that we can write as follows. Set $b_j(x) \eqdef \cos(j\pi x)$, and $B_j[u](x)\eqdef  (b_j(x)a_\theta(x) u'(x))'$, $j=1,2$. Then for $j=1,2$,
\begin{equation}
  G_{\theta,u,j}(x) = \left(a_\theta(x) \left(\omega(x)B_j[u](x)\right)'\right)' - \theta_3\omega(x) B_j[u](x) - \left(b_j(x)a_\theta(x)\left(\omega(x)r_\theta(x)\right)'\right)',
  \label{eq:diff-G-12}
\end{equation}
and
\begin{equation}
  G_{\theta,u,3}(x) = - \left(a_\theta(x)\left(\omega(x)u(x)\right)'\right)' +\theta_3 \omega(x) u(x) + \omega(x) r_\theta(x).
  \label{eq:diff-G-3}
\end{equation}
Therefore, the empirical debiased score in (2.8) takes the form
\begin{equation}
  S_n^{\mathrm{db}}(\theta,\widehat u)
  \;=\;\nabla_\theta\mathcal{R}(\theta,\widehat u)
      \;+\;\frac1{n}\sum_{i=1}^n
        \bigl\{Y_i-\widehat u(X_i)\bigr\}\,G_{\theta,\widehat u}(X_i).
  \label{eq:diff-score}
\end{equation}
We then obtain $\widehat\theta^{\mathrm{db}}$ by minimising
$\|S_n^{\mathrm{db}}(\theta,\widehat u)\|_2^2$ over a neighbourhood
$\Theta_n=\{\theta:\|\theta-\widehat\theta^{\mathrm{PI}}\|_2\le\delta_n\}$ of the plug-in,
using a multi-start quasi-Newton search polished by a derivative-free step to guard against
shallow stationary points; here $\delta_n=3.0$.% TODO: reconcile delta_n with 4.1/4.2 (0.75/0.12).

For the Bayesian estimator we target the debiased posterior
with the scale matrix $V_n$ taken as 
$V_n=\widehat\sigma^2\,\tfrac1n\sum_i G_{\theta,\widehat u}(X_i)G_{\theta,\widehat u}(X_i)^\top$, and with the prior $\pi_0$ is taken as the Gaussian distribution $N(0,(1/\log(n))I_3)$. Sampling uses the adaptive random-walk Metropolis scheme of \cite{haarioetal01} with \cite{atchade:rosenthal05} scale adaptation, run for $60{,}000$ iterations with $30{,}000$ discarded as burn-in and thinning $10$.

\subsection{Monte Carlo design and results}
We repeat the experiment over $B=100$ independent Monte Carlo runs for each sample size. In every run we record the four estimators
$\widehat\theta^{\mathrm{PINN}}$, $\widehat\theta^{\mathrm{PI}}$,
$\widehat\theta^{\mathrm{db}}$, and the debiased posterior mean. 
The results are displayed on Figure~\ref{fig:Diff:1}-\ref{fig:Diff:2}. The biases and the coverages probabilities are also given in Table \ref{tab:ellip3-mc}-\ref{tab:ellip3-cov}. %display, for each coordinate, the boxplots of the four estimators, the running $95\%$
%credible/confidence-interval coverage, the debiased posterior density from a single run
%(with the point estimates and the truth overlaid), and the Q--Q plot of the standardised
%debiased estimator against $N(0,1)$. 
It is clear that both $\widehat\theta^{\mathrm{PI}}$
and $\widehat\theta^{\mathrm{PINN}}$ exhibit substantial bias in all coordinates, that is markedly reduced by the debiased estimator $\widehat\theta^{\mathrm{db}}$. The asymptotic normality of $\widehat\theta^{\mathrm{db}}$ is supported by the Q--Q plots, and its $95\%$ intervals attain approximately nominal coverage. The middle columns in Figure~\ref{fig:Diff:1}-\ref{fig:Diff:2} show the output from one MCMC run from the posterior distribution. The result is approximately Gaussian and close to the Bernstein-von Mises limit (in dashed line).

\begin{figure}[h!]
 \centering
        \includegraphics[width=\textwidth]{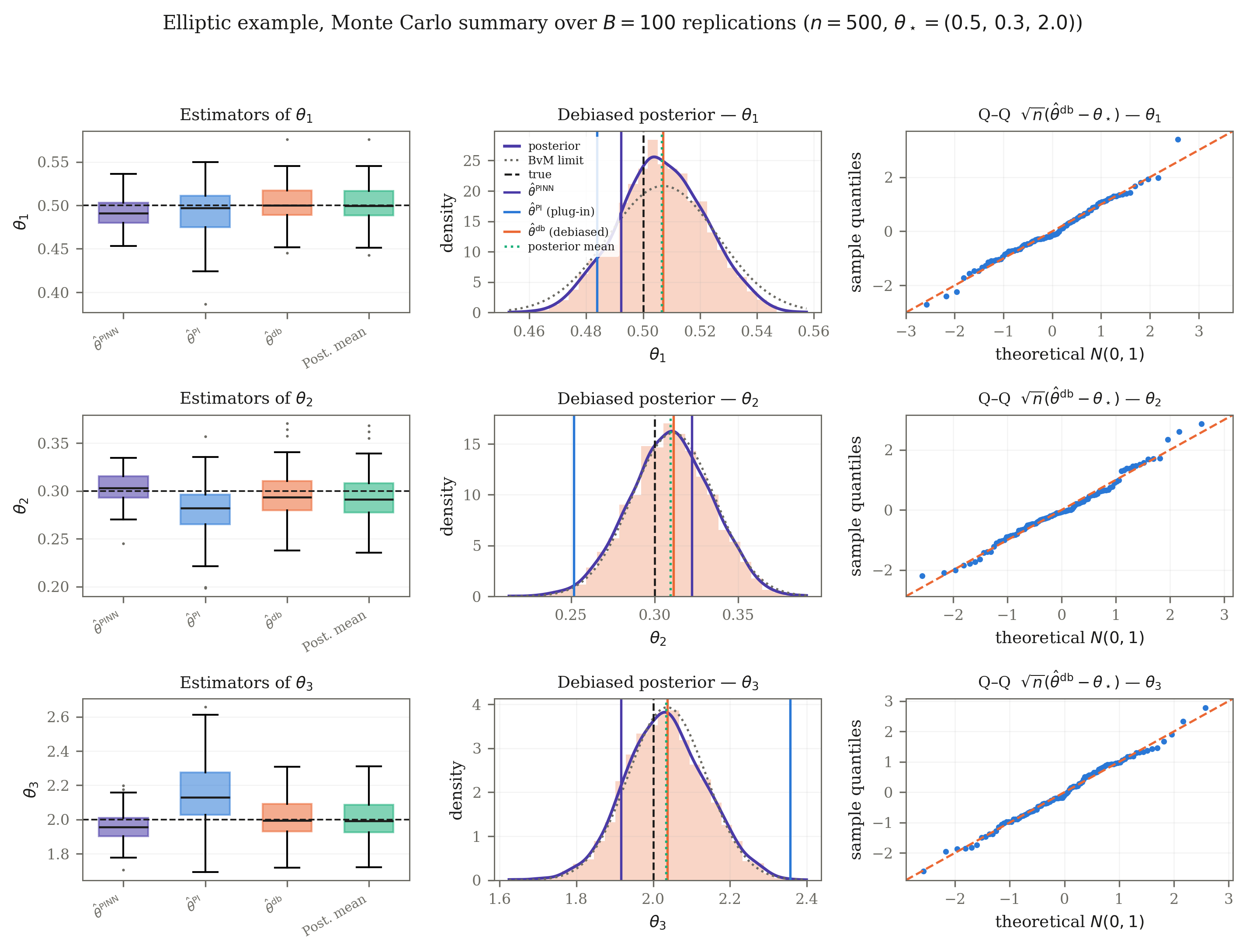}
        \caption{Diffusion example with $n=500$. Monte Carlo summary over $B=100$ replications
    ($\theta_\star=(0.5,0.3,2.0)$). Rows: $\theta_1$, $\theta_2$, $\theta_3$. Columns: boxplots of $\widehat\theta^{\mathrm{PINN}},\widehat\theta^{\mathrm{PI}},
    \widehat\theta^{\mathrm{db}}$ and the posterior mean; debiased posterior density from a single run with the BvM limit, the estimators and the true value overlaid; Q--Q plot of 
    $\sqrt{n}(\widehat\theta^{\mathrm{db}} -\theta_\star)$ against $N(0,1)$.}
        \label{fig:Diff:1}
\end{figure}

    \vspace{0.5cm}

\begin{figure}[h!]
        \centering
        \includegraphics[width=\textwidth]{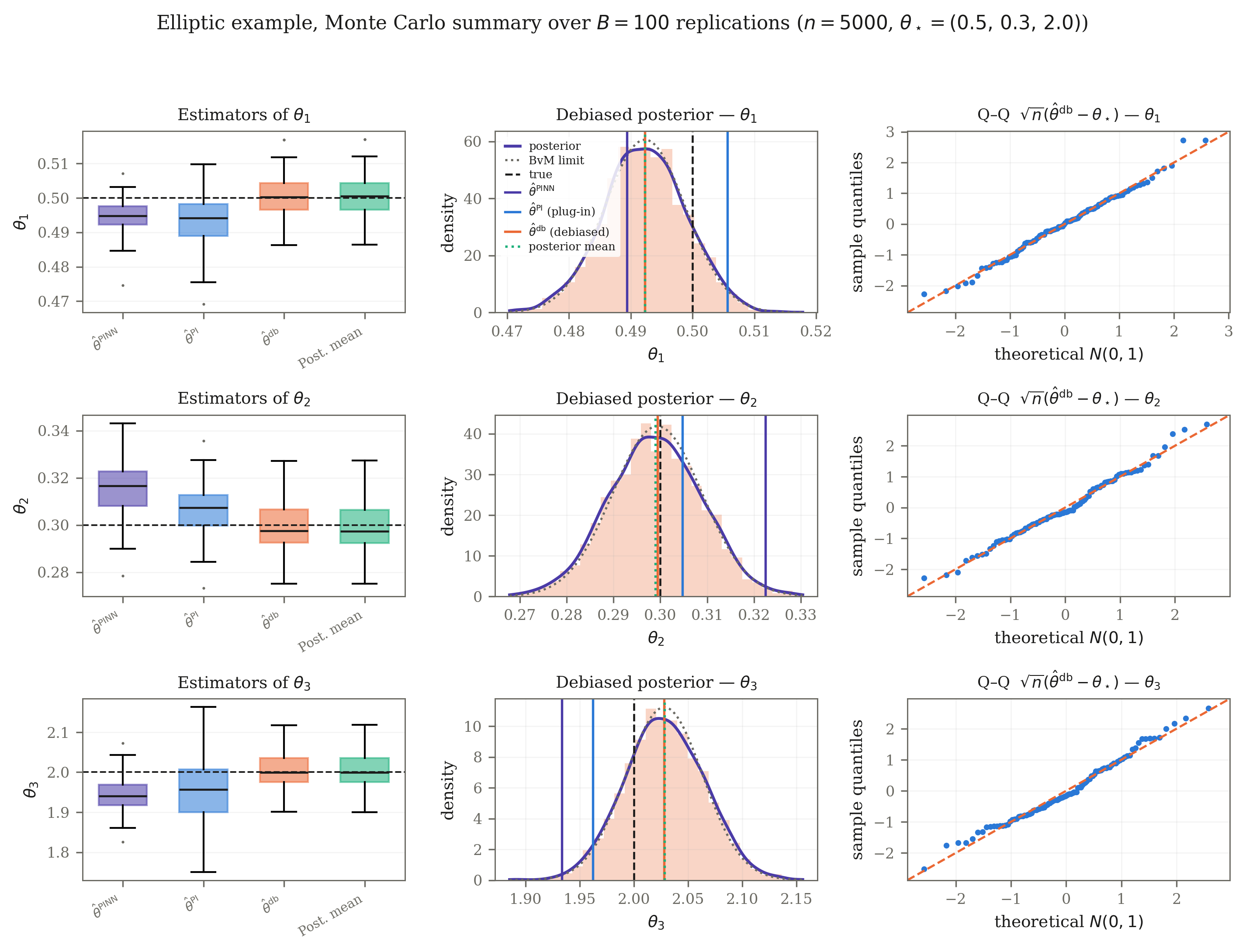}
            \caption{Diffusion example with $n=5000$. Monte Carlo summary over $B=100$ replications
    ($\theta_\star=(0.5,0.3,2.0)$). Rows: $\theta_1$, $\theta_2$, $\theta_3$. Columns: boxplots of $\widehat\theta^{\mathrm{PINN}},\widehat\theta^{\mathrm{PI}},
    \widehat\theta^{\mathrm{db}}$ and the posterior mean; debiased posterior density from a single run with the BvM limit, the estimators and the true value overlaid; Q--Q plot of 
    $\sqrt{n}(\widehat\theta^{\mathrm{db}} -\theta_\star)$ against $N(0,1)$.}
    \label{fig:Diff:2}
\end{figure}

\begin{table}[htbp]
\centering
\caption{Monte Carlo summary (based on $B=100$ replications) for the three-parameter elliptic example. True $\theta_\star=(0.5,0.3,2.0)$. Entries are the bias (mean minus truth) and the standard deviation of each estimator.}
\label{tab:ellip3-mc}
\begin{tabular}{lcccccc}
\toprule
& \multicolumn{2}{c}{$\theta_1 = 0.5$} & \multicolumn{2}{c}{$\theta_2 = 0.3$} & \multicolumn{2}{c}{$\theta_3 = 2.0$}\\
\cmidrule(lr){2-3}\cmidrule(lr){4-5}\cmidrule(lr){6-7}
Estimator & bias & sd & bias & sd & bias & sd\\
\midrule
\multicolumn{7}{l}{\emph{$n=500$}}\\
PINN & $-0.0086$ & $0.0155$ & $+0.0034$ & $0.0164$ & $-0.0428$ & $0.0916$\\
Plug-in & $-0.0078$ & $0.0273$ & $-0.0202$ & $0.0274$ & $+0.1498$ & $0.1931$\\
Debiased & $+0.0030$ & $0.0214$ & $-0.0045$ & $0.0264$ & $+0.0041$ & $0.1097$\\
Posterior mean & $+0.0024$ & $0.0216$ & $-0.0071$ & $0.0266$ & $+0.0020$ & $0.1094$\\
\midrule
\multicolumn{7}{l}{\emph{$n=5000$}}\\
PINN & $-0.0054$ & $0.0044$ & $+0.0151$ & $0.0113$ & $-0.0558$ & $0.0426$\\
Plug-in & $-0.0069$ & $0.0069$ & $+0.0067$ & $0.0108$ & $-0.0483$ & $0.0857$\\
Debiased & $+0.0002$ & $0.0061$ & $-0.0010$ & $0.0104$ & $+0.0066$ & $0.0417$\\
Posterior mean & $+0.0003$ & $0.0061$ & $-0.0012$ & $0.0105$ & $+0.0060$ & $0.0419$\\
\bottomrule
\end{tabular}
\end{table}

\begin{table}[htbp]
\centering
\caption{Empirical coverage (based on $B=100$ replications) of the nominal $95\%$ frequentist confidence and Bayesian credible intervals.}
\label{tab:ellip3-cov}
\begin{tabular}{lcccccc}
\toprule
& \multicolumn{3}{c}{$n=500$} & \multicolumn{3}{c}{$n=5000$}\\
\cmidrule(lr){2-4} \cmidrule(lr){5-7}
& $\theta_1$ & $\theta_2$ & $\theta_3$ & $\theta_1$ & $\theta_2$ & $\theta_3$\\
\midrule
Frequentist (debiased) & $94\%$ & $95\%$ & $94\%$ & $97\%$ & $92\%$ & $88\%$\\
Bayesian (credible) & $90\%$ & $94\%$ & $93\%$ & $97\%$ & $93\%$ & $88\%$\\
\bottomrule
\end{tabular}
\end{table}

\subsection{Robustness to tuning parameter}
The practical use of PINN typically requires tedious cross-validation in choosing the penalty term $\lambda_{\rm sob}$. One advantage of the debiased procedure is an improved robustness to the choice of $\lambda_{\rm sob}$ compared to the first stage estimator $\hat \theta^{{\rm PI}}$ as we show in this section. We set $n=500$, and compare the biases of the first stage estimator $\hat \theta^{{\rm PI}}$ and the debiased procedure $\hat \theta^{{\rm db}}$ across six values of $\lambda_{\mathrm{sob}}$ spanning five
orders of magnitude. For each $\lambda_{\mathrm{sob}}$ we repeat the simulation $B=100$. The results (showing $\sqrt n\times$ bias) are summarized on Figure \ref{fig:rob:lambda}, and show that the debiased procedures are markedly more robust to the choice of $\lambda_{\rm sob}$ across several orders of magnitude.

\begin{figure}[h!]
        \centering
\includegraphics[width=\textwidth]{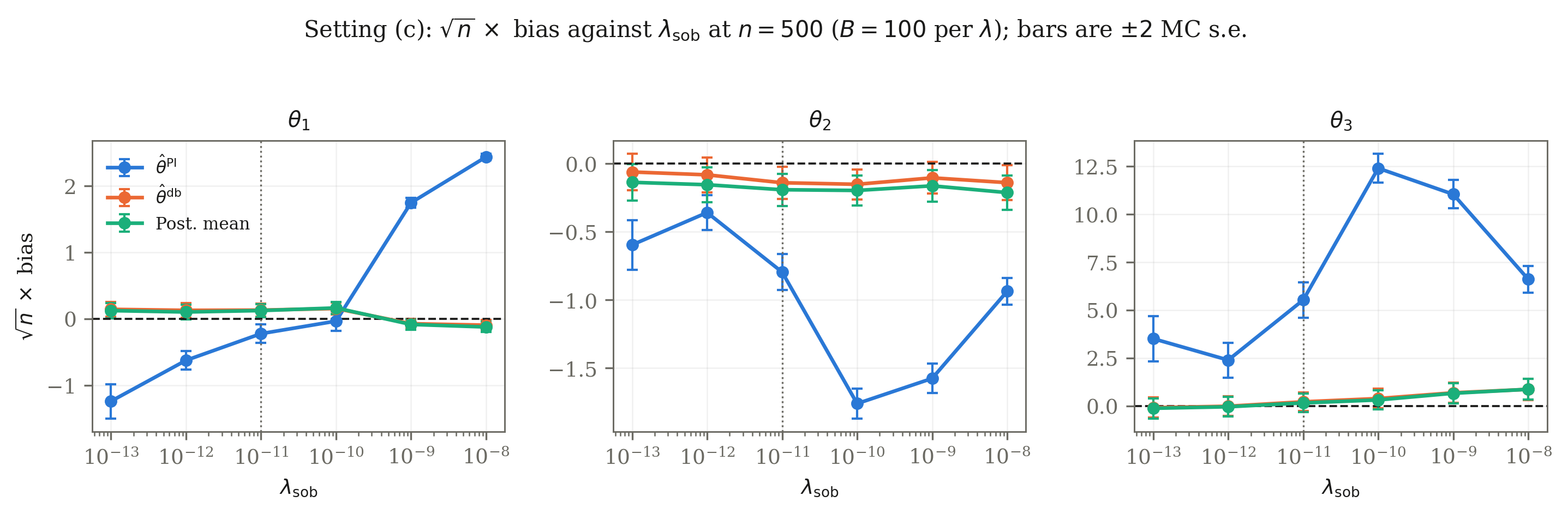}
\caption{Diffusion PDE example with $n=500$. We report the estimation bias ($\sqrt{n}\times {\rm Bias}$) of the plug-in estimator $\hat\theta^{\rm PI}$, the debiased estimator $\hat\theta^{\rm db}$ and the debiased posterior mean  across the three parameters $(\theta_1,\theta_2,\theta_3)$, for different values of the regularization parameter $\lambda_{\rm sob}$. Based on $B=100$ replications.}
\label{fig:rob:lambda}
\end{figure}

\section{Conclusion and future work}
\label{sec:conclusion}
In this paper, we developed a debiasing framework for statistical inference in PDE-constrained inverse problems when the unknown PDE solution is estimated using flexible neural-network methods such as PINNs. Standard plug-in or PINN-based estimators of the finite-dimensional PDE parameter generally inherit the slower nonparametric convergence rate of the estimated solution, leading to biased and inefficient inference. To overcome this difficulty, we constructed an influence-function-based debiased score whose first-order sensitivity to errors in the nuisance function is eliminated. The resulting estimator is $\sqrt n$-consistent and asymptotically normal without requiring undersmoothing of the neural-network estimator. We also proposed a Bayesian analogue based on a debiased quasi-likelihood and established a Bernstein-von Mises theorem showing that the Bayesian inference based on debiased posterior distribution agrees asymptotically with the corresponding frequentist inference. As a by-product, we obtained near-minimax-optimal convergence rates for estimating a Sobolev-smooth regression function and its derivatives simultaneously using neural networks, which may be of independent interest.

However, several interesting directions remain open for future investigation. First, although our debiased estimator attains the parametric rate, it is not yet clear whether its limiting variance is semiparametrically efficient. A natural benchmark is the likelihood-based estimator defined through the exact forward map, but such an estimator is often computationally infeasible because it requires repeatedly solving the PDE. If the corresponding limiting variances are unequal, it would be interesting to understand whether this gap reflects a genuine computational-information-theoretic tradeoff, or whether there exists a computationally feasible procedure that also attains the semiparametric efficiency bound. Second, our theory focuses on finite-dimensional parameters, whereas many scientific inverse problems involve nonparametric or infinite-dimensional unknowns (e.g., spatially varying thermal conductivity). Extending the proposed debiasing and posterior-correction framework to such infinite-dimensional targets is therefore an important direction. Third, another natural extension is to high-dimensional parameter regimes, where the number of candidate PDE terms or physical parameters may grow with the sample size. Developing debiased inference procedures that also perform variable selection would be especially relevant for PDE discovery, where the goal is not only to estimate parameters but also to identify the active terms in the governing equation. 
%\DM{Any direction from a Bayesian perspective?} \Yves{Not specifically. We can tackle these extensions either in a frequentist or Bayesian perspective.}

\section{Proof of Theorem \ref{thm:main_freq}}
\label{appn} %% if no title is needed, leave empty \section*{}.
\noindent
Recall that the debiased estimator $\hat \theta^{\rm db}$ is defined as: 
$$
\htdb= \argmin_{\theta \in \Theta_n} \|\sS_n^{\rm db}(\theta, \hat u)\|_2^2
$$
where $\hat u$ is the neural-network based estimator obtained via minimizing the squared error loss with Sobolev penalty (Equation \eqref{eq:u_est_NN}) and $\Theta_n$ is a shrinking neighborhood around $\hat \theta^{\rm PI}$. 
Define the events $(\Omega_{n, 1}, \Omega_{n, 2})$ as: 
\begin{align*}
\Omega_{n, 1} & = \text{Minimizer of }\|\sS_n^{\rm db}(\theta, \hat u)\|_2^2 \text{ is attained at the interior of }\Theta_n, \\
\Omega_{n, 2} & = \text{The minimum singular value of } \nabla_\theta \sS_n^{\rm db}(\htdb, \hat u) \text{ is strictly positive}. 
\end{align*}
It is established in Lemma S.5.1 and S.5.2 of the supplementary document respectively
% \ref{lem:min_interior} and Lemma \ref{lem:zero_root_db} respectively 
that $\bbP(\Omega_{n, j}) \to 1$ as $n \uparrow \infty$ for $j \in \{1, 2\}$ respectively. Define $\Omega_n = \Omega_{n, 1} \cap \Omega_{n, 2}$. Then $\bbP(\Omega_n) \to 1$ as $n \uparrow \infty$. Therefore, it is enough to establish asymptotic normality of $\sqrt{n}(\htdb - \theta_*)$ on $\Omega_n$. First of all, on $\Omega_n$, we have: 
% Then on $\Omega_n = \Omega_{n, 1} \cap \Omega_{n, 2}$, we have: 
$$
0 = \sS^{\rm db}_n(\htdb, \hat u) = \sfS^{\rm db}_n(\theta_*, u_*) + \left(\sfS^{\rm db}_n(\hat \theta^{\rm db} , \hat u) - \sfS^{\rm db}_n(\theta_*, u_*)\right)
$$
% \begin{align*}
% \hat \theta^{\rm db} & = \argzero_{\theta \in \Theta} \sfS_n(\theta, \hat u) = \argzero_{\theta \in \Theta} \left\{(\nabla_\theta \cR)(\theta,\hat u) + \frac{1}{n}\sum_{i=1}^n(Y_i - \hat u(X_i)) \mathsf{G}_{\theta, \hat u}(X_i)\right\}  \,.
% \end{align*}
% This immediately implies: 
% \begin{align*}
%     0 = \sfS_n(\hat \theta^{\rm db} , \hat u) = \sfS_n(\theta_*, u_*) + \left(\sfS_n(\hat \theta^{\rm db} , \hat u) - \sfS_n(\theta_*, u_*)\right)
% \end{align*}
Now using the fact $\nabla_\theta \cR(\theta_*, u_*) = 0$, we have: 
$$
\sfS^{\rm db}_n(\theta_*, u_*) = \nabla_\theta \cR(\theta_*, u_*) + \frac1n \sum_i (Y_i - u_*(X_i)) \mathsf{G}_{\theta_*, u_*}(X_i) = \frac1n \sum_i \eps_i \mathsf{G}_{\theta_*, u_*}(X_i) \,.
$$
Therefore, along with the previous equation, we have: 
\begin{equation}
    \label{eq:exp_main_1}
    0 = \frac1n \sum_i \eps_i \mathsf{G}_{\theta_*, u_*}(X_i) + \left(\sfS^{\rm db}_n(\hat \theta^{\rm db} , \hat u) - \sfS^{\rm db}_n(\theta_*, u_*)\right)
\end{equation}
We can expand the second term on the right hand side of the above equation as follows:  
% \begin{align*}
%     & \sfS_n(\hat \theta^{\rm db} , \hat u) - \sfS_n(\theta_*, u_*) \\
%     & = \underbrace{\nabla_\theta \cR(\htdb, \hat u) - \nabla_\theta \cR(\htdb, u_*)}_{:= T_1} + \underbrace{\nabla_\theta \cR(\htdb, u_*) - \nabla_\theta \cR(\theta_*, u_*)}_{:=T_2} + \underbrace{\frac1n \sum_{i = 1}^n \eps_i \left(\mathsf{G}_{\htdb, \hat u}(X_i) - \mathsf{G}_{\theta_*, u_*}(X_i)\right)}_{:=T_3} \\
%     & \qquad + \underbrace{\frac1n \sum_i (u_*(X_i) - \hat u(X_i))\left(\mathsf{G}_{\htdb, \hat u}(X_i) - \mathsf{G}_{\theta_*, u_*}(X_i)\right)}_{:=T_4} \\
%     & \qquad + \underbrace{\frac1n \sum_i (u_*(X_i) - \hat u(X_i))\mathsf{G}_{\theta_*, u_*}(X_i)}_{:=T_5} \\
%     & \triangleq \sum_{j = 1}^5 T_j \,.
% \end{align*}
\begin{align*}
    & \sfS^{\rm db}_n(\hat \theta^{\rm db} , \hat u) - \sfS^{\rm db}_n(\theta_*, u_*) \\
    & = \underbrace{\nabla_\theta \cR(\htdb, \hat u) - \nabla_\theta \cR(\htdb, u_*)}_{:= T_1} + \underbrace{\nabla_\theta \cR(\htdb, u_*) - \nabla_\theta \cR(\theta_*, u_*)}_{:=T_2} \\
    & \qquad + \underbrace{\frac1n \sum_{i = 1}^n \eps_i \left(\mathsf{G}_{\htdb, \hat u}(X_i) - \mathsf{G}_{\theta_*, u_*}(X_i)\right)}_{:=T_3} + \underbrace{\frac1n \sum_i (u_*(X_i) - \hat u(X_i))\mathsf{G}_{\htdb, \hat u}(X_i)}_{:=T_4} \\
    & \triangleq \sum_{j = 1}^4 T_j \,.
\end{align*}
This expansion, along with Equation \eqref{eq:exp_main_1}, implies: 
\begin{equation}
\label{eq:exp_main_1_1}
0 =  \frac1n \sum_i \eps_i \mathsf{G}_{\theta_*, u_*}(X_i) + \sum_{j = 1}^4 T_j \,.
\end{equation}
Let us start with $T_2$. We will perform one step Taylor-step expansion: 
\begin{align}
    \label{eq:T2_exp_1}
    T_2 = \nabla_\theta \cR(\htdb, u_*) - \nabla_\theta \cR(\theta_*, u_*) & = \left(\int_0^1 \nabla^2_\theta \cR(\theta_\star + t(\htdb - \theta_\star), u_*) \ dt\right)(\htdb - \theta_*) \notag \\
    & := H_n(\htdb - \theta_*)  
\end{align}
Since $\widehat\theta^{\rm db}\in\Theta_n$, $\delta_n\to0$, and $\widehat\theta^{\rm PI}\to\theta^\star$ in probability, we have $\widehat\theta^{\rm db}\to\theta^\star$ in probability. Hence, by the continuity of $\nabla_\theta^2\cR(\theta,u^\star)$,
\[
H_n \overset{p}{\longrightarrow} M:=\nabla_\theta^2\cR(\theta^\star,u^\star) \,.
\]
By Assumption \ref{assm:curvature_pde_risk}, $M$ is nonsingular, so $H_n^{-1}=M^{-1}+o_p(1)$ and consequently $H_n$ is invertible for all large $n$ with probability going to $1$.
As a consequence, from Equation \eqref{eq:exp_main_1_1}, we have: 
\begin{equation}
    \label{eq:exp_main_2}
    \sqrt{n}(\htdb - \theta_*) = -H_n^{-1}\left\{\frac{1}{\sqrt{n}} \sum_i \eps_i \mathsf{G}_{\theta_*, u_*}(X_i) + \sqrt{n}\sum_{j \neq 2} T_j\right\}
\end{equation}
In the rest of the proof, we show that $\sqrt{n}(T_1+ T_3 + T_4) = o_p(1)$. This, along with the fact that $H_n^{-1} \to M^{-1}$ in probability, will immediately imply via an application of the central limit theorem and Slutsky's theorem: 
$$
\sqrt{n}(\htdb - \theta_*) \implies \cN(0, \sigma_\eps^2 M^{-1}\Sigma M^{-1}), 
$$
where
$$
M = \nabla^2_\theta \cR(\theta_*, u_*), \qquad \Sigma = \bbE[\mathsf{G}_{\theta_*, u_*}(X)\mathsf{G}_{\theta_*, u_*}(X))^\top] \,.
$$
We first show that $\sqrt{n}(T_1 + T_4) = o_p(1)$. 
Henceforth, we use the notation $\bP(f(X))= \int_\Omega f(x)\rmd x$. 
By Taylor expansion, we have: 
\begin{align*}
& \nabla_\theta \cR(\htdb, \hat u) - \nabla_\theta \cR(\htdb, u_*) \\
& = \int_0^1 \int_\Omega (\hat u - u_*)(x) \ \mathsf{G}_{\htdb, u_* + \alpha(\hat u - u_*)}(x) \ dx \ d\alpha \\
& = \int_0^1 \bP\left[(\hat u(X) - u_*(X))\mathsf{G}_{\htdb, u_* + \alpha(\hat u - u_*)}(X)\right] \ d\alpha \\
& =  \bP\left[(\hat u(X) - u_*(X))\mathsf{G}_{\htdb, \hat u}(X)\right] \\
& \qquad + \int_0^1 \bP\left[(\hat u(X) - u_*(X))\left(\mathsf{G}_{\htdb, u_* + \alpha(\hat u - u_*)}(X) - \mathsf{G}_{\htdb, \hat u}(X)\right)\right] \ d\alpha
\end{align*}
Furthermore, we can center $T_4$ as: 
\begin{align*}
    T_4 & = \bbP_n((u_*(X) - \hat u(X))\mathsf{G}_{\htdb, \hat u}(X)) \\
    & = (\bbP_n - \bP)((u_*(X) - \hat u(X))\mathsf{G}_{\htdb, \hat u}(X)) + \bP((u_*(X) - \hat u(X))\mathsf{G}_{\htdb, \hat u}(X)) \,.
\end{align*}
As a consequence, we have: 
\begin{align*}
    T_1 + T_4 & =  (\bbP_n - \bP)((u_*(X) - \hat u(X))\mathsf{G}_{\htdb, \hat u}(X))  \\
    & \qquad \qquad + \int_0^1 \bP\left[(\hat u(X) - u_*(X))\left(\mathsf{G}_{\htdb, u_* + \alpha(\hat u - u_*)}(X) - \mathsf{G}_{\htdb, \hat u}(X)\right)\right] \ d\alpha \\
    & = T_{141} + T_{142} \,.
\end{align*}
Let us first argue that $\sqrt{n}|T_{141}| = o_p(1)$, where it is enough to argue that $\sqrt{n}|T_{141, j}| = o_p(1)$ for all $1 \le j \le q$. Towards that end, we use the following facts: 
\begin{enumerate}
    \item Define $\cU_n$ as: 
    \begin{equation}
        \label{eq:def_un}
        \cU_n = \left\{u: \|u\|_{W^{\beta, 2}} \le H_1 + \|u_*\|_{W^{\beta, 2}}, \  \|\partial^{\bj}u  - \partial^{\bj} u_*\|_2 \le \delta_{n, |\bj|} \ \forall \ |\bj| \le 2\tau\right\}
    \end{equation}
   where $H_1$ is same as defined in Equation S.4.5. of the supplementary document, and 
    $$
    \delta_{n, |\bj|} = C\left(\frac{n}{\log{n}}\right)^{-\frac{\beta - |\bj|}{2\beta + d}}(\log{n})^{1 - \frac{|\bj|}{\beta}} \,.
    $$
    Then by Theorem \ref{thm:deep_sobolev}, $\bbP(\hat u \notin \cU_n) \to 0$. 

    \item $\|u\|_\infty, \|\mathsf{G}_{\theta, u, j}\|_\infty \le B$ for all $u \in \cU_n$ (the first conclusion follows from Theorem \ref{thm:deep_sobolev} and the second conclusion follows from part (1) of Proposition \ref{prop:def_G} along with Theorem \ref{thm:deep_sobolev}). 

    \item $\|\mathsf{G}_{\theta, u, j} - \mathsf{G}_{\theta', u, j}\|_\infty \le L \|\theta - \theta'\|_2$ (follows from part (2) of Proposition \ref{prop:def_G}). 

    \item $\|\mathsf{G}_{\theta, u, j} - \mathsf{G}_{\theta, u', j}\|_\infty \le L \|u - u'\|_\infty^{\alpha_*}$ for all $u \in \cU_n$ (follows from part (2) of Proposition \ref{prop:def_G}). 
\end{enumerate}
We start with the following decomposition: 
\begin{align*}
    \bbP(\sqrt{n}|T_{141 ,j}| > t) & = \bbP(\sqrt{n}T_{141 ,j} > t,\hat u \in \cU_n) + \bbP(\sqrt{n}|T_{141 ,j}| > t, \hat u \notin \cU_n) \\
    & \le \bbP\left(\sup_{\substack{\theta \in \Theta \\ u \in \cU_n }}\sqrt{n}|(\bbP_n - \bP)(u- u_*) \mathsf{G}_{u, \theta, j}| > t, \hat u \in \cU_n\right) + \bbP(\hat u \notin \cU_n) \\
    & \le \bbP\left(\sup_{\substack{\theta \in \Theta \\ u \in \cU_n }}\sqrt{n}|(\bbP_n - \bP)(u- u_*) \mathsf{G}_{u, \theta, j}| > t\right) + \bbP(\hat u \notin \cU_n)
\end{align*}
As $\lim_{n \uparrow \infty} \bbP(\hat u \notin \cU_n) = 0$, it is enough to show: 
$$
\sup_{\theta \in \Theta, u \in \cU_n }|\bbG_n(u- u_*) \mathsf{G}_{u, \theta, j}| = o_p(1) \,, \quad \bbG_n = \sqrt{n}(\bbP_n - \bP) \,.
$$
This essentially follows from the stochastic equicontinuity of an empirical process over the Donsker class. Let 
$$
\cM_j = \{m_{u, \theta} := (u- u_*) \bG_{u, \theta, j}: u \in \cU, \ \theta \in \Theta\} \,.
$$
where $\cU = \{u \in W^{\beta, 2}: \|u\|_{W^{\beta, 2}} \le \|u_\star\|_{W^{\beta, 2}} + H_1\}$. Clearly, by definition $\cU_n \subseteq \cU$. We next argue that $\cM_j $ is Donsker under our assumption $\beta - 2\tau > d$. Towards that end, let $\{u_1, \dots, u_{N_1}\}$ constitutes and $(\eps/6BL)^{1/\alpha_*}$ cover of $\cU$ with respect to $L_\infty$ norm and $\{\theta_1, \dots, \theta_{N_2}\}$ constitutes an $\eps/6BL$ cover of $\Theta$ with respect to $L_2$ norm. Then given $(u, \theta)$, there exists $(u_i, \theta_j)$ such that $\|u - u_i\|_\infty \le (\eps/6BL)^{1/\alpha*}$ and $\|\theta - \theta_j\|_2 \le \eps/6BL$. As a consequence, 
\begin{align*}
    \|m_{u, \theta} - m_{u_i, \theta_j}\|_\infty & \le \|(u - u_*)\mathsf{G}_{u, \theta, j} - (u_i - u_*)\mathsf{G}_{u_i, \theta_j, j}\|_\infty \\
    & \le \|(u - u_i)\mathsf{G}_{u, \theta, j}\|_\infty + \|(u_i - u_*)(\mathsf{G}_{u, \theta, j} - \mathsf{G}_{u, \theta_j, j})\|_\infty  \\
    & \hspace{10em}+ \|(u_i - u_*)(\mathsf{G}_{u, \theta_j, j} - \mathsf{G}_{u_i, \theta_j, j})\|_\infty \\
    & \le B \|u - u_i\|_\infty + 2BL \|\theta - \theta_j\|_2 + 2BL \|u  - u_i\|_\infty ^{\alpha_*}  \ \ \le \eps \,.
\end{align*}
Here we assume without loss of generality $B, L \ge 1$. Therefore, 
\begin{align*}
\log{\cN(\cM_j, L_\infty, \eps)} & \le \log{\cN(\cU, L_\infty, (\eps/6BL)^{1/\alpha_*})} + \log{\cN(\Theta, L_2, \eps/6BL)} \\
& \le C_1 \eps^{-\frac{d}{\beta \alpha_*}} + C_2 q\log{\frac{1}{\eps}} \,.
\end{align*}
Here we used the covering number of the Sobolev function class (e.g., see Theorem 5.2 of \cite{birman1967piecewise}). 
By Assumption \ref{assm:dgp}, $\beta \alpha_* > d/2$, and consequently $\cM_j$ is Donsker. Now define 
$$
\cM_{n, j} = \{m_{u, \theta} \in \cM_j: \ u \in \cU_n\}. 
$$
From Proposition \ref{prop:def_G}, we have $\|\sG_{u, \theta, j}\|_\infty \le B$ uniformly over $u \in \cU, \theta \in \Theta$. Therefore, for $m_{u, \theta} \in \cM_{n, j}$, we have: 
\begin{align*}
    \|m_{u, \theta}\|_2 = \|(u - u_*)\sG_{u, \theta}\|_2 \le \|\sG_{u, \theta}\|_\infty \|u - u_\star\|_2 \le B \delta_{n, \mathbf{0}} \,.
\end{align*}
Therefore, with respect to the variance semi-metric $\rho(f, g) = \sqrt{\var_\bP(f - g)} \le \|f - g\|_{L_2(\bP)}$, we have, we have from equicontinuity of the Donsker class (e.g. see Lemma 2.3.11 of \cite{van1996weak} or Lemma 19.24 of \cite{van2000asymptotic}), 
$$
\sup_{m_{u, \theta} \in \cM_{n, j}}|\bbG_n(m_{u, \theta})| \le \sup_{m_{u, \theta} \in \cM_j: \rho(m_{u, \theta}, 0) \le B\delta_{n, \mathbf{0}}}|\bbG_n(m_{u, \theta})| = o_p(1) \,.
$$
This concludes that $\sqrt{n}T_{141} = o_p(1)$. 

We next show that $\sqrt{n}|T_{142, j}| = o_p(1)$. First, by definition of $\cU_n$, we have $\|\hat u - u_*\|_2 \le \delta_{n, 0}$. Therefore, an application of Theorem S.2.2 of the supplementary document yields: 
$$
\|\hat u - u_*\|_\infty \le C \|\hat u - u_*\|_{W^{\beta, 2}(\Omega)}^{\frac{d}{2\beta}} \|\hat u - u_*\|_2^{1 - \frac{d}{2\beta}} \le C_1 \delta_{n ,0}^{1 - \frac{d}{2\beta}} \,.
$$
Here the last line follows from the fact that both $\hat u$ and $u_*$ has bounded $W^{\beta, 2}$ norm; that $u_*$ has bounded $W^{\beta, 2}$ follows from our Assumption and that $\hat u$ has bounded $W^{\beta, 2}$ follows from Theorem \ref{thm:deep_sobolev}. Now, by the Cauchy-Schwarz inequality, we have: 
\begin{align*}
    & \bP\left[\left|(\hat u(X) - u_*(X))\left(\mathsf{G}_{\htdb, u_* + \alpha(\hat u - u_*), j}(X) - \mathsf{G}_{\htdb, \hat u, j}(X)\right)\right|\right] \\
    & \le \|\hat u - u_*\|_2 \left\|\mathsf{G}_{\htdb, u_* + \alpha(\hat u - u_*), j}(X) - \mathsf{G}_{\htdb, \hat u, j}(X)\right\|_2 \\
    & \le \|\hat u - u_*\|_2 \|\hat u - u_*\|_\infty^{\alpha_*} \\
    & \le C_1 \|\hat u - u_*\|_2^{1 + \alpha_*\left(1 - \frac{d}{2\beta}\right)} \le L \delta_{n, 0}^{1 + \alpha_*\left(1 - \frac{d}{2\beta}\right)} \,.
\end{align*}
Now, from the definition of $\delta_{n, 0}$ we have $\delta_{n, 0} = n^{-\beta/(2\beta + d)}(\log{n})^a$. Furthermore, we have: 
$$
1 + \alpha_*\left(1 - \frac{d}{2\beta}\right) = 1 + \frac{\beta - 2\tau - d/2}{\beta - d/2} \frac{2\beta - d}{2\beta} = 1 + \frac{2\beta - 4\tau - d}{2\beta} = \frac{4\beta - 4\tau - d}{2\beta} \,.
$$
This implies: 
\begin{align*}
\sqrt{n}\delta_{n, 0}^{1 + \alpha_*\left(1 - \frac{d}{2\beta}\right)} & = n^{\frac{1}{2} - \frac{\beta}{2\beta + d}\frac{4\beta - 4\tau - d}{2\beta}}\left(\log{n}\right)^{a\left( \frac{4\beta - 4\tau - d}{2\beta}\right)} \\
& = n^{\frac12\left(1 -\frac{4\beta - 4\tau - d}{2\beta + d} \right)}\left(\log{n}\right)^{a\left( \frac{4\beta - 4\tau - d}{2\beta}\right)} \\
& = n^{-\frac{\beta - 2\tau - d}{2\beta + d}}\left(\log{n}\right)^{a\left( \frac{4\beta - 4\tau - d}{2\beta}\right)} = o(1) \,.
\end{align*}
Here, the last conclusion follows from the fact that $\beta > 2 \tau + d$ (Assumption \ref{assm:dgp}). Therefore, $\sqrt{n}T_{142, j} = o_p(1)$. 
\\\\
\noindent
Finally, we show that $\sqrt{n}T_3 = o_p(1)$ in two steps. First, we show that $\sqrt{n}T_3 = O_p(1)$. As we have already estalished $\sqrt{n}(T_1 + T_4) = o_p(1)$, then from Equation \eqref{eq:exp_main_2}, this conclusion will immediately imply  $\sqrt{n}(\htdb - \theta_*) = O_p(1)$. Towards that end, we use Lemma S.7.1 of the supplementary document, which essentially follows from Dudley's integral bound. We use this lemma with $\cF = \cM'_n$, where 
$$
\cM'_n = \left\{m_{u, \theta}(x) = \sG_{\theta, u, j}(x) - \sG_{\theta_*, u_*}(x): \quad u \in \cU_n, \theta \in \Theta\right\} \,.
$$
Clearly, by an application of Proposition \ref{prop:def_G}, we have: 
$$
\|\sG_{\theta, u} - \sG_{\theta, u'}\|_\infty \le C\|u - u'\|_\infty^{\alpha_*} \,, \quad \forall \ u, u' \in \cU_n \,, \quad \forall \theta \in \Theta \,,
$$
and 
$$
\|\sG_{\theta, u} - \sG_{\theta', u}\|_\infty \le C' \|\theta - \theta'\|_2, \quad \forall \ u \in \cU_n \,, \quad \forall \theta, \theta' \in \Theta \,,
$$
for some constant $C, C' > 0$ independent of $(\theta, u)$, as soon as $\Theta$ is a compact subset of $\reals^d$. Following a similar argument as in the previous part of the proof, given any $\tau > 0$, suppose $\{u_1, \dots, u_{N_1}\}$ be the $(\tau/2C)^{1/\alpha_*}$ cover of $\cU_n$ (w.r.t. $L_\infty$ norm) and $\{\theta_1, \dots, \theta_{N_2}\}$ be the $(\tau/2C')$ cover of $\Theta$ w.r.t. $L_2$ norm. Then we claim that 
$$
\cM_n(\tau) := \{\sG_{\theta_i, u_j} - \sG_{\theta_*, u_*}: 1 \le i \le N_2, 1 \le j \le N_1\}
$$
is a $\tau$-cover of $\cM_n$. This essentially follows from a simple triangle inequality, because, given any $(u, \theta)$, let $(u_i, \theta_j)$ be their corresponding center of cover, i.e. $\|u - u_i\|_{L_\infty} \le (\tau/2C)^{1/\alpha_*}$ and $\|\theta - \theta_j\|_2 \le \tau/2C'$. Then, 
\begin{align*}
    \left\|(\sG_{\theta, u} - \sG_{\theta_*, u_*}) - (\sG_{\theta_j, u_i} - \sG_{\theta_*, u_*})\right\|_{L_\infty} & = \|\sG_{\theta, u} - \sG_{\theta_j, u_i}\|_{L\infty} \\
    & \le \|\sG_{\theta, u} - \sG_{\theta_j, u}\|_{L\infty} + \|\sG_{\theta_j, u} - \sG_{\theta_j, u_i}\|_{L\infty} \\
    & \le C'\|\theta - \theta_j\|_2 + C\|u - u_i\|_\infty^{\alpha_*} \\
    & \le C' \times \frac{\tau}{2C'} + C \times \frac{\tau}{2C} \le \tau \,.
\end{align*}
Consequently, we have: 
\begin{align*}
\log{\cN(\cM_n, \tau, L_\infty)} & \le \log{\cN(\Theta, \tau/2C', L_2)} + \log{\cN(\cU_n, (\tau/2C)^{1/\alpha_*}, L_\infty)} \\
& \le C_1 q \log{(C_2/\tau)} + C_3 \tau^{-\frac{d}{\beta \alpha_*}} \,.
\end{align*}
Therefore, using the fact $\beta \alpha_* > d/2$ (Assumption \ref{assm:dgp}), we have: 
$$
\int_0^B \sqrt{1 + \log{\cN(\cM_n, \tau, L_\infty)}} \ d\tau = B' < \infty \,.
$$
and consequently by Lemma S.7.1 of the supplementary docment,
\begin{equation}
\label{eq:maximal_eps_global}
\bbE\left[\sup_{u \in \cU_n, \theta \in \Theta} \left|\frac{1}{\sqrt{n}}\sum_i \eps_i (\sG_{\theta, u}(X_i) - \sG_{\theta_*, u_*}(X_i))\right|\right] \le B''
\end{equation}
for some constant $B'' > 0$. We will now use this to show $\sqrt{n}T_3 = O_p(1)$, as follows: 
\begin{align*}
    & \bbP\left(\left|\frac{1}{\sqrt{n}}\sum_{i = 1}^n \eps_i \left(\mathsf{G}_{\htdb, \hat u}(X_i) - \mathsf{G}_{\theta_*, u_*}(X_i)\right)\right| \ge t\right) \\
    & \le \bbP\left(\left|\frac{1}{\sqrt{n}}\sum_{i = 1}^n \eps_i \left(\mathsf{G}_{\htdb, \hat u}(X_i) - \mathsf{G}_{\theta_*, u_*}(X_i)\right)\right| \ge t, u \in \cU_n\right) + \bbP(u \in \cU_n^c) \\
    & \le  \bbP\left(\sup_{u \in \cU_n, \theta \in \Theta} \left|\frac{1}{\sqrt{n}}\sum_{i = 1}^n \eps_i \left(\mathsf{G}_{\theta, u}(X_i) - \mathsf{G}_{\theta_*, u_*}(X_i)\right)\right| \ge t, u \in \cU_n\right) + \bbP(u \in \cU_n^c) \\
    & \le \frac{1}{t}\bbE\left[\sup_{u \in \cU_n, \theta \in \Theta} \left|\frac{1}{\sqrt{n}}\sum_i \eps_i (\sG_{\theta, u}(X_i) - \sG_{\theta_*, u_*}(X_i))\right|\right] + \bbP(u \in \cU_n^c) \\
    & \le \frac{B''}{t}+ \bbP(u \in \cU_n^c)
\end{align*}
Therefore, upon observing the fact that $\bbP(u \in \cU_n^c) \to 0$ as $n \uparrow \infty$, we conclude that $\sqrt{n}T_3 = O_p(1)$, which, in turn, implies $\sqrt{n}(\htdb - \theta_*) = O_p(1)$. 

\medskip
\noindent
Now we use this fact to show that in fact $\sqrt{n}T_3 = o_p(1)$. First, define $\check \Theta_n = \{\theta: \|\theta - \theta_*\|_2 \le \sqrt{\log{n}/n}\}$. From the $\sqrt{n}$-consistency of $\htdb$, we have $\bbP(\htdb \notin \check \Theta_n) \to 0$ as $n \uparrow \infty$. Therefore, following the exact calculation as in the previous display, we obtain: 
\begin{align*}
     & \bbP\left(\left|\frac{1}{\sqrt{n}}\sum_{i = 1}^n \eps_i \left(\mathsf{G}_{\htdb, \hat u}(X_i) - \mathsf{G}_{\theta_*, u_*}(X_i)\right)\right| \ge t\right) \\
     & \le \frac{1}{t}\bbE\left[\sup_{u \in \cU_n, \theta \in \check\Theta_n} \left|\frac{1}{\sqrt{n}}\sum_i \eps_i (\sG_{\theta, u}(X_i) - \sG_{\theta_*, u_*}(X_i))\right|\right] + \bbP(\{\hat u \in \cU_n^c\}\mbox{ or } \{ \htdb \in \check\Theta_n^c\}) \,.
\end{align*}
The second term, by definition, is $o(1)$. Therefore, if we can show that the expected suprema of $(\cU_n, \check \Theta_n)$ is $o(1)$, we are done. Towards that end, we will use Lemma \ref{lem:dudley}. Define a function class $\cG_{n, j}$ as: 
$$
\cG_{n, j} = \{\sG_{\theta, u, j} - \sG_{\theta_\star, u_\star, j}: \ \theta \in \hat \Theta_n, u \in \cU_n\} \,.
$$
It is immediate from Proposition \ref{prop:def_G} that 
\begin{align*}
    \|\sG_{\theta, u} - \sG_{\theta_\star, u_\star}\|_\infty & \le C\left(\|u - u_\star\|_\infty^{\alpha_*} + \|\theta - \theta_*\|_2\right) \\
    & \le C\left(\delta_{n, \mathbf{0}}^{\alpha_*(1 - d/2\beta)} + \sqrt{\frac{\log{n}}{n}}\right) := b_n \,.
\end{align*}
By the same argument as before, we can conclude $\cG_{n, j}$ is a Donsker funciton class with 
$$
\log{\cN(u, \cG_{n, j}, L_\infty)} \le C(u^{-d/(\beta \alpha_*)} + q\log{(1/u)})\,.
$$
Therefore, an application of Lemma S.7.1 of the supplementary document yields: 
\begin{align*}
    & \bbE\left[\sup_{u \in \cU_n, \theta \in \check\Theta_n} \left|\frac{1}{\sqrt{n}}\sum_i \eps_i (\sG_{\theta, u}(X_i) - \sG_{\theta_*, u_*}(X_i))\right|\right]  \\
    & \lesssim \int_0^{C b_n} \sqrt{1 + \log{\cN(u, \cG_{n, j}, L_\infty)}} \ du \\
    & \lesssim b_n^{1 - d/(2\beta \alpha_*)} + b_n \sqrt{\log{(C/b_n)}} \longrightarrow 0 \,.
\end{align*}
as $b_n \to 0$. 
This completes the proof of the fact that $\sqrt{n}T_3 = o_p(1)$, which completes the proof of the theorem. 
% As we have already established $\sqrt{n}(T_1 + T_4) = o_p(1)$, to argue $\sqrt{n}$-consistency of $\htdb$ from Equation \eqref{eq:exp_main_2}, all we need to argue is $\sqrt{n}T_3 = O_p(1)$. This is immediate as: 

\bibliographystyle{plain}
\bibliography{biblio}

\newpage 
\begin{appendix}
\renewcommand{\thesection}{S.\arabic{section}}
\section{Additional numerical illustrations}

\subsection{$(1+1)$ transport PDE}\label{sec:transport_freq}

We begin with the $1+1$ transport equation
\begin{align*}
    u^\star_t(x,t) - \theta^\star u^\star_x(x,t) = 0, \qquad (x,t) \in [0,1]^2.
\end{align*}
Observe that this PDE is a special case of our general formulation in
\eqref{eq:pde_model_2}, with
\begin{align*}
    K_0 = \partial_t, \qquad K_1 = \partial_x, \qquad \Psi_\theta(x,z) = -\theta z.
\end{align*}
\\\\
\noindent
{\bf Data-generating process.}
To generate data from a model that satisfies the PDE exactly, we take the true solution to be
\begin{align*}
    u^\star(x,t) = \sin\!\big(2\pi(x+\theta^\star t)\big)
                 + \cos\!\big(\pi(x+\theta^\star t)\big),
    \qquad \text{with} \quad \theta^\star = 2.
\end{align*}
It is immediate that $u^\star_t(x,t) = 2 u^\star_x(x,t)$. We generate
observations according to the following procedure:
\begin{enumerate}
    \item First, we generate $(X_i, T_i) \overset{\mathrm{i.i.d.}}{\sim}
    \mathrm{Unif}([0,1]^2)$, $1 \le i \le n$, where we consider three sample
    sizes $n \in \{500,\, 3000,\, 5000\}$.
    \item Next, for $1 \le i \le n$, we set $Y_i = u^\star(X_i, T_i) +
    \varepsilon_i$, where $\varepsilon_i \overset{\mathrm{i.i.d.}}{\sim}
    N(0, \sigma^2)$. In our experiments, we take $\sigma = 0.5$.
\end{enumerate}
Following Section \ref{sec:setup}, we define the PDE risk using the
boundary-vanishing weight $\omega(x,t) = x^2(1-x)^2 t^2 (1-t)^2$.
Consequently, for a fixed $u$, the minimizer $\theta(u)$ of
$\mathcal{R}(\theta, u)$ has the following closed form:
\begin{align*}
    \theta(u) =
    \frac{\int_{[0,1]^2} \omega(x,t)\, u_t(x,t) u_x(x,t)\, dx\, dt}
         {\int_{[0,1]^2} \omega(x,t)\, u_x(x,t)^2\, dx\, dt}.
\end{align*}
Accordingly, the plug-in estimator $\hat{\theta}^{\mathrm{PI}}$ (resp.\ the
PINN estimator $\hat{\theta}_{\mathrm{PINN}}$) is obtained by replacing $u$
with the first-stage estimator $\widehat{u}$ (resp.\ the PINN estimator
$\widehat{u}_{\mathrm{PINN}}$). In the implementation, the above integrals are
approximated by a midpoint quadrature rule on a $300 \times 300$ grid over
$[0,1]^2$.
\\\\
\noindent
{\bf Implementation of the first-stage estimators.}
For both the plug-in and PINN procedures, the solution $u^\star$ is
approximated by a fully connected neural network with three hidden layers,
width $64$, scalar output, and RePU activation of order $3$, namely
$\sigma(z) = (z_+)^3$. Since our methodology requires accurate evaluation of
first- and second-order derivatives, all computations are carried out using
automatic differentiation in double precision. The expectations with respect to
the distribution of $(X,T)$ that appear in the Sobolev penalty
\eqref{eq:u_est_NN} and in the definition of $\mathcal{R}(\theta,u)$ (for
example, in \eqref{eq:pde_risk} and others) are approximated numerically by
Monte Carlo collocation, using $1024$ fresh draws from $\mathrm{Unif}([0,1]^2)$
per epoch for the Sobolev penalty and $4000$ per epoch for the profiled PDE
penalty. For estimating $(\widehat{u}_{\mathrm{PINN}},
\hat{\theta}_{\mathrm{PINN}})$, we use $\lambda_n = 10^{-5}$ and
$\widetilde{\lambda}_n = 10$ (see \eqref{eq:u_est_NN_PINN} for their
definitions), as these choices provide a good approximation to $u^\star$ and
its derivatives. For estimating $\widehat{u}$ via \eqref{eq:u_est_NN}, we again
set $\lambda_n = 10^{-5}$. Thus, the tuning parameters are kept fixed
throughout the experiment. The neural networks are trained in \texttt{pytorch}
using the Adam optimizer with learning rate $10^{-3}$, for at most $20{,}000$
epochs. We further employ early stopping with patience $300$, based on a
validation set, to stabilize training and prevent overfitting.
\\\\
\noindent
{\bf Construction of the debiased estimator.}
In the present transport model, the score admits a particularly simple closed form. Indeed,
\begin{align*}
    \nabla_\theta \mathcal{R}(\theta, u)
    = -\int_{[0,1]^2} \omega(x,t)\, u_t(x,t) u_x(x,t)\, dx\, dt
      + \theta \int_{[0,1]^2} \omega(x,t)\, u_x(x,t)^2\, dx\, dt.
\end{align*}
Writing
\begin{align*}
    A(u) = \int_{[0,1]^2} \omega(x,t)\, u_t(x,t) u_x(x,t)\, dx\, dt,
    \qquad
    B(u) = \int_{[0,1]^2} \omega(x,t)\, u_x(x,t)^2\, dx\, dt,
\end{align*}
we have
\begin{align*}
    \nabla_\theta \mathcal{R}(\theta, u) = -A(u) + \theta B(u).
\end{align*}
Moreover, the representer $G_{\theta,u}$ of the G\^{a}teaux derivative of
$\nabla_\theta \mathcal{R}(\theta, u)$ reduces in this case to
\begin{align*}
    G_{\theta,u}(x,t)
    = \partial_t\big(\omega(x,t) u_x(x,t)\big)
    + \partial_x\big(\omega(x,t) u_t(x,t)\big)
    - 2\theta\, \partial_x\big(\omega(x,t) u_x(x,t)\big).
\end{align*}
Hence, defining
\begin{align*}
    a_u(x,t) = \partial_t\big(\omega(x,t) u_x(x,t)\big)
             + \partial_x\big(\omega(x,t) u_t(x,t)\big),
    \qquad
    b_u(x,t) = \partial_x\big(\omega(x,t) u_x(x,t)\big),
\end{align*}
we obtain
\begin{align*}
    G_{\theta,u}(x,t) = a_u(x,t) - 2\theta\, b_u(x,t).
\end{align*}
Therefore, the empirical debiased score in \eqref{eq:debiased_score} takes the
form
\begin{align*}
    S^{\mathrm{db}}_n(\theta, \widehat{u})
    = -\widehat{A} + \widehat{B}\theta
    + \frac{1}{n} \sum_{i=1}^{n}
      \big\{ Y_i - \widehat{u}(X_i, T_i) \big\}
      \big( \widehat{a}_i - 2\theta\, \widehat{b}_i \big),
\end{align*}
where $\widehat{A} = A(\widehat{u})$, $\widehat{B} = B(\widehat{u})$, and $\widehat{a}_i, \widehat{b}_i$ denote the corresponding evaluations of $a_{\widehat{u}}$ and $b_{\widehat{u}}$ at the observations. We emphasize that no sample splitting is employed: the same $n$ observations are used both to fit $\widehat{u}$ and to evaluate the empirical debiased score. Since this score is affine in $\theta$, the second-stage root is explicit. In accordance with \eqref{eq:def_htdb}, and to guard against rare finite-sample instability, we project the resulting root onto the neighborhood
\begin{align*}
    \Theta_n = \big[ \hat{\theta}^{\mathrm{PI}} - \delta_n,\,
                     \hat{\theta}^{\mathrm{PI}} + \delta_n \big],
    \qquad \delta_n = 0.75.
\end{align*}
This yields the final debiased estimator $\htdb$.
\\\\
\noindent
{\bf Monte Carlo design and summary measures.}
We repeat the experiment
over $100$ independent Monte Carlo runs
for each sample size $n \in \{500,\, 3000,\, 5000\}$, and in each run, we record $\hat{\theta}^{\mathrm{PI}}$, $\hat{\theta}_{\mathrm{PINN}}$, and
$\htdb$. The results are displayed in Figure~\ref{fig:transport_11_freq}. The top row presents kernel density estimates of the $\sqrt{n}$-scaled errors $\sqrt{n}(\widehat{\theta} - \theta^\star)$ for the three estimators, overlaid with the limiting distribution $N(0, \sigma^2_\varepsilon M^{-1} \Sigma M^{-1})$ of Theorem~\ref{thm:main_freq}, whose variance is computed at $(\theta^\star, u^\star)$ by numerical quadrature. The bottom panel presents boxplots of the three estimators for each sample size.

It is clear from the figure that both $\hat{\theta}^{\mathrm{PI}}$ and $\hat{\theta}_{\mathrm{PINN}}$ exhibit a substantial bias that does not vanish on the $\sqrt{n}$ scale, reflecting the slower nonparametric rate they inherit from the first stage. In contrast, the proposed debiasing step substantially reduces the bias, and the distribution of $\sqrt{n}(\htdb - \theta^\star)$ approaches the limiting law as $n$ grows, corroborating our theoretical findings.

\begin{figure}[t]
    \centering
    \includegraphics[width=\textwidth]{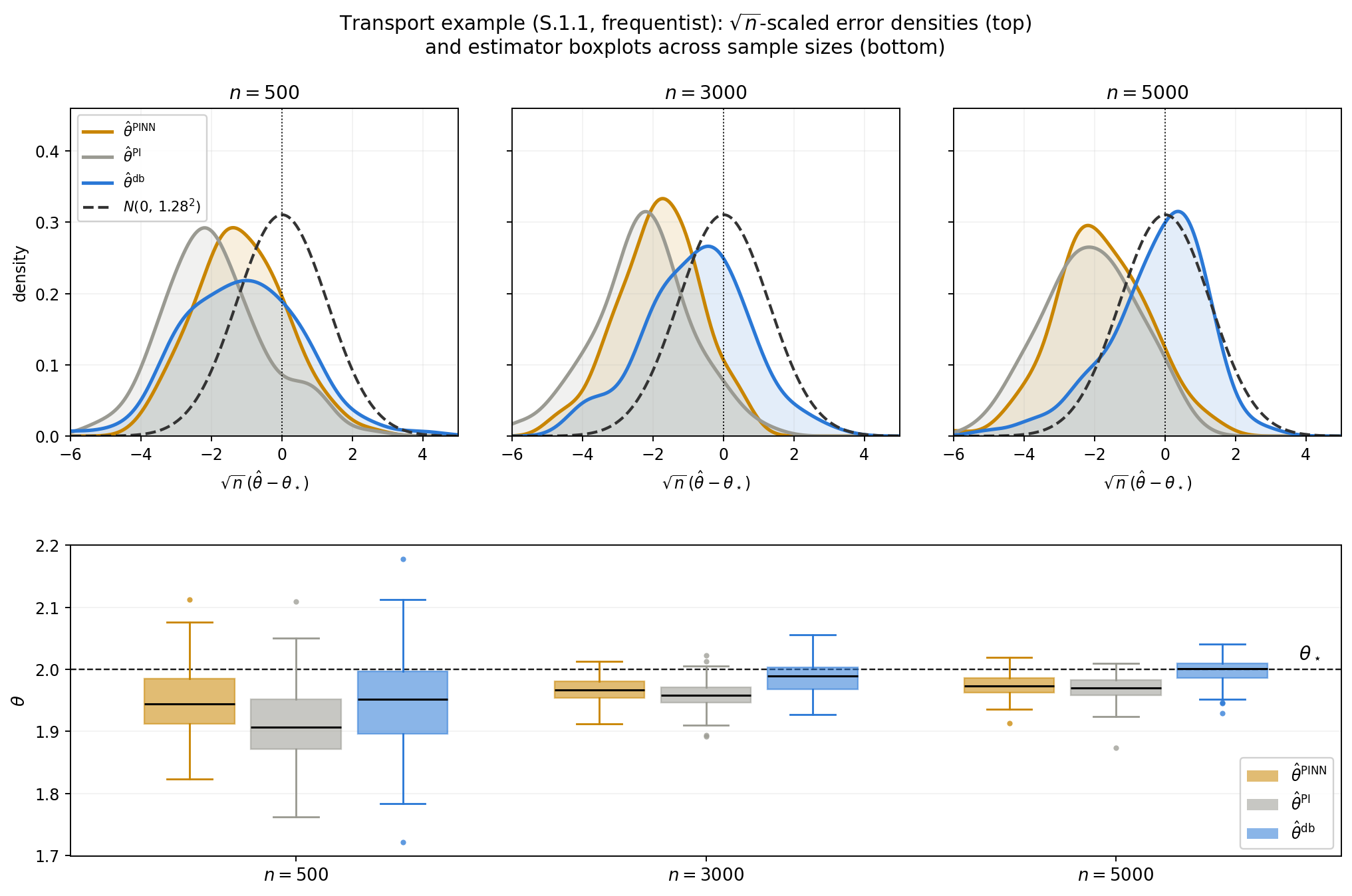}
    \caption{The top row presents kernel density estimates of
    $\sqrt{n}(\widehat{\theta} - \theta^\star)$ for
    $\hat{\theta}_{\mathrm{PINN}}$, $\hat{\theta}^{\mathrm{PI}}$, and
    $\htdb$ over $100$ Monte Carlo runs for each
    $n \in \{500, 3000, 5000\}$, together with the limiting distribution of
    Theorem~\ref{thm:main_freq} (dashed). The bottom panel presents boxplots
    of the three estimators for each sample size; the dashed line marks
    $\theta^\star = 2$.}
    \label{fig:transport_11_freq}
\end{figure}

\subsection{A linear advection-reaction equation}\label{sec:advection_freq}

We next consider the first-order linear PDE
\begin{align*}
    u^\star(x,t) - \theta^\star_1 u^\star_x(x,t) - \theta^\star_2 u^\star_t(x,t) = 0,
    \qquad (x,t)\in[0,1]^2 .
\end{align*}
Observe that this PDE is a special case of our general formulation in
\eqref{eq:pde_model_2}, with
\begin{align*}
    K_0 = I, \qquad K_1 = (\partial_x,\partial_t), \qquad
    \Psi_\theta(z_1,z_2) = -\theta_1 z_1 - \theta_2 z_2 .
\end{align*}
\noindent
{\bf True parameters.}
Throughout the experiments, we set the true parameter to
$\theta^\star=(\theta^\star_1,\theta^\star_2)=(0.9,1.2)$. A solution of the
above PDE, from which we generate the observations, is
\begin{align*}
    u^\star(x,t) = \exp\!\Big(\frac{\theta^\star_1 x + \theta^\star_2 t}
        {(\theta^\star_1)^2+(\theta^\star_2)^2}\Big)\,
        g\big(\theta^\star_2 x - \theta^\star_1 t\big),
    \qquad g(s) = \delta_0 + a\sin(\omega_1 s) + b\cos(\omega_2 s),
\end{align*}
with $\delta_0=0.30$, $a=0.80$, $b=0.60$, $\omega_1=1$ and $\omega_2=1.5$.
\begin{figure}[htbp]
  \centering
  \includegraphics[width=0.9\textwidth]{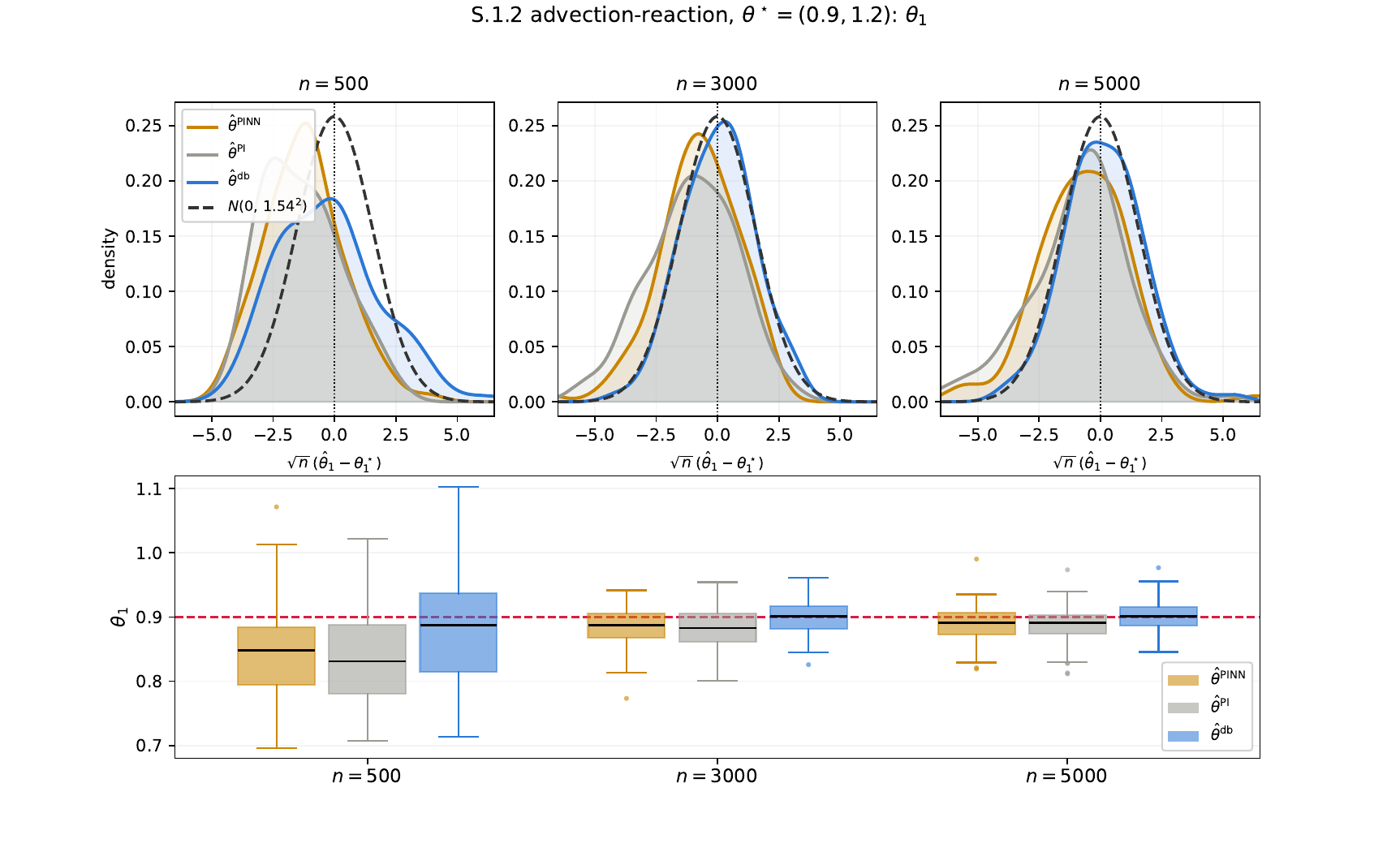}\\[1.5ex]
  \includegraphics[width=\textwidth]{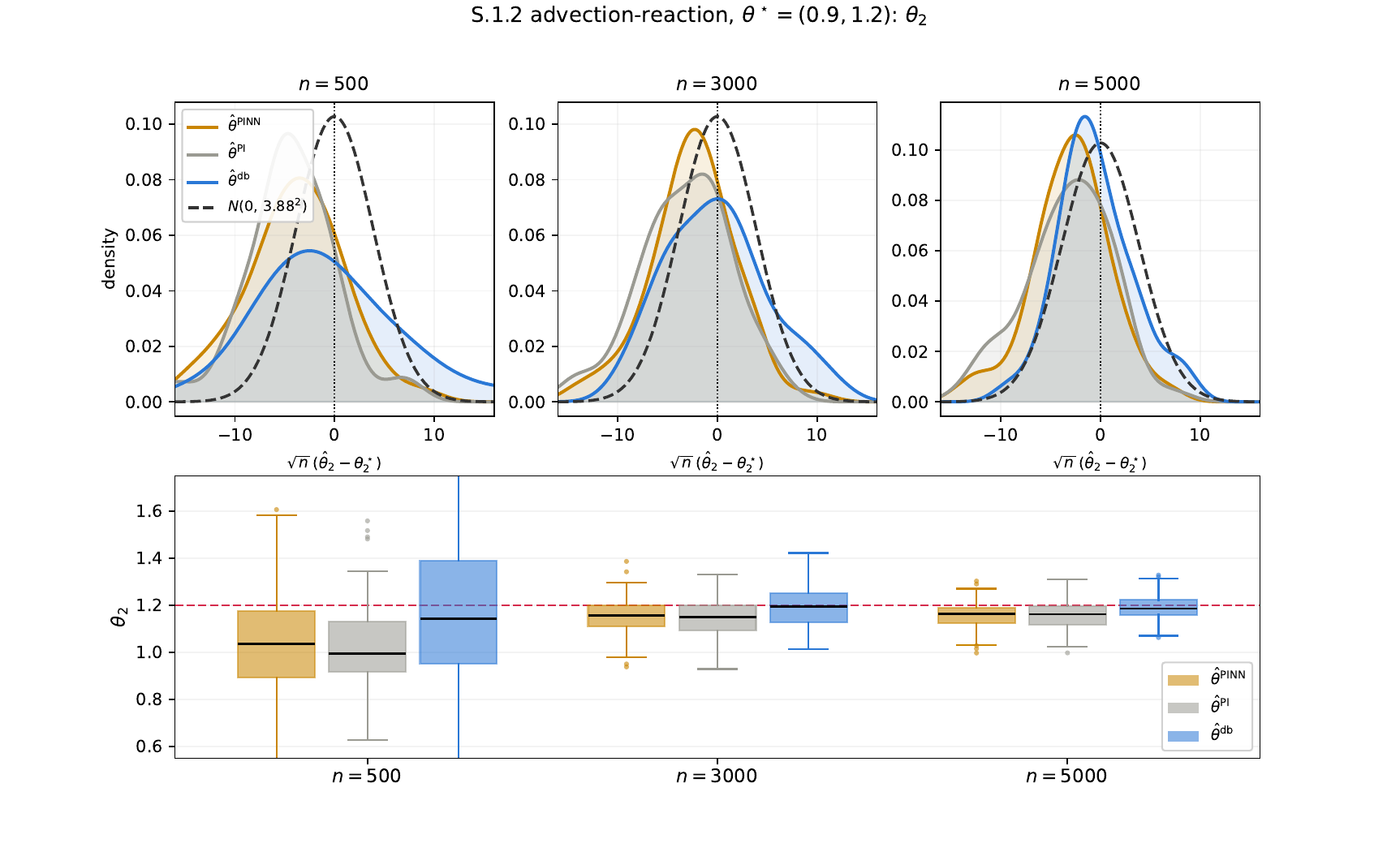}
%  \caption{Advection-reaction example, $\theta^\star=(0.9,1.2)$, frequentist
%  debiasing (Algorithm~\ref{alg:two_step_freq}), $n\in\{500,3000,5000\}$ over $100$
%  replications. Top block: component $\theta_1$; bottom block: component
%  $\theta_2$. Within each block, the upper row shows histograms of the
%  studentized debiased estimator
%  $\sqrt{n}(\htdb_j-\theta_j^\star)$ against the
%  $\mathcal{N}(0,1)$ reference density, and the lower row shows boxplots of the plug-in $\hat\theta^{\rm PI}_j$, the PINN estimate
%  $\hat\theta_{\mathrm{PINN}, j}$, and the debiased estimate
%  $\htdb_j$; dashed lines mark the truth
%  ($\theta_1^\star=0.9$, $\theta_2^\star=1.2$).}
 \caption{Advection–reaction example, \(\theta^\star=(0.9,1.2)\), frequentist debiasing (Algorithm~\ref{alg:two_step_freq}), \(n\in\{500,3000,5000\}\) over 100 replications. Top block: component \(\theta_1\); bottom block: component \(\theta_2\). Within each block, the upper row shows a smoothed histogram of \(\sqrt{n}(\widehat{\theta}_j-\theta_j^\star)\) for the PINN, plug-in, and debiased estimators, overlaid with the reference density \(N(0,1.54^2)\) for \(j=1\) and \(N(0,3.88^2)\) for \(j=2\). The lower row shows boxplots of the plug-in estimator \(\widehat{\theta}^{\mathrm{PI}}_j\), the PINN estimate \(\widehat{\theta}_{\mathrm{PINN},j}\), and the debiased estimate \(\widehat{\theta}^{\mathrm{db}}_j\); dashed lines mark the truth \((\theta_1^\star=0.9,\theta_2^\star=1.2)\).
}
  \label{fig:s12-AR}
\end{figure}
\\\\
\noindent
{\bf Data-generating process.}
We generate observations according to the following procedure:
\begin{enumerate}
    \item First, we generate $(X_i,T_i)\overset{\mathrm{i.i.d.}}{\sim}
    \mathrm{Unif}([0,1]^2)$, $1\le i\le n$, with $n \in \{500, 3000, 5000\}$.
    \item Next, for $1\le i\le n$, we set $Y_i = u^\star(X_i,T_i)+\varepsilon_i$,
    where $\varepsilon_i\overset{\mathrm{i.i.d.}}{\sim} N(0,\sigma^2)$ and
    $\sigma=0.35$.
\end{enumerate}
Following Section \ref{sec:debias_method}, we define the PDE risk using the
boundary-vanishing weight $\omega(x,t)=x^2(1-x)^2t^2(1-t)^2$. Consequently, for
a fixed $u$, the risk takes the form
\begin{align*}
    \mathcal R(\theta,u) = \frac12\int_{[0,1]^2}\omega(x,t)
        \big(u - \theta_1 u_x - \theta_2 u_t\big)^2\,dx\,dt .
\end{align*}
Writing
\begin{align*}
    A(u) = \int_{[0,1]^2}\omega\, u\begin{pmatrix}u_x\\ u_t\end{pmatrix}dx\,dt,
    \qquad
    B(u) = \int_{[0,1]^2}\omega
        \begin{pmatrix}u_x^2 & u_x u_t\\ u_x u_t & u_t^2\end{pmatrix}dx\,dt,
\end{align*}
the minimizer $\theta(u)$ of $\mathcal R(\theta,u)$ is given in closed form by $\theta(u)=B(u)^{-1}A(u)$, provided $B(u)$ is nonsingular. Accordingly, the plug-in estimator $\widehat{\theta}_{\mathrm{PI}}$ (resp.\ the PINN estimator $\widehat{\theta}_{\mathrm{PINN}}$) is obtained by replacing $u$ with the first-stage estimator $\widehat{u}$ (resp.\ $\widehat{u}_{\mathrm{PINN}}$).
\\\\
\noindent
{\bf Implementation of the first-stage estimators.}
We use $n$ observations to fit the first-stage estimator $\widehat{u}$ and to evaluate the empirical debiased score for each $n \in \{500, 3000, 5000\}$. For both the plug-in and PINN procedures, $u^\star$ is approximated by a fully connected neural network with three hidden layers, width $64$, scalar output, and a RePU activation function of order $3$, namely $\sigma(z)=(z_+)^3$. Since our methodology requires accurate evaluation of first- and second-order derivatives, all computations are carried out using automatic differentiation in double precision. The expectations with respect to the distribution of $(X,T)$ that appear in the Sobolev penalty \eqref{eq:u_est_NN} and in the definition of $\mathcal R(\theta,u)$ are approximated by Monte Carlo collocation using $4000$ points drawn uniformly over $[0,1]^2$ at each epoch. To estimate $\widehat{u}$, we minimize the empirical squared loss along with a Sobolev penalty $\|u\|^2_{W^{3,2}}$ with 
$\lambda_{\mathrm{sob}}=10^{-6}$; the PINN estimator adds another PDE penalty with $\lambda_{\mathrm{pde}}=10$. The networks are trained in \texttt{pytorch} using the Adam optimizer with learning rate $2\times10^{-3}$, for at most $3000$ epochs, with early stopping (patience $500$) based on a $20\%$ validation set. To compute $\widehat{\theta}_{\mathrm{PI}}$, we approximate the integrals defining $A(\widehat{u})$ and $B(\widehat{u})$ using a midpoint quadrature rule on a $300\times300$ grid over $[0,1]^2$. 
\\\\
\noindent
{\bf Construction of the debiased estimator.}
In the present model, the score admits the closed-form representation
\begin{align*}
    \nabla_\theta \mathcal R(\theta,u) = -A(u) + B(u)\theta .
\end{align*}
Moreover, the representer $G_{\theta,u}=(G_{\theta,u,1},G_{\theta,u,2})^\top$ of
the G\^{a}teaux derivative of $\nabla_\theta\mathcal R(\theta,u)$ with respect
to $u$ can be written as
\begin{align*}
    G_{\theta,u,1}(x,t) &= \omega_x u
        - 2\theta_1\,\partial_x\big(\omega u_x\big)
        - \theta_2\big\{\partial_x(\omega u_t) + \partial_t(\omega u_x)\big\},\\
    G_{\theta,u,2}(x,t) &= \omega_t u
        - \theta_1\big\{\partial_x(\omega u_t) + \partial_t(\omega u_x)\big\}
        - 2\theta_2\,\partial_t\big(\omega u_t\big).
\end{align*}
Therefore, the empirical debiased score in \eqref{eq:debiased_score} takes the
form
\begin{align*}
    S^{\mathrm{db}}_n(\theta,\widehat{u}) = -\widehat{A} + \widehat{B}\theta
    + \frac1n\sum_{i=1}^n\big\{Y_i - \widehat{u}(X_i,T_i)\big\}
        G_{\theta,\widehat{u}}(X_i,T_i),
\end{align*}
where $\widehat{A}=A(\widehat{u})$ and $\widehat{B}=B(\widehat{u})$. Since $G_{\theta,\widehat{u}}$ is affine in $\theta$, the debiased score is itself affine in $\theta$, so the root of $\sS_n^{\rm db}(\theta, \hat u)$ with respect to $\theta$ is available in closed form. In accordance with \eqref{eq:def_htdb}, and to guard against rare finite-sample instability, we project the resulting root onto the Euclidean ball centered at $\widehat{\theta}_{\mathrm{PI}}$ with radius $\delta_n=0.36\sqrt{7000/n}$. This constraint is active in only a few of the $n=500$ replications and inactive for $n\in\{3000,5000\}$. This yields the final debiased estimator $\widehat{\theta}_{\mathrm{db}}$.
\\\\
\noindent
{\bf Monte Carlo design and summary measures.}
For each sample size $n\in\{500,3000,5000\}$, we repeat the experiment over $100$ independent Monte Carlo runs, and in each run we record $\hat{\theta}^{\mathrm{PI}}$, $\hat{\theta}_{\mathrm{PINN}}$ and $\htdb$. The results are displayed in Figure \ref{fig:s12-AR}, one for each coordinate of $\theta$. Each figure shows, in the top row, kernel density estimates of the $\sqrt{n}$-scaled errors $\sqrt{n}(\hat{\theta}_j-\theta^\star_j)$ for the three estimators at $n\in\{500,3000,5000\}$, overlaid with the limiting Gaussian law of Theorem~\ref{thm:main_freq}; and in the bottom row, boxplots of the three estimators across the three sample sizes. Both $\hat{\theta}^{\mathrm{PI}}$ and $\hat \theta_{\rm PINN}$ are biased downward in both coordinates, whereas the debiased estimator $\htdb$ is approximately centered at $\theta^\star$: the density of $\sqrt{n}(\htdb - \theta^\star)$ concentrates around the limiting law as $n$ grows, and the median of $\htdb$ agrees closely with $\theta^\star$ at $n=3000$ and $n=5000$. Its $95\%$ confidence intervals attain approximately nominal coverage at these sample sizes (between $95\%$ and $98\%$), with mild under-coverage at $n=500$ (around $90\%$), reflecting the larger first-stage error in the small-sample regime.

\section{Some results on Sobolev spaces}
In this section, we present some results on the Sobolev function space. As before, we use the notation $W^{\beta, p}(\Omega)$ to denote the set of functions $f: \Omega \to \reals$ with finite Sobolev norms, where $\Omega \subseteq \reals^d$, and we use the notation $C^{k, \gamma}(\Omega)$ to denote the set of H\"{o}lder functions with index $k+ \gamma$, where $k$ is a non-negative integer and $\gamma \in (0, 1)$. We use the notation $C^K(\Omega)$ to denote $C^{K, 0}(\Omega)$, i.e., the set of all $K$ times differentiable functions with uniformly bounded derivatives. 
In our theoretical results, we use some key results for Sobolev function spaces, which we describe below: 

\begin{theorem}[Sobolev embedding theorem]
    \label{thm:sob_emb}
    Let $\Omega = (0, 1)^d$ and $f \in W^{\beta, p}(\Omega)$ for some integer $\beta \ge 1$. If $\beta - d/p$ is not an integer, then   
    $$
    \|f\|_{C^{K, \gamma}(\Omega)} \le C \|f\|_{W^{\beta, p}(\Omega)}, \quad K = \left\lfloor \beta - \frac{d}{p} \right\rfloor, \ \gamma = \beta - \frac{d}{p} - K \,,
    $$
    for some constant $C > 0$ independent of $f$. If $\beta - d/p$ is an integer, then: 
    $$
     \|f\|_{C^{K}(\Omega)} \le C \|f\|_{W^{\beta, p}(\Omega)}, \quad K = \beta - \frac{d}{p} - 1 \,,
    $$
    for some constant $C > 0$ independent of $f$. 
\end{theorem}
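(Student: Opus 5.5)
The plan is to reduce to the whole-space Sobolev embedding via an extension operator, and to use the Morrey/Campanato route for the Hölder part. Since $\Omega=(0,1)^d$ is a bounded Lipschitz domain, Stein's universal extension operator $E$ is available. It satisfies $Ef=f$ on $\Omega$ and $\|Ef\|_{W^{\beta,p}(\reals^d)}\le C\|f\|_{W^{\beta,p}(\Omega)}$ for every integer $\beta$. Multiplying by a fixed cutoff $\chi\in C_c^\infty$ with $\chi\equiv1$ on $\bar\Omega$, we may assume $Ef$ has support in a fixed ball $B$. It therefore suffices to prove the estimates for $g\in W^{\beta,p}(\reals^d)$ supported in $B$ and then restrict to $\Omega$.

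The core ingredient is Morrey's inequality. For $m>d/p$ (taking $p\ge1$, and $p>d$ when we apply it with $m=1$), and for $h\in W^{1,p}$ with $p>d$, it gives
\[
|h(x)-h(y)|\le C\|\nabla h\|_{L^p}|x-y|^{1-d/p},\qquad \|h\|_\infty\le C\|h\|_{W^{1,p}}.
\]
For general $\beta$ we iterate the Sobolev inequality $W^{k,p}\hookrightarrow W^{k-1,p^*}$ with $1/p^*=1/p-1/d$, applied to derivatives. Let $j$ be the number of steps needed until the integrability exponent exceeds $d$; if $d/p$ is an integer we stop at the exponent equal to $d$ and push one step further. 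The key observation is that the derivative count and the exponent move together: after $j$ steps the quantity $\beta-j-d/p_j$ equals $\beta-d/p$. This invariance is what produces the stated $K$ and $\gamma$.

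\textbf{Case $\beta-d/p\notin\bbN$.} We choose $j$ minimal with $p_j>d$. Then every $D^{\balpha}g$ with $|\balpha|\le K$ lies in $W^{1,p_j}$, and Morrey's inequality gives bounded derivatives up to order $K$. The derivatives of order exactly $K$ are Hölder continuous with exponent $1-d/p_j=\beta-d/p-K=\gamma$. The sup bounds come from the same chain with constants independent of $g$. If $p>d$ already at the start, then $j=0$ and the argument is direct.

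\textbf{Case $\beta-d/p\in\bbN$.} Here the chain lands exactly at the borderline exponent $p_j=d$, where $W^{1,d}\not\hookrightarrow L^\infty$. We give up one derivative. Every derivative of order $\le K+1=\beta-d/p$ of $g$ lies in $W^{1,d}\subset L^r$ for all $r<\infty$, using compact support. Taking $r>d$ and applying Morrey's inequality once more shows that derivatives of order $\le K=\beta-d/p-1$ are bounded, which is the claimed $C^K$ estimate. Restricting to $\Omega$ and composing with the bound on $E$ yields both inequalities.

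\textbf{Main obstacle.} The main difficulty is the bookkeeping in the iteration: checking that the derivative order and the Hölder exponent come out exactly as stated. The borderline case also needs the logarithmic failure at $p=d$ handled by the one-derivative loss. Alternatively, one may simply cite standard references for all of this, e.g. Adams–Fournier, Theorem 4.12, or Evans, Section 5.6.
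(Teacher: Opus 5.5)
Your proposal is correct and follows essentially the same route as the paper: extend from $(0,1)^d$ to $\mathbb{R}^d$ using Stein's extension operator for Lipschitz domains, then apply the whole-space embedding and restrict back to $\Omega$. The only difference is in the whole-space step. The paper cites Brezis (Theorem 9.9 and Corollaries 9.11, 9.13, 9.15) for it. You instead sketch it via iterated Sobolev inequalities and Morrey's inequality, which is the argument underlying those cited results.
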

\begin{proof}
    For a detailed proof, see Theorem 9.9, Corollary 9.11, 9.13, and 9.15 of \cite{brezis2011functional}. The embedding theorem is first proved for $\Omega = \reals^d$ (Theorem 9.9 and Corollary 9.13 of \cite{brezis2011functional}), and then the results are extended to any $\Omega \subseteq \reals^d$ with smooth boundary (Corollary 9.15 of \cite{brezis2011functional}) using the argument of extension operator (Theorem 9.7 of \cite{brezis2011functional}). For this extension argument to work, we require $\Omega$ to satisfy some smoothness conditions (see Theorem 5 of Chapter 6 of \cite{stein1970singular}), which are satisfied by $(0, 1)^d$ (Example 2 of Section 3.3 of \cite{stein1970singular}). 
\end{proof}

\begin{theorem}[Gagliardo--Nirenberg interpolation]
\label{thm:interpolation}
Let $\Omega=(0,1)^d$, let $\beta\in\mathbb N$, and let $0\le j<\beta$. Let $1\le p_0,p_1\le\infty$, $1\le r\le\infty$, and let $\theta\in[j/\beta,1]$ satisfy
\begin{equation}
\frac1r=\frac jd+\theta\left(\frac1{p_1}-\frac{\beta}{d}\right)+\frac{1-\theta}{p_0}.
\label{eq:GN-general-relation}
\end{equation}
Then, for every multi-index $\alpha$ with $|\alpha|=j$ and every $f\in W^{\beta,p_1}(\Omega)\cap L^{p_0}(\Omega)$, there exists a constant $C=C(d,\beta,j,p_0,p_1,r,\theta,\Omega)$, such that
\begin{equation}
\|\partial^\alpha f\|_{L^r(\Omega)} \le C \|f\|_{W^{\beta,p_1}(\Omega)}^{\theta} \|f\|_{L^{p_0}(\Omega)}^{1-\theta}.
\label{eq:GN-general}
\end{equation}
as long as $\theta < 1$. In particular, taking $p_0=p_1=p$, we obtain that if 
\begin{equation}
\frac1r = \frac{j-\theta\beta}{d} + \frac1p,
\label{eq:GN-special-relation}
\end{equation}
then
\begin{equation}
\|\partial^\alpha f\|_{L^r(\Omega)} \le C \|f\|_{W^{\beta,p}(\Omega)}^\theta \|f\|_{L^p(\Omega)}^{1-\theta}.
\label{eq:GN-special}
\end{equation}
If $1<p<\infty$ and $\beta-j-d/p$ is a non-negative integer, then the endpoint $\theta=1$ is excluded, and the conclusion holds for $\theta\in[j/\beta,1)$.
\end{theorem}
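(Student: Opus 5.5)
The plan is to reduce the statement to the classical Gagliardo--Nirenberg inequality on $\reals^d$ (Nirenberg, 1959) by means of an extension operator. The full Sobolev norm $\|f\|_{W^{\beta,p_1}(\Omega)}$ appears on the right-hand side, rather than only the top-order seminorm. This is what lets us avoid the extra lower-order term $\|f\|_{L^s(\Omega)}$ that usually appears in bounded-domain versions.

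\emph{Step 1 (extension).} As in the proof of Theorem~\ref{thm:sob_emb}, $\Omega=(0,1)^d$ is a Lipschitz domain (Example 2 of Section 3.3 of \cite{stein1970singular}). Hence Stein's universal extension operator $E$ (Theorem 5 of Chapter 6 of \cite{stein1970singular}) is available. This single linear operator satisfies $(Ef)|_\Omega=f$ and is bounded $W^{k,p}(\Omega)\to W^{k,p}(\reals^d)$ simultaneously for every integer $k\ge 0$ and every $p\in[1,\infty]$. Two instances of this boundedness are what we need:
\[
\|Ef\|_{W^{\beta,p_1}(\reals^d)}\le C\|f\|_{W^{\beta,p_1}(\Omega)} \quad\text{and}\quad \|Ef\|_{L^{p_0}(\reals^d)}\le C\|f\|_{L^{p_0}(\Omega)}.
\]
The point of using Stein's operator, rather than a reflection-type extension tied to a fixed order, is exactly that it covers $k=0$ and the endpoints $p\in\{1,\infty\}$ with the same operator. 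So $g:=Ef\in W^{\beta,p_1}(\reals^d)\cap L^{p_0}(\reals^d)$.

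\emph{Step 2 (whole-space inequality).} Apply the classical Nirenberg inequality on $\reals^d$ to $g$. Under the scaling relation \eqref{eq:GN-general-relation} with $\theta\in[j/\beta,1]$, it gives
\[
\|D^j g\|_{L^r(\reals^d)}\le C\,\|D^\beta g\|_{L^{p_1}(\reals^d)}^{\theta}\,\|g\|_{L^{p_0}(\reals^d)}^{1-\theta}.
\]
The only exception is the case $1<p_1<\infty$ with $\beta-j-d/p_1$ a nonnegative integer, where $\theta=1$ must be excluded. This is precisely the exclusion in the statement, so no extra restriction arises.

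\emph{Step 3 (restriction).} For $|\alpha|=j$, restrict back to $\Omega$:
\[
\|\partial^\alpha f\|_{L^r(\Omega)}\le \|D^j g\|_{L^r(\reals^d)}.
\]
Then bound $\|D^\beta g\|_{L^{p_1}}\le \|g\|_{W^{\beta,p_1}}\le C\|f\|_{W^{\beta,p_1}(\Omega)}$ and $\|g\|_{L^{p_0}}\le C\|f\|_{L^{p_0}(\Omega)}$ by Step 1. Combining these yields \eqref{eq:GN-general}. Setting $p_0=p_1=p$ turns the relation into \eqref{eq:GN-special-relation} and gives \eqref{eq:GN-special} immediately.

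The main obstacle is Step 1. We need one extension operator that is simultaneously bounded on $L^{p_0}$ and on $W^{\beta,p_1}$, possibly with $p_0\ne p_1$ and including $p\in\{1,\infty\}$. Without this, the two factors on the right-hand side cannot be controlled by the same function $g$. I would verify that Stein's construction, which uses a regularized distance function and is independent of $k$ and $p$, provides this, and check that the constant depends only on $(d,\beta,p_0,p_1,\Omega)$. The remaining steps are bookkeeping.
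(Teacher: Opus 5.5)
Your proposal is correct and follows the paper's proof essentially step for step. Both apply Stein's extension operator, bounded simultaneously on $W^{\beta,p_1}$ and on $L^{p_0}$, then invoke Nirenberg's whole-space inequality for $Ef$, restrict back to $\Omega$, and read off the case $p_0=p_1=p$ from the scaling relation. Writing the whole-space bound with the seminorm $\|D^\beta g\|_{L^{p_1}}$ before dominating it by the full norm is only cosmetic. Nirenberg's other exceptional case ($j=0$, $p_0=\infty$, $r\beta<d$), which you omit, is harmless here: it forces $p_1<\infty$, so $Ef\in L^{p_1}(\mathbb{R}^d)$ already supplies the required extra integrability.
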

\begin{proof}
We first recall the full Gagliardo--Nirenberg inequality on $\mathbb R^d$ \cite{nirenberg1959elliptic}.
If $g\in W^{\beta,p_1}(\mathbb R^d)\cap L^{p_0}(\mathbb R^d)$ and if
$\theta$ satisfies \eqref{eq:GN-general-relation}, then
\begin{equation}
\|\partial^\alpha g\|_{L^r(\mathbb R^d)}
\le
C
\|g\|_{W^{\beta,p_1}(\mathbb R^d)}^{\theta}
\|g\|_{L^{p_0}(\mathbb R^d)}^{1-\theta}.
\label{eq:GN-Rd}
\end{equation}
The usual endpoint restriction is needed in the critical Sobolev case; in particular, when
$r=\infty$ and $p_1<\infty$, we use the non-critical case $\theta<1$. Since $\Omega=(0,1)^d$ is a bounded Lipschitz domain, there exists an extension operator
$E$ (see Theorem 5 of Chapter 6 of \cite{stein1970singular}) such that, for every integer $\beta$ and every $1\le p_1\le\infty$,
\[
\|Ef\|_{W^{\beta,p_1}(\mathbb R^d)}
\le
C
\|f\|_{W^{\beta,p_1}(\Omega)}.
\]
Moreover, the extension can be chosen so that
\[
\|Ef\|_{L^{p_0}(\mathbb R^d)}
\le
C
\|f\|_{L^{p_0}(\Omega)}
\]
for every $1\le p_0\le\infty$. Applying Equation \eqref{eq:GN-Rd} to $g=Ef$ gives
\[
\|\partial^\alpha f\|_{L^r(\Omega)}
\le
\|\partial^\alpha Ef\|_{L^r(\mathbb R^d)}
\le
C
\|Ef\|_{W^{\beta,p_1}(\mathbb R^d)}^\theta
\|Ef\|_{L^{p_0}(\mathbb R^d)}^{1-\theta}.
\]
Using the two extension bounds yields
\[
\|\partial^\alpha f\|_{L^r(\Omega)}
\le
C
\|f\|_{W^{\beta,p_1}(\Omega)}^\theta
\|f\|_{L^{p_0}(\Omega)}^{1-\theta}.
\]
This proves \eqref{eq:GN-general}. Finally, if $p_0=p_1=p$, then \eqref{eq:GN-general-relation} becomes
\[
\frac1r
=
\frac jd
+
\theta\left(\frac1p-\frac{\beta}{d}\right)
+
\frac{1-\theta}{p}
=
\frac{j-\theta\beta}{d}
+
\frac1p,
\]
which gives \eqref{eq:GN-special-relation} and hence the special case
\eqref{eq:GN-special}.
\end{proof}

\section{Proof of Proposition \ref{prop:def_G}}
\label{sec:proof:prop:def_G}
\begin{proof}
Take $u\in W^{\beta, 2}(\Omega, L)$, the space of all Sobolev functions with Sobolev norm at most $L$. Therefore, and since $\beta-2\tau> d$ (Assumption \ref{assm:dgp}), by the Sobolev embedding theorem (Theorem \ref{thm:sob_emb}), $\|\partial^{\balpha} u\|_{L_\infty}  \leq L'$ for some constant $L'$ uniformly over $(u, \balpha)$ for all $\balpha$ with $|\balpha| \le 2\tau$. 
Consequently, by Assumption \ref{assm:psi_smoothness}, there exists a nonnegative continuous function $\theta\mapsto c_1(\theta)$ such that: 
$$
|(\Psi_\theta \circ K_1 u)(x)| = |\Psi(x, K_1(u)(x), \theta)| \le c_1(\theta), \quad \forall \ x \in [0, 1]^d, \ \ u \in W^{\beta, 2}(\Omega, L) \,.
$$
as $\|K_{1j}(u)\|_\infty \le L'$. Furthermore, by another application of Assumption \ref{assm:psi_smoothness}, we also have: 
$$
|\partial_{\theta_j} (\Psi_\theta \circ K_1 u)(x))| = |\partial_{\theta_j} \Psi(x, K_1(u)(x), \theta)|\le c_2(\theta) 
$$
for some nonnegative continuous function $c_2(\cdot)$. As a consequence, 
$$
\left|\left\{ K_0 u(x) + (\Psi_\theta\circ K_1 u)(x)\right\} \partial_{\theta_j} (\Psi_\theta \circ K_1 u)(x)\right|  \le c_2(\theta)(L' + c_1(\theta)), 
$$
for all $x \in \Omega$ and $u \in W^{\beta, 2}(\Omega, L)$.  
Therefore, we can justify interchanging the derivative and the integral via the dominated convergence theorem (DCT) to obtain: 
\[
(\nabla_\theta \cR)(\theta,u) =  \int_\Omega \left\{ K_0 u(x) + (\Psi_\theta\circ K_1 u)(x)\right\} \nabla_\theta (\Psi_\theta\circ K_1 u)(x)\omega(x)\rmd x,\;\;\;\theta\in\rset^q.
\]
For any $h\in W^{\beta,2}(\Omega)$,  the Gateaux derivative of $(\nabla_\theta \cR)(\theta,u)$ with respect to $u$ in the direction $h$ is defined as:
\[
\rmd (\nabla_\theta \cR)(\theta,u)[h] =\lim_{t\to 0}\frac{1}{t}\left[(\nabla_\theta \cR)(\theta,u + t h ) - (\nabla_\theta \cR)(\theta,u)\right].
\]
To obtain this derivative explicitly, we write
\begin{multline*} 
\frac{(\nabla_\theta \cR)(\theta,u + t h ) -(\nabla_\theta \cR)(\theta,u)}{t} = \int_\Omega K_0 h(x)\nabla_\theta (\Psi_\theta\circ K_1 (u + th))(x)\omega(x)\rmd x \\
+ \int_\Omega \frac{\left[(\Psi_\theta\circ K_1 (u + th))(x) - (\Psi_\theta\circ K_1 u)(x)\right]}{t}\nabla_\theta (\Psi_\theta\circ K_1 (u + th))(x)\omega(x)\rmd x \\
+ \int_\Omega \left\{ K_0 u(x) + \Psi_\theta\circ K_1 u(x)\right\} \frac{\left[ \nabla_\theta (\Psi_\theta\circ K_1 (u+th))(x) - \nabla_\theta(\Psi_\theta\circ K_1 u)(x)\right]}{t}\omega(x)\rmd x.
\end{multline*}
Under Assumption \ref{assm:psi_smoothness} we can carry the limit as $t\to 0$ under the integral, which gives 
\begin{multline*}
\rmd (\nabla_\theta \cR)(\theta,u)[h] =\lim_{t \to 0}\frac{(\nabla_\theta \cR)(\theta,u + t h ) -(\nabla_\theta \cR)(\theta,u)}{t} \\
= \int_\Omega \left\{ K_0 h(x) + \nabla_z(\Psi_\theta\circ K_1 u)(x)^\top K_1 h(x)\right\} \nabla_\theta (\Psi_\theta\circ K_1 u)(x)\omega(x)\rmd x \\
+ \int_\Omega \left\{ K_0 u(x) + (\Psi_\theta\circ K_1 u)(x)\right\} \nabla^{(2)}_{z,\theta} (\Psi_\theta\circ K_1 u)(x)K_1 h(x) \omega(x)\rmd x.
\end{multline*}
Let $K_0^\dagger$ denote the adjoint of $K_0$  as defined in Equation \eqref{eq:adjoint:def} and \eqref{eq:adjoint}.  
%With a smoothness assumption (twice continuous differentiability should be enough) on $\theta\mapsto \Psi_\theta(x)$ for all $x\in\Omega$ and since  $K_0 u, K_1 u\in C^{\beta-\tau}(\Omega)$,  for $u\in C^\beta(\Omega)$ we can differentiate under the integral and write
%Similarly, for all $\theta\in\rset^q$, and $u\in C^\beta(\Omega)$, we can take the Gateaux derivative of $(\nabla_\theta \cR)(\theta,u)$ with respect to $u$ in the direction $h$ and write
%where $\nabla^{(2)}_{x,\theta}\Psi_\theta(x)\in\rset^{d\times p}$ with $i$-th row given by the partial derivatives wrt the components of $x$ of $(\nabla_\theta \Psi_\theta (x))_i$. 
%\DM{We can generalize it to $d_1 \times d_2$. Yves: It should have been $d\times p$, not $d\times d$.}
%Let $K_0^\dagger$ denote the adjoint operator of $K_0$, well-defined on $C^\beta_0(\Omega)$: for $f,g\inC^\beta(\Omega)$ with $\beta>\tau$ such that $f\inC^\beta_0(\Omega)$ ($f$ vanishes on the boundary of $\Omega$): 
%\[\int_\Omega f(x) (K_0 g)(x)\rmd x = \int_\Omega g(x) (K_0^\dagger f)(x)\rmd x.\] 
We extend $K_0^\dagger$ to multivalued functions component-wise. First, we note that since $z \mapsto \partial_{\theta_j} \Psi(x, z, \theta) \in C^\beta$ by Assumption \ref{assm:psi_smoothness}, and $u\in W^{\beta,2}(\Omega,L)$, each coordinate function of $x \mapsto \partial_{\theta_j} (\Psi_\theta \circ K_1(u))(x) \in W^{\beta-\tau,2}(\Omega)$, and consequently $x \mapsto \partial_{\theta_j} (\Psi_\theta \circ K_1(u))(x) \  \omega(x) \in W^{\beta-\tau,2}_0(\Omega)$ (by Assumption \ref{assm:w}, See Theorem 2 Section 5.5 of \cite{evans:pde}). Using this, we  obtain
\begin{equation}
\label{adj:eq:1} 
\int_\Omega K_0 h(x) \nabla_\theta (\Psi_\theta \circ K_1(u))(x)\omega(x)\rmd x = \int_\Omega h(x) K_0^\dagger\left(\omega\nabla_\theta (\Psi_\theta \circ K_1(u))\right)(x)\rmd x.\end{equation}
Let us define $K_1^\dagger\eqdef (K_{1,1}^\dagger,\cdots,K_{1,p}^\dagger)^\top$, where $K_{1,j}^\dagger$ denotes the adjoint of $K_{1,j}$. If $A:\Omega \to\rset^{q\times p}$ is a matrix-valued function where each component belongs to $W_0^{\tau,2}(\Omega)$, we define $K_1^\dagger A:\;\Omega\to\rset^q$ as $(K_1^\dagger A)_i \eqdef \sum_{k=1}^p K_{1,k}^\dagger A_{ik}$. With this definition, we check that
\[ \int_\Omega A(x) K_1 u(x)\rmd x = \int_\Omega u(x) K_1^\dagger A(x)\rmd x.\] 
Since the components of $z\mapsto\nabla_\theta \Psi(x, z, \theta)$ and $z\mapsto\nabla^{(2)}_{z, \theta} \Psi(x, z, \theta)$ belong to $C^\beta(\Omega)$ (Assumption \ref{assm:psi_smoothness}), we have for all $u\in W^{\beta,2}(\Omega,L)$,
\begin{align*}
    \partial_{\theta_j}(\Psi_\theta \circ K_1(u)) \ \partial_{z_k} (\Psi_\theta \circ K_1(u)) \in W^{\beta-\tau,2}(\Omega), & \quad \forall \ 1\leq j \le q, 1 \le k\leq p, \\
    \{K_0 u + (\Psi_\theta \circ K_1(u))\} \partial^2_{z_k \theta_j}(\Psi_\theta \circ K_1(u))\in W^{\beta-\tau,2}(\Omega) & \quad \forall \ 1\leq j\leq q, 1\leq k\leq p. 
\end{align*}
Therefore, it follows that
\begin{multline}
\label{adj:eq:2}
\int_\Omega \nabla_z(\Psi_\theta \circ K_1(u))(x)^\top K_1 h(x)  \nabla_\theta (\Psi_\theta \circ K_1(u))(x)\omega(x)\rmd x \\
= \int_\Omega h(x) K_1^\dagger\left(\omega\left\{\nabla_\theta (\Psi_\theta \circ K_1(u)) \nabla_z(\Psi_\theta \circ K_1(u))^\top\right\}\right)(x)\rmd x,
\end{multline}
and
\begin{multline}
\label{adj:eq:3}
\int_\Omega \left\{ K_0 u(x) + \Psi_\theta(K_1 u(x))\right\} \nabla^{(2)}_{z,\theta}(\Psi_\theta \circ K_1(u))(x)K_1 h(x) \omega(x)\rmd x  \\
= \int_\Omega h(x)K_1^{\dagger}\left(\omega \left\{ K_0 u + (\Psi_\theta \circ K_1(u))\right\} \nabla^{(2)}_{z,\theta} (\Psi_\theta \circ K_1(u))\right)(x)\rmd x.
\end{multline}
In conclusion, combining (\ref{adj:eq:1})-(\ref{adj:eq:3}) for $\theta\in\rset^q$, and $u\in W^{\beta,2}(\Omega,L)$, we can find $\mathsf{G}_{\theta,u}:\Omega \to\rset^q$, $\mathsf{G}_{\theta,u}\in L^2(\Omega,\rset^q)$, such that the Gateaux derivative in the direction $h\in W^{\beta,2}(\Omega)$ can be expressed as: 
\[\rmd (\nabla_\theta \cR)(\theta,u)[h]  = \int_\Omega h(x) \mathsf{G}_{\theta,u}(x)\rmd x,\]
where
\begin{multline}
\label{eq:def_G}
\mathsf{G}_{\theta,u}(x) =K_0^\dagger\left(\omega\nabla_\theta(\Psi_\theta \circ K_1(u))\right)(x) \\
+ K_1^\dagger\left(\omega\left\{\nabla_\theta (\Psi_\theta \circ K_1(u)) \nabla_z (\Psi_\theta \circ K_1(u))^\top\right\}\right)(x) \\
+K_1^{\dagger}\left(\omega \left\{ K_0 u + (\Psi_\theta \circ K_1(u))\right\} \nabla^{(2)}_{z,\theta} (\Psi_\theta \circ K_1(u))\right)(x).
\end{multline}
Note that, since $K_0^\dagger$ and $K_1^\dagger$ are also differential operators of order $\tau$, $x\mapsto \mathsf{G}_{\theta,u}(x)$ has derivatives up to order $\beta-2\tau$, and $\mathsf{G}_{\theta,u}\in W^{\beta-2\tau,2}(\Omega)$, and Sobolev embedding yields $\|G_{\theta,u}\|_\infty \le c_0(\theta)$, for some function $c_0$ that is bounded on compact sets. This completes the first part of the proof of Proposition \ref{prop:def_G}. 
\\\\
\underline{{\bf Part (2):}} For $u,u'\in W^{\beta,2}(\Omega,L)$, $\theta,\theta'\in\rset^q$, we write
\begin{equation}\label{eq:decom:diffG0}
\mathsf{G}_{\theta,u}(x) - \mathsf{G}_{\theta',u'}(x) = \mathsf{G}_{\theta,u}(x) - \mathsf{G}_{\theta,u'}(x) + \mathsf{G}_{\theta,u'}(x) - \mathsf{G}_{\theta',u'}(x).    
\end{equation}
Using (\ref{eq:def_G}), we write the first difference on the RHS of the last display as 
\begin{align} 
\label{eq:decom:diffG1}
& \mathsf{G}_{\theta,u}(x) - \mathsf{G}_{\theta,u'}(x) \\
& = K_0^{\dagger}\left(\omega\left\{\nabla_\theta (\Psi_\theta \circ K_1(u)) - \nabla_\theta (\Psi_\theta \circ K_1(u'))\right\}\right)(x) \nonumber\\
&+ K_1^{\dagger}\left(\omega\left\{\left(\nabla_\theta (\Psi_\theta \circ K_1(u)) -  \nabla_\theta (\Psi_\theta \circ K_1(u'))\right)\nabla_z (\Psi_\theta \circ K_1(u))^\top\right\}\right)(x)\nonumber\\
& + K_1^{\dagger}\left(\omega \nabla_\theta (\Psi_\theta \circ K_1(u'))\left\{ \nabla_z (\Psi_\theta \circ K_1(u)) - \nabla_z (\Psi_\theta \circ K_1(u'))\right\}^\top \right)(x)\nonumber\\
&+K_1^{\dagger}\left(\omega \left\{ K_0 u - K_0 u' + (\Psi_\theta \circ K_1(u)) - (\Psi_\theta \circ K_1(u'))\right\} \nabla^{(2)}_{z,\theta} (\Psi_\theta \circ K_1(u))\right)(x)\nonumber\\
&+K_1^{\dagger}\left(\omega \left\{ K_0 u' + (\Psi_\theta \circ K_1(u'))\right\} \left\{\nabla^{(2)}_{z,\theta} (\Psi_\theta \circ K_1(u)) - \nabla^{(2)}_{z,\theta} (\Psi_\theta \circ K_1(u'))\right\}\right)(x).
\end{align}
Given $h\in W^{\tau,2}(\Omega)$, by the properties of the adjoint, 
\begin{align*}
   & \int_\Omega h(x) K_0^{\dagger}\left(\omega\left\{\nabla_\theta (\Psi_\theta \circ K_1(u)) - \nabla_\theta (\Psi_\theta \circ K_1(u'))\right\}\right)(x)\rmd x \\
   & \qquad \qquad = \int_\Omega K_0(h)(x) \left\{\nabla_\theta (\Psi_\theta \circ K_1(u)) - \nabla_\theta (\Psi_\theta \circ K_1(u'))\right\}\omega(x)\rmd x. 
\end{align*}
Since $K_0$ and $K_1$ are of order $\tau$,
\begin{multline*}
    \left\|\int_\Omega h(x) K_0^{\dagger}\left(\omega\left\{\nabla_\theta (\Psi_\theta \circ K_1(u)) - \nabla_\theta (\Psi_\theta \circ K_1(u'))\right\}\right)(x)\rmd x\right\|_2 \\
    \leq c_0\times \sqrt{q}\times \|h\|_{W^{\tau,2}} \max_{1\leq j\leq q}\; \sqrt{\int_\Omega |\partial_{\theta_j} (\Psi_\theta \circ K_1(u))(x) - \partial_{\theta_j} (\Psi_\theta \circ K_1(u'))(x)|^2\omega(x)\rmd x},
\end{multline*}
for some constant $c_0$ that depends only on the operator $K_0$ and $\|\omega\|_\infty$. With $u,u'\in W^{\beta,2}(\Omega,L)$, and $\beta > 2\tau + d/2$, by Sobolev embedding $\max(\|K_1 u\|_\infty,\|K_1 u'\|_\infty) \leq c_0 L$ for some constant $c_0$. Using this with Assumption \ref{assm:psi_smoothness}, we obtain 
\[
|\partial_{\theta_j} (\Psi_\theta \circ K_1(u))(x) - \partial_{\theta_j} (\Psi_\theta \circ K_1(u'))(x)| \leq c(\theta) \|K_1(u - u')(x)\|_2
\]
As a result, since $K_1$ is or order $\tau$, we get
\begin{align*}
& \left\|\int_\Omega h(x) K_0^{\dagger}\left(\omega\left\{\nabla_\theta (\Psi_\theta \circ K_1(u)) - \nabla_\theta (\Psi_\theta \circ K_1(u'))\right\}\right)(x)\rmd x\right\|_2 \\
& \leq c_1(\theta) \times \|h\|_{W^{\tau,2}} \times \| u - u'\|_{W^{\tau,2}},
\end{align*}
for some continuous function of $\theta$ $c_1$. The remaining terms in (\ref{eq:decom:diffG1}) are handled similarly. We decomposition the second difference on the RHS of (\ref{eq:decom:diffG0}) similarly:
\begin{align} 
\label{eq:decom:diffG2}
& \mathsf{G}_{\theta,u'}(x) - \mathsf{G}_{\theta',u'}(x) \\
= & \ K_0^{\dagger}\left(\omega\left\{\nabla_\theta (\Psi_\theta \circ K_1(u')) - \nabla_{\theta} (\Psi_{\theta'} \circ K_1(u'))\right\}\right)(x) \nonumber\\
&+ K_1^{\dagger}\left(\omega\left\{\left(\nabla_\theta (\Psi_\theta \circ K_1(u'))-  \nabla_{\theta} (\Psi_{\theta'} \circ K_1(u'))\right)\nabla_z (\Psi_\theta \circ K_1(u'))^\top\right\}\right)(x)\nonumber\\
& + K_1^{\dagger}\left(\omega \ \nabla_{\theta} (\Psi_{\theta'} \circ K_1(u'))\left\{ \nabla_z (\Psi_\theta \circ K_1(u'))- \nabla_z(\Psi_{\theta'} \circ K_1(u'))\right\}^\top \right)(x)\nonumber\\
&+K_1^{\dagger}\left(\omega \left\{(\Psi_\theta \circ K_1(u')) - (\Psi_{\theta'} \circ K_1(u'))\right\} \nabla^{(2)}_{z,\theta} (\Psi_\theta \circ K_1(u'))\right)(x)\nonumber\\
&+K_1^{\dagger}\left(\omega \left\{ K_0 u' + (\Psi_{\theta'} \circ K_1(u'))\right\} \left\{\nabla^{(2)}_{z,\theta} (\Psi_\theta \circ K_1(u')) - \nabla^{(2)}_{z,\theta} (\Psi_{\theta'} \circ K_1(u' ))\right\}\right)(x).
\end{align}
We then proceed similarly as above. We show the details for the first term. The other terms are handled similarly. Given $h\in W^{\tau,2}(\Omega)$, by the properties of the adjoint,
\begin{multline*} 
\int_\Omega h(x) K_0^{\dagger}\left(\omega\left\{\nabla_\theta (\Psi_\theta \circ K_1(u)) - \nabla_{\theta} (\Psi_{\theta'} \circ K_1(u))\right\}\right)(x)\rmd x \\
= \int_\Omega K_0 h(x) \left\{\nabla_\theta (\Psi_\theta \circ K_1(u)) - \nabla_{\theta} (\Psi_{\theta'} \circ K_1(u))\right\}\omega(x)\rmd x.
\end{multline*}
Using Assumption \ref{assm:psi_smoothness}, and since $\|K_1u\|_\infty \leq c_0 L$, we conclude that for some $\bar\theta\in \mathbf{B}_r$ between $\theta$ and $\theta'$,
\[ 
\|\nabla_\theta (\Psi_\theta \circ K_1(u)) - \nabla_{\theta} (\Psi_{\theta'} \circ K_1(u))\|_2 \leq \|\nabla^{(2)}_\theta (\Psi_{\bar \theta} \circ K_1(u))\|_2\times \|\theta-\theta'\|_2\leq C \|\theta-\theta'\|_2.
\]
Therefore
\[
\left\|\int_\Omega h(x) K_0^{\dagger}\left(\omega\left\{\nabla_\theta (\Psi_\theta \circ K_1(u)) - \nabla_{\theta} (\Psi_{\theta'} \circ K_1(u))\right\}\right)(x)\rmd x\right\|_2 \leq C\sqrt{q} \|\theta-\theta'\|_2 \|h\|_{W^{\tau,2}}.
\]
Hence, the conclusion follows.

\medskip
\noindent
\underline{{\bf Part (3):}} We next show that the representer function $\sG_{\theta, u}(x)$ satisfies for any two $u, u' \in W^{\beta, 2}(L)$: 
$$
\|\sG_{\theta, u, j} - \sG_{\theta, u', j}\|_{L_\infty(\Omega)} \le C \|u - u'\|^{\alpha_*}_{L_\infty(\Omega)} \quad \forall \ 1 \le j \le q, \quad \text{ with } \ \alpha_* = \frac{\beta - 2\tau - d/2}{\beta - d/2} \,.
$$
We use the shorthand $\|\cdot\|_\infty$ for $\|\cdot\|_{L_\infty(\Omega)}$. By Equation \eqref{eq:def_G}, we have: 
\begin{align*}
& \mathsf{G}_{\theta,u, j}(x) \\
& =K_0^\dagger\left(\omega \ \partial_{\theta_j} (\Psi_\theta \circ K_1(u))\right)(x) + e_j^\top K_1^\dagger\left(\omega\left\{\nabla_\theta (\Psi_\theta \circ K_1(u)) \nabla_z (\Psi_\theta \circ K_1(u))^\top\right\}\right)(x) \\
& \qquad \qquad \qquad \qquad +e_j^\top K_1^{\dagger}\left(\omega \left\{ K_0 u + (\Psi_\theta \circ K_1(u))\right\} \nabla^{(2)}_{z,\theta} (\Psi_\theta \circ K_1(u))\right)(x) \\
& := \mathsf{G}^{(1)}_{\theta,u, j}(x) + \mathsf{G}^{(2)}_{\theta,u, j}(x) + \mathsf{G}^{(3)}_{\theta,u, j}(x) \,.
\end{align*}
As a consequence, we have: 
$$
\|\mathsf{G}_{\theta,u, j} - \mathsf{G}_{\theta,u', j}\|_{\infty} \le \sum_{k = 1}^3 \|\mathsf{G}^{(k)}_{\theta,u, j} - \mathsf{G}^{(k)}_{\theta,u', j}\|_{\infty}
$$
Therefore, we need to bound $\|\mathsf{G}^{(k)}_{\theta,u, j} - \mathsf{G}^{(k)}_{\theta,u', j}\|_{\infty}$ for $1 \le k \le 3$. Let us start with $k = 1$. Note that for $u \in W^{\beta, 2}(\Omega, L)$, the function $x \mapsto \omega(x) \ \partial_{\theta_j} (\Psi_\theta \circ K_1(u))(x) \in W^{\beta - \tau, 2}(\Omega, L_1)$ for some $L_1 > 0$, as the differential operator $K_1$ has order atmost $\tau$. Furthermore, both $K_0$ and $K_0^\dagger$ are differential operators with order at most $\tau$. Also, by Theorem \ref{thm:sob_emb}, we know $u \in C^{2\tau}$ as $\beta - 2\tau > d$. Therefore, there exists a compact set $\Omega_1 \subseteq \reals^p$, such that $K_1(u)(x) \in \Omega_1$ for any $u \in W^{\beta, 2}(\Omega, L)$. 
Therefore applying Theorem \ref{thm:interpolation} on $\omega \ \partial_{\theta_j} (\Psi_\theta \circ K_1(u)) - \omega \ \partial_{\theta_j} (\Psi_\theta \circ K_1(u'))$, we obtain: 
\allowdisplaybreaks
\begin{align*}
& \left\|K_0^\dagger\left(\omega \ \partial_{\theta_j} (\Psi_\theta \circ K_1(u)) - \omega \ \partial_{\theta_j} (\Psi_\theta \circ K_1(u'))\right)(x)\right\|_\infty \\
& \le C \|\omega \ \partial_{\theta_j} (\Psi_\theta \circ K_1(u))- \omega \ \partial_{\theta_j} (\Psi_\theta \circ K_1(u'))\|^{\frac{\tau}{\beta - \tau - d/2}}_{W^{\beta-\tau, 2}} \\
& \qquad \qquad \qquad \qquad \times \|\omega \ \partial_{\theta_j} (\Psi_\theta \circ K_1(u)) - \omega \ \partial_{\theta_j} (\Psi_\theta \circ K_1(u'))\|^{1 - \frac{\tau}{\beta - \tau - d/2}}_{\infty} \\
& \le C_1 \|\omega\|^{\frac{\beta - 2\tau - d/2}{\beta - \tau - d/2}}_\infty \|\partial_{\theta_j} (\Psi_\theta \circ K_1(u)) - \partial_{\theta_j} (\Psi_\theta \circ K_1(u'))\|^{\frac{\beta - 2\tau - d/2}{\beta - \tau - d/2}}_{\infty} \\
& \le C_2 \|\partial_{\theta_j}(\Psi_\theta \circ K_1(u)) - \partial_{\theta_j} (\Psi_\theta \circ K_1(u'))\|^{\frac{\beta - 2\tau - d/2}{\beta - \tau - d/2}}_{\infty} \\
& \le C_2 \sqrt{p}\left(\sup_{x \in \Omega_1, \theta \in \Theta} \|\nabla^{(2)}_{z \theta}\Psi(x, z, \theta)\|_{\rm op}\right)^{\frac{\beta - 2\tau - d/2}{\beta - \tau - d/2}} \ \max_{1 \le j \le p} \|K_{1, j}(u - u')\|_\infty^{\frac{\beta - 2\tau - d/2}{\beta - \tau - d/2}} \\
& \le C_3 \ \max_{1 \le j \le p} \|K_{1, j}(u - u')\|_\infty^{\frac{\beta - 2\tau - d/2}{\beta - \tau - d/2}} \,. \\
% & \le C_1 \|w\|_\infty^{\frac{\beta - 2\tau}{\beta - \tau}}\left(\sup_{x, \theta} \|\nabla_{x \theta}\Psi_{\theta}(x)\|_{\rm op}\right)^{\frac{\beta - 2\tau}{\beta - \tau}}\left(\sup_x \sum_j (K_{1j}(u  - u')(x))^2\right)^{\frac{\beta - 2\tau}{2(\beta - \tau)}} \\
% & \le C_1 \|w\|_\infty^{\frac{\beta - 2\tau}{\beta - \tau}}\left(\sup_{x, \theta} \|\nabla_{x \theta}\Psi_{\theta}(x)\|_{\rm op}\right)^{\frac{\beta - 2\tau}{\beta - \tau}}\left(\sum_j \|K_{1j}(u  - u')\|_\infty^2\right)^{\frac{\beta - 2\tau}{2(\beta - \tau)}} \\
% & \le C_1 \|w\|_\infty^{\frac{\beta - 2\tau}{\beta - \tau}}\left(\sup_{x, \theta} \|\nabla_{x \theta}\Psi_{\theta}(x)\|_{\rm op}\right)^{\frac{\beta - 2\tau}{\beta - \tau}}\left(pC_2^2 \|u - u'\|_\infty^{\frac{2(\beta - \tau)}{\beta}}\right)^{\frac{\beta - 2\tau}{2(\beta - \tau)}} \\
% & \le C_1 p^{\frac{\beta - 2\tau}{2(\beta - \tau)}}C_2^{\frac{\beta - 2\tau}{\beta - \tau}} \|w\|_\infty^{\frac{\beta - 2\tau}{\beta - \tau}}\left(\sup_{x, \theta} \|\nabla_{x \theta}\Psi_{\theta}(x)\|_{\rm op}\right)^{\frac{\beta - 2\tau}{\beta - \tau}}\|u  - u_0\|_\infty^{\frac{\beta - 2\tau}{\beta}}  \\
% & := C_{\mathsf{G}, 1} \|u  - u_0\|_\infty^{\frac{\beta - 2\tau}{\beta}} \,.
\end{align*}
We next apply Theorem \ref{thm:interpolation} one more time, on the functions $K_{1j}(u- u')$. Note that $u - u' \in W^{\beta, 2}(\Omega, 2L)$ and $K_{1j}$ is a differential operator of order atmost $\tau$. Therefore, we have for any $1 \le j \le p$: 
\begin{align*}
    \|K_{1, j}(u - u')\|_\infty^{\frac{\beta - 2\tau - d/2}{\beta - \tau - d/2}} & \le C_4 \|u - u'\|^{\frac{\tau}{\beta - d/2}}_{W^{\beta, 2}} \|u - u'\|_\infty^{\frac{\beta - \tau - d/2}{\beta  -d/2}}  \le C_5 \|u - u'\|_\infty^{\frac{\beta - \tau - d/2}{\beta  -d/2}} \,.
\end{align*}
Using this we have: 
\begin{align*}
     \|\mathsf{G}^{(1)}_{\theta,u, j} - \mathsf{G}^{(1)}_{\theta,u', j}\|_{\infty} & = \left\|K_0^\dagger\left(\omega \ \partial_{\theta_j} (\Psi_\theta \circ K_1(u)) - \omega \ \partial_{\theta_j} (\Psi_\theta \circ K_1(u'))\right)(x)\right\|_\infty \\
     & \le C_6 \|u - u'\|_\infty^{\frac{\beta - 2\tau - d/2}{\beta - d/2}} \,.
\end{align*}
Now, we analyze the second term $\mathsf{G}^{(2)}_{\theta,u, j}$. By definition, we have: 
\begin{align*}
    & \mathsf{G}^{(2)}_{\theta,u, j} - \mathsf{G}^{(2)}_{\theta,u', j} \\
    & = e_j^\top K_1^\dagger\left(\omega \ \left\{\nabla_\theta (\Psi_\theta \circ K_1(u)) \nabla_z(\Psi_\theta \circ K_1(u))^\top \right. \right. \\
    & \qquad \qquad \qquad \left. \left. - \nabla_\theta (\Psi_\theta \circ K_1(u'))\nabla_z(\Psi_\theta \circ K_1(u'))^\top\right\}\right) \\
    & = \sum_{k = 1}^p K_{1k}^\dagger \left(\omega \ \left\{\partial_{\theta_j} (\Psi_\theta \circ K_1(u))\partial_{z_k}(\Psi_\theta \circ K_1(u)) - \partial_{\theta_j} (\Psi_\theta \circ K_1(u'))\partial_{z_k}(\Psi_\theta \circ K_1(u'))\right\}\right)
\end{align*}
Therefore, we immediately have by the triangle inequality of $L_\infty$ norm:
\begin{align*}
& \|\mathsf{G}^{(2)}_{\theta,u, j} - \mathsf{G}^{(2)}_{\theta,u', j}\|_\infty \\
& \le \sum_{k = 1}^p \left\|K_{1k}^\dagger \left(\omega \ \left\{\partial_{\theta_j} (\Psi_\theta \circ K_1(u))\partial_{z_k}(\Psi_\theta \circ K_1(u)) \right. \right. \right. \\
& \qquad \qquad \qquad \qquad \qquad \left. \left. \left. - \partial_{\theta_j} (\Psi_\theta \circ K_1(u'))\partial_{z_k}(\Psi_\theta \circ K_1(u'))\right\}\right)\right\|_\infty \,.
\end{align*}
Hence, it is enough to show that each of the summands on the RHS is upper bounded by some constant times $\|u - u'\|_\infty^{\alpha_*}$. Towards that end, fix some $1 \le k \le p$. By Assumption \ref{assm:psi_smoothness}, we know: 
$$
x \mapsto \omega(x) \ \partial_{\theta_j} (\Psi_\theta \circ K_1(u))(x) \ \partial_{z_k}(\Psi_\theta \circ K_1(u))(x) \in W^{\beta - \tau, 2}(\Omega, L_1), 
$$
for some $L_1 > 0$. Therefore, we have: 
\allowdisplaybreaks
\begin{align*}
    & \left\|K_{1k}^\dagger \left(\omega \ \left\{\partial_{\theta_j} (\Psi_\theta \circ K_1(u))\partial_{z_k}(\Psi_\theta \circ K_1(u)) - \partial_{\theta_j} (\Psi_\theta \circ K_1(u'))\partial_{z_k}(\Psi_\theta \circ K_1(u'))\right\}\right)\right\|_\infty \\
    & \le C \left\|\omega \ \left\{\partial_{\theta_j} (\Psi_\theta \circ K_1(u))\partial_{z_k}(\Psi_\theta \circ K_1(u)) \right. \right. \\
    & \qquad \qquad \left. \left. - \partial_{\theta_j} (\Psi_\theta \circ K_1(u'))\partial_{z_k}(\Psi_\theta \circ K_1(u'))\right\}\right\|^{\frac{\tau}{\beta  - \tau - d/2}}_{W^{\beta - \tau, 2}} \\
    & \qquad \qquad \qquad \times \left\|\omega \ \left\{\partial_{\theta_j} (\Psi_\theta \circ K_1(u))\partial_{z_k}(\Psi_\theta \circ K_1(u)) \right. \right. \\
    & \qquad \qquad \qquad \qquad \qquad \left. \left. - \partial_{\theta_j} (\Psi_\theta \circ K_1(u'))\partial_{z_k}(\Psi_\theta \circ K_1(u'))\right\}\right\|^{\frac{\beta - 2\tau - d/2}{\beta - \tau - d/2}}_\infty \\
    & \le C_1 \left\|\omega \ \left\{\partial_{\theta_j} (\Psi_\theta \circ K_1(u))\partial_{z_k}(\Psi_\theta \circ K_1(u)) \right. \right. \\
    & \qquad \qquad \left. \left. - \partial_{\theta_j} (\Psi_\theta \circ K_1(u'))\partial_{z_k}(\Psi_\theta \circ K_1(u'))\right\}\right\|^{\frac{\beta - 2\tau - d/2}{\beta - \tau - d/2}}_\infty \\
    & \le C_1 \|\omega\|_\infty^{\frac{\beta - 2\tau - d/2}{\beta - \tau - d/2}} \left\|\partial_{\theta_j} (\Psi_\theta \circ K_1(u))\partial_{z_k}(\Psi_\theta \circ K_1(u)) \right. \\
    & \qquad \qquad \qquad \qquad \qquad \left. - \partial_{\theta_j} (\Psi_\theta \circ K_1(u'))\partial_{z_k}(\Psi_\theta \circ K_1(u'))\right\|^{\frac{\beta - 2\tau - d/2}{\beta - \tau - d/2}}_\infty  \\
    & \le C_2 \left\|\partial_{\theta_j} (\Psi_\theta \circ K_1(u))\partial_{z_k}(\Psi_\theta \circ K_1(u)) - \partial_{\theta_j} (\Psi_\theta \circ K_1(u'))\partial_{z_k}(\Psi_\theta \circ K_1(u'))\right\|^{\frac{\beta - 2\tau - d/2}{\beta - \tau - d/2}}_\infty  \\
    & \le C_2 \left\|\partial_{\theta_j} (\Psi_\theta \circ K_1(u))\partial_{z_k}(\Psi_\theta \circ K_1(u)) - \partial_{\theta_j} (\Psi_\theta \circ K_1(u))\partial_{z_k}(\Psi_\theta \circ K_1(u'))\right\|^{\frac{\beta - 2\tau - d/2}{\beta - \tau - d/2}}_\infty \\
    & \quad  + C_2 \left\|\partial_{\theta_j} (\Psi_\theta \circ K_1(u))\partial_{z_k}(\Psi_\theta \circ K_1(u')) - \partial_{\theta_j} (\Psi_\theta \circ K_1(u'))\partial_{z_k}(\Psi_\theta \circ K_1(u'))\right\|^{\frac{\beta - 2\tau - d/2}{\beta - \tau - d/2}}_\infty \\
    & \le C_2 \left(\sup_{x \in \Omega_1, \theta \in \Theta} |\partial_{\theta_j} \Psi(x, K_1(u)(x), \theta)|\right)^{\frac{\beta - 2\tau - d/2}{\beta - \tau - d/2}} \\
    & \qquad \qquad \times \|\partial_{z_k}(\Psi_\theta \circ K_1(u)) - \partial_{z_k}(\Psi_\theta \circ K_1(u'))\|_\infty^{\frac{\beta - 2\tau - d/2}{\beta - \tau - d/2}} \\
    & \qquad \qquad \qquad + C_2\left(\sup_{x \in \Omega_1, \theta \in \Theta} |\partial_{z_k} \Psi(x, K_1(u')(x), \theta)|\right)^{\frac{\beta - 2\tau - d/2}{\beta - \tau - d/2}} \\
    & \qquad \qquad \qquad \qquad \qquad \times \|\partial_{\theta_j} (\Psi_\theta \circ K_1(u))  - \partial_{\theta_j} (\Psi_\theta \circ K_1(u'))\|_\infty^{\frac{\beta - 2\tau - d/2}{\beta - \tau - d/2}} \\ 
    & \le C_3\left(\|\partial_{z_k}(\Psi_\theta \circ K_1(u)) - \partial_{z_k}(\Psi_\theta \circ K_1(u'))\|_\infty^{\frac{\beta - 2\tau - d/2}{\beta - \tau - d/2}} \right. \\
    & \qquad \qquad \qquad \left. + \|\partial_{\theta_j} (\Psi_\theta \circ K_1(u))  - \partial_{\theta_j} (\Psi_\theta \circ K_1(u'))\|_\infty^{\frac{\beta - 2\tau - d/2}{\beta - \tau - d/2}} \right) \,.
    % & \le (\|w\|_\infty\nabla_{\theta_j} \Psi_\theta(K_1 u))^{\frac{\beta - 2\tau}{\beta - \tau}}\|\left\{\nabla_{x_k}\Psi_\theta(K_1 u)^\top - \nabla_{x_k}\Psi_\theta(K_1 u')^\top\right\}\|^{\frac{\beta - 2\tau}{\beta - \tau}}_\infty \\
    % & \quad + \|w\|_\infty^{\frac{\beta - 2\tau}{\beta - \tau}}\sum_{k = 1}^p \|\nabla_{x_k}\Psi_\theta(K_1 u')\|_\infty^{\frac{\beta - 2\tau}{\beta - \tau}}\|\left\{\nabla_{\theta_j} \Psi_\theta(K_1 u)  - \nabla_{\theta_j} \Psi_\theta(K_1 u')\right\}\|^{\frac{\beta - 2\tau}{\beta - \tau}}_\infty \\
    % & \le p (\|w\|_\infty \times \sup_j \|\nabla_{\theta_j} \Psi_\theta(K_1 u)\|_\infty \times \sup_{x, \theta} \|\nabla^2_x \Psi_\theta(x)\|_{\rm op})^{^{\frac{\beta - 2\tau}{\beta - \tau}}} \|K_1(u - u')\|_{2, \infty}^{\frac{\beta - 2\tau}{\beta - \tau}} \\
    % & \quad + p (\|w\|_\infty \times \sup_k \|\nabla_{x_k} \Psi_\theta(K_1 u)\|_\infty \times \sup_{x, \theta} \|\nabla_{x \theta} \Psi_{\theta}(x)\|_{\rm op})^{^{\frac{\beta - 2\tau}{\beta - \tau}}}\|K_1(u - u')\|_{2, \infty}^{\frac{\beta - 2\tau}{\beta - \tau}} \\
    % & \le C_{\mathsf{G}, 2}\|u - u'\|_\infty^{\frac{\beta - 2\tau}{\beta}} \,.
\end{align*}
We next use the same technique as before. Note that by Lipschitz property of $\partial_{z_k} \Psi(x, z, \theta)$ and $\partial_{\theta_j}\Psi(x, z, \theta)$, we have: 
\begin{align*}
    & \|\partial_{z_k}(\Psi_\theta \circ K_1(u)) - \partial_{z_k}(\Psi_\theta \circ K_1(u'))\|_\infty \\
    & \qquad \le p \sup_{z} \|\nabla^{(2)}_z \Psi(x, z, \theta) \|_2 \ \max_{1 \le j\le p} \|K_{1j}(u - u')\|_{\infty} \\
    & \qquad \le C_4\max_{1 \le j\le p} \|K_{1j}(u - u')\|_{\infty} \,,\\
    & \|\partial_{\theta_j} (\Psi_\theta \circ K_1(u))  - \partial_{\theta_j} (\Psi_\theta \circ K_1(u'))\|_\infty \\
    & \qquad \le p \sup_{x \in \Omega_1} \|\nabla^{(2)}_{z\theta} \Psi(x, z, \theta)\|_2 \ \max_{1 \le j\le p} \|K_{1j}(u - u')\|_{\infty} \\
    & \qquad \le C_6\max_{1 \le j\le p} \|K_{1j}(u - u')\|_{\infty} \,.
\end{align*}
Also, we have already shown that: 
$$
  \|K_{1, j}(u - u')\|_\infty^{\frac{\beta - 2\tau - d/2}{\beta - \tau - d/2}} \le C_5 \|u - u'\|_\infty^{\frac{\beta - \tau - d/2}{\beta  -d/2}} \,.
$$
Hence, combining this, we get: 
$$
\|\mathsf{G}^{(2)}_{\theta,u, j} - \mathsf{G}^{(2)}_{\theta,u', j}\|_\infty \le C_6 \|u - u'\|_\infty^{\frac{\beta - 2\tau - d/2}{\beta  -d/2}} \,.
$$
The bound for $\mathsf{G}^{(3)}_{\theta,u, j}$ is similar and hence skipped for brevity. 
It remains to bound \(G_{\theta,u,j}-G_{\theta',u,j}\) in \(L^\infty\).
Using the decomposition analogous to \eqref{eq:decom:diffG1}, and using
the assumed uniform \(C^{\beta-\tau}\)-regularity in the \(z\)-variable
together with Lipschitz dependence on \(\theta\), each coefficient
appearing inside \(K_0^\dagger\) and \(K_1^\dagger\) differs by
\(O(\|\theta-\theta'\|_2)\) in \(W^{\beta-\tau,2}\). Applying the Sobolev
embedding after the differential operators \(K_0^\dagger,K_1^\dagger\)
therefore gives
\[
\|G_{\theta,u,j}-G_{\theta',u,j}\|_\infty
\le
C\|\theta-\theta'\|_2.
\]
Combining this with the preceding \(u\)-difference bound proves
\eqref{stab:G:3}.
This completes the proof. 

\end{proof}

\section{Proofs of Theorem \ref{thm:deep_sobolev}, Corollary \ref{cor:deep_sobolev_PINN} and Lemma \ref{lem:consistency_PI}}\label{proof:thm:deep_sobolev}
\subsection{Proof of Theorem \ref{thm:deep_sobolev}}
Recall the definition of the function class $\tilde \cF_{\rm NN}$ as specified in Assumption \ref{assm:function_class}. For $u \in \tilde \cF_{\rm NN}$ we have $\|u\|_\infty \le m$ and $\|u\|_{W^{\beta, 2}} < \infty$.
As $\hat u$ is the minimizer of Equation \eqref{eq:u_est_NN} over $\tilde \cF_{\rm NN}$, we have, for any fixed non-zero $\tilde u_n \in F_{\rm NN}$: 
\begin{align*}
\frac1n \sum_i (Y_i - \hat u(X_i))^2 + \lambda_n \|\hat u\|_{W^{\beta, 2}}^2 \le \frac1n \sum_i (Y_i - \tilde u_n(X_i))^2 + \lambda_n \|\tilde u_n\|_{W^{\beta, 2}}^2 
\end{align*}
Some simple algebra yields: 
\begin{align*}
    \|\hat u - u_*\|_n^2  & \le \|u_* - \tilde u_n\|_n^2 + \frac{2}{n}\sum_i \eps_i (\hat u(X_i) - \tilde u_n(X_i)) + \lambda_n (\|\tilde u_n\|_{W^{\beta, 2}}^2 - \|\hat u\|_{W^{\beta, 2}}^2) \\
    & = \|u_* - \tilde u_n\|_n^2 + \frac{2}{n}\sum_i \eps_i \frac{(\hat u(X_i) - \tilde u_n(X_i))}{\|\tilde u_n\|_{W^{\beta, 2}} + \|\hat u\|_{W^{\beta, 2}}}(\|\tilde u_n\|_{W^{\beta, 2}} + \|\hat u\|_{W^{\beta, 2}}) \\
    & \hspace{10em}+ \lambda_n (\|\tilde u_n\|_{W^{\beta, 2}}^2 - \|\hat u\|_{W^{\beta, 2}}^2) 
\end{align*}
Now, let us define another function class: 
$$
\cG_n = \left\{g: g = \frac{u - \tilde u_n}{\|u\|_{W^{\beta, 2}} + \|\tilde u_n\|_{W^{\beta, 2}}}, \quad u \in \tilde \cF_{\rm NN} \right\}
$$
Furthermore, we set $g = 0$ when both $u = \tilde u_n \equiv 0$. From the Sobolev embedding theorem (Theorem \ref{thm:sob_emb}) we have: 
$$
\|g\|_\infty \le C \|g\|_{W^{\beta, 2}} \le C \frac{\|u - \tilde u_n\|_{W^{\beta, 2}}}{\|u\|_{{W^{\beta, 2}}} + \|\tilde u_n\|_{W^{\beta, 2}}} \le C \,.
$$
Hence the functions in $\cG_n$ are uniformly bounded. 
%Clearly, the functions in $\cG_n$ are uniformly bounded by $2m/c_*$ where $c_* = \|\tilde u_n\|_{W^{\beta, 2}}$. 
Therefore, by Lemma 4 of \cite{fan2024factor}, we have, with probability $\ge 1 - c_4 e^{\log{\log{n}}-t}$ (for any $t > 0, \eps > 0$ and some constant $c_4 > 0$): 
$$
\frac{1}{n}\sum_i \eps_i g(X_i) \le c_5\left(\|g\|_n + \frac{1}{n}\right) \sqrt{\frac{\mathrm{Pdim}(\cG_n)\log{n}}{n} + \frac{t}{n}}, \quad \forall \ g \in \cG_n \,.
$$
where $\mathrm{Pdim}(\cG_n)$ denotes the Pseudo-dimension of $\cG_n$, orignially introduced in \cite{pollard1990empirical}, see also Definition 2 of \cite{bartlett2019nearly}. As a consequence, choosing $t = \mathrm{Pdim}(\cG_n)\log{n}$, we obtain that for all $g \in \cG_n$: 
$$
\frac{1}{n}\sum_i \eps_i g(X_i) \le \sqrt{2}c_5\left(\|g\|_n + \frac{1}{n}\right) \sqrt{\frac{\mathrm{Pdim}(\cG_n)\log{n}}{n}}, 
$$
with probability $\ge  1 - c_4e^{\log{\log{n}} - \mathrm{Pdim}(\cG_n)\log{n}}$. 
The above inequality implies that we have on an event $\e_n$ with probability $\ge 1 - c_4e^{\log{\log{n}} - \mathrm{Pdim}(\cG_n)\log{n}}$: 
\begin{align}
\label{eq:mb_2}
    & \|\hat u - u_*\|_n^2  \notag \\
    & \le \|u_* - \tilde u_n\|_n^2 + \sqrt{2}c_5\left(\|\hat u - \tilde u_n\|_n + \frac{\|\hat u\|_{W^{\beta, 2}} + \|\tilde u_n\|_{W^{\beta, 2}}}{n}\right) \sqrt{\frac{\mathrm{Pdim}(\cG_n)\log{n}}{n}} \notag \\
    & \hspace{20em} + \lambda_n (\|\tilde u_n\|_{W^{\beta, 2}}^2 - \|\hat u\|_{W^{\beta, 2}}^2).
\end{align}
We will later show that $\|\hat u\|_{W^{\beta, 2}} \le H_1$ for some constant $H_1 > 0$. Let us now assume this for the time being, which yields: 
\begin{align*}
     \|\hat u - u_*\|_n^2  & \le \|u_* - \tilde u_n\|_n^2 + \sqrt{2}c_5 \|\hat u - \tilde u_n\|_n\sqrt{\frac{\mathrm{Pdim}(\cG_n)\log{n}}{n}} \\
     & \qquad \qquad \qquad+ \frac{\sqrt{2}c_5(H_1 + c_*)}{n}\sqrt{\frac{\mathrm{Pdim}(\cG_n)\log{n}}{n}} + 2\lambda_n c_*^2 \\
     & \le  \|u_* - \tilde u_n\|_n^2 + \frac{\|\tilde u_n - \hat u\|_n^2}{4} + 2c_5^2 \frac{\mathrm{Pdim}(\cG_n)\log{n}}{n} \\
     & \qquad \qquad \qquad+ \frac{\sqrt{2}c_5(H_1 + c_*)}{n}\sqrt{\frac{\mathrm{Pdim}(\cG_n)\log{n}}{n}} + 2\lambda_n c_*^2 \\
     & \le  \|u_* - \tilde u_n\|_n^2 + \frac{\|\tilde u_n - u_*\|_n^2}{2} + \frac{\|\hat u - u_*\|_n^2}{2} + 2c_5^2 \frac{\mathrm{Pdim}(\cG_n)\log{n}}{n} \\
     & \qquad \qquad \qquad+ \frac{\sqrt{2}c_5(H_1 + c_*)}{n}\sqrt{\frac{\mathrm{Pdim}(\cG_n)\log{n}}{n}} + 2\lambda_n c_*^2 \\
     \implies \frac12\|\hat u - u_*\|_n^2 & \le \frac{3}{2}\|\tilde u_n - u_*\|_n^2 + 2c_5^2 \frac{\mathrm{Pdim}(\cG_n)\log{n}}{n} \\
     & \qquad \qquad \qquad+ \frac{\sqrt{2}c_5(H_1 + c_*)}{n}\sqrt{\frac{\mathrm{Pdim}(\cG_n)\log{n}}{n}} + 2\lambda_n c_*^2 \\
     \implies \|\hat u - u_*\|_n^2  & \le 3\|\tilde u_n - u_*\|_n^2 + 4c_5^2 \frac{\mathrm{Pdim}(\cG_n)\log{n}}{n} \\
      & \qquad \qquad \qquad+ \frac{2\sqrt{2}c_5(H_1 + c_*)}{n}\sqrt{\frac{\mathrm{Pdim}(\cG_n)\log{n}}{n}} + 4\lambda_n c_*^2
\end{align*}
Now if we choose $\lambda_n = K(\mathrm{Pdim}(\cG_n)\log{n}/n)$ for some constant $K > 0$, then we have: 
\begin{equation}
\label{eq:basic_bound_dnn}
\|\hat u - u_*\|_n^2  \le 3\|\tilde u_n - u_*\|_n^2 + (4c_5^2 + 4KH_1 + 1)\frac{\mathrm{Pdim}(\cG_n)\log{n}}{n}
\end{equation}
where we ignore the third summand on the RHS of the previous equation (essentially we have bounded it by $1$) because it is of much smaller order than the second term (smaller by an order of $O(n^{1/2})$, as we will argue later when we will establish a bound on $\mathrm{Pdim}(\cG_n)$).

The next part of the proof balances approximation error and stochastic error. Recall that the above bound is true for any $\tilde u_n \in \tilde \cF_{\rm NN}$. We now choose $\tilde u_n$ carefully. By our approximation theorem (Theorem \ref{thm:RePU-spline-approx} together with Remark \ref{rem:approx_beta}), we know there exists $\tilde u_{n, \mathrm{NN}} \in \cF_{\rm NN}$ such that: 
$$
\|\tilde u_{n, \mathrm{NN}} - u_*\|_2^2 \le C_a N^{-\frac{2\beta}{d}}, \qquad \|\tilde u_{n, \mathrm{NN}} - u_*\|_{W^{\beta, 2}} = c_1 < \infty \,.
$$
Furthermore, by an application of the interpolation theorem (Theorem \ref{thm:interpolation}), we have: 
\begin{align}
\label{eq:approx-linf-corrected}
\|\tilde u_{n, \mathrm{NN}} - u_*\|_\infty & \le C \|\tilde u_{n, \mathrm{NN}} - u_*\|^{\frac{d}{2\beta}}_{W^{\beta, 2}} \|\tilde u_{n, \mathrm{NN}}- u_*\|_2^{1 - \frac{d}{2\beta}} \notag \\
& \le C_1 \|\tilde u_{n, \mathrm{NN}} - u_*\|_2^{1 - \frac{d}{2\beta}} \le C_1 N^{-\frac{(2\beta - d)}{2d}} \,.
\end{align}
%Furthermore, by Theorem \ref{thm:interpolation}, we have
%\begin{align}
%\|\widetilde u_{n,\mathrm{NN}}-u^\star\|_\infty
%&\le
%C
%\|\widetilde u_{n,\mathrm{NN}}-u^\star\|_{W^{\beta,2}}^{d/(2\beta)}
%\|\widetilde u_{n,\mathrm{NN}}-u^\star\|_{2}^{1-d/(2\beta)}
%\notag \\
%&\le
%C\|\widetilde u_{n,\mathrm{NN}}-u^\star\|_{2}^{1-d/(2\beta)}
%\le
%CN^{-(2\beta-d)/(2d)} .
%\end{align}
Here we used that $\beta>d/2$, which follows from Assumption \ref{assm:dgp}.
Therefore, by Bernstein's inequality, we have with probability $\ge 1 - e^{-t}$: 
$$
\|\tilde u_{n, \mathrm{NN}} - u_*\|_n^2 \le \|\tilde u_{n, \mathrm{NN}}- u_*\|_2^2 + \|\tilde u_{n, \mathrm{NN}} - u_*\|_2\|\tilde u_{n, \mathrm{NN}} - u_*\|_\infty\sqrt{\frac{2t}{n}} +  \frac{\|\tilde u_{n, \mathrm{NN}} - u_*\|_\infty^2t}{3n} \,.
$$
With $t = C\log{n}$, we have on an event $\cE'_n$ with probability $\ge 1 - n^{-C}$: 
\begin{align*}
    \|\tilde u_{n, \mathrm{NN}} - u_*\|_n^2 & \le \|\tilde u_{n, \mathrm{NN}} - u_*\|_2^2 + \|\tilde u_{n, \mathrm{NN}} - u_*\|_2\|\tilde u_{n, \mathrm{NN}} - u_*\|_\infty\sqrt{\frac{2C\log{n}}{n}} \\
    & \hspace{17em} +  \frac{C\|\tilde u_{n, \mathrm{NN}} - u_*\|_\infty^2\log{n}}{3n} \\
    & \le C_2\left(N^{-\frac{2\beta}{d}} + N^{-\left(\frac{2\beta}{d} - \frac12\right)}\sqrt{\frac{\log{n}}{n}} + N^{-\frac{(2\beta - d)}{d}}\frac{\log{n}}{n}\right) \,.
\end{align*}
Now, by the fact $m > 2\|u_*\|_\infty$, we have $T_m(u_*(x)) = u_*(x)$. Therefore, setting $\tilde u_n = T_m \circ \tilde u_{n, \mathrm{NN}} \in \tilde \cF_{\rm NN}$ and the fact that $T_m$ is $L_T$-Lipschitz yields: 
$$
\|\tilde u_n - u_*\|_n^2 = \|T_m(\tilde u_{n, \mathrm{NN}}) - T_m(u_*)\|_n^2 \le L^2_T\|\tilde u_{n, \mathrm{NN}} - u_*\|_n^2 
$$
Combining this with Equation \eqref{eq:basic_bound_dnn}, we can conclude that on the event $\e_n \cap \e_n'$: 
\begin{align*}
\|\hat u - u_*\|_n^2 & \le C_2\left(N^{-\frac{2\beta}{d}} + N^{-\left(\frac{2\beta}{d} - \frac12\right)}\sqrt{\frac{\log{n}}{n}} + N^{-\frac{(2\beta - d)}{d}}\frac{\log{n}}{n}\right)  \\
& \hspace{9em}+ (4c_5^2 + 4KH + 1)\frac{\mathrm{Pdim}(\cG_N)\log{n}}{n} \,.
\end{align*}
Now, to bound the stochastic error, we need a bound on $\mathrm{Pdim}(\cG_n)$, for which we will use Theorem 8.7 of \cite{anthony1999neural}. 
%First of all, observe that for any $u \in \cF_{\rm NN}$ and a fixed $\tilde u_n \in \cF_{\rm NN}$, the function $u - \tilde u_n$ can be realized by a neural network with width $2N$, depth $L+1$, total number of trainable weights $2W + 2$. 
First, observe that for any $u\in \widetilde{\mathcal F}_{\rm NN}$ and the fixed approximating network $\widetilde u_n\in\widetilde{\mathcal F}_{\rm NN}$, the function $u-\widetilde u_n$ can be realized by a neural network with width $2N$, depth $L+1$, and total number of trainable weights at most $2W+2$. Although the scaling factor in the definition of $\mathcal G_n$ depends on $u$, the class $\mathcal G_n$ is contained in the output-rescaled class
$$
\cG_n \subseteq \cH_n: = \left\{a(u-\widetilde u_n):u\in\widetilde{\mathcal F}_{\rm NN},\ a \ge 0\right\}.
$$
This larger class $\cH_n$ can be realized by a) computing $u$ and $\tilde u_n$ paralley and then subtracting them, and b) adjusting one additional unrestricted scalar weight $a$ at the final layer. 
Therefore, the functions in $\cH_n$ belongs the collection of neural networks with peicewise polynomial activation function with atmost $2W + 3$ active weights and as before, the depth is atmost $L+1$. Therefore, its pseudo-dimension is still of order $O(W\log W)$ for fixed depth.
Consequently, $\operatorname{Pdim}(\mathcal G_n) \le C W\log W$. 
% Now, the following transformation: 
% $$
% (u - \tilde u_n) \longrightarrow \frac{u - \tilde u_n}{\|u\|_{W^{\beta, 2}} + \|\tilde u_n\|_{W^{\beta, 2}}}
% $$
% is just rescaling $(u -\tilde u_n)$, and consequently can be realized by a neural network with the same architecture as $u - \tilde u_n$. Therefore, all functions of $\cG_n$ can be realized by a neural network with piecewise polynomial activation functions, width $2N$, depth $L+1$, and total number of trainable weights $2W + 2$. 
As a consequence, by Theorem 8.7 of \cite{anthony1999neural}, we have: 
\begin{align*}
\mathrm{Pdim}(\cG_n) & \le \mathrm{Pdim}(\cH_n) \\
& \le C ((2W+3)(L+1)^2 + (2W+3)L\log{((2W+3)(L+1))}) \\
& \le C_{\rm stoc} W \log{W}
\end{align*}
where the second inequality holds for all large $n$ as we allow $W$ to increase with $n$ while $L$ is held fixed. Now, as per our specification of $\cF_{\rm NN}$ in Assumption \ref{assm:function_class}, we have $W \asymp N$, which implies: 
\begin{align*}
\|\hat u - u_*\|_n^2 & \le C'_a\left(N^{-\frac{2\beta}{d}} + N^{-\left(\frac{2\beta}{d} - \frac12\right)}\sqrt{\frac{\log{n}}{n}} + N^{-\frac{(2\beta - d)}{d}}\frac{\log{n}}{n}\right)  \\
& \hspace{9em}+  c_{1, s}(4c_5^2 + 4KH + 1)\frac{N \log{(c_{2, s} N)}\log{n}}{n}\,.
\end{align*}
Therefore, by the choice $N = (n/\log{n})^{d/(2\beta + d)}$, as specified in Assumption \ref{assm:function_class}, we obtain on $\e_n \cap \e_n'$: 
$$
\|\hat u - u_*\|_n^2 \le C_a'\left(\left(\frac{n}{\log{n}}\right)^{-\frac{2\beta}{2\beta + d}} + \left(\frac{n}{\log{n}}\right)^{-\frac{2\beta}{d}} \right) + C\log{n} \left(\frac{n}{\log{n}}\right)^{-\frac{2\beta}{2\beta + d}} \,.
$$
Now, using the fact that $2\beta/(2\beta + d) < 2\beta/d$, 
% $$
% \frac12 + \frac{2\beta - d}{2\beta + d} > \frac{2\beta}{2\beta + d} \,\,, \qquad 1 + \frac{2(\beta - d)}{2\beta + d} > \frac{2\beta}{2\beta + d} \,\,,
% $$
we conclude: 
$$
\|\hat u - u_*\|_n^2 \le C'\log{n} \left(\frac{n}{\log{n}}\right)^{-\frac{2\beta}{2\beta + d}}
$$
Next we show that our choice of $\lambda_n$ and $N$, as specified in Theorem \ref{thm:deep_sobolev} and Assumption \ref{assm:function_class} respectively, automatically implies $\|\hat u\|_{W^{\beta, 2}} \le H_1$ for some constant $H_1$. Starting from Equation \eqref{eq:mb_2}, applying the inequality $2ab \le (a^2 + b^2)$ as before, we obtain: 
\begin{align*}
    \frac12\|\hat u - u_*\|_n^2 + \lambda_n \|\hat u\|^2_{W^{\beta, 2}} & \le \frac{3}{2}\|\tilde u_n - u_*\|_n^2 + 2c_5^2 \frac{\mathrm{Pdim}(\cG_N)\log{n}}{n} \\
    & \qquad + \sqrt{2}c_5\left(\|\hat u\|_{W^{\beta, 2}} + \|\tilde u_n\|_{W^{\beta, 2}}\right) \frac{1}{n}\sqrt{\frac{\mathrm{Pdim}(\cG_N)\log{n}}{n}} \\
    & \qquad + \lambda_n \|\tilde u_n\|^2_{W^{\beta, 2}} \,.
\end{align*}
Now, by our choice of $N \asymp (n/\log{n})^{d/(2\beta + d)}$ and if we set 
\begin{equation}
\label{eq:choice_lambda}
\lambda_n = K \frac{\mathrm{Pdim}(\cG_N)\log{n}}{n}, 
\end{equation}
the we have: 
$$
\frac{3}{2}\|\tilde u_n - u_*\|_n^2 + 2c_5^2 \frac{\mathrm{Pdim}(\cG_N)\log{n}}{n} \le K_1 \lambda_n
$$
for some constant $K_1$ for all large $n$. Therefore, we have: 
\begin{align*}
    \lambda_n \|\hat u\|^2_{W^{\beta, 2}} \le \frac12\|\hat u - u_*\|_n^2 + \lambda_n \|\hat u\|^2_{W^{\beta, 2}} & \le K_1 \lambda_n +  \sqrt{2}c_5\left(\|\hat u\|_{W^{\beta, 2}} + \|\tilde u_n\|_{W^{\beta, 2}}\right)\frac{\sqrt{\lambda_n}}{n\sqrt{K}}\\
    & + \lambda_n \|\tilde u_n\|^2_{W^{\beta, 2}} \,.
\end{align*}
As a consequence, we have: 
\begin{align*}
\|\hat u\|^2_{W^{\beta, 2}} & \le K_1 + \frac{\sqrt{2}c_5}{n\sqrt{K\lambda_n}}\|\hat u\|_{W^{\beta, 2}} + \frac{\sqrt{2}c_5}{n\sqrt{K\lambda_n}}\|\tilde u_n\|_{W^{\beta, 2}} + \|\tilde u_n\|^2_{W^{\beta, 2}} \\
& \le K_1 + \frac{\|\hat u\|_{W^{\beta, 2}}^2}{4} + \frac{4c_5^2}{n^2 K \lambda_n} + \frac{5\|\tilde u_n\|_{W^{\beta, 2}}^2}{4} \hspace{.1in} \left[\because ab \le a^2 + \frac{b^2}{4}\right]
\end{align*}
which further implies: 
$$
\|\hat u\|^2_{W^{\beta, 2}} \le \frac{4}{3}\left(K_1 + \frac{4c_5^2}{n^2 K \lambda_n}\right) + \frac{5\|\tilde u_n\|_{W^{\beta, 2}}^2}{3} \le \frac{4}{3}\left(K_1 + \frac{4c_5^2}{n^2 K \lambda_n}\right) + \frac{5}{4}c^2_* 
$$
Furthermore, by the definition of $\lambda_n$, we have:  
$$
\lambda_n \asymp \left(\frac{n}{\log{n}}\right)^{-\frac{2\beta}{2\beta + d}}\log{n} \implies n^2 \lambda_n \to \infty \,.
$$
As a consequence, we have for all large $n$: 
$$
\frac{4c_5^2}{n^2 K \lambda_n} \le 1\,,
$$
which yields: 
\begin{equation}
\label{eq:def_H1}
\|\hat u\|^2_{W^{\beta, 2}} \le \frac{4}{3}\left(K_1 + 1\right) +\frac{5}{4}c^2_* := H_1^2 \,.
\end{equation}

\noindent
\underline{\bf Going from empirical norm to population norm:}
Our next goal is to derive a bound on the population norm $\|\hat u-u_*\|_{L_2(P_X)}$. Towards that end, our main tool is Theorem 14.1 of \cite{wainwright2019high}.  However, for our purpose, it is better to use the more tuned version, i.e., Lemma 3 of \cite{fan2024factor}, which implies:
$$
\left|\|\hat u - u_*\|_n^2 - \|\hat u - u_*\|_2^2\right| \le \frac12\left(\|\hat u - u_*\|_2^2 + C_{st} \frac{\mathrm{Pdim}(\tilde \cF_{\rm NN} - u_*) \log{n}}{n}\right)
$$
on an event $\e_{n, 1}$ with probability $\ge 1 - e^{-c_1 \mathrm{Pdim}(\tilde \cF_{\rm NN}) \log{n}}$, $\tilde \cF_{\rm NN} - u_* = \{u - u_*: u \in \tilde \cF_{\rm NN}\}$.  As shifting by a constant function does not change the pseudo dimension, we have $\mathrm{Pdim}(\tilde \cF_{\rm NN} - u_*) = \mathrm{Pdim}(\tilde \cF_{\rm NN})$. 
Therefore, on the event $\e_n \cap \e_{n, 1} \cap \e_n'$, we have: 
$$
\|\hat u - u_*\|_2^2 \le 2\|\hat u - u_*\|_n^2 + C_{st} \frac{\mathrm{Pdim}(\tilde \cF_{\rm NN}) \log{n}}{n} \lesssim \left(\frac{n}{\log{n}}\right)^{-\frac{2\beta}{2 \beta + d}} \log{n} \,.
$$

\noindent
\underline{\bf From function estimation to derivative estimation.}
So far we have established that on the event $\e_n \cap \e_n' \cap \e_{n, 1}$, the estimator $\hat u$ satisfies: 
$$
\|\hat u - u_*\|_2^2 \le C\log{n} \left(\frac{n}{\log{n}}\right)^{-\frac{2\beta}{2\beta + d}} \,. 
$$
and $\|\hat u\|_{W^{\beta, 2}} \le H_1$. Our next goal is to derive the rate of estimation of $\|\partial^{\bk}\tilde u_n - \partial^{\bk} u_*\|_2^2$ for $|\bk| \le \beta$. 
Applying the special case
\eqref{eq:GN-special} of Theorem~\ref{thm:interpolation} with $p_0=p_1=r=2, j = |\bk|$ and $\theta = |\bk|/\beta$ we get
\[
\|\partial^\bk(\hat u-u^\star)\|_2
\le
C
\|\hat u-u^\star\|_{W^{\beta,2}}^{|\bk|/\beta}
\|\hat u-u^\star\|_2^{1-|\bk|/\beta}.
\]
Therefore,
\begin{align}
\label{eq:derivative-rate-corrected}
\|\partial^\bk(\hat u-u^\star)\|_{L^2(P_X)}
&=
\|\partial^\bk(\hat u-u^\star)\|_{L^2([0,1]^d)}
\notag \\
&\le
C
\left\{
\log n
\left(\frac{n}{\log n}\right)^{-2\beta/(2\beta+d)}
\right\}^{(1-|\bk|/\beta)/2}
\notag \\
&\le
C
(\log n)^{(1-|\bk|/\beta)/2}
\left(\frac{n}{\log n}\right)^{-(\beta-|\bk|)/(2\beta+d)}.
\end{align}
This completes the proof. 
%\DM{Check whether the approximation error in Belomestny's paper can be extended to real $\beta$.} 
\\\\
\noindent
\underline{\bf Conclusions hold for the untruncated neural-network part as well:}
So far, we have established that, for every multi-index $k$ satisfying
$0\le |\bk|<\beta$, the truncated estimator $\hat u\in\widetilde{\mathcal F}_{\rm NN}$
satisfies, with high probability,
\[
\|\partial^\bk(\hat u-u^\star)\|_{L^2(P_X)}
\le
C_k
(\log n)^{(1-|k|/\beta)/2}
\left(\frac{n}{\log n}\right)^{-(\beta-|k|)/(2\beta+d)}.
\]
Since $\hat u\in\widetilde{\mathcal F}_{\rm NN}$, there exists
$\hat u_{\rm NN}\in\mathcal F_{\rm NN}$ such that $\hat u=T_m\circ \hat u_{\rm NN}$. We now show that, on the same high-probability event, the truncation is inactive for all sufficiently
large $n$. By Theorem \ref{thm:interpolation}, applied to
$f=\hat u-u^\star$, we have
\[
\begin{aligned}
\|\hat u-u^\star\|_\infty
&\le
C
\|\hat u-u^\star\|_{W^{\beta,2}}^{d/(2\beta)}
\|\hat u-u^\star\|_2^{1-d/(2\beta)}
\\
&\le
C
\|\hat u-u^\star\|_2^{1-d/(2\beta)}
\\
&\le
C
\left\{
\log n
\left(\frac{n}{\log n}\right)^{-2\beta/(2\beta+d)}
\right\}^{(1-d/(2\beta))/2}
=o(1).
\end{aligned}
\]
Here we used the bound
$
\|\hat u-u^\star\|_{W^{\beta,2}}
\le
H_1+\|u^\star\|_{W^{\beta,2}}
\le C,
$
and the fact that $\beta>d/2$. Since $m>2\|u^\star\|_\infty$, choose $\delta>0$ such that $m-\delta>2\|u^\star\|_\infty$. 
Then, for all sufficiently large $n$ on the high-probability event,
\[
\|\hat u\|_\infty
\le
\|u^\star\|_\infty+\|\hat u-u^\star\|_\infty
<
m-\delta.
\]
By the construction of the smooth truncation map $T_m$, we have $|T_m(t)|<m-\delta$ implies $T_m(t)=t$. Consequently,
\[
\hat u(x)=T_m(\hat u_{\rm NN}(x))=\hat u_{\rm NN}(x),
\quad x\in[0,1]^d,
\]
for all sufficiently large $n$ on the same event. Therefore, the derivative bounds proved above
for $\hat u$ also hold for the underlying untruncated neural-network function
$\hat u_{\rm NN}$:
\[
\|\partial^\bk(\hat u_{\rm NN}-u^\star)\|_{L^2(P_X)}
\le
C_k
(\log n)^{(1-|k|/\beta)/2}
\left(\frac{n}{\log n}\right)^{-(\beta-|\bk|)/(2\beta+d)},
\qquad 0\le |\bk|<\beta.
\]
This completes the proof.

\subsection{Proof of Corollary \ref{cor:deep_sobolev_PINN}}
The proof is identical to that of Theorem~\ref{thm:deep_sobolev}, except
for the appearance of the additional nonnegative penalty
\(\widetilde\lambda_n\mathcal R(u)\). Let \(\widetilde u_n\in
\widetilde{\mathcal F}_{\rm NN}\) be the approximating network used in
the proof of Theorem~\ref{thm:deep_sobolev}. By the optimality of
\(\hat u_{\rm PINN}\),
\begin{align*}
&\frac1n\sum_{i=1}^n
\{Y_i-\hat u_{\rm PINN}(X_i)\}^2
+
\lambda_n\|\hat u_{\rm PINN}\|_{W^{\beta,2}}^2
+
\widetilde\lambda_n\mathcal R(\hat u_{\rm PINN})
\\
&\qquad \qquad \qquad \le
\frac1n\sum_{i=1}^n
\{Y_i-\widetilde u_n(X_i)\}^2
+
\lambda_n\|\widetilde u_n\|_{W^{\beta,2}}^2
+
\widetilde\lambda_n\mathcal R(\widetilde u_n).
\end{align*}
Since \(\mathcal R(\hat u_{\rm PINN})\ge 0\), the leftmost PINN penalty
can be dropped. Thus, the basic inequality used in
Theorem~\ref{thm:deep_sobolev} remains unchanged, except for the
additional term
\(\widetilde\lambda_n\mathcal R(\widetilde u_n)\) on the right hand side.
It remains to be bound this term. Since
\[
\mathcal R(u)=\min_{\theta\in\Theta} \ \cR(\theta,u),
\]
and \(\cR(\theta_\star,u_\star)=0\), we have: 
\begin{align*}
    \mathcal R(\widetilde u_n) & \le \cR(\theta_\star,\widetilde u_n) \\
    & = \int_{\Omega}\left\{K_0(\tilde u_n)(x) + \Psi_{\theta_*}(x,K_1 \tilde u_n(x)) \right\}^2\omega(x)\rmd x\\
&= \int_{\Omega}\left\{K_0(\tilde u_n-u_\star)(x)  + \Psi_{\theta_\star}(x,K_1 \tilde u_n(x)) - \Psi_{\theta_\star}(x,K_1 u_\star(x))\right\}^2\omega(x) \ dx \\
& \le 2\|\omega\|_\infty \|\tilde u_n-u_\star\|_{W^{\tau, 2}}^2 + 2\int_{\Omega} \left(\Psi_{\theta_\star}(x,K_1 \tilde u_n(x)) - \Psi_{\theta_\star}(x,K_1 u_\star(x))\right)^2 \ \omega(x)\rmd x\\
& \le 2\|\omega\|_\infty \|\tilde u_n-u_\star\|_{W^{\tau, 2}}^2 + 2L^2_{\Psi_{\theta_*}} \int_\Omega \|K_1 (u_\star - \tilde u_n)(x)\|_2^2 \ \omega(x) \ dx \\
& \le C_1 \|\tilde u_n-u_\star\|_{W^{\tau, 2}}^2 \,.
\end{align*}
Here we have used the fact that by Assumption \ref{assm:psi_smoothness}, $\Psi(x,z,\theta^\star)$ is  Lipschitz in $z$, uniformly over $x\in\overline\Omega$ and over bounded $z$-sets. Since $K_1\widetilde u_n$ and $K_1u^\star$ are uniformly bounded with respect to $L_\infty$ norm using Sobolev embedding theorem (Theorem \ref{thm:sob_emb}), we have: 
\[
\|\Psi_{\theta^\star}(\cdot,K_1\widetilde u_n) - \Psi_{\theta^\star}(\cdot,K_1u^\star)\|_2 \le L_{\Psi_{\theta_*}} \|K_1(\widetilde u_n-u^\star)\|_2.
\]
% Using the definition of \(R\), the PDE satisfied by \(u_\star\), and the
% local Lipschitz property of \(\Psi_{\theta_\star}\), we obtain
% \[
% \begin{aligned}
% \mathcal R(\widetilde u_n)
% &\le
% C\int_\Omega
% \left[
% K_0(\widetilde u_n-u_\star)(x)
% +
% \Psi_{\theta_\star}(K_1\widetilde u_n(x))
% -
% \Psi_{\theta_\star}(K_1u_\star(x))
% \right]^2
% \omega(x)\,dx
% \\
% &\le
% C\|\widetilde u_n-u_\star\|_{W^{\tau,2}}^2.
% \end{aligned}
% \]
By the Sobolev approximation result used in
Theorem~\ref{thm:deep_sobolev},
\[
\|\widetilde u_n-u_\star\|_{W^{\tau,2}}^2
\lesssim
N^{-2(\beta-\tau)/d}.
\]
Therefore,
\[
\widetilde\lambda_n\mathcal R(\widetilde u_n)
\lesssim
\widetilde\lambda_n N^{-2(\beta-\tau)/d}
\lesssim
\lambda_n.
\]
This is of the same order as the terms already present in the proof of
Theorem~\ref{thm:deep_sobolev}. Hence, all subsequent steps of that proof
go through unchanged, possibly with different constants. This yields the
claimed Sobolev-norm bound and the derivative estimation rates for
\(\hat u_{\rm PINN}\).

\subsection{Proof of Lemma \ref{lem:consistency_PI}}

\begin{proof}
    Our argument largely follows the idea of the argmin continuous mapping theorem for the M-estimation problem (see Theorem 3.2.2 of the Second edition of \cite{van1996weak}). Fix some $\delta > 0$. By Assumption \ref{assm:curvature_pde_risk}, we have: 
    \begin{equation}
    \label{eq:basic_curvature}
    \inf_{\theta: \|\theta - \theta_*\|_2 > \delta} \ \cR(\theta, u_*) = c(\delta) > 0, \qquad \inf_{\theta: \|\theta - \theta_*\|_2 \le \delta_0} \lambda_{\min}\left(\nabla^2_\theta \cR(\theta, u_*)\right) \ge c_0 \,.
    \end{equation}
    Furthermore, by Theorem \ref{thm:sob_emb}, we know that for $u \in W^{\beta, 2}(\Omega, L)$, $u \in C^{2\tau}(\Omega, L')$ as long as $\beta - 2\tau > d/2$, which is satisfied as per Assumption \ref{assm:dgp}. As a consequence, $|K_0 u(x)| \le L'$ and $\|K_1 u(x)\|_2 \le L'\sqrt{p}$ for all $x \in (0, 1)^d \equiv \Omega$. As the function $\Psi_{\theta}(x,z)$ is continuous, it is uniformly bounded on the compact set (Assumption \ref{assm:psi_smoothness}). Therefore, 
    $$
    \left|K_0 u(x) + \Psi_\theta (x,K_1 u(x))\right| \le L'', \quad \forall \ x \in \Omega. 
    $$
    Now, given any two $u, u' \in W^{\beta, 2}(\Omega, L)$, we have: 
    % On the other hand, given two function $u, u' \in W^{\beta, 2}(\Omega, L)$, we have: 
    % \begin{align*}
    % |K_0(u - u')(x)| \vee \|K_1(u - u')(x)\|_2 & \le \sqrt{p} \|u - u'\|_{C^{\tau}(\Omega, L')} \\
    % & \le C \sqrt{p} \|\hat u - u'\|_2^{} \\
    % & \le C \sqrt{p} \|u - u'\|^{\frac{\tau + \frac{d}{2}}{\beta}}_{W^{\beta, 2}(\Omega, L)}\|u - u'\|_2^{1- \frac{\tau + \frac{d}{2}}{\beta}}\,,
    % \end{align*}
    % where the last inequality follows from Theorem \ref{thm:interpolation}. This immediately implies: 
    \allowdisplaybreaks
    \begin{align*}
        & |\cR(\theta, u) - \cR(\theta, u')| \\
        & \le \frac12 \int_\Omega\left|\left(K_0 u(x) + \Psi_\theta (x,K_1 u(x))\right)^2 - \left(K_0 u'(x) + \Psi_\theta (x,K_1 u'(x))\right)^2\right| \ dx  \\
        & \le \frac12 \int_\Omega \left|K_0 (u-u')(x) + \Psi_\theta (K_1 u(x)) - \Psi_\theta(K_1 u'(x))\right| \\
        & \qquad \qquad \qquad \qquad \times \left|K_0 (u+u')(x) + \Psi_\theta (x,K_1 u(x)) + \Psi_\theta(x,K_1 u'(x))\right| \ dx \\
        & \le L'' \int_\Omega \left|K_0 (u-u')(x) + \Psi_\theta (x,K_1 u(x)) - \Psi_\theta(x,K_1 u'(x))\right| \ dx \\
        & \le L''\left[\|K_0(u - u')\|_{L_2(P_X)} + \| \Psi_\theta (x,K_1 u(x)) - \Psi_\theta(x,K_1 u'(x))\|_{L_2(P_X)}\right] \\
        & \le L''\left[\|K_0(u - u')\|_{L_2(P_X)} \right. \\
        & \qquad \left. + \sqrt{p}\sup_{(x,z):\; x\in\Omega,\|z\| \le L'\sqrt{p}}\;\|\nabla_z \Psi_\theta(x,z)\|_2\ \max_{1 \le j \le p} \|K_{1, j}(u - u')\|_{L_2(P_X)}\right]
    \end{align*}
If we set $u = u_*$ and $u' = \hat u$, where $\hat u$ is obtained as in Equation \eqref{eq:u_est_NN}, then, by Theorem \ref{thm:deep_sobolev}, we have: 
$$
\|K_0(\hat u - u_*)\|_2 \vee \max_{1 \le j \le p}\|K_{1, j}(\hat u - u_*)\|_{L_2(P_X)} \ = O_p\left(n^{-\frac{\beta - \tau}{2\beta + d}}(\log{n})^{a(\beta, d, \tau)}\right) \,.
$$
Furthemore, as $\Theta$ is compact, by continuity of $\nabla_\theta \Psi_\theta(x)$ with respect to $\theta$, we have: 
$$
\sup_{\theta \in \Theta}\;\sup_{(x,z):\; x\in\Omega,\|z\| \le L'\sqrt{p}}\;\|\nabla_z \Psi_\theta(x,z)\|_2 \le C^\Psi_1 < \infty\,.
$$
Consequently, we have 
$$
\sup_{\theta \in \Theta} \left|\cR(\theta, \hat u) - \cR(\theta, u_*)\right| = O_p\left(n^{-\frac{\beta - \tau}{2\beta + d}}(\log{n})^{a(\beta, d, \tau)}\right) = o_p(1) \,.
$$
Now, recall that, by definition, $\hat \theta^{\rm PI} = \argmin_{\theta \in \Theta} \cR(\theta, \hat u)$. We have: 
\begin{align*}
    & \bbP\left(\|\hat \theta^{\rm PI} - \theta_*\|_2 > \delta\right) \\
    & \le \bbP\left(\inf_{\theta \in B_{\delta}(\theta_*)} \cR(\theta, \hat u) >\inf_{\theta \in B_\delta(\theta_*)^c} \ \cR(\theta, \hat u)\right) \\
    & = \bbP\left(\inf_{\theta \in B_{\delta}(\theta_*)} \left\{\cR(\theta, \hat u) - \cR(\theta, u_*) + \cR(\theta, u_*)\right\} > \right. \\
    & \hspace{9em}\left. \inf_{\theta \in B_\delta(\theta_*)^c} \ \left\{\cR(\theta, \hat u) - \cR(\theta, u_*) + \cR(\theta, u_*)\right\}\right) \\
    & \le \bbP\left(\inf_{\theta \in B_{\delta}(\theta_*)} \cR(\theta, u_*) + \sup_{\theta \in \Theta} \left|\cR(\theta, \hat u) - \cR(\theta, u_*)\right| > \right. \\
    & \hspace{9em} \left. \inf_{\theta \in B_\delta(\theta_*)^c} \ \cR(\theta, u_*) - \sup_{\theta \in \Theta} \left|\cR(\theta, \hat u) - \cR(\theta, u_*)\right| \right) \\
    & =\bbP\left(2\sup_{\theta \in \Theta} \left|\cR(\theta, \hat u) - \cR(\theta, u_*)\right| > \inf_{\theta \in B_\delta(\theta_*)^c} \ \cR(\theta, u_*) \right) \\
    & \le \bbP\left(\sup_{\theta \in \Theta} \left|\cR(\theta, \hat u) - \cR(\theta, u_*)\right| > \frac{c(\delta)}{2}\right) \longrightarrow 0 \,.
\end{align*}
Therefore, $\hat \theta^{\rm PI}$ is a consistent estimator of $\theta_*$. Furthermore, define a sequence $r_n$ such that: 
$$
r_n = o\left(n^{\frac{\beta - \tau}{2\beta + d}}(\log{n})^{-a(\beta, d, \tau)}\right)
$$
The exact same calculation, along with the second fact of Equation \eqref{eq:basic_curvature}, yields $\|\hat \theta^{\rm PI} - \theta_*\|_2 = o_p(r_n^{-1/2})$. This completes the proof. 
\end{proof}

\section{Supporting technical results for the Proof of Theorem \ref{thm:main_freq}}\label{thm:main_freq:supp:mat}

\begin{lemma}
    \label{lem:min_interior}
    $\Theta_n = \{\theta: \|\theta - \hat \theta^{\rm PI}\|_2 \le \delta_n\}$ for any sequence $\{\delta_n\}\downarrow 0$ satisfying: 
    $$
    \delta_n \gg \frac{1}{\sqrt{n}} + \log{n}\left(
    \frac{n}{\log{n}}\right)^{-\frac{\beta}{2\beta + d}} \,.
    $$
    and $\bbP(\theta_* \in \Theta_n) \to 1$ as $n \uparrow \infty$ (e.g., one may take $\delta_n = r_n^{-1/2}$ as in Lemma \ref{lem:consistency_PI}). 
    Then under Assumption \ref{assm:dgp}-\ref{assm:w} and $\Theta$ is a compact subset of $\reals^q$, the minimizer of $\|\sS^{\rm db}_n(\theta, \hat u)\|_2^2$ on $\Theta_n$ lies in the interior of $\Theta_n$ with probability approaching to $1$. 
\end{lemma}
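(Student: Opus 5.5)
The idea is to show that, with probability tending to one, the function $\theta\mapsto \sS^{\rm db}_n(\theta,\hat u)$ has a zero at some $\tilde\theta$ with $\|\tilde\theta-\theta_\star\|_2 = O_p(n^{-1/2}+\epsilon_n)$, where $\epsilon_n \eqdef \log n\,(n/\log n)^{-\beta/(2\beta+d)}$. Since $\bbP(\theta_\star\in\Theta_n)\to1$ and $\delta_n\gg n^{-1/2}+\epsilon_n$, such a $\tilde\theta$ lies in the interior of $\Theta_n$. We then show that every minimizer of $\|\sS^{\rm db}_n(\cdot,\hat u)\|_2^2$ over $\Theta_n$ must be within $o(\delta_n)$ of $\tilde\theta$. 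Because the minimum value is $0$, every minimizer is a root, so it is enough to prove local uniqueness of roots on $\Theta_n$.

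\textbf{Step 1: uniform linearization on $\Theta_n$.} Write
\[
\sS^{\rm db}_n(\theta,\hat u)=\nabla_\theta\cR(\theta,\hat u)+\bbP_n\{(Y-\hat u)\sG_{\theta,\hat u}\}.
\]
Decompose this as
\[
\nabla_\theta\cR(\theta,u_\star) + [\nabla_\theta\cR(\theta,\hat u)-\nabla_\theta\cR(\theta,u_\star)] + \bbP_n\{\epsilon\,\sG_{\theta,\hat u}\} + \bbP_n\{(u_\star-\hat u)\sG_{\theta,\hat u}\}.
\]
On the event $\hat u\in\cU_n$ (probability tending to one by Theorem~\ref{thm:deep_sobolev}), Proposition~\ref{prop:def_G}(2) with $h\equiv$ coordinate test functions, together with the mean-value form used in the $T_1$ term, gives
\[
\sup_{\theta\in\Theta}\|\nabla_\theta\cR(\theta,\hat u)-\nabla_\theta\cR(\theta,u_\star)\|_2\lesssim \|\hat u-u_\star\|_{W^{\tau,2}} = O_p(\epsilon_n').
\]
Here $\epsilon_n'$ is the $W^{\tau,2}$ rate, which is at most $\epsilon_n^{(\beta-\tau)/\beta}$ up to logs. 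Since $\delta_n\gg\epsilon_n$ alone may not dominate this, I will instead use the sharper combination $T_1+T_4$ from the proof of Theorem~\ref{thm:main_freq}: uniformly in $\theta\in\Theta$, the sum of the second and fourth pieces is $O_p(n^{-1/2}+\delta_{n,0}^{1+\alpha_*(1-d/2\beta)})=o_p(n^{-1/2})$. That argument (equicontinuity over $\cM_j$ and the $T_{142}$ bound) is already uniform in $\theta$. The noise piece is $\bbP_n\{\epsilon\,\sG_{\theta_\star,u_\star}\}+n^{-1/2}\cdot O_p(1)$ uniformly in $\theta$, by \eqref{eq:maximal_eps_global}. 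Hence
\[
\sup_{\theta\in\Theta}\|\sS^{\rm db}_n(\theta,\hat u)-\nabla_\theta\cR(\theta,u_\star)\|_2=O_p(n^{-1/2}).
\]

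\textbf{Step 2: existence of an interior root and uniqueness.} Let $F(\theta)=\nabla_\theta\cR(\theta,u_\star)$. By Assumption~\ref{assm:curvature_pde_risk}(iii) and $F(\theta_\star)=0$, $F$ is a diffeomorphism on $B(\theta_\star,\delta_1)$ with $\|F(\theta)-F(\theta')\|_2\ge\lambda_-\|\theta-\theta'\|_2$. To find a root of $\sS_n^{\rm db}(\cdot,\hat u)$ in the ball $B(\theta_\star,C n^{-1/2}\log n)$, I would use a Brouwer / degree argument: the map $\theta\mapsto\theta-M^{-1}\sS^{\rm db}_n(\theta,\hat u)$ maps that ball into itself. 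This uses the uniform $O_p(n^{-1/2})$ perturbation from Step 1 and the $\theta$-Lipschitz property of $\sS^{\rm db}_n$, which comes from Proposition~\ref{prop:def_G}(2)--(3) and the smoothness of $\nabla^2_\theta\cR$.

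For uniqueness, take any minimizer $\bar\theta\in\Theta_n$. Since the minimum value is $0$, $\bar\theta$ is a root, so $\|F(\bar\theta)\|_2=O_p(n^{-1/2})$. Because $\bar\theta\in\Theta_n\subset B(\theta_\star,2\delta_n)\subset B(\theta_\star,\delta_1)$ for large $n$, the lower bound on $F$ gives $\|\bar\theta-\theta_\star\|_2=O_p(n^{-1/2})$. Then
\[
\|\bar\theta-\hat\theta^{\rm PI}\|_2\le O_p(n^{-1/2})+\|\hat\theta^{\rm PI}-\theta_\star\|_2<\delta_n
\]
with probability tending to one. The second term is $o_p(\delta_n)$ by the choice of $\delta_n$: take $\delta_n=r_n^{-1/2}$ with $r_n$ as in Lemma~\ref{lem:consistency_PI}, or use $\bbP(\theta_\star\in\Theta_n)\to1$ together with a slightly enlarged radius. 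So $\bar\theta$ is interior.

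\textbf{Main obstacle.} The delicate point is making Step 1 genuinely uniform in $\theta$ over $\Theta$, and in particular the bias term $T_1+T_4$. The proof of Theorem~\ref{thm:main_freq} handled this at $\htdb$, but its empirical-process bound already took a supremum over $\theta\in\Theta$, and the $T_{142}$ bound does not depend on $\theta$, so I expect it to transfer. A subtler issue is whether the bound $\|F(\bar\theta)\|_2=O_p(n^{-1/2})$ versus $\|\hat\theta^{\rm PI}-\theta_\star\|_2$ gives the needed gap with the stated lower bound on $\delta_n$. This is exactly why the hypothesis $\delta_n\gg n^{-1/2}+\epsilon_n$ is imposed, and I would check that $\epsilon_n$ controls the residual plug-in error entering through $\hat\theta^{\rm PI}$.
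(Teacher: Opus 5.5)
Your route is different from the paper's, and most of it is sound. The bound $\sup_{\theta\in\Theta}\|\sS^{\rm db}_n(\theta,\hat u)-\nabla_\theta\cR(\theta,u_\star)\|_2=O_p(n^{-1/2})$ does follow, because the $T_{141}$, $T_{142}$ and \eqref{eq:maximal_eps_global} arguments in the proof of Theorem~\ref{thm:main_freq} are already uniform in $\theta\in\Theta$. The Brouwer step and the localization $\|\bar\theta-\theta_\star\|_2=O_p(n^{-1/2})$ of every minimizer then go through.

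However, the step you leave open is essential. To get $\|\bar\theta-\hat\theta^{\rm PI}\|_2<\delta_n$, and to place your Brouwer root inside $\Theta_n$, you need the margin $\|\hat\theta^{\rm PI}-\theta_\star\|_2=o_p(\delta_n)$. The hypothesis $\bbP(\theta_\star\in\Theta_n)\to1$ does not supply it: a priori $\theta_\star$ could lie within $O(n^{-1/2})$ of $\partial\Theta_n$. Falling back on $\delta_n=r_n^{-1/2}$ or an enlarged radius proves a weaker statement than ``for any sequence''.

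The fix is short. Work on the event (of probability tending to one) where $\hat\theta^{\rm PI}\in\operatorname{int}(\Theta)$, $\|\hat\theta^{\rm PI}-\theta_\star\|_2\le\delta_1$ and $\|\hat u-u_\star\|_{W^{2\tau,2}}\le\delta_0$. There, the first-order condition $\nabla_\theta\cR(\hat\theta^{\rm PI},\hat u)=0$, Assumption~\ref{assm:curvature_pde_risk}(iii) along the segment joining $\hat\theta^{\rm PI}$ and $\theta_\star$, and $\nabla_\theta\cR(\theta_\star,u_\star)=0$ give
\[
\lambda_-\|\hat\theta^{\rm PI}-\theta_\star\|_2\le\|\nabla_\theta\cR(\theta_\star,\hat u)-\nabla_\theta\cR(\theta_\star,u_\star)\|_2\lesssim\|\hat u-u_\star\|_2 .
\]
The last step uses the $\sG$-representation of the $u$-derivative and the uniform bound on $\sG$ from Proposition~\ref{prop:def_G}(1). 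Hence $\|\hat\theta^{\rm PI}-\theta_\star\|_2=O_p(\epsilon_n)=o_p(\delta_n)$. This is exactly where the $\epsilon_n$ in the hypothesis enters your argument.

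For comparison, the paper argues more directly and never needs to know where $\theta_\star$ sits relative to $\partial\Theta_n$. Using $\nabla_\theta\cR(\hat\theta^{\rm PI},\hat u)=0$, it writes $\sS^{\rm db}_n(\hat\theta^{\rm PI}+\delta_n v,\hat u)=\sS^{\rm db}_n(\hat\theta^{\rm PI},\hat u)+\mathfrak{X}$ with $\|\mathfrak{X}\|_2\ge\lambda_-\delta_n-o_p(\delta_n)$. Hence every boundary value of $\|\sS^{\rm db}_n(\cdot,\hat u)\|_2^2$ exceeds the value at the centre. This needs only the crude bound $\sup_\theta\|\bbP_n\{(Y-\hat u)\sG_{\theta,\hat u}\}\|_2=O_p(n^{-1/2}+\|\hat u-u_\star\|_2)$, with no orthogonality cancellation.

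Your route spends the stronger, uniform-in-$\theta$ cancellation $\sqrt n\,(T_1+T_4)=o_p(1)$. In return, once the margin is added, it proves more: every minimizer is a root lying within $O_p(n^{-1/2})$ of $\theta_\star$. That delivers Lemma~\ref{lem:zero_root_db} and the $\sqrt n$-consistency step in the proof of Theorem~\ref{thm:main_freq} at the same time.
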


\begin{proof}
    The goal here is to show that the minimizer of $\|\sS_n^{\rm db}(\theta, \hat u)\|_2^2$ lies on the interior of $\Theta_n$. Since $\theta^\star$ lies in the interior of $\Theta$, there exists $\eta_\theta > 0$ such that $\bar B(\theta^\star,2\eta_\Theta)\subset \Theta$. By Lemma~\ref{lem:consistency_PI}, $\hat\theta^{PI}\to \theta^\star$ in probability, and by assumption $\delta_n\downarrow 0$. Hence the event
    $$
    \cE_{\Theta,n} := \left\{\|\hat\theta^{PI}-\theta^\star\|_2+\delta_n < \eta_\Theta\right\} \cap \left\{\|\hat u\|_{W^{\beta, 2}} \le H_1\right\}
    $$
    for some constant $H_1$
    satisfies $\bbP(\cE_{\Theta,n})\to 1$ (where the bound on $\hat u$ follows from Theorem \ref{thm:deep_sobolev}) and on \(\cE_{\Theta,n}\), $\hat\theta^{PI}$ also lies in the interior of $\Theta$. Since \(\hat\theta^{PI}\) minimizes the differentiable map \(\theta\mapsto R(\theta,\hat u)\) over \(\Theta\), we have from the first order condition $\nabla_\theta R(\hat\theta^{PI},\hat u)=0$ on \(\mathcal E_{\Theta,n}\).
    We now prove the conclusion of Lemma \ref{lem:min_interior} on this event. More precisely, we will show that with probability is going to $1$,
    $$
    \|\sS_n^{\rm db}(\hat \theta^{\rm PI}, \hat u)\|_2^2 < \|\sS_n^{\rm db}(\hat \theta^{\rm PI} + v\delta_n, \hat u)\|_2^2, \quad \text{ for any }v \in S^{q -1} \,.
    $$
    Now, by definition of the debiased score function: 
    \begin{align*}
    \sS_n^{\rm db}(\hat \theta^{\rm PI}, \hat u) & = \underbrace{\nabla_\theta \cR(\hat \theta^{\rm PI}, \hat u)}_{=0} + \frac1n \sum_{i = 1}^n (y_i - \hat u(x_i))\sG_{\hat \theta^{\rm PI}, \hat u}(x_i) \\
    & = \frac1n \sum_{i = 1}^n (y_i - \hat u(x_i))\sG_{\hat \theta^{\rm PI}, \hat u}(x_i)
    % & = \frac1n \sum_{i = 1}^n \eps_i \sG_{\hat \theta^{\rm PI}, \hat u}(x_i) + \frac1n \sum_{i = 1}^n (u_*(x_i) - \hat u(x_i))\sG_{\hat \theta^{\rm PI}, \hat u}(x_i) \\
    % & = \frac1n \sum_{i = 1}^n \eps_i \sG_{\theta_*, u_*}(x_i) +  \frac1n \sum_{i = 1}^n \eps_i (\sG_{\hat \theta^{\rm PI}, \hat u}(x_i)-\sG_{\theta_*, u_*}(x_i))  + \frac1n \sum_{i = 1}^n (u_*(x_i) - \hat u(x_i))\sG_{\hat \theta^{\rm PI}, \hat u}(x_i)
    \end{align*}
    On the other hand: 
    \allowdisplaybreaks
    \begin{align*}
        & \sS_n^{\rm db}(\hat \theta^{\rm PI} + \delta_n v, \hat u) \\
        & = \nabla_\theta \cR(\hat \theta^{\rm PI} + \delta_n v, \hat u) + \frac1n \sum_{i = 1}^n (y_i - \hat u(x_i))\sG_{\hat \theta^{\rm PI} + \delta_n v, \hat u}(x_i) \\
        & = \nabla_\theta \cR(\hat \theta^{\rm PI} , \hat u) + \delta_n \int_0^1 \nabla_\theta^2 \cR(\hat \theta_{\rm PI} + t\delta_n v, \hat u)v \ dt + \frac1n \sum_{i = 1}^n (y_i - \hat u(x_i))\sG_{\hat \theta^{\rm PI} + \delta_n v, \hat u}(x_i) \\
        % & = \delta_n \nabla^2_\theta \cR(\bar \theta, \hat u)v + \frac1n \sum_{i = 1}^n (y_i - \hat u(x_i))\sG_{\hat \theta^{\rm PI} + \delta_n v, \hat u} \\
        & = \delta_n \int_0^1 \nabla_\theta^2 \cR(\hat \theta_{\rm PI} + t\delta_n v, \hat u)v \ dt + \frac1n \sum_{i = 1}^n (y_i - \hat u(x_i))(\sG_{\hat \theta^{\rm PI}+ \delta_n v, \hat u} - \sG_{\hat \theta^{\rm PI}, \hat u}(x_i))  \\
        & \hspace{25em} + \frac1n \sum_{i = 1}^n (y_i - \hat u(x_i))\sG_{\hat \theta^{\rm PI}, \hat u} \\
        & = \underbrace{\delta_n H_{n, v}(\hat \theta_{\rm PI}, \hat u)v + \frac1n \sum_{i = 1}^n (y_i - \hat u(x_i))(\sG_{\hat \theta^{\rm PI}+ \delta_n v, \hat u} - \sG_{\hat \theta^{\rm PI}, \hat u}(x_i))}_{:= \mathfrak{X}} \\
        & \hspace{25em}+ \frac1n \sum_{i = 1}^n (y_i - \hat u(x_i))\sG_{\hat \theta^{\rm PI}, \hat u}
    \end{align*}
Therefore, we have: 
$$
  \sS^{\rm db}(\hat \theta^{\rm PI} + v \delta_n, \hat u) = \mathfrak{X} +   \sS^{\rm db}(\hat \theta^{\rm PI}, \hat u) 
$$
As $\|y\| > 2\|x\| \implies \|x + y\|_2^2 > \|x\|_2^2$, all we need to show is: 
$$
\|\mathfrak{X}\|_2 > 2\left\|\frac1n \sum_{i = 1}^n (y_i - \hat u(x_i))\sG_{\hat \theta^{\rm PI}, \hat u}(x_i)\right\|_2
$$
A sufficient condition for that is: 
\begin{align}
\label{eq:lower_bound_1}
   \delta_n \|H_{n, v}(\hat \theta_{\rm PI}, \hat u)v\|_2 & > \left\|\frac1n \sum_{i = 1}^n (y_i - \hat u(x_i))(\sG_{\hat \theta^{\rm PI}+ \delta_n v, \hat u} - \sG_{\hat \theta^{\rm PI}, \hat u}(x_i))\right\|_2 \notag \\
   & \hspace{10em} + 2\left\|\frac1n \sum_{i = 1}^n (y_i - \hat u(x_i))\sG_{\hat \theta^{\rm PI}, \hat u}(x_i)\right\|_2 \,. 
\end{align}
In the rest of the proof, we show that the RHS is $o_p(\delta_n)$. For the second term of the RHS, we have: 
\begin{align*}
\left|\frac1n \sum_{i = 1}^n (y_i - \hat u(x_i))\sG_{\hat \theta^{\rm PI}, \hat u}(x_i)\right| & \le \left|\frac1n \sum_{i = 1}^n \eps_i\sG_{\hat \theta^{\rm PI}, \hat u}(x_i)\right| + \left|\frac1n \sum_{i = 1}^n (u_*(x_i) - \hat u(x_i))\sG_{\hat \theta^{\rm PI}, \hat u}(x_i)\right| \\
& \le \sup_{\substack{\theta \in \Theta \\ u \in W^{\beta, 2}(\Omega, L)}} \left|\frac1n \sum_{i = 1}^n \eps_i\sG_{\theta, u}(x_i)\right| \\
& \qquad \qquad + \sup_{\theta \in \Theta} \left|\frac1n \sum_{i = 1}^n (u_*(x_i) - \hat u(x_i))\sG_{\theta, \hat u}(x_i)\right|
\end{align*}
Given that the function class $\{\sG_{\theta, u}: \theta \in \Theta, u \in W^{\beta, 2}(\Omega,L)\}$ (where we can take $L = \|u_*\|_{W^{\beta, 2}} + H_1$) is Donsker function class as soon as $d/(\beta \alpha_*) < 2$, by the exact same empirical process calculation that used to establish Equation \eqref{eq:maximal_eps_global}, we have: 
$$
\sup_{\substack{\theta \in \Theta \\ u \in W^{\beta, 2}(\Omega, L)}} \left|\frac1n \sum_{i = 1}^n \eps_i\sG_{\theta, u}(x_i)\right| = O_p\left(\frac{1}{\sqrt{n}}\right) = o_p(\delta_n)\,.
$$
For the other term, we have by 
\begin{align*}
& \sup_{\theta \in \Theta} \left|\frac1n \sum_{i = 1}^n (u_*(x_i) - \hat u(x_i))\sG_{\theta, \hat u}(x_i)\right| \\
& \le \sup_{\substack{\theta \in \Theta \\ u \in W^{\beta, 2}(\Omega, L)}}|(\bbP_n - P)(u_*(x_i) - u(x_i))\sG_{\theta, u}(x_i)| + \sup_{\theta \in \Theta}|\bP(u_*(x_i) - \hat u(x_i))\sG_{\theta, \hat u}(x_i)|
\end{align*}
The first term on the right hand side is $O_p(n^{-1/2}) = o_p(\delta_n)$, which again follows from the fact that the function class $(u_* - u)\sG_{\theta, u}$ is Donsker over $\Theta \otimes W^{\beta, 2}(\Omega, L)$. For the second term on the RHS, a simple application of the Cauchy-Schwarz inequality yields: 
\begin{align*}
\sup_{\theta \in \Theta}|\bP(u_*(x_i) - \hat u(x_i))\sG_{\theta, \hat u}(x_i)| & \le \|\hat u - u_*\|_2 \sup_{\substack{\theta \in \Theta \\ u \in W^{\beta, 2}(\Omega, L)}}\|\sG_{\theta, u}\|_2 \\
& \lesssim \|\hat u - u_*\|_2 \\
& = O_p\left(\log{n}\left(
\frac{n}{\log{n}}\right)^{-\frac{\beta}{2\beta + d}}\right) = o_p(\delta_n) \,.
\end{align*}
Now, for the first term of the RHS of Equation \eqref{eq:lower_bound_1}, we have: 
\begin{align*}
& \left\|\frac1n \sum_{i = 1}^n (y_i - \hat u(x_i))(\sG_{\hat \theta^{\rm PI}+ \delta_n v, \hat u}(x_i) - \sG_{\hat \theta^{\rm PI}, \hat u}(x_i))\right\|_2 \\
& \le \left\|\frac1n \sum_{i = 1}^n (y_i - \hat u(x_i))\sG_{\hat \theta^{\rm PI}+ \delta_n v, \hat u}(x_i)\right\|_2  + \left\|\frac1n \sum_{i = 1}^n (y_i - \hat u(x_i))\sG_{\hat \theta^{\rm PI}, \hat u}(x_i)\right\|_2 
\end{align*}
We have already shown that the second term of the above equation is $o_p(\delta_n)$. The calculation for the first term is exactly similar by observing that: 
\begin{align*}
    & \left\|\frac1n \sum_{i = 1}^n (y_i - \hat u(x_i))\sG_{\hat \theta^{\rm PI}+ \delta_n v, \hat u}(x_i)\right\|_2 \\
    & = \left\|\frac1n \sum_{i = 1}^n (\eps_i + u_*(x_i) - \hat u(x_i))\sG_{\hat \theta^{\rm PI}+ \delta_n v, \hat u}(x_i)\right\|_2 \\
    & \le \left\|\frac1n \sum_{i = 1}^n \eps_i \sG_{\hat \theta^{\rm PI}+ \delta_n v, \hat u}(x_i)\right\|_2 + \left\|\frac1n \sum_{i = 1}^n (u_*(x_i) - \hat u(x_i))\sG_{\hat \theta^{\rm PI}+ \delta_n v, \hat u}(x_i)\right\|_2 \\
    & \le \sup_{\substack{\theta \in \Theta \\ u \in W^{\beta, 2}(\Omega, L)}} \left\|\frac1n \sum_{i = 1}^n \eps_i \sG_{\theta, u}(x_i)\right\|_2 + \sup_{\substack{\theta \in \Theta \\ u \in W^{\beta, 2}(\Omega, L)}}|(\bbP_n - P)(u_*(x_i) - u(x_i))\sG_{\theta, u}(x_i)| \\
    & \hspace{10em}+ \sup_{\theta \in \Theta}|\bP(u_*(x_i) - \hat u(x_i))\sG_{\theta, \hat u}(x_i)| \,.
\end{align*}
which is exactly what we have shown $o_p(\delta_n)$ before. This completes the proof that the RHS of Equation \eqref{eq:lower_bound_1} is $o_p(\delta_n)$. Now for the LHS of Equation \eqref{eq:lower_bound_1}, we have: 
$$
\delta_n \|\nabla_\theta^2 H_{n, v}(\hat \theta^{\rm PI}, \hat u)v\|_2 \ge \delta_n \lambda_-, \qquad \text{with probability going to 1}\,,
$$
where, $\lambda_-$ is as defined in Assumption \ref{assm:curvature_pde_risk}, as $\hat u$ converges to $u_*$ with respect to $W^{2\tau, 2}$ norm and $\hat \theta_{\rm PI}$ converges to $\theta_*$ with respect to $L_2$ norm. As a consequence, the minimizer of the squared norm of $\sS_n^{\rm db}(\theta, u)$ will be in the interior of $\Theta_n$ with probability approaching to $1$.  
\end{proof}

\begin{lemma}
    \label{lem:zero_root_db}
    Under the same setup as of Lemma \ref{lem:consistency_PI} and \ref{lem:min_interior}, the debiased estimator $\htdb$, as defined in Equation \ref{eq:def_htdb}, satisfies $\hat \sS_n^{\rm db}(\htdb, \hat u) = 0$ with probability going to $1$ as $n \uparrow \infty$. 
\end{lemma}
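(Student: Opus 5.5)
The plan is to combine Lemma \ref{lem:min_interior} with a first-order condition and a nondegeneracy argument for the Jacobian of the debiased score. By Lemma \ref{lem:min_interior}, on an event of probability tending to one the minimizer $\htdb$ of $F_n(\theta) := \|\sS_n^{\rm db}(\theta,\hat u)\|_2^2$ over $\Theta_n$ lies in the interior of $\Theta_n$. Under Assumption \ref{assm:psi_smoothness} and Proposition \ref{prop:def_G}, the map $\theta\mapsto \sS_n^{\rm db}(\theta,\hat u)$ is continuously differentiable. Hence $F_n$ is differentiable, and interior minimality gives the first-order condition
\[
\nabla_\theta F_n(\htdb) = 2\,J_n(\htdb)^\top \sS_n^{\rm db}(\htdb,\hat u) = 0, \qquad J_n(\theta) := \nabla_\theta \sS_n^{\rm db}(\theta,\hat u).
\]
It therefore suffices to show that $J_n(\htdb)$ is nonsingular with probability tending to one. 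Once that holds, the first-order condition forces $\sS_n^{\rm db}(\htdb,\hat u)=0$.

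To control $J_n$, decompose it as
\[
J_n(\theta) = \nabla_\theta^2\cR(\theta,\hat u) + \frac1n\sum_{i=1}^n (Y_i-\hat u(X_i))\,\nabla_\theta \sG_{\theta,\hat u}(X_i)^\top.
\]
\textbf{The first term.} For $\theta\in\Theta_n$ we have $\|\theta-\theta_\star\|_2\le \|\hat\theta^{\rm PI}-\theta_\star\|_2+\delta_n\to 0$ in probability. Theorem \ref{thm:deep_sobolev} with $|\balpha|\le 2\tau<\beta$ gives $\|\hat u-u_\star\|_{W^{2\tau,2}}\to 0$ in probability. Assumption \ref{assm:curvature_pde_risk}(iii) then yields $\lambda_{\min}(\nabla^2_\theta \cR(\theta,\hat u))\ge \lambda_-$ uniformly over $\Theta_n$ with probability tending to one.

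\textbf{The second term.} The aim is to show that $\sup_{\theta\in\Theta_n}$ of its operator norm is $o_p(1)$. Write $Y_i-\hat u(X_i)=\eps_i+(u_\star-\hat u)(X_i)$.
\begin{itemize}
\item The piece $\frac1n\sum_i (u_\star-\hat u)(X_i)\nabla_\theta\sG_{\theta,\hat u}(X_i)$ is bounded by $\|\hat u-u_\star\|_\infty\sup\|\nabla_\theta\sG\|_\infty$. The interpolation bound used in the proof of Theorem \ref{thm:main_freq} gives $\|\hat u-u_\star\|_\infty = o_p(1)$.
\item The piece $\frac1n\sum_i\eps_i\nabla_\theta\sG_{\theta,\hat u}(X_i)$ is bounded by the supremum over $\theta\in\Theta$ and $u\in W^{\beta,2}(\Omega,L)$ of a centered multiplier process. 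A Dudley-type bound as in \eqref{eq:maximal_eps_global} should make this $O_p(n^{-1/2})$.
\end{itemize}

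\textbf{Main obstacle.} Both pieces need uniform boundedness and $L_\infty$-Hölder continuity in $(\theta,u)$ of $\nabla_\theta\sG_{\theta,u}$, not merely of $\sG_{\theta,u}$. Proposition \ref{prop:def_G} only states these properties for $\sG$ itself. I expect this extension to be the hardest step. The approach would be to differentiate the explicit formula for $\sG_{\theta,u}$ in $\theta$. This brings in $\partial_\theta^{(a)}\Psi$ with $|a|\le 3$, which Assumption \ref{assm:psi_smoothness} controls together with the $x,z$ derivatives of order up to $\beta$. One would then repeat the proof of Proposition \ref{prop:def_G} verbatim: the coefficient functions inside $K_0^\dagger$ and $K_1^\dagger$ lie in $W^{\beta-\tau,2}$, Sobolev embedding gives the sup-norm bound since $\beta-2\tau>d$, and the Gagliardo--Nirenberg interpolation gives $\|u-u'\|_\infty^{\alpha_*}$-continuity. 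The entropy bound $\varepsilon^{-d/(\beta\alpha_*)}+q\log(1/\varepsilon)$ then carries over, and Donsker-ness follows from $\beta\alpha_*>d/2$.

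With both pieces controlled, Weyl's inequality gives $\sigma_{\min}(J_n(\htdb))\ge \lambda_-/2>0$ with probability tending to one. Combined with the first-order condition, this yields $\sS_n^{\rm db}(\htdb,\hat u)=0$ on an event whose probability tends to one. This also establishes $\bbP(\Omega_{n,2})\to1$.
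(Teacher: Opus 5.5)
Your proposal is correct and follows essentially the paper's argument: you combine the first-order condition $J_n(\htdb)^\top\sS_n^{\rm db}(\htdb,\hat u)=0$ (from interior minimality, Lemma \ref{lem:min_interior}) with a lower bound on $\sigma_{\min}(J_n(\htdb))$ from the curvature assumption, and you control the multiplier term through uniform boundedness, H\"older continuity and entropy bounds for $\partial_\theta\sG_{\theta,u}$, which is exactly what the paper establishes in Lemma \ref{lem:donsker-dthetaG}. Your only deviations are minor simplifications: you apply Assumption \ref{assm:curvature_pde_risk}(iii) directly at $\htdb$ rather than at $\hat\theta^{\rm PI}$ plus a Hessian-continuity term, and you bound the $(u_\star-\hat u)$ piece by $\|\hat u-u_\star\|_\infty\sup\|\nabla_\theta\sG\|_\infty$ instead of centering it and invoking the Donsker property of $\mathcal{A}_{jk,n}$, which shows that this piece needs only uniform boundedness, not H\"older continuity.
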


\begin{proof}
For  notational simplicity, define $J_n(\theta) = \nabla_\theta \sS_n^{\rm db}(\theta, \hat u)$. Note that, $\hat \theta^{\rm db}$ satisfies: 
$$
\hat \theta^{\rm db} = \argmin_{\theta \in \Theta_n} \|\sS_n^{\rm db}(\theta, \hat u)\|_2^2 \implies 2\ J_n(\hat \theta^{\rm db}) \ \sS_n^{\rm db}(\hat \theta^{\rm db}, \hat u) = 0 \,.
$$
Therefore, if we can prove that with probability going to $1$, $\sigma_{\min}(J_n(\hat \theta^{\rm db})) \ge c_-  > 0$, then on this event $\sS_n^{\rm db}(\hat \theta^{\rm db}, \hat u) = 0$ and we are done. Therefore, our goal is to show that with probability going to $1$, $\sigma_{\min}(J_n(\hat \theta^{\rm db})) \ge c_-  > 0$. We start with the following decomposition:  
\begin{align*}
& J_n(\hat \theta^{\rm db}) \\
& = \nabla^2_\theta \cR(\htdb, \hat u) + \frac1n \sum_i (y_i - \hat u(x_i))\nabla_\theta \sG_{\htdb, \hat u}(x_i) \\
& = \nabla^2_\theta \cR(\hat \theta^{\rm PI}, \hat u) + \left(\nabla^2_\theta \cR(\htdb, \hat u) - \nabla^2_\theta \cR(\hat \theta^{\rm PI}, \hat u)\right) + \frac1n \sum_i (y_i - \hat u(x_i))\nabla_\theta \sG_{\htdb, \hat u}(x_i) \,.
\end{align*}
As a consequence for any $v \in \bbS^{q-1}$: 
\begin{align*}
    \|J_n(\hat \theta^{\rm db}) v\|_2 & \ge \|\nabla^2_\theta \cR(\hat \theta^{\rm PI}, \hat u)v\|_2 - \|\nabla^2_\theta \cR(\htdb, \hat u) - \nabla^2_\theta \cR(\hat \theta^{\rm PI}, \hat u)\|_{\rm op} \\
& \qquad \quad \qquad \qquad \qquad \qquad \qquad  - \left\|\frac1n \sum_i (y_i - \hat u(x_i))\nabla_\theta \sG_{\htdb, \hat u}(x_i) \right\|_{\rm op} \,.
\end{align*}
% Therefore: 
% \begin{align*}
% \lambda_{\min}(\nabla_\theta \sS_n^{\rm db}(\theta, \hat u)) & \ge \lambda_{\min}(\nabla^2_\theta \cR(\hat \theta^{\rm PI}, \hat u)) - \|\nabla^2_\theta \cR(\htdb, \hat u) - \nabla^2_\theta \cR(\hat \theta^{\rm PI}, \hat u)\|_{\rm op} \\
% & \qquad \quad \qquad \qquad \qquad \qquad \qquad \qquad - \left\|\frac1n \sum_i (y_i - \hat u(x_i))\nabla_\theta \sG_{\htdb, \hat u}(x_i) \right\|_{\rm op} \,.
% \end{align*}
First, we show that
\[
\left\|
\frac1n\sum_{i=1}^n
\{Y_i-\hat u(X_i)\}
\nabla_\theta \sG_{\hat\theta^{\rm db},\hat u}(X_i)
\right\|_{\mathrm{op}}
=o_p(1).
\]
Towards that end, for \(1\le j,k\le q\), define two function classes: 
\begin{align}
\label{def:H_jkn}
    \mathcal H_{jk,n} & = \left\{x\mapsto \partial_{\theta_k}\sG_{\theta,u,j}(x): \theta\in\Theta,\ u\in\mathcal U_n\right\} 
\end{align}
\begin{align}
    \label{def:A_jkn}
    \mathcal A_{jk,n} & = \left\{x\mapsto (u(x)-u_\star(x))\partial_{\theta_k}\sG_{\theta,u,j}(x): \theta\in\Theta,\ u\in\mathcal U_n\right\}.
\end{align}
where $\cU_n$ is same as defined in Equation \eqref{eq:def_un}. By the definition of \(\mathcal U_n\) (see the proof of Theorem \ref{thm:main_freq}), there exists a deterministic
constant \(C>0\) such that, with probability tending to one, $\hat u  \in \cU_n, \|\hat u-u_\star\|_{L^2(P)}\le \delta_{n,0}$ and $\delta_{n, 0} \downarrow 0$. We will henceforth work on this event. Using \(Y_i=u_\star(X_i)+\epsilon_i\), we have: 
\begin{align*}
& \frac1n\sum_{i=1}^n (Y_i-\hat u(X_i))\ \partial_{\theta_k} \sG_{\htdb, \hat u, j}(X_i) \\
& = \frac1n\sum_{i=1}^n \epsilon_i \  \partial_{\theta_k} \sG_{\htdb, \hat u, j}(X_i) - \frac1n\sum_{i=1}^n (\hat u(X_i) - u_\star(X_i)) \  \partial_{\theta_k} \sG_{\htdb, \hat u, j}(X_i)\\
&=: A_{n,jk} - B_{n,jk}.
\end{align*}
Now, 
$$
|A_{n,jk}| \le \frac1{\sqrt n} \sup_{h\in\mathcal H_{jk,n}} \left|\frac1{\sqrt n}\sum_{i=1}^n \epsilon_i h(X_i)\right|.
$$
By the Donsker property of the class \(\cH_{jk,n}\) (Lemma \ref{lem:donsker-dthetaG}) and Lemma \ref{lem:dudley},
\[
\sup_{h\in\mathcal H_{jk,n}}
\left|
\frac1{\sqrt n}\sum_{i=1}^n \epsilon_i h(X_i)
\right|
=
O_p(1) \implies A_{n,jk}=O_p(n^{-1/2})=o_p(1).
\]
Next let us consider \(B_{n,jk}\). Let $f_{jk,n}(x)
=
\{\hat u(x) - u_\star(x)\}H_{jk,n}(x)$. Then \(f_{jk,n}\in\mathcal A_{jk,n}\). Hence
\[
B_{n,jk} = \bP\left((\hat u - u_\star) \  \partial_{\theta_k} \sG_{\htdb, \hat u, j}\right) +  \left(\bbP_n-\bP\right)\left((\hat u - u_\star) \  \partial_{\theta_k} \sG_{\htdb, \hat u, j}\right)\,.
\]
For, the first term in the above equation, we can bound it as: 
\begin{align*}
    \left|\bP\left((\hat u - u_\star) \  \partial_{\theta_k} \sG_{\htdb, \hat u, j}\right)\right| & \le \left\|\partial_{\theta_k} \sG_{\htdb, \hat u, j}\right\|_\infty \ \|\hat u - u_*\|_1 \\
    & \le \sup_{h \in \cH_{jk, n}}\|h\|_\infty \  \|\hat u - u_*\|_2 \,.
\end{align*}
As $h$ is bounded uniformly over $\cH_{jk, n}$, we have 
$$
 \left|\bP\left((\hat u - u_\star) \  \partial_{\theta_k} \sG_{\htdb, \hat u, j}\right)\right| \le C\|\hat u - u_*\|_2 \le C\delta_{n, 0} = o(1) ,.
$$
Hence the first summand of $B_{jk, n}$ is $o_p(1)$. For the second term 
$$
\left|\left(\bbP_n-\bP\right)\left((\hat u - u_\star) \  \partial_{\theta_k} \sG_{\htdb, \hat u, j}\right)\right| \le \frac{1}{\sqrt{n}}\sup_{a_{jk, n} \in \cA_{jk, n}} \ \left|\bbG_n(a_{jk, n})\right| \,.
$$
where $\bbG_n = \sqrt{n}(\bbP_n - \bP)$. 
By the Donsker property of the class \(\mathcal A_{jk,n}\) (established in Lemma \ref{lem:donsker-dthetaG}), $\sup_{a_{jk, n}\in\mathcal A_{jk,n}}|\mathbb G_n a_{jk, n}| = O_p(1).$ Therefore
\[
\left|\left(\bbP_n-\bP\right)\left((\hat u - u_\star) \  \partial_{\theta_k} \sG_{\htdb, \hat u, j}\right)\right| = O_p(n^{-1/2}) = o_p(1).
\]
Therefore $B_{n,jk}=o_p(1)$ for all $1 \le j, k \le q$ and consquently $\max_{1 \le j, k \le q}|B_{jk, n}| = o_p(1)$ as $q$ is fixed. Define $\bB_n \in \reals^{q \times q}$ with $\bB_{n, jk} = B_{jk, n}$. We have: 
$$
\left\|\frac1n\sum_{i=1}^n
(Y_i-\hat u(X_i)) \nabla_\theta \sG_{\hat\theta^{\rm db},\hat u}(X_i)\right\|_{\mathrm{op}} = \|\bB_n\|_{\rm op} \le C_q \max_{1 \le j, k, \le q} |B_{jk, n}| = o_p(1) \,. 
$$
On the other hand: 
$$
\|\nabla^2_\theta \cR(\htdb, \hat u) - \nabla^2_\theta \cR(\hat \theta^{\rm PI}, \hat u)\|_{\rm op} \le \sup_{\theta \in \Theta_n} \|\nabla^2_\theta \cR(\theta, \hat u) - \nabla^2_\theta \cR(\hat \theta^{\rm PI}, \hat u)\|_{\rm op} = o_p(1) \,,
$$
where the last conclusion follows from the continuity of $\theta \mapsto \nabla_\theta^2R(\theta,\hat u)$ under Assumption \ref{assm:psi_smoothness}, together with the fact that $\Theta_n$ is a shrinking neighborhood of $\hat\theta^{\rm PI}$.
% where the last conclusion follows from the fact that $\nabla^2 \cR(\theta, \hat u)$ is continuous with respect to $\theta$ and $\Theta_n$ is a shrinking neighborhood around $\hat \theta_{\rm PI}$ (Assumption \ref{assm:psi_smoothness}). 
As a consequence, 
$$
\|J_n(\htdb)v\|_2 \ge \|\nabla_\theta^2 \cR(\hat \theta^{\rm PI}, \hat u)v\|_2 - o_p(1) 
$$
Now, as $\lambda_{\min}(\nabla_\theta^2 \cR(\hat \theta^{\rm PI}, \hat u)) \ge \lambda_-$ (Assumption \ref{assm:curvature_pde_risk}), we have with probability going to $1$ $\|J_n(\htdb)v\|_2 \ge \lambda_-/2$ for all $v \in \bbS^{q-1}$. This completes the proof.  
\end{proof}

\section{Proof of Theorem \ref{thm:cond:db:rate}}
\label{sec:proof:theory:Bayesian}
Throughout $\lambda_{\max}(A)$ (resp. $\lambda_{\min}(A)$) denotes the largest (resp. smallest) eigenvalue of the matrix $A$, and $\|A\|_{\rm op}$ its operator norm. If $A\in\rset^{q\times q}$ is symmetric positive definite, and $x,y\in\rset^q$, we will use the notations 
\[\pscal{x}{y}_A \eqdef x^\top A y, \;\;\;\;\mbox{ and }\;\;\;\;  \|x\|_A \eqdef  \sqrt{\pscal{x}{x}_A}.\]

For $r\geq 0$, we define $\mathbf{B}_r = \mathbf{B}_r(\theta_\star)\eqdef \left\{\theta\in\rset^q:\;\|\theta- \theta_\star\|_2 \leq r\right\}$.  Given $0\leq s \leq \beta$, and for some constant $L_1>0$, we set
\[r_n(s) \eqdef L_1 \left(\log{n}\right)^{\frac12\left(1 - \frac{s}{\beta}\right)}\left(\frac{n}{\log{n}}\right)^{-\frac{\beta - s}{2\beta + d}},\;\;\mbox{ and }\;\; r_n\eqdef r_n(\tau),\]
where $\tau$ is the PDE order as defined  in Assumption \ref{assm:dgp}. We note that $r_n(\beta)=L_1$. We define
\[\mathcal{U}_n \eqdef \left\{u\in W^{\beta,2}(\Omega):\;\; \|u - u_\star\|_{W^{s,2}(\Omega)} \leq r_n(s),\;  0\leq s\leq \beta\right\}.\]
We recall that the conclusion of Theorem \ref{thm:deep_sobolev} is that $\PP(\hat u\notin \mathcal{U}_n) \leq n^{-C}$, for some constants $L_1,C$. Throughout the proof, $0<C,C_0,C_1,C_2<\infty$ denote some generic constants independent of the sample size $n$, that we do not track (i.e. their actual value may change from one equation to the next). 
Let 
\[ T_n^{(1)}(\theta,u) \eqdef \mathsf{S}_n^{\rm db}(\theta,u) - \mathsf{S}^{\rm db}(\theta,u),\;\;\;\theta\in\rset^q,\; u\in W^{\beta,2}(\Omega).\]
For  some constant $C_1$, we set
\[\e_{n,0}\eqdef\left\{ \D_n:\;  \;\;\max_{1\leq j\leq q}\;\;\sqrt{n}|T_{n,j}^{(1)}(\theta_\star,u_\star)| \leq  C_1\sqrt{A\log(n)} \right\},\]
and
\[\e_{n,1}\eqdef\left\{ \D_n:\;  \sup_{u\in\mathcal{U}_n}\;\sup_{\theta\in\mathbf{B}_1(\theta_\star)}\;\;\max_{1\leq j\leq q}\;\;\sqrt{n}|T_{n,j}^{(1)}(\theta,u) - T_{n,j}^{(1)}(\theta_\star,u_\star)| \leq  C_1\sqrt{A\log(n)} \right\},\]
where $T_{n,j}^{(1)}$ denotes the $j$-th component of $T_{n}^{(1)}$. 
In these definitions, $A\geq 1$ is an absolute constant that can be taken as $A=1$. Its sole purpose is to control the sizes (hence the probabilities) of these events. Throughout we assume that the sample size $n$ is large enough such that
\begin{equation}\label{tech:cond:1pc} 
\sqrt{\frac{2q\log(2q)}{\bar v\log(n) + n\|V_n^{-1/2}M\|_{{\rm op}}^2}} \leq \min(1,r_n),\;\;\mbox{ and }\;\;\; r_n^2\leq  C_1 \sqrt{\frac{A q\log(n)}{n}},\;\;
\end{equation}
and furthermore, we can find a constant $C_0$ such that
\begin{equation}\label{tech:cond:2pc}
\max\left(C_1 A\lambda_{\max}(V_n^{-1})\log(n),\; \log\left(\bar v\log(n) + n \|V_n^{-1/2}M\|_{{\rm op}}^2\right)\right) \leq C_0\log(n).
\end{equation}
We note that, since by assumption $\beta>2\tau + d/2$, $r_n=r_n(\tau) \sim n^{-(\beta-\tau)/(2\beta +d)}$. Hence the conditions in (\ref{tech:cond:1pc}) are always true, for all $n$  large enough. (\ref{tech:cond:2pc}) also readily follows from Assumption \ref{assump:prior:pi0} for all $n$ large enough. By Lemma  \ref{lem:localization} below we can find a constant $b>0$ such that for all $\D_n\in \{\D_n:\; \hat u\in\mathcal{U}_n\} \cap \e_{n,0}\cap \e_{n,1}$
\[ \;\Pi^{(\theta\vert u)}(\mathbf{B}_b^c\vert \hat u,\D_n) \leq n^{-C},\] 
for some constant $C>0$. Using this value $b$ we then define
\[\e_n \eqdef \left\{ \D_n:\; \hat u\in\mathcal{U}_n\right\} \cap \e_{n,0} \cap \e_{n,1} \;\cap \; \e_{n,2},\]
where
\[\e_{n,2}\eqdef\left\{ \D_n:\;  \sup_{u\in\mathcal{U}_n}\;\sup_{\theta\in\mathbf{B}_b(\theta_\star)}\;\;\max_{1\leq j\leq q}\;\;\sqrt{n}|T_{n,j}^{(1)}(\theta,u) - T_{n,j}^{(1)}(\theta_\star,u_\star)| \leq  C_1 b\sqrt{A\log(n)} \right\},\]

First we argue that we can find $C_1$ in the definition of $\e_n$ such that for all $n$ large enough, $\PP(\D_n\notin \e_n)\leq n^{-C_3}$, for some constant $C_3$. Specifically, we can find a constant $C_2<\infty$ such that if in addition to (\ref{tech:cond:1pc})-(\ref{tech:cond:2pc}) $n$ satisfies
\begin{equation}\label{tech:cond:3pc} 
C\times L_1^{\frac{d}{2\beta}} \times r_n(0)^{1-{\frac{d}{2\beta}}} \leq \min\left(1,b^{1/\alpha_*}\right),\;\;\;\mbox{ and }\;\;\; n\geq C_2 A\log(n),
\end{equation}
where $C$ is as in Theorem \ref{thm:interpolation},  then $\PP(\D_n\notin \e_n)\leq n^{-C_3}$, for some constant $C_3$.
\\\\
\noindent
\underline{\bf Event $\left\{ \D_n:\; \hat u\in\mathcal{U}_n\right\}$:} By Theorem \ref{thm:deep_sobolev} that we can choose $L_1$ in the definition of $r_n(s)$ such that $\PP(\hat u\notin \mathcal{U}_n)\leq n^{-C}$ for some constant $C$. 
\\\\
\noindent
\underline{\bf Event $\e_{n,0}$:} We have
\[T_{n,j}^{(1)}(\theta_\star,u_\star) =\frac{1}{n}\sum_{i=1}^n \epsilon_i \mathsf{G}_{\theta_\star,u_\star,j}(X_i).\]
By Proposition \ref{prop:def_G}, 
\[ \|\mathsf{G}_{\theta_\star,u_\star,j}\|_\infty \leq c_0.\]
Therefore, for $m\geq 2$,
\[\PE\left(\left|\frac{\epsilon}{\sigma_\epsilon} \mathsf{G}_{\theta_\star,u_\star,j}(X)\right|^m\right) \leq c_0^m \PE\left(\left|\frac{\epsilon}{\sigma_\epsilon}\right|^m\right) \leq c_0^2 c_0^{m-2} \frac{m!}{2},\]
where we use that for a sub-Gaussian random variable $Z$ with mean $0$ and variance $1$, $\PE(|Z|^m)\leq m!/2$ for $m\geq 2$. By Bernstein's inequality (Lemma 5.7 of \cite{geer2000empirical}) with $K_0=R_0=\sigma_\epsilon c_0$, for any $a_1>0$ satisfying $a_1\leq \sigma_\epsilon c_0\sqrt{n}$,
\begin{align*}
\PP\left(\left|\frac{1}{\sqrt{n}}\sum_{i=1}^n \epsilon_i \mathsf{G}_{\theta_\star,u_\star,j}(X_i)\right|>a_1\right) & \leq 2\exp\left(-\frac{a_1^2}{2\left(\sigma_\epsilon^2 c_0^2 + \sigma_\epsilon c_0 a_1 n^{-1/2}\right)}\right) \\
& \leq 2\exp\left(-\frac{a_1^2}{4\sigma_\epsilon^2 c_0^2}\right).
\end{align*}
Setting $a_1 = 2\sigma_\epsilon c_0\sqrt{A\log n}$, the condition $a_1\leq\sigma_\epsilon c_0\sqrt{n}$ reduces to $n\geq 4A\log n$, which is implied by the second condition in (\ref{tech:cond:3pc}) by taking $C_2 \geq 4$. Hence, provided that $C_1 \geq 2\sigma_\epsilon c_0$, we have $\PP(\D_n\notin \e_{n,0})\leq 2/n^A$.
\\\\
\noindent
\underline{\bf Event $\e_{n,1} \;\cap \; \e_{n,2}$:} Under the stated conditions, Lemma \ref{lem:tailbbound:dbScore} implies that $\PP(\D_n\notin \e_{n,1})\leq Cn^{-A}$ and $\PP(\D_n\notin \e_{n,2})\leq Cn^{-A}$.

\subsection{Proof of the contraction rate of Theorem \ref{thm:cond:db:rate}}
For $r>0$, we set  $\mathbf{B}^c_{r} \eqdef \{\theta\in\rset^q:\;\|\theta - \theta_\star\|_2 > r\}$.  Thanks to Lemma  \ref{lem:localization} we have chosen  $b>\tau_n$ such that $$
\textbf{1}_{\e_n}(\D_n)\;\Pi^{(\theta\vert u)}(\mathbf{B}_b^c\vert \hat u,\D_n) \leq n^{-C}, 
$$
for some constant $C>0$.  Writing $\mathbf{B}^c_{\tau_n} = (\mathbf{B}_b\setminus \mathbf{B}_{\tau_n}) \cup \mathbf{B}_b^c$, we have
\[\Pi^{(\theta\vert u)}(\mathbf{B}^c_{\tau_n}\vert \hat u,\D_n) \leq \textbf{1}_{\e_n}(\cD_n)\left(\Pi^{(\theta\vert u)}(\mathbf{B}_b^c\vert \hat u,\D_n) + \Pi^{(\theta\vert u)}((\mathbf{B}_b\setminus \mathbf{B}_{\tau_n})\vert \hat u,\D_n)\right) + \textbf{1}_{\e_n^c}(\cD_n).\]
 Hence
\[\PE\left[\Pi^{(\theta\vert u)}(\mathbf{B}^c_{\tau_n}\vert \hat u,\D_n)\right] 
\leq \PE\left[\textbf{1}_{\e_n}(\cD_n) \Pi^{(\theta\vert u)}(\mathbf{B}_b\setminus \mathbf{B}_{\tau_n}\vert \hat u,\D_n) \right]  + \frac{1}{n^C}+ \PP\left(\D_n\notin \e_n\right). \]
We write
\begin{multline}\label{eq1:proof:thm:pc}
\Pi^{(\theta\vert u)}(\mathbf{B}_b\setminus \mathbf{B}_{\tau_n}\vert \hat  u,\D_n) \\
= \frac{(2\pi)^{-q/2}\int_{\mathbf{B}_b\setminus \mathbf{B}_{\tau_n}}e^{-\frac{n}{2}\left(\|\mathsf{S}_n^{\rm db}(\theta,\hat u)\|_{V_n^{-1}}^2 - \|\mathsf{S}_n^{\rm db}(\theta_\star,u_\star)\|_{V_n^{-1}}^2\right) + (\log\pi_0(\theta) - \log\pi_0(\theta_\star))}\rmd\theta}{(2\pi)^{-q/2}\int_{\rset^q}e^{-\frac{n}{2}\left(\|\mathsf{S}_n^{\rm db}(\theta,\hat u)\|_{V_n^{-1}}^2-\|\mathsf{S}_n^{\rm db}(\theta_\star,u_\star)\|_{V_n^{-1}}^2\right) + (\log\pi_0(\theta) - \log\pi_0(\theta_\star))}\rmd\theta}.
\end{multline}
It is shown in Lemma \ref{lem:control:NC} below that under the stated assumptions, and for $\cD_n\in\e_n$,
 \begin{multline} 
 \left(\frac{1}{2\pi}\right)^{q/2}\int_{\rset^q}e^{-\frac{n}{2}\left(\|\mathsf{S}_n^{\rm db}(\theta,\hat u)\|_{V_n^{-1}}^2-\|\mathsf{S}_n^{\rm db}(\theta_\star,u_\star)\|_{V_n^{-1}}^2\right)+(\log\pi_0(\theta) - \log\pi_0(\theta_\star))}\rmd\theta \\
 \geq \frac{1}{2} \exp\left(-c_0 n\lambda_{\max}(V_n^{-1}) \bar r_n^2 -\frac{q}{2}\log\left(\bar v\log n + n \|V_n^{-1/2}M\|_{{\rm op}}^2\right)\right),
 \end{multline}
for some constant $c_0$, where
 \[ \bar r_n \eqdef \max\left(r_n^{2},\; C_1 \sqrt{\frac{Aq\log(n)}{n}}\right) = C_1 \sqrt{\frac{Aq\log(n)}{n}}.\]
 We exploit that lower-bound together with (\ref{tech:cond:2pc}) to further reduce (\ref{eq1:proof:thm:pc}) to 
\begin{multline}\label{eq2:proof:thm:pc}
    \textbf{1}_{\e_n}(\D_n) \Pi^{(\theta\vert u)}(\mathbf{B}_b\setminus \mathbf{B}_{\tau_n}\vert \hat u,\D_n) \leq 2\exp\left(C_0 q\log(n)\right)\\
    \times (2\pi)^{-q/2}\int_{\mathbf{B}_b\setminus \mathbf{B}_{\tau_n}}e^{-\frac{n}{2}\left(\|\mathsf{S}_n^{\rm db}(\theta,\hat u)\|_{V_n^{-1}}^2 - \|\mathsf{S}_n^{\rm db}(\theta_\star,u_\star)\|_{V_n^{-1}}^2\right) +(\log\pi_0(\theta) - \log\pi_0(\theta_\star))}\rmd\theta,
\end{multline}
for some constant $C_0$. We write
\begin{multline*}
\|\mathsf{S}_n^{\rm db}(\theta,\hat u)\|_{V_n^{-1}}^2 - \|\mathsf{S}_n^{\rm db}(\theta_\star,u_\star)\|_{V_n^{-1}}^2 = \|\mathsf{S}_n^{\rm db}(\theta,\hat u) - \mathsf{S}_n^{\rm db}(\theta_\star,u_\star)\|_{V_n^{-1}}^2 \\ + 2\pscal{\mathsf{S}_n^{\rm db}(\theta_\star,u_\star)}{\mathsf{S}_n^{\rm db}(\theta,\hat u) - \mathsf{S}_n^{\rm db}(\theta_\star,u_\star)}_{V_n^{-1}}.
\end{multline*}
Using Lemma \ref{lem:control:NC},
\[\mathsf{S}_n^{\rm db}(\theta,u) - \mathsf{S}_n^{\rm db}(\theta_\star,u_\star) =  (\nabla_\theta \cR)(\theta,u_\star) +  \underbrace{T_n^{(1)}(\theta,u) - T_n^{(1)}(\theta_\star,u_\star)}_{R_n^{(1)}(\theta,u)} + \underbrace{ T_n^{(2)}(\theta,u)}_{R_n^{(2)}(\theta,u)},\]
Since $(\nabla_\theta\cR)(\theta_\star,u_\star)=0$, a first order Taylor expansion and the notation $M_\theta \eqdef \nabla_\theta^{(2)}\cR(\theta,u_\star)$ yields
\[\mathsf{S}_n^{\rm db}(\theta,u) - \mathsf{S}_n^{\rm db}(\theta_\star,u_\star) =  M_{\bar \theta}(\theta-\theta_\star) +  R_n^{(1)}(\theta,u) + R_n^{(2)}(\theta,u),\]
for some $\bar\theta$ on the segment between $\theta$ and $\theta_\star$. Therefore, 
\begin{align*}
    & \|\mathsf{S}_n^{\rm db}(\theta,u)\|_{V_n^{-1}}^2 - \|\mathsf{S}_n^{\rm db}(\theta_\star,u_\star)\|_{V_n^{-1}}^2 \\
    & \geq \|M_{\bar\theta}(\theta-\theta_\star)\|_{V_n^{-1}}^2 +2 \pscal{M_{\bar\theta}(\theta-\theta_\star)}{\mathsf{S}_n^{\rm db}(\theta_\star,u_\star) + R_n^{(1)}(\theta,u) + R_n^{(2)}(\theta,u)}_{V_n^{-1}} \\
    &-2\lambda_{\max}(V_n^{-1})\|\mathsf{S}_n^{\rm db}(\theta_\star,u_\star)\|_2 \times \|R_n^{(1)}(\theta,u) + R_n^{(2)}(\theta,u)\|_2.
\end{align*}
For  $\mathcal{\D}_n\in\e_n$,
\[\|\mathsf{S}_n^{\rm db}(\theta_\star,u_\star)\|_2 + \sup_{\theta\in\mathbf{B}_b}\;\sup_{u\in\mathcal{U}_n} \|R_n^{(1)}(\theta,u)\|_2 \leq  C_1(1 + b) \sqrt{\frac{Aq \log(n)}{n}},\]
and we apply (\ref{stab:G:2}) of Proposition \ref{prop:def_G} to get
\begin{multline*}
    \|R^{(2)}_n(\theta,u)\|_2 = \|T^{(2)}_n(\theta,u)\|_2 \leq c_1(\theta)\times  \| u - u_\star\|_{W^{\tau,2}}^2\\
\leq \left(\sup_{\theta:\;\|\theta-\theta_\star\|_2\leq b}\; c_1(\theta)\right)\;\times \; r_n^{2} \leq C_1 \sqrt{\frac{Aq \log(n)}{n}},
\end{multline*}
where the last inequality uses (\ref{tech:cond:1pc}).
Hence, with the definition of the curvature parameter $\lambda_-(b)$, we conclude that
\begin{multline*}  
\|\mathsf{S}_n^{\rm db}(\theta,u)\|_{V_n^{-1}}^2 - \|\mathsf{S}_n^{\rm db}(\theta_\star,u_\star)\|_{V_n^{-1}}^2 \geq \lambda_-(b)\|\theta-\theta_\star\|_2^2 \\
- C_0\sqrt{\lambda_+(b)\lambda_{\max}(V_n^{-1})} \times \sqrt{\frac{Aq\log(n)}{n}}\|\theta-\theta_\star\|_2 - C_0\lambda_{\max}(V_n^{-1})\times (1 + b)^2\frac{Aq\log(n)}{n},
\end{multline*}
for some constant $C_0$. If we choose $c$ in the definition of $\tau_n$ such that 
\[\tau_n\geq \left(4C_0\frac{\sqrt{\lambda_+(b)}}{\lambda_-(b)} + 2(1+b)\sqrt{\frac{C_0}{\lambda_-(b)}}\right)\sqrt{\frac{A\lambda_{\max}(V_n^{-1})q\log(n)}{n}},\]
then for $\|\theta-\theta_\star\|_2\geq \tau_n$,
\[ \frac{\lambda_-(b)}{4}\|\theta-\theta_\star\|_2^2 \geq C_0\lambda_{\max}(V_n^{-1})\times b^2\frac{Aq\log(n)}{n},\]
and
\[\frac{\lambda_-(b)}{4}\|\theta-\theta_\star\|_2^2 \geq C_0\sqrt{\lambda_+(b)\lambda_{\max}(V_n^{-1})} \times b\sqrt{\frac{Aq\log(n)}{n}}\|\theta-\theta_\star\|_2.\]
Hence, for $\|\theta - \theta_\star\|_2>\tau_n$,
\[ \|\mathsf{S}_n^{\rm db}(\theta,u)\|_{V_n^{-1}}^2 - \|\mathsf{S}_n^{\rm db}(\theta_\star,u_\star)\|_{V_n^{-1}}^2   \geq  \frac{\lambda_-(b)\tau_n^2}{2}.\] 
We are thus able to conclude from (\ref{eq2:proof:thm:pc}) that for $\D_n\in\e_n$ 
\begin{align}\label{eq3:proof:thm:pc}
   & \Pi^{(\theta\vert u)}(\mathbf{B}_b\setminus \mathbf{B}_{\tau_n}\vert u,\D_n) \notag \\  \leq & 2\exp\left(C_0 q\log(n)-\frac{n \lambda_-(b) \tau_n^2}{4}\right)\nonumber\\
    & \quad\quad \times  (2\pi)^{-q/2}\int_{\mathbf{B}_b\setminus \mathbf{B}_r}e^{\log\pi_0(\theta) - \log\pi_0(\theta_\star))}\rmd\theta\nonumber\\
    \leq & \exp\left(C_0 q\log(n)  -\frac{n \lambda_-(b) \tau_n^2}{4} \right)\nonumber\\
    &\times \left(\frac{1}{2\pi}\right)^{q/2}\int_{\rset^q}\exp\left(\pscal{\nabla\log\pi_0(\theta_\star)}{\theta-\theta_\star} - \frac{\underline{v}\log(n)}{2}\|\theta-\theta_\star\|_2^2\right)\rmd\theta\nonumber\\
    &\leq \exp\left(C_0 q\log(n)  -\frac{n \lambda_-(b) \tau_n^2}{4} + \frac{q}{2}\log\left(\frac{1}{\underline{v}\log(n)}\right) + \frac{\|\nabla\log\pi_0(\theta_\star)\|_2^2}{2\underline{v}\log(n)}\right)\nonumber\\
    & \leq 2e^{-C_0\log(n)q},
\end{align}
by taking $c$ in $\tau_n$ such that
\[ n \lambda_-(b) \tau_n^2 \geq 8C_0q\log(n) +  \frac{\|\nabla\log\pi_0(\theta_\star)\|_2^2}{2\underline{v}\log(n)},\]
where we recall that by Assumption \ref{assump:prior:pi0}, $\underline{v}\log(n)\geq 1$, and $\|\nabla\log\pi_0(\theta_\star)\|_2\leq \underline{v}\log(n)\sqrt{q}$.
We conclude that
\[\PE\left[\Pi^{(\theta\vert u)}(\mathbf{B}_{\tau_n}^c\vert \hat u,\D_n)\right] \leq 2e^{-C_0\log(n)q}  + \frac{1}{n^C}+ \PP\left(\D_n\notin \e_n\right).\]
We have seen in the preamble of the proof that the probability on the right-hand side in the last display is bounded by $1/n^{C}$. Hence the result.
\medskip

\subsection{Proof of the Bernstein-von Mises approximation of Theorem \ref{thm:cond:db:rate}}
Let 
\[\Psi_n(\rmd h) \propto\exp\left(-\frac{n}{2} \sS^{\rm db}_n(\theta_\star + n^{-1/2}h,\hat u)^\top V_n^{-1} \sS^{\rm db}_n(\theta_\star + n^{-1/2}h,\hat u)\right)\pi_0(\theta_\star + n^{-1/2}h)\rmd h,\] 
denote the distribution of $\sqrt{n}(\vartheta - \theta_\star)$, where $\vartheta\sim \Pi^{(\theta\vert u)}_{\rm db}(\cdot\vert \hat u,\D_n)$, $Q_n$ be the distribution of $\mathbf{N}_q(\Delta_n, (M V_n^{-1}M)^{-1})$. Let $\textbf{B}_n \eqdef\{h\in\rset^q:\;\|h\|_2 \leq \tau_n\sqrt{n}\}$, and  let $\Psi_n^{\textbf{B}_n}$ (resp. $Q_n^{\textbf{B}_n}$) denote the restriction of $\Psi_n$ (resp. $Q_n$) on $\textbf{B}_n$. That is, $\Psi_n^{\textbf{B}_n}(A) = \Psi_n(A\cap \textbf{B}_n)/\Psi_n(\textbf{B}_n)$, $A\subseteq \textbf{B}_n$, and similarly for $Q_{\textbf{B}_n}$. To save notations, we will also write $\Psi_n^{\textbf{B}_n}$ to denote the density of $\Psi_n^{\textbf{B}_n}$ with respect to the Lebesgue measure, and similarly for $Q_n^{\textbf{B}_n}$ -- which object we are referring to should be clear from the context. We therefore have
\[\PE\left(\|\Psi_n - Q_n\|_\tv\right) \leq  2\PE\left(\Psi_n(\textbf{B}_n^c)\right) + 2\PE\left(Q_n(\textbf{B}_n^c)\right) + \PE\left(\|\Psi_n^{\textbf{B}_n} - Q_n^{\textbf{B}_n}\|_\tv\right).\]
By definition, $\Psi_n(\mathbf{B}_n^c) = \Pi_{\rm db}^{(\theta\vert u)}(\{\theta:\;\|\theta-\theta_\star\|_2 >\tau_n\}\vert\hat u,\;\D_n)$. As a result, the first part of the theorem guarantees that
\[\PE\left(\Psi_n(\textbf{B}_n^c)\right) \leq n^{-C},\]
for some constant $C>0$. For the second term, we have
\begin{multline*} 
 \PE\left[Q_n(\textbf{B}_n^c)\right)]\leq \PP(\|\Delta_n\|_2>\tau_n\sqrt{n}/2) + \PE\left(\textbf{1}_{\{\|\Delta_n\|_2\leq \tau_n\sqrt{n}/2\}}\;Q_n(\textbf{B}_n^c)\right) \\
 \leq \PP\left(\|\Delta_n\|_2>\tau_n\sqrt{n}/2\right) + \PP\left(\|(MV_n^{-1}M)^{-1/2} Z\|_2 > \frac{\tau_n\sqrt{n}}{2}\right),\end{multline*} 
where $Z= (Z_1,\ldots,Z_q)\stackrel{i.i.d.}{\sim}\textbf{N}(0,1)$. Without any loss of generality, we can choose $c$ in $\tau_n$ large enough such that both terms in the last display converge to $0$, as $n\to\infty$. We deduce that 
\begin{equation}\label{eq:bvm:b0}
\lim_{n\to\infty} \PE\left[\|\Psi_n - Q_n\|_\tv\right] = \lim_{n\to\infty} \PE\left[\|\Psi_n^{\textbf{B}_n} - Q_n^{\textbf{B}_n}\|_\tv\right].
\end{equation}
For any two arbitrary densities $f_1,f_2$ on some measure space $\Xset$, with $f_2>0$, since $|a-b| = a+b -2\min(a,b)$ for all $a,b\geq 0$, we can write
\begin{align*} 
\int_{\Xset}|f_1(x) - f_2(x)|\rmd x &= 2\int_{\Xset}\left[1 - \min\left(1,\frac{f_1(x)}{f_2(x)}\right)\right] f_2(x)\rmd x\\
& = 2\int_{\Xset}\max\left(0,1-\frac{f_1(x)}{f_2(x)}\right)f_2(x)\rmd x.
\end{align*}
We apply this to $f_1 = \Psi_n^{\textbf{B}_n}$ and $f_2 = Q_n^{\textbf{B}_n}$. We set
\begin{align*}
L(h,u) \eqdef & \frac{n}{2}\left(\|\mathsf{S}_n^{\rm db}(\theta_\star + n^{-1/2}h,u)\|_{V_n^{-1}}^2 - \|\mathsf{S}_n^{\rm db}(\theta_\star,u_\star)\|_{V_n^{-1}}^2\right) \\
& \qquad \qquad \qquad \qquad - (\log(\pi_0(\theta_\star + n^{-1/2}h )) - \log(\pi_0(\theta_\star)),
\end{align*}
and 
\[\chi(h,u) \eqdef L(h,u) - \frac{1}{2}(h-\Delta_n)(M V_n^{-1}M)(h-\Delta_n).\]
Hence
\begin{align*} 
\frac{\Psi_n^{\textbf{B}_n}(h)}{Q_n^{\textbf{B}_n}(h)} &= \frac{e^{-\chi(h,\hat u)}\times \int_{\textbf{B}_n}e^{-\frac{1}{2}(h'-\Delta_n)(M V_n^{-1}M)(h'-\Delta_n)}\rmd h'}{\int_{\textbf{B}_n}e^{-L(\theta_\star + n^{-1/2}h',\hat u)}\rmd h'} \\
&  = \frac{\int_{\textbf{B}_n}e^{\chi(h',\hat u)-\chi(h,\hat u)} e^{-L(\theta_\star + n^{-1/2}h',\hat u)}\rmd h'}{\int_{\textbf{B}_n}e^{-L(\theta_\star + n^{-1/2}h',\hat u)}\rmd h'}.
\end{align*}
With the last display, and since $x\mapsto \max(0,x)$ is convex, we apply Jensen's inequality to obtain that for all $\theta\in \textbf{B}_n$,
\[\max\left(0, 1 - \frac{\Psi_n^{\textbf{B}_n}(h)}{Q_n^{\textbf{B}_n}(h)}\right) \leq \int_{\textbf{B}_n}\max\left(0,1-e^{\chi(h',\hat u)-\chi(h,\hat u)}\right)\Psi_n^{\textbf{B}_n}(h')\rmd h'. \]
And since $\max(0,1-e^x)\leq \min(1,|x|)$, we readily conclude that
\begin{multline*}
\|\Psi_n^{\textbf{B}_n} - Q_n^{\textbf{B}_n}\|_\tv\leq 2 \int_{\textbf{B}_n}\int_{\textbf{B}_n}\max\left(0,1-e^{\chi(h',\hat u)-\chi(h,\hat u)}\right)\Psi_n^{\textbf{B}_n}(h')\rmd h' Q_n^{\textbf{B}_n}(h)\rmd h \\
\leq 2\textbf{1}_{\bar \e_n^c}(\D_n) + 2\textbf{1}_{\bar \e_n}(\D_n)\;\sup_{h_1,h_2\in \textbf{B}_n}\;\left|\chi(h_1,\hat u)-\chi(h_2,\hat u)\right|,
\end{multline*}
where we define
\[\bar\e_n \eqdef \e_n \cap \e_{n,3},\]
where, with $b_n \eqdef r_n^{\alpha_*(1-d/(2\beta))}$,
\[\e_{n,3}\eqdef\left\{ \D_n:\;  \sup_{u\in\mathcal{U}_n}\;\sup_{\theta\in\mathbf{B}_{\tau_n}(\theta_\star)}\;\;\max_{1\leq j\leq q}\;\;\sqrt{n}|T_{n,j}^{(1)}(\theta,u) - T_{n,j}^{(1)}(\theta_\star,u_\star)| \leq  C_1 b_n\sqrt{A\log(n)} \right\}.\]
Hence it suffices control the probability of the event $\bar\e_n$, and to show that the term $\textbf{1}_{\bar \e_n}(\D_n) \sup_{h_1,h_2\in \textbf{B}_n} |\chi(h_1,\hat u) - \chi(h_2,\hat u)| \to 0$ in $L^1$. To save space, we introduce $\theta_h \eqdef \theta_\star + n^{-1/2} h$. We have
\begin{multline*}
\|\mathsf{S}_n^{\rm db}(\theta_h,u)\|_{V_n^{-1}}^2-\|\mathsf{S}_n^{\rm db}(\theta_\star,u_\star)\|_{V_n^{-1}}^2 = \|\mathsf{S}_n^{\rm db}(\theta_h,u) - \mathsf{S}_n^{\rm db}(\theta_\star,u_\star)\|_{V_n^{-1}}^2 \\
+ 2\pscal{\mathsf{S}_n^{\rm db}(\theta_\star,u_\star)}{\mathsf{S}_n^{\rm db}(\theta_h,u) - \mathsf{S}_n^{\rm db}(\theta_\star,u_\star)}_{V_n^{-1}},\end{multline*}
and furthermore, as shown in (\ref{eq10:lem:lb}) of Lemma \ref{lem:control:NC}
\[\mathsf{S}_n^{\rm db}(\theta,u) - \mathsf{S}_n^{\rm db}(\theta_\star,u_\star) = M(\theta - \theta_\star) + R_n(\theta,u),\;\; \mbox{ where }\;\; R_n(\theta,u) \eqdef  \sum_{k=1}^3 R_n^{(k)}(\theta,u),\]
with 
\begin{multline*} 
R_n^{(1)}(\theta,u) \eqdef T_n^{(1)}(\theta,u) - T_n^{(1)}(\theta_\star,u_\star),\; \\
R_n^{(2)}(\theta,u)\eqdef T_n^{(2)}(\theta,u),\; \;\; \mbox{ and }\;\;\;\; R_n^{(3)}(\theta,u) \eqdef \frac{1}{2} (\nabla^{(3)}_\theta\cR)(\bar\theta,u_\star)\cdot(\theta-\theta_\star,\theta-\theta_\star).\end{multline*}
Hence
\begin{align*}
& \|\mathsf{S}_n^{\rm db}(\theta_h,u)\|_{V_n^{-1}}^2-\|\mathsf{S}_n^{\rm db}(\theta_\star,u_\star)\|_{V_n^{-1}}^2 \\ 
& = (\theta_h - \theta_\star)^\top M V_n^{-1}M (\theta_h-\theta_\star) + 2(\theta_h-\theta_\star)^\top M V_n^{-1}R_n(\theta_h,u) \\
& + \|R_n(\theta_h,u)\|_{V_n^{-1}}^2 + 2\pscal{\mathsf{S}_n^{\rm db}(\theta_\star,u_\star)}{M(\theta_h-\theta_\star)}_{V_n^{-1}} + 2\pscal{\mathsf{S}_n^{\rm db}(\theta_\star,u_\star)}{R_n(\theta_h,u)}_{V_n^{-1}}.
\end{align*}
Therefore
\begin{align*}
L(h,u) =& \frac{n}{2}\left(\|\mathsf{S}_n^{\rm db}(\theta_h,u)\|_{V_n^{-1}}^2 - \|\mathsf{S}_n^{\rm db}(\theta_\star,u_\star)\|_{V_n^{-1}}^2\right) - (\log(\pi_0(\theta_h )) - \log(\pi_0(\theta_\star)) \\
  =&\frac{1}{2}h^\top (M V_n^{-1} M) h + \sqrt{n}h^\top M V_n^{-1}\mathsf{S}_n^{\rm db}(\theta_\star,u_\star)+ n(\theta_h-\theta_\star)^\top M V_n^{-1}R_n(\theta_h,u)\\
 & + \frac{n}{2}\left( \|R_n(\theta_h,u)\|_{V_n^{-1}}^2 +  2\pscal{\mathsf{S}_n^{\rm db}(\theta_\star,u_\star)}{R_n(\theta_h,u)}_{V_n^{-1}} \right)\\
 & - (\log(\pi_0(\theta_h )) - \log(\pi_0(\theta_\star)).
\end{align*}
Noting by completing the square that 
\[ \frac{1}{2}h^\top (M V_n^{-1} M) h + \sqrt{n}h^\top M V_n^{-1}\mathsf{S}_n^{\rm db}(\theta_\star,u_\star) = \frac{1}{2}(h-\Delta_n)^\top (MV_n^{-1}M)(h-\Delta_n) + C_n,\]
where $C_n$ does not depend on $h$, we conclude that
\begin{multline*} 
\chi(h,u) =  n(\theta_h-\theta_\star)^\top M V_n^{-1}R_n(\theta_h,u)\\
 + \frac{n}{2}\left( \|R_n(\theta_h,u)\|_{V_n^{-1}}^2 +  2\pscal{\mathsf{S}_n^{\rm db}(\theta_\star,u_\star)}{R_n(\theta_h,u)}_{V_n^{-1}} \right)\\
  - (\log(\pi_0(\theta_h )) - \log(\pi_0(\theta_\star)) + C_n.
 \end{multline*}
Taking the difference,
\begin{align*}
\chi(h,u) - \chi(h',u) & =  n\left[(\theta_h-\theta_\star)^\top M V_n^{-1}R_n(\theta_h,u) - (\theta_{h'}-\theta_\star)^\top M V_n^{-1}R_n(\theta_{h'},u) \right] \\
& + \frac{n}{2}\left[\|R_n(\theta_h,u)\|_{V_n^{-1}}^2-\|R_n(\theta_{h'},u)\|_{V_n^{-1}}^2 \right]
\\ 
& + n\left[\pscal{\mathsf{S}_n^{\rm db}(\theta_\star,u_\star)}{R_n(\theta_h,u) - R_n(\theta_{h'},u)}_{V_n^{-1}}\right] \\
& +(\log(\pi_0(\theta_{h'} )) - \log(\pi_0(\theta_\star)) - (\log(\pi_0(\theta_h )) - \log(\pi_0(\theta_\star)).
\end{align*}
We deal with each term separately.
\\\\
\noindent
\underline{\bf On terms $n(\theta_h-\theta_\star)^\top M V_n^{-1}R_n(\theta_h,u)$ and $n\|R_n(\theta_h,u)\|_{V_n^{-1}}^2$:}
By definition, $R_n^{(1)}(\theta,u) =T_n^{(1)}(\theta,u) - T_n^{(1)}(\theta_\star,u_\star)$. Therefore, for  $\D_n\in\bar \e_n$, and recalling $b_n \eqdef r_n^{\alpha_*(1-d/(2\beta))}$,
\[\sup_{u\in\mathcal{U}_n}\;\sup_{h\in\mathbf{B}_n}\;\|R_n^{(1)}(\theta_h,u)\|_2 \leq C_1 b_n\sqrt{\frac{Aq\log(n)}{n}},\;\;\mbox{ and }\;\; \|\mathsf{S}_n^{(\rm db)}(\theta_\star,u_\star)\|_2 \leq C_1 \sqrt{\frac{Aq\log(n)}{n}}.\]
We apply Proposition \ref{prop:def_G}-(2) to get for all $u\in\mathcal{U}_n$
\[\|R^{(2)}_n(\theta,u)\|_2 = \|T^{(2)}_n(\theta,u)\|_2 \leq c_0 \sqrt{q}\times  \| u - u_\star\|_{W^{\tau,2}}^{2}\leq c_0 r_n^2,\]
for some constant $c_0$. Assumption \ref{assm:psi_smoothness} implies that for  $\|\theta-\theta_\star\|_2\leq \tau_n$,
\[ \|R_n^{(3)}(\theta,u)\|_2 \leq c_0 \|\theta -\theta_\star\|_2^2 \leq c_0 \tau_n^2.\]
To summarize, we have
\begin{align*}
\sup_{u\in\mathcal{U}_n}\;\sup_{h\in\mathbf{B}_n}\;\|R_n(\theta_h,u)\|_2 & \leq C_1\max\left(b_n\sqrt{\frac{Aq\log(n)}{n}}, \tau_n^2, r_n^2\right) \\
& = C_1 \max\left(b_n\sqrt{\frac{Aq\log(n)}{n}}, r_n^2\right),
\end{align*}
which implies that 
\begin{equation} 
\label{eq:bvm:b1} \sup_{u\in\mathcal{U}_n}\;\sup_{h\in\mathbf{B}_n}\;n\left|(\theta_h-\theta_\star)^\top M V_n^{-1}R_n(\theta_h,u)\right| \leq C_1 \max(b_n\log(n), n\tau_n r_n^2) \to 0,
\end{equation}
and
\begin{equation}\label{eq:bvm:b2} 
\sup_{u\in\mathcal{U}_n}\;\sup_{h\in\mathbf{B}_n}\;n\|R_n(\theta_h,u)\|_{V_n^{-1}}^2 \leq C_1 \max\left(b_n^2\log(n),n r_n^4\right) \to 0,
\end{equation}
as $n\to\infty$.
\\\\
\noindent
\underline{\bf On the term $n\left[\pscal{\mathsf{S}_n^{\rm db}(\theta_\star,u_\star)}{R_n(\theta_h,u) - R_n(\theta_{h'},u)}_{V_n^{-1}}\right]$:} To handle this term, we add and subtract $R_n^{(2)}(\theta_\star,u)$. First, note as in the previous calculations that we readily have for $\D_n\in\bar \e_n$,
\begin{equation}\label{eq:bvm:b3}
\sup_{u\in\mathcal{U}_n}\;\sup_{h\in\mathbf{B}_n}\; n\left|\pscal{\mathsf{S}_n^{\rm db}(\theta_\star,u_\star)}{R_n^{(1)}(\theta_h,u)}_{V_n^{-1}}\right| \leq C_1 b_n\log(n)\to 0,
\end{equation}
as $n\to\infty$, and
\begin{equation}\label{eq:bvm:b4} 
\sup_{u\in\mathcal{U}_n}\;\sup_{h\in\mathbf{B}_n}\; n\left|\pscal{\mathsf{S}_n^{\rm db}(\theta_\star,u_\star)}{R_n^{(3)}(\theta_h,u)}_{V_n^{-1}}\right| \leq C_1 \sqrt{n\log(n)}\tau_n^2\to 0,
\end{equation}
as $n \to \infty$. We then write 
\[
R_n^{(2)}(\theta_h,u) - R_n^{(2)}(\theta_{h'},u) = R_n^{(2)}(\theta_h,u) - R_n^{(2)}(\theta_\star,u) - \left[ R_n^{(2)}(\theta_{h'},u) - R_n^{(2)}(\theta_\star,u)\right].
\]
Recall that $R_n^{(2)}(\theta,u)=  T_n^{(2)}(\theta,u)$, and going back to the definition of $T_n^{(2)}$ in Lemma \ref{prop:Sn:decomp},
\begin{align*} 
& R_n^{(2)}(\theta,u) - R_n^{(2)}(\theta_\star,u) \\
& =  \int_0^1 \int_\Omega \left(\mathsf{G}_{\theta,u+\alpha(u-u_\star)}(x) - \mathsf{G}_{\theta_\star,u+\alpha(u-u_\star)}(x)\right)(u(x) - u_\star(x))\rmd x\rmd \alpha \\
& \;\;\;\;\;\;\;- \int_\Omega \left(\mathsf{G}_{\theta,u}(x) - \mathsf{G}_{\theta_\star,u}(x)\right)(u(x) - u_\star(x))\rmd x\end{align*}
We then apply (\ref{stab:G:3}) of Proposition \ref{prop:def_G} to get, for all $\|\theta-\theta_\star\|_2\leq\tau_n$, and $u\in\mathcal{U}_n$,
\begin{align*}
& \|R_n^{(2)}(\theta,u) - R_n^{(2)}(\theta_\star,u)\|_2 \\
& \leq 
\sqrt{q}\max_{1\leq j\leq q}\max\left(\|\mathsf{G}_{\theta,u+\alpha(u-u_\star),j} - \mathsf{G}_{\theta_\star,u+\alpha(u-u_\star),j}\|_{L^\infty(\Omega)}, \right. \\
& \left. \qquad \qquad \qquad \qquad \|\mathsf{G}_{\theta,u,j} - \mathsf{G}_{\theta_\star,u,j}\|_{L^\infty(\Omega)}\right) \int_\Omega |u(x) - u_\star(x)|\rmd x\\
& \leq C_{0}\|\theta-\theta_\star\|_2 \| u - u_\star\|_{L^2(\Omega)} \\
& \leq C_0 \tau_n \times \| u - u_\star\|_{L^2(\Omega)} \leq C_0 \tau_n r_n(0).
\end{align*}
Hence
\begin{align} 
\label{eq:bvm:b5} 
& \sup_{u\in\mathcal{U}_n}\;\sup_{h\in\mathbf{B}_n}\; n\left|\pscal{\mathsf{S}_n^{\rm db}(\theta_\star,u_\star)}{R_n^{(2)}(\theta_h,u) - R_n^{(2)}(\theta_\star,u)}_{V_n^{-1}}\right| \notag \\
& \hspace{15em}\leq C_1 \sqrt{n\log(n)}\tau_n r_n(0)\to 0,
\end{align}
as $n\to\infty$.
\\\\
\noindent
\underline{\bf On term $|(\log(\pi_0(\theta_{h'} )) - \log(\pi_0(\theta_\star))|$:} By Assumption \ref{assump:prior:pi0},
\begin{align*} \left| (\log(\pi_0(\theta_{h'} )) - \log(\pi_0(\theta_\star))\right| & \leq \|\nabla\log\pi_0(\theta_\star)\|_2 \frac{\|h\|_2}{\sqrt{n}} + \frac{\bar v\log(n)}{2}\frac{\|h\|_2^2}{n}\\
& \leq \bar v\log(n)\sqrt{q}\frac{\|h\|_2}{\sqrt{n}} + \frac{\bar v\log(n)}{2}\frac{\|h\|_2^2}{n}.
\end{align*}
Hence
\begin{equation} \label{eq:bvm:b6} 
\sup_{h\in \mathbf{B}_n} \left| (\log(\pi_0(\theta_{h'} )) - \log(\pi_0(\theta_\star))\right| \leq \bar v\log(n)\left(\sqrt{q}\tau_n + \tau_n^2\right) \to 0,\end{equation}
as $n\to\infty$.
Combining (\ref{eq:bvm:b1})-(\ref{eq:bvm:b6}), we deduce that
\[
\textbf{1}_{\bar \e_n}(\D_n)\;\sup_{h_1,h_2\in \textbf{B}_n}\;\left|\chi(h_1,\hat u)-\chi(h_2,\hat u)\right| \leq C_0\max\left(b_n\log(n),n \tau_n r_n^2\right),
\]
and the term on the right-hand side of the last display converges to $0$, as $n\to\infty$.
We deduce that 
\[\PE\left[\|P_{\textbf{B}_n} - Q_{\textbf{B}_n}\|_\tv\right]\leq 2\PP\left(\D_n\notin\bar \e\right) +  C_0\max\left(b_n\log(n),n \tau_n r_n^2\right).\]
By Lemma \ref{lem:tailbbound:dbScore}, applied with $\rho=\tau_n$, and $\bar\rho = (C L_1^{d/(2\beta)} \times r_n(0)^{1-d/(2\beta)})^{\alpha_*}$, $\PP(\D_n\notin\bar \e_{n,3})\leq n^{-C}$, for some constant $C>0$. The theorem then follows from the last display, and (\ref{eq:bvm:b0}).

\subsection{Technical supporting lemmas for Theorem \ref{thm:cond:db:rate}}
We collect here the proofs of the supporting lemmas for Theorem \ref{thm:cond:db:rate}. We first establish a key decomposition of the debiased score $\mathsf{S}_n^{\rm db}$. 
\begin{lemma}\label{prop:Sn:decomp}
Assume Assumptions \ref{assm:psi_smoothness}-\ref{assm:w}. Define
\[ T_n^{(1)}(\theta,u) \eqdef \sS_n^{\rm db}(\theta,u) - \sS^{\rm db}(\theta,u),\]
\[ 
T_n^{(2)}(\theta,u) \eqdef \int_0^1 \int_\Omega \left(\mathsf{G}_{\theta,u+\alpha(u-u_\star)}(x) - \mathsf{G}_{\theta,u}(x)\right)(u(x) - u_\star(x))\rmd x\rmd \alpha.
\]
For all $\theta\in\rset^q$, and $u\in W^{\beta,2}(\Omega,L)$ it holds  
\[ \mathsf{S}_n^{\rm db}(\theta,u) =
 (\nabla_\theta\cR)(\theta,u_\star)  + T_n^{(1)}(\theta,u) + T_n^{(2)}(\theta,u).
\]
\end{lemma}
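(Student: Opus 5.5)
The identity is an exact algebraic consequence of the definitions together with a fundamental-theorem-of-calculus argument along the segment joining $u_\star$ and $u$; no probability is involved. First, by the definition of $T_n^{(1)}$,
\[
\sS_n^{\rm db}(\theta,u) = \sS^{\rm db}(\theta,u) + T_n^{(1)}(\theta,u),
\qquad
\sS^{\rm db}(\theta,u) = (\nabla_\theta\cR)(\theta,u) + \int_\Omega (u_\star-u)(x)\,\mathsf{G}_{\theta,u}(x)\,\rmd x .
\]
It therefore suffices to show the deterministic identity $\sS^{\rm db}(\theta,u) = (\nabla_\theta\cR)(\theta,u_\star) + T_n^{(2)}(\theta,u)$.

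Set $h \eqdef u-u_\star$ and $u_\alpha \eqdef u_\star+\alpha h$ for $\alpha\in[0,1]$. I read the integrand of $T_n^{(2)}$ as $\mathsf{G}_{\theta,u_\star+\alpha(u-u_\star)}$, the interpolating path. This is the same path used for $T_1$ in the proof of Theorem~\ref{thm:main_freq}, and I take the written ``$u+\alpha(u-u_\star)$'' to be a typo for it. Since $u,u_\star\in W^{\beta,2}(\Omega,L)$ and the ball is convex, every $u_\alpha$ lies in $W^{\beta,2}(\Omega,L)$. Consider $\phi(\alpha)\eqdef(\nabla_\theta\cR)(\theta,u_\alpha)$. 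Proposition~\ref{prop:def_G}, applied at the point $u_\alpha$ in the direction $h\in W^{\beta,2}(\Omega)$, shows that $\phi$ is differentiable with
\[
\phi'(\alpha)=\int_\Omega h(x)\,\mathsf{G}_{\theta,u_\alpha}(x)\,\rmd x .
\]
By \eqref{stab:G:2}, $|\phi'(\alpha)-\phi'(\alpha')|\le c_1\|h\|_{W^{\tau,2}}^2|\alpha-\alpha'|$, so $\phi'$ is continuous. The fundamental theorem of calculus then gives
\[
(\nabla_\theta\cR)(\theta,u)-(\nabla_\theta\cR)(\theta,u_\star)=\int_0^1\!\!\int_\Omega (u-u_\star)(x)\,\mathsf{G}_{\theta,u_\alpha}(x)\,\rmd x\,\rmd\alpha .
\]

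Now substitute this into $\sS^{\rm db}(\theta,u)$, and write the correction term as $-\int_0^1\!\int_\Omega (u-u_\star)\,\mathsf{G}_{\theta,u}\,\rmd x\,\rmd\alpha$; its integrand does not depend on $\alpha$. Combining the two integrals yields $\sS^{\rm db}(\theta,u)=(\nabla_\theta\cR)(\theta,u_\star)+T_n^{(2)}(\theta,u)$. Adding $T_n^{(1)}$ gives the claim.

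The only delicate point is the regularity needed for the fundamental theorem of calculus: the Gateaux derivative must exist at every point of the segment, not only at $u_\star$, and must vary continuously in $\alpha$. Both follow from Proposition~\ref{prop:def_G} because the whole segment stays in $W^{\beta,2}(\Omega,L)$, on which the representation and the stability bound \eqref{stab:G:2} hold uniformly for $\|\theta\|_2\le r$. For arbitrary $\theta\in\rset^q$, take $r=\|\theta\|_2$.
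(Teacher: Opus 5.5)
Your proposal is correct and follows the paper's proof. You split off $T_n^{(1)}$ by definition, expand $(\nabla_\theta\cR)(\theta,u)-(\nabla_\theta\cR)(\theta,u_\star)$ along the segment between $u_\star$ and $u$ using the representer $\mathsf{G}$, and absorb the correction term $\int_\Omega (u_\star-u)\,\mathsf{G}_{\theta,u}$ into $T_n^{(2)}$. Your reading of the path as $u_\star+\alpha(u-u_\star)$ is the right fix, since the paper's $u+\alpha(u-u_\star)$ should be $u+\alpha(u_\star-u)$, the same segment reparametrized. You also use \eqref{stab:G:2} to get continuity of $\phi'$, which justifies the step the paper calls a ``Taylor expansion with integral remainder'' without further comment.
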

\begin{proof} 
By definition,
\begin{equation}\label{lem:Sn:decom:eq1}
\mathsf{S}_n^{\rm db}(\theta,u) = \mathsf{S}^{\rm db}(\theta,u) + T_n^{(1)}(\theta,u).\end{equation}
And 
\begin{align*} 
\mathsf{S}^{\rm db}(\theta,u) &= (\nabla_\theta\cR)(\theta,u) +   \int_\Omega \mathsf{G}_{\theta,u}(x)\cdot \left(u_\star(x) - u(x)\right)\rmd x\\
& = (\nabla_\theta\cR)(\theta,u_\star) - \left((\nabla_\theta\cR)(\theta,u_\star) - (\nabla_\theta\cR)(\theta,u)\right) \\
& + \int_\Omega \mathsf{G}_{\theta,u}(x)\cdot \left(u_\star(x) - u(x)\right)\rmd x.
\end{align*}
We write $(\nabla_\theta\cR)(\theta,u_\star) - (\nabla_\theta\cR)(\theta,u) = (\nabla_\theta\cR)(\theta,u + (u_\star-u)) - (\nabla_\theta\cR)(\theta,u)$ that we expand using a first order Taylor expansion with integral remainder to obtain
\[
    (\nabla_\theta\cR)(\theta,u_\star) - (\nabla_\theta\cR)(\theta,u) = 
     \int_0^1 \int_\Omega \mathsf{G}_{\theta,u+\alpha(u-u_\star)}(x)\cdot(u_\star(x) - u(x))\rmd x\rmd \alpha.
\]
We thus have
\[ \mathsf{S}^{\rm db}(\theta,u) = (\nabla_\theta\cR)(\theta,u_\star)  + T_n^{(2)}(\theta,u),\]
which we combine with (\ref{lem:Sn:decom:eq1}) to obtain the stated  result.
\end{proof}

The next lemma gives an important lower-bound on the normalizing constant of the posterior distribution. 
\begin{lemma}\label{lem:control:NC}
 Suppose that Assumption \ref{assm:psi_smoothness}-\ref{assm:w} holds. Suppose also that the prior distribution $\pi_0$ satisfies Assumption \ref{assump:prior:pi0}, and the sample size assumption (\ref{tech:cond:1pc}) holds. For $\D_n\in\e_{n,0} \cap \e_{n,1}$, and for all $u\in\mathcal{U}_n$, it holds
 \begin{multline} 
 \left(\frac{1}{2\pi}\right)^{q/2}\int_{\rset^q}e^{-\frac{n}{2}\left(\|\mathsf{S}_n^{\rm db}(\theta,u)\|_{V_n^{-1}}^2-\|\mathsf{S}_n^{\rm db}(\theta_\star,u_\star)\|_{V_n^{-1}}^2\right) + (\log\pi_0(\theta) - \log\pi_0(\theta_\star))}\rmd\theta \\
 \geq \frac{1}{2} \exp\left(-c_0 n\lambda_{\max}(V_n^{-1}) \bar r_n^2 -\frac{q}{2}\log\left(\bar v\log n + n \|V_n^{-1/2}M\|_{{\rm op}}^2\right)\right),
 \end{multline}
 for some constant $c_0$, where
 \[ \bar r_n \eqdef \max\left(r_n^{2},\; C_1 \sqrt{\frac{Aq\log(n)}{n}}\right),\]
with $A$ and $C_1$ as in the definition of $\e_{n,0} \cap \e_{n,1}$.
\end{lemma}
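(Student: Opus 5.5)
We aim to lower bound the normalizing integral by restricting it to a small ball $\mathbf{B}_\rho(\theta_\star)$ on which the exponent is controlled from above. Take
\[
\rho \eqdef \sqrt{\frac{2q\log(2q)}{\bar v\log(n) + n\|V_n^{-1/2}M\|_{\rm op}^2}}.
\]
By (\ref{tech:cond:1pc}) we have $\rho\le\min(1,r_n)$, so $\mathbf{B}_\rho\subseteq \mathbf{B}_1$ and the event $\e_{n,1}$ applies there.

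The first step is the score decomposition. By Lemma \ref{prop:Sn:decomp}, and since $(\nabla_\theta\cR)(\theta_\star,u_\star)=0$, we can write, for $\theta\in\mathbf{B}_\rho$ and $u\in\mathcal U_n$,
\[
\mathsf S_n^{\rm db}(\theta,u)=\mathsf S_n^{\rm db}(\theta_\star,u_\star)+M_{\bar\theta}(\theta-\theta_\star)+R_n^{(1)}(\theta,u)+T_n^{(2)}(\theta,u),
\]
with $R_n^{(1)}=T_n^{(1)}(\theta,u)-T_n^{(1)}(\theta_\star,u_\star)$. Each piece is then controlled separately. On $\e_{n,0}$, $\|\mathsf S_n^{\rm db}(\theta_\star,u_\star)\|_2\le C_1\sqrt{Aq\log n/n}$, and on $\e_{n,1}$ the same bound holds for $\|R_n^{(1)}\|_2$. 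By (\ref{stab:G:2}) of Proposition \ref{prop:def_G} with $h=u-u_\star$, $\|T_n^{(2)}\|_2\le c\|u-u_\star\|_{W^{\tau,2}}^2\le c r_n^2$. The linear term is handled by continuity of $M_\theta$: $\|M_{\bar\theta}(\theta-\theta_\star)\|_{V_n^{-1}}\le C\|V_n^{-1/2}M\|_{\rm op}\|\theta-\theta_\star\|_2$.

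Next we bound the exponent. Using $\|a+b\|^2\le 2\|a\|^2+2\|b\|^2$ and dropping the positive $-\|\mathsf S_n^{\rm db}(\theta_\star,u_\star)\|^2$ term only where convenient, we get
\[
\frac n2\Big(\|\mathsf S_n^{\rm db}(\theta,u)\|_{V_n^{-1}}^2-\|\mathsf S_n^{\rm db}(\theta_\star,u_\star)\|_{V_n^{-1}}^2\Big)\le c_0 n\lambda_{\max}(V_n^{-1})\bar r_n^2+ Cn\|V_n^{-1/2}M\|_{\rm op}^2\|\theta-\theta_\star\|_2^2 .
\]
The cross term with $\mathsf S_n^{\rm db}(\theta_\star,u_\star)$ is absorbed by Young's inequality. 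For the prior, the strong log-concavity of Assumption \ref{assump:prior:pi0} gives
\[
\log\pi_0(\theta)-\log\pi_0(\theta_\star)\ge \langle\nabla\log\pi_0(\theta_\star),\theta-\theta_\star\rangle-\tfrac{\bar v\log n}{2}\|\theta-\theta_\star\|_2^2 .
\]
The integrand is therefore bounded below by $e^{-c_0n\lambda_{\max}(V_n^{-1})\bar r_n^2}$ times a Gaussian-type factor $\exp\big(\langle g,\theta-\theta_\star\rangle-\tfrac{\kappa}{2}\|\theta-\theta_\star\|^2\big)$, where $g=\nabla\log\pi_0(\theta_\star)$ and $\kappa\asymp\bar v\log n+n\|V_n^{-1/2}M\|_{\rm op}^2$.

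The last step, and the main obstacle, is evaluating this Gaussian integral while losing only a factor of $1/2$. The linear term is removed by symmetrization: pairing $\theta-\theta_\star=\pm t$ and using $\cosh\ge1$ shows the integral is at least $\int e^{-\kappa\|t\|^2/2}$ over $\mathbf{B}_\rho$. Writing this as $(2\pi)^{q/2}\kappa^{-q/2}\,\PP(\|Z\|_2\le\rho\sqrt\kappa)$ with $Z\sim\mathbf N_q(0,I_q)$ produces the factor $\exp(-\tfrac q2\log\kappa)$. It then remains to show $\PP(\|Z\|\le\rho\sqrt{\kappa})\ge 1/2$, and here the choice of $\rho$ matters: we have $\rho^2\kappa\gtrsim 2q\log(2q)\ge 2q$, while $\mathbb E\|Z\|^2=q$, so Markov's inequality gives the claim. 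The constants inside $\kappa$ must be matched to this particular $\rho$, and that is where the delicate bookkeeping of the proof lies.
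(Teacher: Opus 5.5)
Your proposal is correct and is essentially the paper's proof. It uses the same ball of radius $\sqrt{2q\log(2q)/(\bar v\log n+n\|V_n^{-1/2}M\|_{\rm op}^2)}$, the same score decomposition, and the same remainder bounds on $\e_{n,0}\cap\e_{n,1}$. It removes the linear term by symmetry of the ball (your $\cosh\ge 1$ in place of the paper's Jensen step) and finishes with a $\chi^2_q$ probability of at least $1/2$. Your use of $\|a+b\|^2\le 2\|a\|^2+2\|b\|^2$ instead of the exact expansion keeps $nMV_n^{-1}\mathsf S_n^{\rm db}(\theta_\star,u_\star)$ out of the linear term; this is cosmetic and costs only a constant factor in $\kappa$, which is absorbed into $c_0 n\lambda_{\max}(V_n^{-1})\bar r_n^2\gtrsim q\log n$.

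One small slip: $2q\log(2q)\ge 2q$ fails at $q=1$, so Markov alone does not give $1/2$ there. Either inflate $\kappa$ by a further constant factor (also absorbed into $c_0$), or note directly that $P(\chi^2_1\le 2\log 2)\approx 0.76$.
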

\begin{proof}
With $M$ as in (\ref{def:M}), we set $b_0\eqdef \sqrt{\frac{2q\log(2q)}{\bar v\log n+2n\|V_n^{-1/2}M\|_{{\rm op}}^2}}$, and define
\[\mathbf{B}_0 \eqdef\left\{\theta\in\rset^q:\;\|\theta - \theta_\star\|_2 \leq b_0\right\}.\]
Recall that under (\ref{tech:cond:1pc}) $b_0\leq \min(1,r_n)$. %Without any loss of generality we assume that the absolute constant $A$ in the definition of the event $\e_n(1,A)$ is taken such that $b_0 \leq C_1 \sqrt{\frac{Aq\log(n)}{n}} = \bar r_n$. 
Since $(\nabla_\theta \cR)(\theta_\star,u_\star)=0$, and $T_n^{(2)}(\theta_\star,u_\star)=0$, using Lemma \ref{prop:Sn:decomp}, we have
\[\mathsf{S}_n^{\rm db}(\theta,u) - \mathsf{S}_n^{\rm db}(\theta_\star,u_\star) =  (\nabla_\theta \cR)(\theta,u_\star) +  \underbrace{T_n^{(1)}(\theta,u) - T_n^{(1)}(\theta_\star,u_\star)}_{R_n^{(1)}(\theta,u)} + \underbrace{ T_n^{(2)}(\theta,u)}_{R_n^{(2)}(\theta,u)},\]
and for some $\bar\theta$ on the segment between $\theta$ and $\theta_\star$, we have
\[ (\nabla_\theta \cR)(\theta, u_\star) = (\nabla_\theta \cR)(\theta_\star,u_\star) + M(\theta-\theta_\star) + \frac{1}{2} (\nabla^{(3)}_\theta\cR)(\bar\theta,u_\star)\cdot(\theta-\theta_\star,\theta-\theta_\star),\]
where $M\eqdef (\nabla^{(2)}_\theta \cR)(\theta,u_\star)\vert_{\theta=\theta_\star}$,  as defined in (\ref{def:M}). For convenience (recall that $(\nabla_\theta \cR)(\theta_\star,u_\star)=0$) we write the last display as
\[(\nabla_\theta \cR)(\theta, u_\star) = M(\theta-\theta_\star) +   R_n^{(3)}(\theta,u).\]
In summary,
\begin{equation}\label{eq10:lem:lb}
\mathsf{S}_n^{\rm db}(\theta,u) - \mathsf{S}_n^{\rm db}(\theta_\star,u_\star) = M(\theta - \theta_\star) + R_n(\theta,u),\;\; \mbox{ where }\;\; R_n(\theta,u) \eqdef  \sum_{k=1}^3 R_n^{(k)}(\theta,u).
\end{equation}
We define
\[ R^{(4)}(\theta_\star,\theta) \eqdef \log\pi_0(\theta) - \log\pi_0(\theta_\star) - \pscal{\nabla\log\pi_0(\theta_\star)}{\theta-\theta_\star}.\] 
Since,
\begin{multline*}
\|\mathsf{S}_n^{\rm db}(\theta,u)\|_{V_n^{-1}}^2-\|\mathsf{S}_n^{\rm db}(\theta_\star,u_\star)\|_{V_n^{-1}}^2 = \|\mathsf{S}_n^{\rm db}(\theta,u) - \mathsf{S}_n^{\rm db}(\theta_\star,u_\star)\|_{V_n^{-1}}^2 \\
+ 2\pscal{\mathsf{S}_n^{\rm db}(\theta_\star,u_\star)}{\mathsf{S}_n^{\rm db}(\theta,u) - \mathsf{S}_n^{\rm db}(\theta_\star,u_\star)}_{V_n^{-1}},\end{multline*}
with $\mu\eqdef -\nabla\log\pi_0(\theta_\star) +   n MV_n^{-1}\mathsf{S}_n^{\rm db}(\theta_\star,u_\star)$, we have
\begin{multline}\label{eq1:lem:lb}
\frac{n}{2}\left(\|\mathsf{S}_n^{\rm db}(\theta,u)\|_{V_n^{-1}}^2-\|\mathsf{S}_n^{\rm db}(\theta_\star,u_\star)\|_{V_n^{-1}}^2\right) - \log\pi_0(\theta) + \log\pi_0(\theta_\star) \\
= \frac{n}{2}\|M(\theta-\theta_\star) + R_n(\theta,u)\|_{V_n^{-1}}^2 - R^{(4)}(\theta_\star,\theta)  + \pscal{\mu}{\theta - \theta_\star}\\
+ n \pscal{\mathsf{S}_n^{\rm db}(\theta_\star,u_\star)}{R_n(\theta,u)}_{V_n^{-1}}.
\end{multline}
By Assumption \ref{assump:prior:pi0}, $-R^{(4)}(\theta_\star,\theta) \leq \bar v\log n \|\theta - \theta_\star\|_2^2/2$. Expanding the term $\|M(\theta-\theta_\star) + R_n(\theta,u)\|_{V_n^{-1}}^2$, we deduce that for $\|\theta-\theta_\star\|_2\leq b_0$,
\begin{multline}\label{eq2:lem:lb}
\frac{n}{2}\left(\|\mathsf{S}_n^{\rm db}(\theta,u)\|_{V_n^{-1}}^2-\|\mathsf{S}_n^{\rm db}(\theta_\star,u_\star)\|_{V_n^{-1}}^2\right) - \log\pi_0(\theta) + \log\pi_0(\theta_\star) \\
\leq \frac{1}{2}\left(n\|V_n^{-1/2}M\|_{{\rm op}}^2  + \bar v\log n\right)\|\theta - \theta_\star\|_2^2 + \pscal{\mu}{\theta-\theta_\star} \\
+ \frac{n}{2} \|R_n(\theta,u)\|_{V_n^{-1}}^2 + n\|R_n(\theta,u)\|_{V_n^{-1}}\left(\|\mathsf{S}_n^{\rm db}(\theta_\star,u_\star)\|_{V_n^{-1}}  + b_0 \|V_n^{-1/2}M\|_{{\rm op}}\right).
\end{multline}
To continue, we need upper bounds on $\|R_n(\theta,u)\|_{V_n^{-1}}$ and $\|\mathsf{S}_n^{\rm db}(\theta_\star,u_\star)\|_{V_n^{-1}}$. For $\|\theta-\theta_\star\|_2\leq b_0\leq 1$, $u\in\mathcal{U}_n$ and $\D_n\in\e_{n,1}$, 
\[\|R_n^{(1)}(\theta,u)\|_{2}  \leq 2C_1 \sqrt{\frac{Aq\log(n)}{n}}.\]
For $\|\theta-\theta_\star\|_2\leq b_0$, and $u\in\mathcal{U}_n$, we apply Proposition \ref{prop:def_G}-(2) to get
\[\|R^{(2)}_n(\theta,u)\|_2 = \|T^{(2)}_n(\theta,u)\|_2 \leq c_0 \sqrt{q}\times  \| u - u_\star\|_{W^{\tau,2}}^{2}\leq c_0 r_n^2,\]
for some constant $c_0$. Assumption \ref{assm:psi_smoothness} implies that for  $\|\theta-\theta_\star\|_2\leq b_0$,
\[ \|R_n^{(3)}(\theta,u)\|_2 \leq c_0 \|\theta -\theta_\star\|_2^2 \leq c_0 b_0^2\leq c_0 r_n^2,\]
since $b_0\leq r_n$ by Assumption (\ref{tech:cond:1pc}). For $\D_n\in\e_{n,0}$,
\[ \|\mathsf{S}_n^{\rm db}(\theta_\star,u_\star)\|_2 = \|T_n^{(1)}(\theta_\star,u_\star)\|_2 \leq  C_1 \sqrt{\frac{Aq\log(n)}{n}}.\]
In summary, for $u\in\mathcal{U}_n$, $\|\theta-\theta_\star\|_2 \leq b_0$, and $\D_n\in\e_{n,0}\cap \e_{n,1}$, we have
\begin{align*}
& \frac{n}{2} \|R_n(\theta,u)\|_{V_n^{-1}}^2 + n\|R_n(\theta,u)\|_{V_n^{-1}}\left(\|\mathsf{S}_n^{\rm db}(\theta_\star,u_\star)\|_{V_n^{-1}}  + b_0 \|V_n^{-1/2}M\|_{{\rm op}}\right)  \\
& \hspace{25em} \leq c_0 n\lambda_{\max}(V_n^{-1}) \bar r_n^2,
\end{align*}
for some constant $c_0$. The last display together with (\ref{eq2:lem:lb}) shows that for  $\D_n\in\e_{n,0}\cap \e_{n,1}$ and $u\in\mathcal{U}_n$,
\begin{multline*} 
\int_{\rset^q}e^{-\frac{n}{2}\left(\|\mathsf{S}_n^{\rm db}(\theta,u)\|_{V_n^{-1}}^2-\|\mathsf{S}_n^{\rm db}(\theta_\star,u_\star)\|_{V_n^{-1}}^2\right) + \log\pi_0(\theta) - \log\pi_0(\theta_\star)}\rmd\theta \\
\geq e^{-c_0 n\lambda_{\max}(V_n^{-1}) \bar r_n^2} \int_{\mathbf{B}_0}e^{-\frac{\bar v\log(n) + n \|V_n^{-1/2}M\|_{{\rm op}}^2}{2}\|\theta-\theta_\star\|_2^2 -\pscal{\mu}{\theta-\theta_\star}}\rmd\theta \\
= e^{-c_0 n\lambda_{\max}(V_n^{-1}) \bar r_n^2} \int_{\{v\in\rset^q:\;\|v\|_2\leq b_0\}}e^{-\frac{\bar v\log(n) + n \|V_n^{-1/2}M\|_{{\rm op}}^2}{2}\|v\|_2^2 -\pscal{\mu}{v}}\rmd v.
\end{multline*}
By Jensen's inequality, and setting $\lambda\eqdef \bar v\log(n) + n \|V_n^{-1/2}M\|_{{\rm op}}^2$ to save space,
\begin{multline*} 
\int_{\{v\in\rset^q:\;\|v\|_2\leq b_0\}}e^{ -\pscal{\mu}{v}} \frac{e^{-\frac{\lambda}{2}\|v\|_2^2}}{\int_{\{v\in\rset^q:\;\|v\|_2\leq b_0\}} e^{-\frac{\lambda}{2}\|x\|_2^2}\rmd x}\rmd v \\
\geq \exp\left(-\int_{\{v\in\rset^q:\;\|v\|_2\leq b_0\}}\pscal{\mu}{v} \frac{e^{-\frac{\lambda}{2}\|v\|_2^2}}{\int_{\{v\in\rset^q:\;\|v\|_2\leq b_0\}} e^{-\frac{\lambda}{2}\|x\|_2^2}\rmd x}\rmd v\right) =1.
\end{multline*}
Hence 
for $\D_n\in\e_{n,0}\cap \e_{n,1}$, and $u\in\mathcal{U}_n$
\begin{multline*} 
\int_{\rset^q}e^{-\frac{n}{2}\left(\|\mathsf{S}_n^{\rm db}(\theta,u)\|_{V_n^{-1}}^2-\|\mathsf{S}_n^{\rm db}(\theta_\star,u_\star)\|_{V_n^{-1}}^2\right)+ \log\pi_0(\theta) - \log\pi_0(\theta_\star)}\rmd\theta \\
\geq e^{-c_0 n\lambda_{\max}(V_n^{-1}) \bar r_n^2} \int_{\{v\in\rset^q:\;\|v\|_2\leq b_0\}} e^{-\frac{\bar v\log(n) + 2n \|V_n^{-1/2}M\|_{{\rm op}}^2}{2}\|v\|_2^2}\rmd v \\
=e^{-c_0 n\lambda_{\max}(V_n^{-1}) \bar r_n^2} \left(\frac{2\pi}{\bar v\log(n) + n \|V_n^{-1/2}M\|_{{\rm op}}^2}\right)^{q/2} \times \\
\PP\left(\|Z\|_2 \leq b_0\sqrt{\bar v\log(n) +n \|V_n^{-1/2}M\|_{{\rm op}}^2}\right),
\end{multline*}
where $Z_j\stackrel{i.i.d.}\sim \textbf{N}(0,1)$. Since $b_0\sqrt{\bar v\log n +n \|V_n^{-1/2}M\|_{{\rm op}}^2}= \sqrt{2q\log(2q)}$, the probability in the last display is at least $1/2$. Hence, for $\D_n\in\e_{n,0}\cap \e_{n,1}$, and $u\in\mathcal{U}_n$
\begin{multline*} 
\left(\frac{1}{2\pi}\right)^{q/2}\int_{\rset^q}e^{-\frac{n}{2}\left(\|\mathsf{S}_n^{\rm db}(\theta,u)\|_{V_n^{-1}}^2-\|\mathsf{S}_n^{\rm db}(\theta_\star,u_\star)\|_{V_n^{-1}}^2\right)+ \log\pi_0(\theta) - \log\pi_0(\theta_\star)}\rmd\theta \\
\geq \frac{1}{2} e^{-c_0 n\lambda_{\max}(V_n^{-1}) \bar r_n^2 -q\log(\bar v\log(n) + n \|V_n^{-1/2}M\|_{{\rm op}}^2)/2}.
\end{multline*}
\end{proof}

\begin{lemma}\label{lem:localization}
Suppose that Assumption \ref{assm:psi_smoothness}-\ref{assm:w} holds. Suppose also that the prior distribution $\pi_0$ satisfies Assumption \ref{assump:prior:pi0}, and the sample size assumptions (\ref{tech:cond:1pc}) and (\ref{tech:cond:2pc}) hold. We can find $c>0$ such that for all $\D_n\in\e_{n,0}\cap \e_{n,1}$ and $u\in\mathcal{U}_n$, we have 
    \[ \Pi^{(\theta\vert u)}\left(\left\{\theta\in\rset^q:\; \|\theta-\theta_\star\|_2>c\sqrt{q}\right\}\vert u,\D_n\right) \leq  2\exp\left(-\frac{c^2\underline{v}\log(n)q}{64}\right).\]
\end{lemma}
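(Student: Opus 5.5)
The plan is to bound the posterior mass of $\{\|\theta-\theta_\star\|_2>c\sqrt q\}$ as a ratio of two integrals and to control each one separately. The denominator is handled by Lemma \ref{lem:control:NC}. On $\e_{n,0}\cap\e_{n,1}$ and for $u\in\mathcal U_n$, that lemma lower bounds the normalizing constant, after subtracting $\|\mathsf S_n^{\rm db}(\theta_\star,u_\star)\|^2_{V_n^{-1}}$ and $\log\pi_0(\theta_\star)$, by $\tfrac12\exp(-c_0 n\lambda_{\max}(V_n^{-1})\bar r_n^2-\tfrac q2\log(\bar v\log n+n\|V_n^{-1/2}M\|^2_{\rm op}))$. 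Since $\bar r_n^2\asymp q\log n/n$, condition (\ref{tech:cond:2pc}) turns this into $\ge \tfrac12 e^{-C_0 q\log n}$ for a constant $C_0$.

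For the numerator I will drop the likelihood part entirely. The key observation is that the exponent
\[
-\tfrac n2\bigl(\|\mathsf S_n^{\rm db}(\theta,u)\|^2_{V_n^{-1}}-\|\mathsf S_n^{\rm db}(\theta_\star,u_\star)\|^2_{V_n^{-1}}\bigr)\le \tfrac n2\|\mathsf S_n^{\rm db}(\theta_\star,u_\star)\|^2_{V_n^{-1}} .
\]
On $\e_{n,0}$ the right-hand side is at most $\tfrac12\lambda_{\max}(V_n^{-1})C_1^2Aq\log n\le C q\log n$. The localization therefore comes entirely from the strongly log-concave prior. By Assumption \ref{assump:prior:pi0},
\[
\log\pi_0(\theta)-\log\pi_0(\theta_\star)\le \langle\nabla\log\pi_0(\theta_\star),\theta-\theta_\star\rangle-\tfrac{\underline v\log n}{2}\|\theta-\theta_\star\|_2^2,
\]
and $\|\nabla\log\pi_0(\theta_\star)\|_2\le\underline v\log n\sqrt q$. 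Writing $r=\|\theta-\theta_\star\|_2>c\sqrt q$, the linear term is at most $\underline v\log n\sqrt q\, r\le \underline v\log n\, r^2/c$. So for $c\ge4$ the prior contribution is at most $-\tfrac{\underline v\log n}{4}r^2$.

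It remains to integrate $(2\pi)^{-q/2}e^{-\underline v\log n\, r^2/4}$ over $r>c\sqrt q$. I will split the exponent in half. One half gives the uniform factor $e^{-\underline v\log n\,c^2q/8}$. The other half gives a Gaussian integral bounded by $(4/(\underline v\log n))^{q/2}\le 4^{q/2}$, using $\underline v\log n\ge1$. The numerator is thus at most $\exp(Cq\log n+q\log 2-c^2\underline v q\log n/8)$. Dividing by the denominator bound yields
\[
2\exp\bigl((C+C_0)q\log n+q\log2-c^2\underline v q\log n/8\bigr).
\]

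The main obstacle is ensuring that the $O(q\log n)$ terms coming from the likelihood offset and from the normalizing constant are absorbed by $c^2\underline v q\log n/8$, leaving $c^2\underline v q\log n/64$. I would choose $c$ large enough that $(C+C_0)\le 7c^2\underline v/64$ (recall $\underline v$ is a fixed constant) and $\log 2\le c^2\underline v\log n/64$. This gives the stated bound $2\exp(-c^2\underline v\log(n)q/64)$.
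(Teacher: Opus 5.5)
Your argument is correct and follows the paper's proof essentially step for step: a ratio bound with the denominator controlled by Lemma~\ref{lem:control:NC} and (\ref{tech:cond:2pc}), dropping $\|\mathsf{S}_n^{\rm db}(\theta,u)\|^2_{V_n^{-1}}\ge 0$ in the numerator, and using the prior's strong log-concavity with $c\ge 4$ to absorb the linear term. The only real difference is that you bound the Gaussian tail by splitting the exponent in half instead of citing a chi-square tail inequality, which is more elementary. There is one small bookkeeping slip: your two conditions on $c$ sum to exactly $c^2\underline v q\log n/8$, which only makes the exponent $\le 0$, so you should require, e.g., $C+C_0\le 6c^2\underline v/64$ rather than $7c^2\underline v/64$.
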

\begin{proof}
Given $b>0$, we define $\mathbf{B}_b^c\eqdef  \{\theta\in\rset^q:\;\|\theta - \theta_\star\|_2 > b\}$. For  any measurable set $B\subseteq\rset^q$, using the definition of $\Pi^{(\theta\vert u)}$ and the lower bound provided by Lemma \ref{lem:control:NC}, for $u\in\mathcal{U}_n$, we have 
\begin{align}
\label{eq1:lem:loc} 
& \Pi^{(\theta\vert u)}(B\vert u,\D_n) \notag \\
& = \frac{(2\pi)^{-q/2}\int_{B}e^{-\frac{n}{2}\left(\|\mathsf{S}_n^{\rm db}(\theta, u)\|_{V_n^{-1}}^2 - \|\mathsf{S}_n^{\rm db}(\theta_\star,u_\star)\|_{V_n^{-1}}^2\right)+(\log\pi_0(\theta) - \log\pi_0(\theta_\star))}\rmd\theta}{(2\pi)^{-q/2}\int_{\rset^q}e^{-\frac{n}{2}\left(\|\mathsf{S}_n^{\rm db}(\theta,u)\|_{V_n^{-1}}^2 - \|\mathsf{S}_n^{\rm db}(\theta_\star,u_\star)\|_{V_n^{-1}}^2\right) + (\log\pi_0(\theta) - \log\pi_0(\theta_\star))}\rmd\theta} \nonumber\\
& \leq 2\exp\left(c_0 n\lambda_{\max}(V_n^{-1}) \bar r_n^2 + \frac{q}{2}\log(\bar v\log n + n \|V_n^{-1/2}M\|_{{\rm op}}^2)\right)\nonumber\\
& \quad \quad\times (2\pi)^{-q/2}\int_{B}e^{-\frac{n}{2}\left(\|\mathsf{S}_n^{\rm db}(\theta,u)\|_{V_n^{-1}}^2 - \|\mathsf{S}_n^{\rm db}(\theta_\star,u_\star)\|_{V_n^{-1}}^2\right)+(\log\pi_0(\theta) - \log\pi_0(\theta_\star))}\rmd\theta\nonumber\\
& \leq 2\exp\left(c_0 n\lambda_{\max}(V_n^{-1}) \bar r_n^2 + \frac{q}{2}\log(\bar v\log n + n \|V_n^{-1/2}M\|_{{\rm op}}^2) +\frac{n}{2}\|\mathsf{S}_n^{\rm db}(\theta_\star,u_\star)\|_{V_n^{-1}}^2\right)\nonumber\\
 & \quad\quad \times (2\pi)^{-q/2} \int_{C}e^{-\frac{\underline{v}\log n}{2}\|\theta-\theta_\star\|_2^2-\pscal{\nabla\log\pi_0(\theta_\star)}{\theta-\theta}}\rmd\theta,\end{align}
where we recall that
\[ \bar r_n \eqdef \max\left(r_n^2,\; C_1\sqrt{\frac{A q\log(n)}{n}}\right) = C_1\sqrt{\frac{A q\log(n)}{n}}.\]
Furthermore, for $\D_n\in\e_{n,0}$, $\|\mathsf{S}_n^{\rm db}(\theta_\star,u_\star)\|_2^2  = \|T_n^{(1)}(\theta_\star,u_\star)\|_2^2 \leq c_0 \bar r_n^2$, for some  constant $c_0$. Therefore using (\ref{tech:cond:2pc}),
\[c_0 n\lambda_{\max}(V_n^{-1}) \bar r_n^2 + \frac{q}{2}\log(\bar v\log n + n \|V_n^{-1/2}M\|_{{\rm op}}^2) +\frac{n}{2}\|\mathsf{S}_n^{\rm db}(\theta_\star,u_\star)\|_{V_n^{-1}}^2 \leq C_0 q\log(n),\]
for some constant $C_0$. Combined with (\ref{eq1:lem:loc}) we obtain
\begin{equation}\label{eq2:lem:loc} 
\Pi^{(\theta\vert u)}(B \vert u,\D_n) \leq 2\exp\left(C_0 q\log(n)\right)
 \times (2\pi)^{-q/2} \int_{B}e^{-\frac{\underline{v}\log n}{2}\|\theta-\theta_\star\|_2^2-\pscal{\nabla\log\pi_0(\theta_\star)}{\theta-\theta}}\rmd\theta.
 \end{equation}
Recall from Assumption \ref{assump:prior:pi0} that $\|\nabla\log\pi_0(\theta_\star)\|_2 \leq \underline{v}\log(n)\sqrt{q}$. Using the fact that $-a x^2 + bx\leq 0$ for all $x\geq b/a$, for $\|\theta - \theta_\star\|_2 > c\sqrt{q}$ with $c\geq 4$, we have
\begin{align*}
& -\frac{\underline{v}\log n}{4}\|\theta-\theta_\star\|_2^2-\pscal{\nabla\log\pi_0(\theta_\star)}{\theta-\theta_\star} \\
& \leq -\frac{\underline{v}\log n}{4}\|\theta-\theta_\star\|_2^2 + \|\nabla\log\pi_0(\theta_\star)\|_2 \|\theta-\theta_\star\|_2 \leq 0.
\end{align*}
Therefore with $b\geq c\sqrt{q}$ for $c\geq 4$,
\begin{multline*} 
(2\pi)^{-q/2}\int_{\{\|\theta - \theta_\star\|_2 > b\}}e^{-\frac{\underline{v}\log n}{2}\|\theta-\theta_\star\|_2^2-\pscal{\nabla\log\pi_0(\theta_\star)}{\theta-\theta}}\rmd\theta \\
\leq (2\pi)^{-q/2}\int_{\{\|\theta - \theta_\star\|_2 > b\}}e^{-\frac{\underline{v}\log n}{4}\|\theta-\theta_\star\|_2^2}\rmd\theta = \left(\frac{2}{\underline{v}\log n}\right)^{q/2}\PP\left(\|Z\|_2 > b\sqrt{\frac{\underline{v}\log n}{2}}\right),
\end{multline*}
where $Z_1,\ldots,Z_q\stackrel{iid}{\sim}\textbf{N}(0,1)$. With $b= c\sqrt{q}$, $c\geq 4$, a tail bound for chi-squares distributions (see e.g. the Hanson-Wright ineq. in \cite{gine2021mathematical}) gives
\[ \PP\left(\|Z\|_2 > b\sqrt{\frac{\underline{v}\log n}{2}}\right) = \PP\left(\|Z\|_2^2 > c^2q\frac{\underline{v}\log(n)}{2}\right) \leq e^{-c^2\underline{v}\log(n)q/32}.\]
Then with  $\underline{v}\log(n)\geq 1$, we conclude that for $\D_n\in\e_{n,0}\cap \e_{n,1}$,
\[\Pi^{(\theta\vert u)}(\mathbf{B}_b^c \vert u,\D_n) \leq 
    2\exp\left(c_0 q\log(n) -\frac{c^2\underline{v}\log(n)q}{32}\right)\leq 2\exp\left(-\frac{c^2\underline{v}\log(n)q}{64}\right),\]
by choosing $c$ appropriately. Hence the result.
\end{proof}

\medskip

\section{Empirical processes results}\label{sec:emp:proc:results}
For the convenience of the reader we collect here all the empirical process results that we use. 
% \Yves{Can you add here the empirical process results from Fan et al that you used in Theorem 1? And harmonize the notation and briefly comment the results to make the entire section read nicely.}
% Our proof of Theorem \ref{thm:main_freq} uses Theorem 5.2 of \cite{chernozhukov2014gaussian}, which we state here for the ease of the readers: 
% \begin{theorem}[Theorem 5.2 of \cite{chernozhukov2014gaussian}]
% \label{thm:cck}
%     Suppose $\cF$ is a collection of functions with a bounded envelope $F$ (i.e. $\|F\|_\infty$ is finite). Assume $\|f\|_2^2 \le \sigma^2$ for all $f \in \cF$. Set $\delta = \sigma/\|F\|_2$. Define the entropy integral $\cJ(\delta, \cF, F)$ as: 
%     $$
%     \cJ(\delta, \cF, F) = \int_0^\delta \sup_Q \ \sqrt{1 + \log{\cN(\cF, L_2(Q), \eps \ \|F\|_{Q, 2})}} \ d\eps \,.
%     $$
%     Then, we have: 
%     \begin{equation}
%     \label{eq:upper_bound_cck}
%     \bbE\left[\sup_{f \in \cF}|\bbG_n(f)|\right] \lesssim \cJ(\delta, \cF, F)\|F\|_2 + \frac{\|F\|_\infty \cJ^2(\delta, \cF, F)}{\delta^2 \sqrt{n}}
%     \end{equation}
% \end{theorem}
First, we need the following general result from the empirical process theory, which essentially follows from Dudley's integral bound: 
\begin{lemma}\label{lem:dudley}
    Suppose $\cF$ is a collection of functions on $\Omega$, the domain of $X$, such that $\|f\|_\infty \le B$ for some $B > 0$ for all $f \in \cF$. Furthermore, suppose $X$ and $\eps$ are independent random variables, with $\eps$ being a sub-Gaussian random variable. Then: 
    $$
    \bbE\left[\sup_{f \in \cF}\left|\frac{1}{\sqrt{n}}\sum_{i = 1}^n \eps_i f(X_i)\right|\right] \le C\int_0^B \sqrt{1 + \log{\cN(u, \cF, L_\infty)}} \ du
    $$
    where $\cN(u, \cF, \infty)$ is the covering number $\cF$ at resolution $u$ with respect to $L_\infty$ norm. In particular, if the covering number of $\cF$ satisfies: 
    $$
    \log{\cN(u, \cF, L_\infty)} \le C_1 u^{-\alpha}
    $$
    for some $\alpha < 2$, then we have: 
    $$
     \bbE\left[\sup_{f \in \cF}\left|\frac{1}{\sqrt{n}}\sum_{i = 1}^n \eps_i f(X_i)\right|\right] \le C_2
    $$
    for some constnat $C_2$ depending on $(C, C_1, B)$. 
\end{lemma}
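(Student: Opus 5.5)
The statement to prove is Lemma~\ref{lem:dudley}: a maximal inequality for the multiplier process $f\mapsto n^{-1/2}\sum_i \eps_i f(X_i)$ over a uniformly bounded class $\cF$, in terms of $L_\infty$ covering numbers.

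\textbf{Step 1: condition on the design.} Fix $X_1,\dots,X_n$ and consider $Z_f \eqdef n^{-1/2}\sum_{i=1}^n \eps_i f(X_i)$. Since the $\eps_i$ are independent of the $X_i$, centered and sub-Gaussian with parameter $\sigma$, each increment $Z_f-Z_g$ is sub-Gaussian given the design. Its variance proxy is at most
\[
\sigma^2 \frac1n\sum_{i=1}^n (f-g)^2(X_i) \le \sigma^2\|f-g\|_{\infty}^2 .
\]
Thus, conditionally on the design, $(Z_f)_{f\in\cF}$ is a sub-Gaussian process with respect to the metric $\sigma\|\cdot\|_\infty$, and this metric does not depend on the $X_i$.

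\textbf{Step 2: apply Dudley's entropy integral.} For a sub-Gaussian process,
\[
\bbE\sup_{f}|Z_f-Z_{f_0}|\le C\int_0^{\mathrm{diam}}\sqrt{\log \cN(u,\cF,\sigma\|\cdot\|_\infty)}\,du ,
\]
for any fixed $f_0\in\cF$. The diameter is at most $2\sigma B$. Rescaling $u$ absorbs $\sigma$ and the factor $2$ into the constant, giving $C\int_0^B\sqrt{\log\cN(u,\cF,L_\infty)}\,du$.

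It remains to handle $\bbE|Z_{f_0}|$. Since $\bbE Z_{f_0}^2\le\sigma^2B^2$, we have $\bbE|Z_{f_0}|\le \sigma B$, and this is bounded by $C\int_0^B 1\,du$. This accounts for the ``$1+$'' inside the square root. The resulting bound does not depend on the $X_i$, so taking expectations over the design gives the first claim.

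\textbf{Step 3: the polynomial-entropy case.} If $\log\cN\le C_1u^{-\alpha}$, then
\[
\sqrt{1+C_1u^{-\alpha}}\le 1+\sqrt{C_1}\,u^{-\alpha/2}.
\]
This is integrable on $(0,B]$ exactly when $\alpha<2$, and the integral equals $B+\sqrt{C_1}\,B^{1-\alpha/2}/(1-\alpha/2)$. This gives the constant $C_2$.

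\textbf{Main obstacle.} The only delicate point is the chaining step: making sure that conditioning on $X$ is legitimate and that the sub-Gaussian increment bound holds for the $\eps_i$, not only for Gaussians or Rademachers. This follows from the independence of the sub-Gaussian $\eps_i$ and Hoeffding-type bounds for weighted sums. Measurability of the supremum can be handled by separability of $\cF$ in $L_\infty$, which holds because the covering numbers are finite.
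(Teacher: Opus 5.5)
Your proposal is correct and takes the same route as the paper, which proves this lemma only by citing Dudley's entropy-integral bound for sub-Gaussian processes. Your argument fills in the details behind that citation: conditioning on the design, the sub-Gaussian increment bound in the metric $\sigma\|\cdot\|_\infty$, the separate bound on $\bbE|Z_{f_0}|$ (which produces the ``$1+$''), and absorbing the diameter factor via monotonicity of covering numbers. You are also right to assume the $\eps_i$ are centered, as in Assumption~\ref{assm:dgp}; the lemma statement leaves this implicit, but it is needed.
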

\begin{proof}
See Proposition 2.5.1 of \cite{talagrand2022upper} or Theorem 2.3.6 of \cite{gine2021mathematical}. 
\end{proof}
\noindent

\subsection{Deviation bounds}\label{sec:dev:bound:Bay}
The proof of Theorem \ref{thm:cond:db:rate} employs some  concentration inequalities for empirical processes that we derive here, following Theorem 5.11 of \cite{geer2000empirical}. For any random variable $X$, and $\kappa>0$, its $\kappa$-Bernstein norm is defined as: 
\begin{equation}
\label{eq:bernstein_norm}
\psi_\kappa(X) \eqdef \sqrt{2\kappa^2 \bbE\left[e^{\frac{|X|}{\kappa}} - 1 - \frac{|X|}{\kappa}\right]} \,.
\end{equation}
Let $X,X_{1:n}\stackrel{i.i.d.}{\sim} P$, and $\PP_n$ their empirical measure.  It is immediate to check that if we can find $(K,R)$ such that
$$
\bbE[|X|^m] \le \frac12 m! K^{m-2} R^2 \ \ \forall \ \ m = 2, 3, \dots ,$$
then $\psi_{2K}(X) \le \sqrt{2}R$. Given $R,K<\infty$, let $\cG$ be a collection of real-valued functions such that $\PE(g(X))=0$ and $\psi_K(g(X)) \leq R$ for all $g\in\mathcal{G}$. Let $H_{B, K}(u, \cG)$ denote the bracketing entropy of $\cG$ with respect to the pseudo-metric $\psi_{K}$ (we refer the reader to \cite{geer2000empirical} for definition and more details). Then the following holds.
\begin{theorem}[Theorem 5.11 of \cite{geer2000empirical}]
\label{thm:vdg}
Given $R,K<\infty$, let $\cG$ be a collection of real-valued functions such that $\PE(g(X))=0$ and $\psi_K(g(X)) \leq R$ for all $g\in\mathcal{G}$. There  exists a universal constant $C\geq 1$ such that for any $a,C_0,C_1$ satisfying: 
\begin{equation}\label{thm:vdg:cond}
C_0R\times\max\left(\int_0^1\sqrt{H_{B, K}(uR, \cG)} \ du, \ 1 \right) \le a \le \min\left(\frac{C_1 R}{K}, 8 \right)\sqrt{n}R
\end{equation}
and $C^2 \le C_0^2/(C_1 + 1)$: 
$$
\bbP\left(\sup_{g \in \cG}\left|\sqrt{n}(\bbP_n - P)g\right| \ge a\right) \le Ce^{- \frac{a^2}{C^2 (C_1 + 1)R^2}} \,.
$$
\end{theorem}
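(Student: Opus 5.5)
This is a deviation inequality for the supremum of an empirical process over a class with bracketing entropy measured in the Bernstein norm $\psi_K$. The plan is the standard chaining-with-truncation argument of Bernstein type, which is how van de Geer proves it. Four preliminary observations set it up.

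First, for a single centered $g$ with $\psi_K(g(X))\le R$, the exponential moment bound $\bbE[e^{\lambda |g|}-1-\lambda|g|]\le \lambda^2 R^2/2$ (for $\lambda\le 1/K$) gives Bernstein's inequality:
\[
\bbP\bigl(|\sqrt n(\bbP_n-P)g|\ge a\bigr)\le 2\exp\Bigl(-\frac{a^2}{c(R^2+aK/\sqrt n)}\Bigr).
\]
Second, the upper constraint $a\le (C_1R/K)\sqrt n R$ in (\ref{thm:vdg:cond}) ensures $aK/\sqrt n\le C_1R^2$, so the denominator is at most $c(C_1+1)R^2$. This is where the factor $(C_1+1)$ in the exponent originates.

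Third, a union bound over a finite set extends this to finite classes, with the $\log$ cardinality absorbed into $a^2$.

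Fourth, the supremum over $\cG$ is handled by chaining along brackets. For $s=0,1,\dots,S$, take minimal $2^{-s}R$-brackets in $\psi_K$, with cardinality $e^{H_{B,K}(2^{-s}R,\cG)}$. Each $g$ is sandwiched as $g^L_s\le g\le g^U_s$ with $\psi_K(g^U_s-g^L_s)\le 2^{-s}R$. Write $g=g_0+\sum_s(g_{s}-g_{s-1})+(g-g_S)$.

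The key technical device is truncation at each level. Split each link using the indicator that the bracket width $\Delta_s$ exceeds a level $K_s\asymp 2^{-s}R\sqrt n/a_s$. On the small part, apply Bernstein with variance proxy $2^{-2s}R^2$ and sup-norm bound $K_s$. The large part is handled by monotonicity of brackets. Since $\bbP_n$ is positive, the upper bracket controls it from above and a mean bound controls it from below. Its contribution is bounded using $\bbE[\Delta_s 1\{\Delta_s>K_s\}]\le \psi_K(\Delta_s)^2/K_s$, a property of the Bernstein norm.

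Next I allocate the deviation budget $a=\sum_s a_s$ with
\[
a_s\gtrsim 2^{-s}R\sqrt{H_{B,K}(2^{-s}R,\cG)}+2^{-s}\sqrt{s}\,a/\text{const}.
\]
Summing the first term over $s$ gives a Riemann sum of $R\int_0^1\sqrt{H_{B,K}(uR,\cG)}\,du$. The lower constraint $a\ge C_0R\max(\int,1)$ then guarantees $\sum_s a_s\le a$ once $C_0$ is large. The requirement $C^2\le C_0^2/(C_1+1)$ is the compatibility condition that lets a single universal $C$ work.

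The union bounds at each level produce $\sum_s \exp\bigl(H_s - a_s^2/(c(C_1+1)2^{-2s}R^2)\bigr)$. Because $a_s$ dominates the entropy term, each summand is at most $\exp\bigl(-(s+1)a^2/(C^2(C_1+1)R^2)\bigr)$, and the geometric sum is bounded by $C\exp\bigl(-a^2/(C^2(C_1+1)R^2)\bigr)$.

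Finally, choose $S$ so that $2^{-S}R\asymp a/\sqrt n$, using the constraint $a\le 8\sqrt n R$. The terminal remainder $\sqrt n\,P\Delta_S\lesssim \sqrt n\,2^{-S}R$ is then a small fraction of $a$.

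The hardest step is the truncation bookkeeping. Three things must be controlled simultaneously at each level: the truncated Bernstein ranges $K_s$, the bias from the discarded large parts (bounded through $\psi_K^2/K_s$), and the fact that $\psi_K$-brackets are only comparable to $L_2$ brackets up to the $K$ factor. All of this has to be arranged so that the constants close using only the two-sided condition (\ref{thm:vdg:cond}). I would follow van de Geer's Theorem 5.11 scheme directly and cite it for the precise constant tracking, since the lemma is quoted from there.
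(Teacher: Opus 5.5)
The paper gives no proof of its own here; it quotes Theorem 5.11 of van de Geer, whose proof is the Bernstein-norm bracketing chaining-with-truncation argument you sketch, and your account of where $(C_1+1)$, the condition $C^2\le C_0^2/(C_1+1)$, and $a\le 8\sqrt n R$ enter matches that proof. One scaling slip should be fixed: the truncation level must be $K_s\asymp 2^{-2s}R^2\sqrt n/a_s$, whereas yours is off by a factor $2^{-s}R$. Only with this choice does $a_sK_s/\sqrt n\lesssim 2^{-2s}R^2$ hold, which keeps the fine-level Bernstein exponents of order $a_s^2/(2^{-2s}R^2)$ so they can absorb $H_{B,K}(2^{-s}R,\cG)$, and which also yields the bias bound $\sqrt n\,\psi_K(\Delta_s)^2/K_s\lesssim a_s$.
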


We apply this result to obtain the follow. 

\begin{lemma}\label{lem:tailbbound:dbScore}
Assume Assumptions \ref{assm:psi_smoothness}-\ref{assm:w}, and $\beta\geq 2\tau +d$. Given $0\leq s \leq \beta$, and for some constant $L_1>0$, we set
\[r_n(s) \eqdef L_1 \left(\log{n}\right)^{\frac12\left(1 - \frac{s}{\beta}\right)}\left(\frac{n}{\log{n}}\right)^{-\frac{\beta - s}{2\beta + d}},\]and define
\[\mathcal{U}_n \eqdef \left\{u\in W^{\beta,2}(\Omega):\;\; \|u - u_\star\|_{W^{s,2}(\Omega)} \leq r_n(s),\;  0\leq s\leq \beta\right\}.\]
Fix $\bar\rho>0$. We can find constants $1\leq C_1,C_2<\infty$ such that for all $\rho\in(0,\bar\rho]$, $A\geq 2$, and all $n$ such that 
\begin{equation}\label{cond:n:lem:dev}
C L_1^{d/(2\beta)} \times r_n(0)^{1-d/(2\beta)} \leq \min(1, \bar \rho^{1/\alpha_*}), \qquad\text{and}\qquad n \geq C_1 A\log(n),
\end{equation}
where $C$ is as in Theorem \ref{thm:interpolation}, it holds
\[\PP\left(\sup_{u\in\mathcal{U}_n}\; \;\sup_{\theta\in\mathbf{B}_\rho(\theta_\star)}\;\;\max_{1\leq j\leq q}\;\;\sqrt{n}|T_{n,j}^{(1)}(\theta,u) - T_{n,j}^{(1)}(\theta_\star,u_\star)| > C_2\bar \rho\sqrt{A\log(n)} \right) \leq \frac{1}{n^{A/2}}.\]
\end{lemma}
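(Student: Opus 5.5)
The plan is to apply Theorem \ref{thm:vdg} to the centered class
\[
\cG_j \eqdef \Big\{(x,\epsilon)\mapsto \epsilon\,\big(\mathsf{G}_{\theta,u,j}(x)-\mathsf{G}_{\theta_\star,u_\star,j}(x)\big):\; u\in\mathcal{U}_n,\ \theta\in\mathbf{B}_\rho(\theta_\star)\Big\},
\]
one coordinate $1\le j\le q$ at a time, and then take a union bound over the fixed number $q$ of coordinates. By definition, $T_n^{(1)}(\theta,u)-T_n^{(1)}(\theta_\star,u_\star)=(\PP_n-\PE)\big[(Y-u(X))\mathsf{G}_{\theta,u}(X)\big]-(\PP_n-\PE)\big[\epsilon\,\mathsf{G}_{\theta_\star,u_\star}(X)\big]$. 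Writing $Y-u=\epsilon+(u_\star-u)$ splits this into an $\epsilon$-part, which is exactly $(\PP_n-\PE)g$ with $g\in\cG_j$, and a design part $(\PP_n-\PE)[(u_\star-u)\mathsf{G}_{\theta,u,j}]$. The design part is handled the same way, with the sub-Gaussian factor replaced by the bounded factor $u_\star-u$. All functions in both classes have mean zero, since $\epsilon$ is independent of $X$ and the second class is centred by construction.

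The first step is to control the size parameters $R$ and $K$. For $u\in\mathcal{U}_n$, Theorem \ref{thm:interpolation} gives $\|u-u_\star\|_\infty\le C L_1^{d/(2\beta)}r_n(0)^{1-d/(2\beta)}$. Combined with (\ref{stab:G:3}) of Proposition \ref{prop:def_G}, this yields
\[
\|\mathsf{G}_{\theta,u,j}-\mathsf{G}_{\theta_\star,u_\star,j}\|_\infty\le c_1\big(\|u-u_\star\|_\infty^{\alpha_*}+\rho\big)\lesssim \bar\rho .
\]
Here the first condition in (\ref{cond:n:lem:dev}) is used exactly to make $\|u-u_\star\|_\infty^{\alpha_*}\le\bar\rho$. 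By the sub-Gaussian moment bound $\PE|\epsilon/\sigma_\epsilon|^m\le m!/2$, already used for $\e_{n,0}$, we can take $K\asymp R\asymp\bar\rho$ for the $\epsilon$-class. For the design class the bound $\|u-u_\star\|_\infty\le 1$ together with part (1) of Proposition \ref{prop:def_G} gives a uniformly bounded envelope, and this too can be arranged to be of order at most $\bar\rho$.

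The second step is to bound the entropy. Bracketing entropy in the Bernstein norm is dominated by $L_\infty$ covering entropy times the $\epsilon$-moment factor, via brackets of the form $\epsilon\,g\pm|\epsilon|\delta$. Using the Lipschitz and Hölder bounds of Proposition \ref{prop:def_G} exactly as in the proof of Theorem \ref{thm:main_freq}, we get $H_{B,K}(uR,\cG_j)\lesssim (u\bar\rho)^{-d/(\beta\alpha_*)}+q\log(1/u)$. Since $\beta\ge 2\tau+d$ gives $\beta\alpha_*>d/2$, the integral $\int_0^1\sqrt{H_{B,K}(uR,\cG_j)}\,du$ is finite and bounded by a constant depending only on $\bar\rho$.

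The third step is to choose $a=C_2\bar\rho\sqrt{A\log n}$. The lower inequality in (\ref{thm:vdg:cond}) holds once $C_2$ is large, because $\sqrt{A\log n}\ge1$ dominates the constant entropy integral. The upper inequality $a\le \min(C_1R/K,8)\sqrt n R$ reduces to $\sqrt{A\log n}\lesssim\sqrt n$, which is the second condition in (\ref{cond:n:lem:dev}). Theorem \ref{thm:vdg} then gives a tail of order $C\exp(-C_2^2A\log n/(C^2(C_1+1)))\le n^{-A}$ for $C_2$ large. Union-bounding over the $2q$ pieces and absorbing the constant $2qC$ gives the stated $n^{-A/2}$ for $n$ large.

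I expect the main obstacle to be the entropy step: converting $L_\infty$ covers of $\mathcal{U}_n$ (through the Sobolev metric entropy and Hölder exponent $\alpha_*$) into bracketing in the Bernstein norm $\psi_K$ when the factor $\epsilon$ is unbounded. I would first handle this by building brackets $\epsilon g_k\pm|\epsilon|\delta$ and checking that $\psi_K(|\epsilon|\delta)\lesssim\delta$ with the moment bound.
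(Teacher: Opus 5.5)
Your proposal is correct and follows essentially the same route as the paper. You split $T_{n,j}^{(1)}(\theta,u)-T_{n,j}^{(1)}(\theta_\star,u_\star)$ into the $\epsilon$-class and the centred design class $(u_\star-u)\mathsf{G}_{\theta,u,j}$, bound both envelopes by a multiple of $\bar\rho$ via (\ref{stab:G:3}) and Gagliardo--Nirenberg, control bracketing entropy through $L_\infty$ covers of $\mathcal{U}_n$ (Birman--Solomyak with exponent $d/(\beta\alpha_*)<2$) and Euclidean covers of $\mathbf{B}_\rho(\theta_\star)$ using brackets $\epsilon g\pm|\epsilon|\delta$, and then apply Theorem \ref{thm:vdg} with $a\asymp\bar\rho\sqrt{A\log n}$ and a union bound over $j$, exactly as the paper does.
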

\begin{proof}
Throughout the proof, $C$ will denote a positive constant that we do not track. Fix a component $1\leq j\leq q$. By Proposition \ref{prop:def_G}, we can find $c_1=c_1(\bar \rho,L,\|\theta_\star\|_2,\|u_\star\|_{W^{\beta,2}})$ such that for all $u\in\cA$, $\theta\in\mathbf{B}_{\rho}(\theta_\star)$, $\|\mathsf{G}_{\theta,u,j}\|_\infty \leq c_1$ and 
\[ 
\|\mathsf{G}_{\theta,u,j} - \mathsf{G}_{\theta_\star,u_\star,j}\|_\infty \leq c_1 (\|\theta-\theta_\star\|_2 + \|u-u_\star\|_\infty^{\alpha_*}) \leq c_1 (\bar \rho  + \|u-u_\star\|_\infty^{\alpha_*}) .\]
Furthermore, by the Gagliardo-Nirenberg interpolation (see Theorem \ref{thm:interpolation}) (setting $r=\infty$,  $j=0$, and $\theta= d/(2\beta)$ in that theorem):
\[\|u-u_\star\|_\infty \leq C\|u-u_\star\|_{L^2(\Omega)}^{1-d/(2\beta)}\|u-u_\star\|_{W^{\beta,2}(\Omega)}^{d/(2\beta)} \leq C r_n^{1-d/(2\beta)}(0)\times r_n(\beta)^{d/(2\beta)} \leq \bar \rho^{1/\alpha_*},\]
where the last inequality uses the first condition in (\ref{cond:n:lem:dev}). It follows that for all $u\in\cA$, $\theta\in\mathbf{B}_\rho(\theta_\star)$,
\begin{equation}
    \label{eq:bounds:famil:G}
\|\mathsf{G}_{\theta,u,j}\|_\infty \leq c_1, \;\; \mbox{ and }\;\; \|\mathsf{G}_{\theta,u,j} - \mathsf{G}_{\theta_\star,u_\star,j}\|_\infty \leq 2c_1\bar \rho.
\end{equation}
Since $Y_i = u_\star(X_i)+\epsilon_i$, the debiased score satisfies $\sS_n^{\rm db}(\theta,u) - \sS^{\rm db}(\theta,u) = \frac{1}{n}\sum_{i=1}^n\epsilon_i\mathsf{G}_{\theta,u}(X_i) - \PE[\epsilon\mathsf{G}_{\theta,u}(X)] + \frac{1}{n}\sum_{i=1}^n(u_\star-u)(X_i)\mathsf{G}_{\theta,u}(X_i) - \PE[(u_\star-u)(X)\mathsf{G}_{\theta,u}(X)]$.  Since $\PE[\epsilon\mathsf{G}_{\theta,u}(X)]=0$ (as $\epsilon\perp X$), splitting the first sum at $\mathsf{G}_{\theta_\star,u_\star}$ gives
\begin{align*} 
\sS_{n}^{{\rm db}}(\theta,u) - \sS^{\rm db}(\theta,u) & = \frac{1}{n}\sum_{i=1}^n \epsilon_i \mathsf{G}_{\theta_\star,u_\star}(X_i) \\
& + \frac{1}{n}\sum_{i=1}^n \epsilon_i \left(\mathsf{G}_{\theta,u}(X_i) - \mathsf{G}_{\theta_\star,u_\star}(X_i)\right) \\
& + \frac{1}{n}\sum_{i=1}^n (u_\star-u)(X_i)\mathsf{G}_{\theta,u}(X_i) - \PE\left[(u_\star-u)(X)\mathsf{G}_{\theta,u}(X)\right].
\end{align*}
Therefore,
\begin{align*} 
& T_{n,j}^{(1)}(\theta,u) - T_{n,j}^{(1)}(\theta_\star,u_\star)  \\
& = \frac{1}{n}\sum_{i=1}^n \epsilon_i \left(\mathsf{G}_{\theta,u}(X_i) - \mathsf{G}_{\theta_\star,u_\star}(X_i)\right) \\
& + \frac{1}{n}\sum_{i=1}^n (u_\star-u)(X_i)\mathsf{G}_{\theta,u}(X_i) - \PE\left[(u_\star-u)(X)\mathsf{G}_{\theta,u}(X)\right],
\end{align*}
and
\begin{align*}
& \sup_{u\in\mathcal{U}_n}\; \;\sup_{\theta\in\mathbf{B}_\rho(\theta_\star)}\;\;\sqrt{n}|T_{n,j}^{(1)}(\theta,u) - T_{n,j}^{(1)}(\theta_\star,u_\star)| \\
& \leq  \sigma_\epsilon\underbrace{\sup_{g\in\mathcal{G}_1} \left|\frac{1}{\sqrt{n}}\sum_{i=1}^ng(\epsilon_i,X_i)\right|}_{H_1} + c_1 \underbrace{\sup_{g\in\mathcal{G}_2} \left|\frac{1}{\sqrt{n}}\sum_{i=1}^ng(X_i)\right|}_{H_2},
\end{align*}
where
\[\mathcal{G}_1\eqdef\left\{(\eta,x)\mapsto \frac{\eta}{\sigma_\epsilon} \left(\mathsf{G}_{\theta,u,j}(x) - \mathsf{G}_{\theta_\star,u_\star,j}(x)\right),\;\;\theta\in\mathbf{B}_\rho(\theta_\star),\;u\in\mathcal{U}_n\right\},\]
\[\mathcal{G}_2\eqdef\left\{x\mapsto (u_\star-u)(x)\frac{\mathsf{G}_{\theta,u,j}(x)}{c_1}-\PE\left[(u_\star-u)(X)\frac{\mathsf{G}_{\theta,u,j}(X)}{c_1}\right],\;\;\theta\in\mathbf{B}_\rho(\theta_\star),\;u\in\mathcal{U}_n\right\}.\]

\medskip
\noindent\textbf{Bounding $H_1$.}\; Since $\epsilon/\sigma_\epsilon$ is sub-Gaussian with unit variance and (\ref{eq:bounds:famil:G}) gives $\|\mathsf{G}_{\theta,u,j}-\mathsf{G}_{\theta_\star,u_\star,j}\|_\infty\leq 2c_1\bar\rho$, for all $g\in\mathcal{G}_1$ and $m\geq 2$,
\[\PE\left[|g(\epsilon,X)|^m\right] \leq (2c_1\bar\rho)^m \frac{m!}{2}.\]
Therefore, as commenting on at the beginning of Section \ref{sec:dev:bound:Bay}, choosing $K_1\eqdef 4c_1\rho$, and $R_1\eqdef 2\sqrt{2}\,c_1\bar\rho$,  we have
\[\psi_{K_1}(g(\epsilon,X))\leq R_1, \quad\mbox{for all }g\in\mathcal{G}_1.\]

\smallskip
\noindent\textit{Bracketing entropy of $\mathcal{G}_1$.}\; For $\delta\in(0,R_1)$, let $\{u^{\ell_1}\}$ be a $(\delta/(8c_1))^{1/\alpha_\star}$-cover of $\mathcal{U}_n$ in $\|\cdot\|_\infty$ with $N_u(\delta)$ points, and $\{\theta^{\ell_2}\}$ be a $\delta/(8c_1)$-cover of $\mathbf{B}_\rho(\theta_\star)$ in $\|\cdot\|_2$ with $N_\theta(\delta)\leq(1+16c_1\bar\rho/\delta)^q$ points. Define brackets
\begin{multline*} 
L^{\ell_1,\ell_2}(\eta,x) \eqdef \frac{\eta}{\sigma_\epsilon}\left(\mathsf{G}_{\theta^{\ell_2},u^{\ell_1},j}(x) - \mathsf{G}_{\theta_\star,u_\star,j}(x)\right) -\frac{\delta|\eta|}{4\sigma_\epsilon},\quad \\
U^{\ell_1,\ell_2}(\eta,x) \eqdef \frac{\eta}{\sigma_\epsilon}\left(\mathsf{G}_{\theta^{\ell_2},u^{\ell_1},j}(x) - \mathsf{G}_{\theta_\star,u_\star,j}(x)\right) +\frac{\delta|\eta|}{4\sigma_\epsilon}.
\end{multline*}
For any $(u,\theta)\in\cA\times\mathbf{B}_\rho(\theta_\star)$, pick $(\ell_1,\ell_2)$ with $\|\theta-\theta^{\ell_2}\|_2\leq\delta/(8c_1)$ and $\|u-u^{\ell_1}\|_\infty\leq(\delta/(8c_1))^{1/\alpha_*}$. Then by Proposition~\ref{prop:def_G},
\[\|\mathsf{G}_{\theta,u,j}-\mathsf{G}_{\theta^{\ell_2},u^{\ell_1},j}\|_\infty \leq c_1\!\left(\frac{\delta}{8c_1}+\frac{\delta}{8c_1}\right)=\frac{\delta}{4},\]
which gives $L^{\ell_1,\ell_2}(\eta,x)\leq g_{\theta,u}(\eta,x)\leq U^{\ell_1,\ell_2}(\eta,x)$ for all $\eta,x$. Moreover, $U^{\ell_1,\ell_2}-L^{\ell_1,\ell_2}=\frac{\delta}{2\sigma_\epsilon}|\eta|$, so for $m\geq 2$,
\[\PE\left[\left|U^{\ell_1,\ell_2}(\epsilon,X)-L^{\ell_1,\ell_2}(\epsilon,X)\right|^m\right]\leq\left(\frac{\delta}{2}\right)^m\frac{m!}{2} \leq (2c_1\rho)^{m-2}\left(\frac{\delta}{2}\right)^2\frac{m!}{2},\]
which implies that $\psi_{K_1}(U^{\ell_1,\ell_2}-L^{\ell_1,\ell_2})\leq\sqrt{\delta/2} \leq  \delta$ for $\delta\leq R_1$. Since $\mathcal{U}_n\subseteq\{u:\|u-u_\star\|_{W^{\beta,2}}\leq r_n(\beta)\}$, the Birman--Solomyak theorem (Theorem~5.2 of \cite{birman1967piecewise}) gives the entropy bound $\log N_u(\delta)\leq C_{\rm BS}\,r_n(\beta)^{d/\beta}\,\delta^{-d/(\beta\alpha_*)}$ for an absolute constant $C_{\rm BS}$, and therefore
\begin{equation}\label{eq:entropy:G1}
H_{B,K_1}(\delta,\mathcal{G}_1)\leq q\log\!\left(1+\frac{16c_1\bar\rho}{\delta}\right)+C_{\rm BS}\,r_n(\beta)^{d/\beta}\,\left(\frac{1}{\delta}\right)^{d/(\beta\alpha_*)}.
\end{equation}

Using (\ref{eq:entropy:G1}) and $R_1=2\sqrt{2}c_1\bar\rho$, so that $16c_1\bar\rho/R_1=4\sqrt{2}$, we bound the entropy integral by splitting:
\begin{align*}
& \int_0^1\!\sqrt{H_{B,K_1}(uR_1,\mathcal{G}_1)}\,du \\
&\leq \sqrt{q}\int_0^1\!\sqrt{\log\!\left(1+\frac{4\sqrt{2}}{u}\right)}\,du
  +\sqrt{C_{\rm BS}}\,L_1^{d/(2\beta)}\int_0^1\!(uR_1)^{-d/(2\beta\alpha_*)}\,du \\
&= C''\sqrt{q}
  +\frac{\sqrt{C_{\rm BS}}\,L_1^{d/(2\beta)}\,(2\sqrt{2}c_1\bar\rho)^{1-d/(2\beta\alpha_*)}}{1-d/(2\beta\alpha_*)},
\end{align*}
where the first integral is a finite constant $C''$, $L\eqdef r_n(\beta)$, and where the second integral converges because $\beta\alpha_*>d/2$. Using $(A+B)\leq \sqrt{2(A^2+B^2)}$ valid for all $A,B\geq 0$, gives 
\[
\int_0^1\sqrt{H_{B,K_1}(uR_1,\mathcal{G}_1)}\,du\leq C'\sqrt{q+V_1(\bar\rho,L_1)},
\]
with
\[V_1(\bar\rho,L_1) = \left(\frac{\sqrt{C_{\rm BS}}\,L_1^{d/(2\beta)}\,(2\sqrt{2}c_1\bar\rho)^{1-d/(2\beta\alpha_*)}}{1-d/(2\beta\alpha_*)}\right)^2
\] 
We set
\[a \eqdef C'C_0 R_1\sqrt{\left(q+V_1(\bar\rho,L)\right)A\log n},\]
where $C_0 = 2C$ with $C$ as in Theorem \ref{thm:vdg}, and $C'$ can be taken satisfying $C'\geq 1$. The lower bound in (\ref{thm:vdg:cond}) is then satisfied by construction. For the upper bound, we choose $C_1=1$. since $K_1=4c_1\bar\rho$, $R_1=2\sqrt{2}c_1\bar\rho$, and $C_1R_1/K_1=\sqrt{2}/2$, we need $a\leq \sqrt{2}/2\cdot R_1\sqrt{n}$, i.e.,
\[C'C_0\sqrt{\left(q+V_1(\bar\rho,L)\right)A\log n}\leq \frac{\sqrt{2}}{2}\sqrt{n},\]
which can be written as $n\geq C_1 A\log(n)$, for some constant $C_1$. With $A\geq 1$, $C_0^2\geq C^2(C_1+1)/A$ so that $a_2^2/(C^2(C_1+1)R_1^2)\geq A\log n$, and Theorem \ref{thm:vdg} yields
\[\PP\left(\sigma_\epsilon\sup_{g\in\mathcal{G}_1}\left|\frac{1}{\sqrt{n}}\sum_{i=1}^ng(\epsilon_i,X_i)\right|>a\right)\leq C\exp\!\left(-\frac{a_2^2}{C^2(C_1+1)R_1^2}\right)\leq\frac{C}{n^A}\leq\frac{2}{n^A}.\]

\medskip
\noindent\textbf{Bounding $H_2$.}\; Each $g\in\mathcal{G}_2$ is centered ($\PE[g(X)]=0$ by definition) and bounded: $\|g\|_\infty\leq 2\|u-u_\star\|_\infty\leq 2 C\times r_n(0)^{1-d/(2\beta)}r_n(\beta)^{d/(2\beta)} \leq 2\bar \rho$, by Gagliardo-Nirenberg, for $n$ large enough to satisfy (\ref{cond:n:lem:dev}). Bounded random variables satisfy $\PE[|g(X)|^m]\leq (2\bar\rho)^m m!/2$, so $\psi_{4\bar\rho}(g(X))\leq 2\sqrt{2}\bar\rho$. 

\smallskip
\noindent\textit{Bracketing entropy of $\mathcal{G}_2$.}\; With $R_1 = 2\sqrt{2}\bar\rho$, $K_1 = 4\bar\rho$, and for $\delta\in(0,R_1)$, let $\{u^{\ell_1}\}$ be a $(\delta/(32))^{1/\alpha_\star}$-cover of $\mathcal{U}_n$ in $\|\cdot\|_\infty$ with $N_u(\delta)$ points, and $\{\theta^{\ell_2}\}$ be a $\delta/(32)$-cover of $\mathbf{B}_\rho(\theta_\star)$ in $\|\cdot\|_2$ with $N_\theta(\delta)\leq(1+64\rho/\delta)^q$ points. For any $(u,\theta)\in\cA\times\mathbf{B}_\rho(\theta_\star)$, pick $(\ell_1,\ell_2)$ with $\|\theta-\theta^{\ell_2}\|_2\leq\delta/(32)$ and $\|u-u^{\ell_1}\|_\infty\leq(\delta/(32))^{1/\alpha_*}$. Then
\begin{align*}
    & \|(u-u_\star)\mathsf{G}_{\theta,u,j} - (u^{\ell_1} - u_\star)\mathsf{G}_{\theta^{\ell_2},u^{\ell_1},j}\|_\infty \\
    & \leq  \|(u-u_\star)(\mathsf{G}_{\theta,u,j} - \mathsf{G}_{\theta^{\ell_2},u^{\ell_1},j})\|_\infty + \|(u-u^{\ell_1})\mathsf{G}_{\theta^{\ell_2},u^{\ell_1},j}\|_\infty \\
    & \leq c_1\|u-u_\star\|_\infty\left(\frac{\delta}{32} + \|u^{\ell_1} - u\|_\infty^{\alpha_\star}\right) + c_1 \left(\frac{\delta}{32}\right)^{1/\alpha_*}\\
    & \leq c_1\left(\frac{\delta}{16} + \frac{\delta}{16}\right) \leq c_1\frac{\delta}{8},
\end{align*}
where we use the fact for $u\in\mathcal{U}_n$, $\|u-u_\star\|_\infty \leq 1$, as a consequence of (\ref{cond:n:lem:dev}). This result implies that $\{(u^{\ell_1} - u_\star)\mathsf{G}_{\theta^{\ell_2},u^{\ell_1},j},\;\ell_1,\ell_2\}$ is a $(\delta/4)$-cover for $\mathcal{G}_2$, in the uniform norm, leading to brackets of size $\delta/2$ with $(4\bar\rho)$-Bernstein norm of at most $\delta$. A similar bracketing construction as above  with the Birman--Solomyak bound gives
\[H_{B,2}(\delta,\mathcal{G}_2)\leq q\log\!\left(1+\frac{64\bar \rho}{\delta}\right)+C_{\rm BS}\,L_1^{d/\beta}\,\delta^{-d/(\beta\alpha_*)}.\]
We carry the same entropy calculations to obtain 
\[ \int_0^{2\sqrt{2}\bar\rho}\sqrt{H_{B,2}(v,\mathcal{G}_2)}\,dv\leq C'\sqrt{q+V_1(\bar\rho,L)},\]
for some constant $C'$. Therefore, with
\[a'\eqdef C'C_0 (2\sqrt{2}\bar\rho) \sqrt{\left(q+V_1(\bar\rho,L)\right)A\log n},\]
and with the same sample size condition as above, applying Theorem \ref{thm:vdg} to $\mathcal{G}_2$ gives
\[\PP\left(\sup_{g\in\mathcal{G}_2}\left|\frac{1}{\sqrt{n}}\sum_{i=1}^ng(X_i)\right|>a'\right)\leq\frac{2}{n^A}.\]

\medskip
\noindent\textbf{Conclusion.}\; By the triangle inequality and a union bound over $j=1,\ldots,q$, we can find $C_2$ such that
\[\PP\left(\max_{1\leq j\leq q}\;\sup_{u,\theta}\; \sqrt{n}|T_{n,j}^{(1)}(\theta,u) - T_{n,j}^{(1)}(\theta_\star,u_\star)|>C_2\bar\rho\sqrt{A\log(n)}\right)\leq q\cdot\frac{6}{n^A}\leq\frac{1}{n^{A/2}},\]
where the last inequality uses $6q\leq n^{A/2}$, i.e., $n\geq(6q)^{2/A}$, which is implied by the second condition in (\ref{cond:n:lem:dev}) for large enough $C_1$.  
\end{proof}

\section{Neural network based approximation on Sobolev space}
In this section, we present a theorem about the optimal approximation error of a function $f \in W^{\beta, 2}((0, 1)^d, L)$ using a RePU($\nu$) neural network (where $\nu \ge \beta$) in terms of the number of parameters. Our main proof idea is closely related to that of \cite{belomestny2023simultaneous}, where the authors established the approximation error for H\"{o}lder smooth functions, i.e., $C^{\beta}(\Omega, L)$. The key differences being: i) our function class $W^{\beta, 2}$ which is larger than $C^{\beta}$, and ii) we use RePU($\nu$) activation function for some $\nu \ge \beta$ to ensure our neural networks have finite $W^{\beta, 2}$ norm. 
Throughout this section, Sobolev derivatives are understood in the weak sense.

\begin{theorem}[Approximation of $W^{\nu,2}((0,1)^d)$ functions by RePU($\nu$) networks]
\label{thm:RePU-spline-approx}
Let $d\ge 1$ and let $\nu\in\mathbb{N}$ with $\nu \ge 3$.  
Let $\Omega=(0,1)^d$ and suppose $f\in W^{\nu,2}(\Omega)$. Then given any $K \in \bbN$, there exists a RePU($\nu$) neural network $\cN_K$ with $\mathrm{depth}(\mathcal N_K) \le c_1(1 + \lceil \log_2 d\rceil)$, $\mathrm{width}(\mathcal N_K)
\le c_2(K+\nu)^d$, $\mathrm{sparsity}(\mathcal N_K)\le c_3 (K+\nu)^d$, such that: 
\begin{equation}
\label{eq:approx-rate}
\|f-\mathcal N_K\|_{W^{r,2}(\Omega)}
\;\le\;
C_{\nu,d}\, K^{-(\nu-r)}\,\|f\|_{W^{\nu,2}(\Omega)},
\qquad r=0,1,\dots,\nu,
\end{equation}
Here the constants $c_1, c_2, c_3, C_{\nu, d}$ depends on $(\nu, d)$, but not on $K$. 
% For any integer $k\ge 1$, define the mesh size $h:=1/k$ and let
% \[
% n:=k+p.
% \]
% Then there exists a RePU($\nu$) neural network $\mathcal N_k:\Omega\to\mathbb{R}$ such that
% \begin{equation}
% \label{eq:approx-rate}
% \|f-\mathcal N_k\|_{W^{r,2}(\Omega)}
% \;\le\;
% C_{p,d}\, h^{\,p-r}\,\|f\|_{W^{p,2}(\Omega)},
% \qquad r=0,1,\dots,p,
% \end{equation}
% where $C_{p,d}>0$ depends only on $(p,d)$.

% Moreover, the network $\mathcal N_k$ satisfies the following quantitative bounds:
% \begin{itemize}
% \item \textbf{Depth (RePU layers):}
% \[
% \mathrm{depth}(\mathcal N_k)=1+\lceil \log_2 d\rceil.
% \]
% \item \textbf{Maximum width:}
% \[
% \mathrm{width}(\mathcal N_k)
% \;\asymp\;
% \lfloor p/2\rfloor\,(k+p)^d.
% \]
% \item \textbf{Sparsity (number of nonzero parameters):}
% \[
% \mathrm{nnz}(\mathcal N_k)=\Theta\!\big((k+p)^d\big),
% \]
% up to a multiplicative constant depending only on $p$.
% \item \textbf{Magnitude of weights:}
% \begin{align*}
% \text{(i) spline-evaluation layers:} &\quad O(k^p),\\
% \text{(ii) multiplier layers:} &\quad O_p(1),\\
% \text{(iii) final linear layer:} &\quad |c_{\mathbf j}|\ \text{given by the spline coefficients of } f.
% \end{align*}
% \end{itemize}
\end{theorem}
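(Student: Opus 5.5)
The approach is the classical two-step route used by \cite{belomestny2023simultaneous}: first approximate $f$ by a tensor-product spline of degree $\nu$ (order $\nu+1$) with simultaneous Sobolev error bounds, then show that such a spline is realized exactly by a RePU($\nu$) network of the stated size.

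\textbf{Step 1: spline approximation.} Take the uniform knot sequence on $[0,1]$ with mesh $h=1/K$, extended by $\nu$ extra knots at each end. This gives a univariate B-spline basis $\{B_{j}\}_{j=1}^{K+\nu}$ of degree $\nu$. Form the tensor products $B_{\mathbf j}(x)=\prod_{i=1}^d B_{j_i}(x_i)$, with $(K+\nu)^d$ terms. Let $Q_K$ be a quasi-interpolant, for instance the de Boor--Fix type local projector, or the $L^2$-stable dual functionals composed with a Stein extension $E f$ (as in the proof of Theorem \ref{thm:interpolation}). This operator reproduces polynomials of coordinate degree $\le \nu$ and is locally $L^2$-stable on each cell's $(\nu+1)^d$ neighborhood.

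A Bramble--Hilbert argument on each cell then gives $\|f-Q_Kf\|_{W^{r,2}(\text{cell})}\lesssim h^{\nu-r}|Ef|_{W^{\nu,2}(\text{patch})}$. Here one uses an inverse inequality for splines on the cell to control $W^{r,2}$ of $Q_K f-p$ by its $L^2$ norm. Summing over cells with bounded patch overlap and using $\|Ef\|_{W^{\nu,2}}\lesssim \|f\|_{W^{\nu,2}}$ gives \eqref{eq:approx-rate} with $\mathcal N_K$ replaced by $Q_Kf=\sum_{\mathbf j}c_{\mathbf j}B_{\mathbf j}$.

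Because $f\in W^{\nu,2}$ is not pointwise defined, the dual functionals must be integral ones (averages against local polynomials). I expect this to be the main technical care point, along with including $r=\nu$: there the bound is just stability, $\|Q_Kf\|_{W^{\nu,2}}\lesssim\|f\|_{W^{\nu,2}}$, which holds piecewise. Since splines of degree $\nu$ lie only in $C^{\nu-1}$, $W^{\nu,2}$ is understood piecewise/weakly. This is fine because the $\nu$-th derivative is piecewise polynomial and bounded, so the spline is in $W^{\nu,2}$.

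\textbf{Step 2: exact network realization.} Each univariate B-spline is a finite divided difference of truncated powers, $B_j(t)=\sum_{k=0}^{\nu+1}a_{jk}(t-t_{j+k})_+^{\nu}$. It is therefore computed exactly by one RePU($\nu$) hidden layer with $\nu+2$ units and one linear combination. This layer has width and sparsity $O(\nu d(K+\nu))$.

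The products $\prod_i B_{j_i}(x_i)$ are obtained by binary-tree multiplication in $\lceil\log_2 d\rceil$ levels. At each level, a product $xy=\tfrac14((x+y)^2-(x-y)^2)$ is needed, and squares must be realized exactly by RePU($\nu$) units. For this I would use the standard fact that $z^2$, and indeed any polynomial of degree $\le\nu$ on a bounded interval, is an exact linear combination of $\sigma_\nu(z+b_k)$ for $\nu+1$ distinct shifts $b_k$ placed so that all arguments remain positive on the bounded range. Nonnegativity is then ensured by adding a constant shift, since the B-spline values lie in $[0,1]$. This step uses the assumption $\nu\ge 3$ in the same way as in \cite{belomestny2023simultaneous}, requiring $O_\nu(1)$ units per multiplication.

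\textbf{Step 3: size count and conclusion.} There are $(K+\nu)^d$ products, each needing $d-1$ multiplications. Sharing the univariate layer across products, the width and sparsity are $O_{\nu,d}((K+\nu)^d)$ and the depth is $c_1(1+\lceil\log_2 d\rceil)$. The final linear layer carries the coefficients $c_{\mathbf j}$. Since the realization is exact, $\mathcal N_K=Q_Kf$, and Step 1 yields \eqref{eq:approx-rate}.
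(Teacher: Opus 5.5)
Your proposal is correct and follows essentially the same route as the paper. The steps match: a degree-$\nu$ tensor-product spline quasi-interpolant with simultaneous $W^{r,2}$ error bounds (the paper cites this from Schumaker rather than deriving it via Bramble--Hilbert), exact realization of the univariate B-splines by shifted units $\sigma_\nu(\cdot - t)$, exact multiplication through the polarization identity with $t^2$ written as a finite combination of shifted RePU($\nu$) units, and a balanced binary tree of depth $\lceil \log_2 d\rceil$. The differences are only in implementation details: the paper uses clamped knots and represents the first-interval polynomial piece by extra shifted units, and it builds $t^2$ from evenized pairs $(a+t)_+^\nu+(a-t)_+^\nu$ with $\lfloor \nu/2\rfloor$ shifts rather than your $\nu+1$ shifts. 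Neither version needs more than $\nu\ge 2$ for the multiplication step.
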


\begin{proof}
The proof proceeds in six steps. 
\\\\
\noindent
{\bf Step 1: Sobolev approximation by tensor-product splines.}
Let $\{B^{K,\nu}_j\}_{j=1}^{K+\nu}$ denote the univariate B-spline basis of degree $\nu$ on $(0,1)$ associated with the open uniform knot sequence of spacing $h=1/K$. Let
$$
\cS_{K,\nu}^{(1)} := {\rm span}\left\{B_j^{K,\nu}:j\in[K+\nu]\right\}
$$
denote the corresponding univariate spline space. 
Define the $d$-dimensional tensor-product spline space
\[
S_{K,\nu} := \bigotimes_{i=1}^d \mathcal S_{K,\nu}^{(1)} =
\mathrm{span}\Big\{
\Phi_{\mathbf j}(x)
:=
\prod_{i=1}^d B^{K,\nu}_{j_i}(x_i)
\;\Big|\;
\mathbf j=(j_1,\dots,j_d)\in[K+\nu]^d
\Big\}.
\]
By classical spline approximation theory (e.g. see Theorem 6.25 and Corollary 6.26 of \cite{schumaker2007spline}), there exists a linear operator $Q_K:W^{\nu,2}(\Omega)\to S_{K,\nu}$ such that
\[
\|f-Q_K f\|_{W^{r,2}(\Omega)}
\le
C_{\nu,d}\, h^{\nu-r}\,\|f\|_{W^{\nu,2}(\Omega)},
\qquad r=0,1,\dots,\nu.
\]
Writing
\[
Q_K f(x) = \sum_{\mathbf j\in[K+\nu]^d} c_{\mathbf j}\,\Phi_{\mathbf j}(x),
\]
we obtain the spline approximant that will be implemented by the neural network.
\\\\
\noindent
{\bf Step 2: Exact RePU representation of univariate B-splines:}
% Each univariate B-spline admits a truncated-power representation:
% for every $j\in[K+\nu]$,
% \[
% B^{K,\nu}_j(x) = \sum_{m=0}^{\nu+1}\alpha_{j,m}\,(x-t_{j+m})_+^\nu,
% \]
% where $\{t_{j+m}\}$ are neighboring knots and the coefficients satisfy
% \[
% |\alpha_{j,m}|\le C_\nu \,h^{-\nu}=C_\nu\,K^\nu.
% \]
% For boundary B-splines associated with repeated knots in the open knot sequence, the same representation is understood in the limiting divided-difference sense; equivalently, these boundary splines are ordinary polynomials on the boundary cells and can be represented by the same finite collection of shifted RePU\((\nu)\) units.
% The coefficient bounds remain $ C_\nu \,h^{-\nu}$. Since $(x-t)_+^\nu=\sigma_\nu(x-t)$, each $B^{K,\nu}_j(x)$ is computed exactly by a one-hidden-layer RePU($\nu$) subnetwork with width $\nu+2$. Applying this construction to each coordinate $x_i$ and each index $j$, a single RePU layer of width $W_0=d(K+\nu)(\nu+2)$ computes all univariate spline values $\{B^{K,\nu}_j(x_i)\}_{i,j}$.
Let the univariate knots be $0 = \xi_0 < \xi_1 < \cdots < \xi_K = 1$, where $\xi_r = rh$ and $h = 1/K$. Every spline $s \in S^{(1)}_{K,\nu}$ admits the truncated-power representation (e.g., see \cite[Chapter VIII]{de1978practical})
$$
s(x) = p_s(x) + \frac{1}{\nu!}\sum_{r=1}^{K-1}\Delta_r(s)\,(x-\xi_r)_+^{\nu}, \qquad x \in [0,1],
$$
where $p_s \in \mathcal{P}_\nu$ is the polynomial that agrees with $s$ on the first knot interval (Here $\mathcal P_\nu$ denotes the space of polynomials of degree at most $\nu$) and $\Delta_r(s) = D^\nu s(\xi_r+) - D^\nu s(\xi_r-)$. 
Now for $s = B_j^{K,\nu}$, $B_j^{K,\nu}$ has support on at most $\nu + 1$ consecutive knot intervals, at most $\nu + 2$ of the coefficients $\Delta_r(B_j^{K,\nu})$ are nonzero, along with
$$
\max_r \left|\Delta_r(B_j^{K,\nu})\right| \le C_\nu h^{-\nu}.
$$
To represent the first polynomial term, let us fix distinct constants $a_\ell = \ell + 1$, $0 \le \ell \le \nu$. Since $\{(x + a_\ell)^\nu : 0 \le \ell \le \nu\}$ forms a basis of $\mathcal{P}_\nu$, there exist coefficients $c_{j,\ell}$
such that
$$
p_j(x) = \sum_{\ell=0}^{\nu} \gamma_{j,\ell}\,(x + a_\ell)_+^{\nu}, \qquad x \in [0,1],
$$
with $\max_\ell |\gamma_{j,\ell}| \le C_\nu h^{-\nu}$. Therefore we have: 
$$
B_j^{K,\nu}(x) = \sum_{\ell=0}^{\nu} \gamma_{j,\ell}\,\sigma_\nu(x + a_\ell) + \frac{1}{\nu!}\sum_{r \in I_j}\Delta_r(B_j^{K,\nu})\,\sigma_\nu(x - \xi_r),
$$
where $|I_j| \le \nu + 2$. Thus we can represent $B_j^{K,\nu}$ exactly on $[0,1]$ by a one-hidden-layer $\mathrm{RePU}(\nu)$ network of width at most $2\nu + 3$. Representing all univariate spline basis functions $\{B^{K,\nu}_j(x_i)\}_{i,j}$ in parallel therefore requires a single RePU layer of width at most $W_0 = d(K + \nu)(2\nu + 3)$. 
\\\\
\noindent
{\bf Step 3: Exact one-hidden-layer RePU($\nu$) of $f(x, y) = xy$:} Recall that RePU($\nu$) activation function is defined as $\sigma_\nu(t):=(t_+)^\nu=\max\{t,0\}^\nu$. We now construct a one-hidden-layer network $\mathrm{Mult}_\nu(u,v)$ such that
\[
\mathrm{Mult}_\nu(u,v)=uv,\qquad \forall (u,v)\in[0,1]^2,
\]
using $m=4\lfloor \nu/2\rfloor$ hidden units, with input weights in $\{-1,0,1\}$, and with biases and output weights bounded by constants depending only on $\nu$. For any real numbers $u,v$, we have the following identity
\begin{equation}
\label{eq:polarization}
uv=\frac{(u+v)^2-(u-v)^2}{4}.
\end{equation}
Thus it suffices to represent $t\mapsto t^2$ on $t\in[-1,2]$ using RePU($\nu$) units. Let $q = \lfloor \nu/2 \rfloor \ge 1$ as $\nu \ge 3$. Choose $q$ distinct shifts $a_i:=2+i$ for $1 \le i \le q$. For any $t\in[-1,2]$ and any $i\in[q]$, we have $a_i\pm t\ge 2+i-2\ge 1$, hence $(a_i\pm t)_+^\nu=(a_i\pm t)^\nu$. Define the \emph{evenized shifted feature}
% \begin{equation}
% \label{eq:even-feature}
% E_{a}(t):=(a+t)_+^p+(a-t)_+^p.
% \end{equation}
% Expanding by the binomial theorem gives, for every integer $p\ge 1$,
\begin{equation}
\label{eq:even-expansion}
E_{a_i}(t) = (a_i+t)_+^\nu+(a_i-t)_+^\nu =(a_i+t)^\nu+(a_i-t)^\nu = 2\sum_{j=0}^{\lfloor \nu/2\rfloor}\binom{\nu}{2j}\,a_i^{\nu-2j}\,t^{2j}.
\end{equation}
In particular, $E_{a_i}(t)$ is a polynomial in $t^2$ of degree $q$, with coefficients depending on $a_i$.
Let $c_1,\dots,c_q$ be chosen to satisfy the linear constraints
\begin{equation}
\label{eq:linear-system-c}
\sum_{i=1}^q c_i\,a_i^{\,\nu-2j}=
\begin{cases}
\displaystyle \frac{1}{2\binom{\nu}{2}}, & j=1,\\[6pt]
0, & j=2,3,\dots,q.
\end{cases}
\end{equation}
Since the $a_i$'s are distinct and the matrix $\big(a_i^{\nu-2j}\big)_{1\le j\le q,\,1\le i\le q}$ is a (generalized) Vandermonde matrix, \eqref{eq:linear-system-c} has a unique solution. Multiplying \eqref{eq:even-expansion} by $c_i$ and summing over $i$ yields, for all $t\in[-1,2]$,
\begin{align}
S(t) := \sum_{i=1}^q c_i E_{a_i}(t)
&=
2\sum_{j=0}^{q}\binom{\nu}{2j}\Big(\sum_{i=1}^q c_i a_i^{\nu-2j}\Big)t^{2j} \nonumber\\
&=
2\binom{\nu}{2}\Big(\sum_{i=1}^q c_i a_i^{\nu-2}\Big)t^{2}
\;+\;
2\binom{\nu}{0}\Big(\sum_{i=1}^q c_i a_i^{\nu}\Big) \nonumber\\
&= t^2 + C_\nu,
\label{eq:t2-plus-const}
\end{align}
where $C_\nu:=2\sum_{i=1}^q c_i a_i^{\nu}$ is a constant depending only on $p$ (and our chosen shifts).
Thus, we have produced an exact representation of $t^2$ up to an additive constant. Now, for $(u,v)\in[0,1]^2$,
\[
S(u+v)-S(u-v)=(u+v)^2-(u-v)^2=4uv.
\]
Therefore,
\begin{equation}
\label{eq:mult-final}
uv=\frac{S(u+v)-S(u-v)}{4}
=\frac{1}{4}\sum_{i=1}^q c_i\Big(E_{a_i}(u+v)-E_{a_i}(u-v)\Big).
\end{equation}
For each $i\in[q]$, expanding the four terms in \eqref{eq:mult-final} yields:
\[
E_{a_i}(u+v)=(a_i+u+v)_+^\nu+(a_i-u-v)_+^\nu,
\qquad
E_{a_i}(u-v)=(a_i+u-v)_+^\nu+(a_i-u+v)_+^\nu.
\]
Hence our one layer RePU network to implement the product is: 
\begin{align}
\label{eq:mult-4q}
& \mathrm{Mult}_\nu(u,v) \notag\\
& \qquad = \frac{1}{4}\sum_{i=1}^q c_i\left[(a_i+u+v)_+^\nu+(a_i-u-v)_+^\nu-(a_i+u-v)_+^\nu-(a_i-u+v)_+^\nu\right].
\end{align}
This is precisely a one-hidden-layer RePU($\nu$) network with $m=4q=4\lfloor \nu/2\rfloor$ hidden units with weights bounded by $C_\nu$ for some constant depending on $\nu$. 
% \smallskip
% \noindent\textbf{Step 6: Bounds on weights and biases.}
% By construction, each hidden neuron has input weights in $\{-1,0,1\}$, hence bounded by $1$ in magnitude.  
% Moreover, the biases are $a_i=2+i\le 2+q\le 2+\lfloor p/2\rfloor$, hence $O(p)$.
% Finally, the output weights are $\pm c_i/4$, where $(c_i)_{i=1}^q$ is the unique solution to \eqref{eq:linear-system-c}. Since $q$ is fixed once $p$ is fixed and the nodes $a_i=2+i$ are fixed constants, we have
% \[
% \max_{1\le i\le q}|c_i| \le C_p'
% \qquad\Longrightarrow\qquad
% \max_{i,r}|w_{i,r}|\le \frac{C_p'}{4},
% \]
% for some constant $C_p'$ depending only on $p$.
% Thus all multiplier-network weights and biases are bounded by $O_p(1)$ (more precisely, by constants depending only on $p$).
% \smallskip
% \noindent
% Combining the steps above shows that $\mathrm{Mult}_\nu(u,v)$ defined by \eqref{eq:mult-4q}
% computes $uv$ exactly on $[0,1]^2$ using a single hidden layer of width $m=4\lfloor p/2\rfloor$.
\\\\
\noindent
{\bf Step 4: Extension to multiple products:} We now extend the previous construction to the product of the form $f(x_1, \dots, x_d) = x_1x_2\cdots x_d$ by composing $\mathrm{Mult}_\nu$ in a balanced binary tree. Let $L:=\lceil \log_2 d\rceil$ and set $D:=2^L$. Pad the input with ones:
\[
\tilde x_i:=
\begin{cases}
x_i, & 1\le i\le d,\\
1, & d<i\le D,
\end{cases}
\]
so that $\prod_{i=1}^d x_i=\prod_{i=1}^D \tilde x_i$. Define the intermediate variables $z^{(0)}_i := \tilde x_i$ for $1 \le i \le D$ and for each level $\ell=0,1,\dots,L-1$,
\[
z^{(\ell+1)}_j
:=\mathrm{Mult}_\nu\!\big(z^{(\ell)}_{2j-1},\,z^{(\ell)}_{2j}\big),
\qquad j=1,\dots, D/2^{\ell+1}.
\]
When composing the multiplier blocks across different levels of the tree, the linear readout producing each intermediate product can be absorbed into the affine transformation of the next RePU layer. 
More precisely, each intermediate product $z_j^{(\ell)}$ is a linear readout of the RePU features produced at level $\ell$. When $z_j^{(\ell)}$ is used as an input to a multiplier at level $\ell+1$, every preactivation in the next multiplier is an affine function of $z_j^{(\ell)}$ and hence, after substitution, an affine function of the RePU features from level $\ell$. Thus the linear readout map can be composed with the affine input map of the next RePU layer and need not be counted as a separate nonlinear layer. Therefore, the above recursive construction is realized as an ordinary feedforward RePU$(\nu)$ network, up to changing constants in the width and sparsity bounds.
Since $z^{(\ell)}_j\in[0,1]$ for all $\ell,j$ (products of numbers in $[0,1]$ stay in $[0,1]$), the multiplier is always applied within its exactness domain. After $L$ levels, we obtain a single scalar
\[
z^{(L)}_1=\prod_{i=1}^D \tilde x_i=\prod_{i=1}^d x_i.
\]
This construction uses exactly $L=\lceil\log_2 d\rceil$ multiplication stages (each stage applies $\mathrm{Mult}_\nu$ in parallel), hence the number of RePU($\nu$) \emph{nonlinear} layers is $1+L$. 
At level $\ell$, there are $D/2^{\ell+1}$ copies of $\mathrm{Mult}_\nu$ in parallel; since each copy uses $m=4\lfloor \nu/2\rfloor$ hidden units, the RePU width at level $\ell$ is $mD/2^{\ell + 1}$ and the maximum width occurs at $\ell=0$, namely $mD/2\le m2^{L-1}\asymp m d$.
Thus $\prod_{i=1}^d x_i$ can be computed exactly on $[0,1]^d$ by composing the two-input RePU($\nu$) multiplier in a balanced binary tree of depth $\lceil\log_2 d\rceil$, padding with ones to the next power of two. Using this balanced binary-tree construction with the one-hidden-layer RePU($\nu$) multiplier, the function $f(x_1,\dots,x_d)=\prod_{i=1}^d x_i$ on $[0,1]^d$ is computed exactly by a RePU($\nu$) network of depth $\lceil\log_2 d\rceil$, maximum width $O(\nu\,d)$, with $O(\nu\,d)$ active (nonzero) parameters, whose input weights lie in $\{-1,0,1\}$, hidden biases are bounded by $2+\lfloor \nu/2\rfloor$, and all output weights are bounded by a constant $C_\nu$ depending only on $\nu$.
\\\\
\noindent
{\bf Step 5: Computing all possible tensor products of spline basis:} Starting from the input $x=(x_1,\dots,x_d)$, we first compute all univariate spline values $\{B^{K,\nu}_{j}(x_\ell):1\le \ell\le d,\ 1\le j\le K+\nu\}$. Then we use the balanced-tree multiplier construction of previous step to compute all tensor products $\{\Phi_{\mathbf j}(x)\}_{\bj \in [K+\nu]^d}$.
These $d$ collections of size $(K+\nu)$ are then combined using the exact two-input RePU($\nu$) multiplier $\mathrm{Mult}_\nu$ in a balanced binary-tree fashion: at each merge level, the current feature blocks are paired and all their entrywise products
(i.e.\ outer products) are computed in parallel.
Repeating this procedure for $\lceil\log_2 d\rceil$ merge levels yields, simultaneously and exactly, the full tensor product
feature map $\{\Phi_{\mathbf j}(x)\}_{\bj \in [K+\nu]^d}$. 
Consequently, the mapping from $x$ to all $(K+\nu)^d$ tensor-product spline basis functions can be realized by a RePU($\nu$)
network of depth at most $1+\lceil\log_2 d\rceil$, with the final multiplication layer dominating the width on the order of
$\lfloor \nu/2\rfloor (K+\nu)^d$, and with the total number of active (nonzero) parameters also scaling on the order of
$(K+\nu)^d$, up to constants depending only on $\nu$.
Therefore, starting from $x=(x_1,x_2,\cdots,x_d)$, the full tensor-product feature map $\{\Phi_{\mathbf j}(x)\}_{\bj \in [K+\nu]^d}$ can be computed exactly by a RePU($\nu$) neural network of depth at most $1+\lceil\log_2 d\rceil$, maximum width at most $C_{\nu, d}\,(K+\nu)^d$, and total number of active (nonzero) parameters at most $C_{\nu, d}'\,(K+\nu)^d$, where $C_{\nu, d},C_{\nu, d}'>0$ depend only on $(\nu, d)$ (and not on $K$).
% \paragraph{Step 3: Exact multiplication via RePU($\nu$) networks.}
% Because B-splines form a partition of unity and are nonnegative,
% \[
% 0\le B^{K,p}_j(x)\le 1.
% \]
% Hence all tensor-product basis functions $\Phi_{\mathbf j}(x)$ take values in $[0,1]$.
% There exists a one-hidden-layer RePU($\nu$) \emph{multiplier network}
% $\mathrm{Mult}_\nu(u,v)$ that computes $uv$ exactly on $[0,1]^2$.
% This network uses $m=4\lfloor p/2\rfloor$ hidden units,
% input weights in $\{-1,0,1\}$, biases bounded by $O_p(1)$,
% and output weights bounded by $O_p(1)$.
% Using a balanced binary tree of such multiplier blocks,
% all tensor products $\Phi_{\mathbf j}(x)=\prod_{i=1}^d B^{K,p}_{j_i}(x_i)$
% are computed in $L=\lceil\log_2 d\rceil$ additional RePU layers.
% The last multiplication layer has width of order $m\,n^d$,
% which dominates the network width.
\\\\
\noindent
{\bf Step 6: Final linear readout:}
Adding a final linear layer with weights $\{c_{\mathbf j}\}_{\mathbf j\in[K+\nu]^d}$ produces the scalar-valued network output
\[
\mathcal N_K(x)=\sum_{\mathbf j\in[K+\nu]^d} c_{\mathbf j}\,\Phi_{\mathbf j}(x) =Q_K f(x).
\]
Combining this identity with the spline approximation bound from Step~1 yields \eqref{eq:approx-rate}.
% \paragraph{Network complexity.}
% The depth is $1+L=1+\lceil\log_2 d\rceil$ RePU layers.
% The maximal width is attained in the final multiplication layer and is
% $\Theta(\lfloor p/2\rfloor\,n^d)$.
% The number of nonzero parameters scales as $\Theta(n^d)$.
% Weight magnitudes scale as $O(k^p)$ only in the spline-evaluation layer,
% while all multiplier-layer weights are $O_p(1)$.
% This completes the proof.
\end{proof}

\begin{remark}[Effect of lower smoothness $\beta< \nu$]
\label{rem:approx_beta}
If the target function satisfies only $f\in W^{\beta,2}((0,1)^d)$ for some $\beta<\nu$, then the \emph{network architecture and construction remain unchanged}, while only the approximation statement is modified. Indeed, we may still choose spline degree $\nu$ and use the same RePU($\nu$) network to compute all univariate B-splines, their tensor-product basis functions, and the final linear combination. The only difference is in the spline approximation step: the spline (or quasi-interpolant) $Q_K f$ now satisfies
\[
\|f-Q_K f\|_{W^{r,2}((0,1)^d)} \;\le\; C_{\beta,\nu,d}\, K^{-(\beta-r)}\,\|f\|_{W^{\beta,2}((0,1)^d)},
\qquad 0\le r\le \beta,
\]
so the approximation rate is governed by the actual smoothness $\beta$ rather than the spline degree $\nu$.
In particular, taking $\nu>\beta$ does not improve the convergence order but only increases constants and network size, while the depth, width, and sparsity bounds derived for the RePU$(\nu)$ realization continue to hold, up to constants depending on $(\beta,\nu,d)$.
\end{remark}

\section{Additional technical lemmas and their proofs}
\begin{lemma}[Smooth truncation function]
    \label{lem:truncation}
Fix an integer $p\ge 1$ and let $m>0$ and a small $0<\delta<m$. Define a function $S_p: \reals \mapsto \reals$ as: 
$$
S_p(t):= \begin{cases}
0, & t\le 0,\\[4pt]
B_p^{-1}\displaystyle\int_0^t s^p(1-s)^p\,ds, & 0<t<1,\\[8pt]
1, & t\ge 1,
\end{cases}
$$
where $B_p:=\int_0^1 s^p(1-s)^p\,ds$. 
Now, using this $S_p$, define another function $h_{m, p, \delta}(r)$ for $r\ge 0$ as: 
$$
h_{m, p, \delta}(x):= \begin{cases}
x, & 0\le x\le m-\delta,\\[4pt] 
x + (m-x) S_p(\frac{x-(m-\delta)}{\delta}), & m-\delta\le x \le m,\\[8pt]
m, & x \ge m.
\end{cases}
$$
Finally define $T_m \equiv T_{m, p, \delta}:\reals \mapsto \reals$ as 
$$
T_{m, p, \delta}(x) = \sign(x)h_{m, p, \delta}(|x|) \,,
$$
where we use the convention $\sign(0) = 0$. 
Then the following properties hold: 
\begin{enumerate}
    \item $T_{m, p, \delta}\in C^p(\mathbb R)$, odd, monotone nondecreasing on $\mathbb R$, and is a piecewise polynomial with finitely many pieces and degree at most $2p+2$.
    \item For all $x\in\mathbb R$, $|T_{m, p, \delta}(x)|\le m$, 
    and
    $$
    T_{m, p, \delta}(x)=x \quad\text{for } |x|\le m-\delta, \qquad T_{m, p, \delta}(x)=m\,\operatorname{sign}(x) \quad\text{for } |x|\ge m.
    $$
    As a consequence $|T_{m, p, \delta}(x)|\le m-\delta$ implies $T_{m, p, \delta}(x)=x$. 
    \item There exists a finite constant $L_p<\infty$, depending only on $p$, such that $T_{m, p, \delta}(x)$ is $L_p$ Lipschitz on $\reals$, where one may take
    $$
    L_p = \max\left\{1,\, \sup_{0\le t\le 1}\left[1-S_p(t)+(1-t)S_p'(t)\right]\right\}.
    $$
\end{enumerate}
\end{lemma}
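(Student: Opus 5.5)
The plan is to verify the three properties directly from the explicit construction, working first on $h_{m,p,\delta}$ on $[0,\infty)$ and then transferring to $T_{m,p,\delta}$ by odd extension. We begin with $S_p$. Its derivative on $(0,1)$ is $B_p^{-1}s^p(1-s)^p$, which vanishes to order $p$ at both $0$ and $1$. Hence $S_p\in C^{p}(\mathbb R)$, and in fact $S_p^{(k)}(0)=S_p^{(k)}(1)=0$ for $1\le k\le p$ (and $S_p^{(p+1)}$ has a jump only when $p$ is such that the leading term is nonzero). Moreover, $S_p$ is nondecreasing, takes values in $[0,1]$, and on $(0,1)$ is a polynomial of degree $2p+1$.

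Set $t=(x-(m-\delta))/\delta$ on the middle piece. There $h(x)=x+(m-x)S_p(t)$, and since $m-x=\delta(1-t)$ this equals $(m-\delta)+\delta t+\delta(1-t)S_p(t)$. This is a polynomial of degree at most $2p+2$ in $x$, which gives the degree claim. For smoothness, we match derivatives at the two junctions.

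At $t=0$ we have $S_p(0)=0$ and its derivatives up to order $p$ vanish. Leibniz's rule then gives $h'=1$ and $h^{(k)}=0$ for $2\le k\le p$, which agrees with the identity piece.

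At $t=1$ we have $S_p=1$ with vanishing derivatives, so $h=m$. Differentiating, $h'(x)=1-S_p(t)+(1-t)S_p'(t)$, which is $0$ at $t=1$. For $k\ge 2$, $h^{(k)}$ is a combination of terms $(1-t)S_p^{(k)}$ and $S_p^{(k-1)}$ (up to powers of $\delta$), all of which vanish at $t=1$ as long as $k\le p$. This matches the constant piece $m$, so $h\in C^p([0,\infty))$.

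Monotonicity follows from the same formula: $h'=1-S_p+(1-t)S_p'\ge 0$, since $S_p\le 1$, $S_p'\ge0$ and $t\le1$. Consequently $h$ increases from $m-\delta$ to $m$ on the middle piece, so $0\le h\le m$ everywhere and $h(x)\ge m-\delta$ for $x\ge m-\delta$.

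The main obstacle is the odd extension at $0$, together with the meaning of $C^p$ there. Near $0$ the function is $T(x)=x$ on $|x|\le m-\delta$, which is smooth and odd, so there is no issue at the origin. Oddness and monotonicity on $\mathbb R$ then follow from $h(0)=0$ and $h$ nondecreasing.

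Property 2 is now immediate. For the consequence, suppose $|T(x)|\le m-\delta$ and, by oddness, take $x\ge0$. If $x>m-\delta$, then $h(x)>m-\delta$: this uses $h'>0$ on $[m-\delta,m)$, which holds because $1-S_p(t)>0$ for $t<1$. So $|T(x)|\le m-\delta$ forces $x\le m-\delta$, where $T(x)=x$. (At the boundary value $m-\delta$ itself the identity holds anyway.)

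Property 3 follows from the bound $|T'|\le\sup_{t\in[0,1]}[1-S_p(t)+(1-t)S_p'(t)]$ on the middle pieces, $|T'|=1$ on the identity piece and $0$ on the flat pieces. Combining these, $T$ is Lipschitz with constant $L_p$ by the mean value theorem. This supremum depends only on $p$, since $\delta$ cancels in the chain rule.
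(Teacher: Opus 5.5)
Your proposal is correct and follows essentially the same route as the paper: establish $S_p\in C^p(\mathbb R)$ with $S_p^{(k)}(0)=S_p^{(k)}(1)=0$ for $1\le k\le p$, match one-sided derivatives of the polynomial transition piece at $x=m-\delta$ and $x=m$, handle the origin via $T_{m,p,\delta}(x)=x$ near $0$, and read off monotonicity and the Lipschitz constant from $h'(x)=1-S_p(t)+(1-t)S_p'(t)$. Your strict-monotonicity argument for the consequence $|T_{m,p,\delta}(x)|\le m-\delta\Rightarrow T_{m,p,\delta}(x)=x$ validly fills in a step the paper's written proof leaves implicit. Your parenthetical about $S_p^{(p+1)}$ is unnecessary: that derivative always jumps at $0$ and at $1$, since $\frac{d^p}{dt^p}[t^p(1-t)^p]$ equals $p!$ at $t=0$ and $(-1)^p p!$ at $t=1$. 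Nothing in your argument depends on it.
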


\begin{proof}
We start by recording some elementary properties of $S_p$. On $(0,1)$, from the definition of $S_p$, we have $S_p'(t)=t^p(1-t)^p/B_p$. Hence $S_p$ is nondecreasing, satisfies $0\le S_p\le 1$, and is a polynomial of degree
$2p+1$ on $(0,1)$. Moreover, since $S_p'$ vanishes to order $p$ at both endpoints,
\[
S_p^{(k)}(0)=S_p^{(k)}(1)=0,
\qquad 1\le k\le p.
\]
Consequently, we have $S_p\in C^p(\mathbb R)$. Furthermore, by construction, $h_{m, p, \delta}(x)=x$ for all $0 \le x \le m-\delta$ and $h_{m, p, \delta}(x)= m$ for all $x \ge m$. 
On the transition interval $m-\delta\le  x \le m$, define
$$
t=\frac{x-(m-\delta)}{\delta} \implies t \in[0,1].
$$
Then $h_{m, p, \delta}(x) = x+(m-x)S_p(t)$.  Since $t$ is a linear function of $x$ and $S_p$ is a polynomial of degree $2p+1$ on $(0,1)$, it is immediate that this transition piece is a polynomial in $x$ of degree at most $2p+2$. Therefore, $h_{m, p, \delta}$ is a piecewise polynomial with finitely many pieces, and so is $T_{m, p, \delta}$. As a consequence, 
\begin{align*}
    |x|\le m-\delta & \implies h_{m, p, \delta}(|x|)=|x| \implies T_{m, p, \delta}(x)=\sign(x)|x|=x \,,\\
    |x|\ge m & \implies h_{m, p, \delta}(|x|)=m \implies T_{m, p, \delta}(x)=m\,\sign(x) \,.
\end{align*}
The uniform bound $|T_{m, p, \delta}(x)|\le m$ also follows from the definition.
We next prove $C^p$ smoothness. As $T_{m, p, \delta}(x) = x$ for $|x| \le m-\delta$ and $T_{m, p, \delta}(x) = m\sign(x)$ for $|x| \ge m$, we only need to establish the $C^p$ smoothness of $T_{m, p, \delta}(x)$ on the region $m-\delta \le |x| \le m$. Let us first consider the interior. When $m-\delta < x < m$, then we have: 
$$
T_{m, p, \delta}(x) = h_{m, p, \delta}(x) = x + (m - x)S_p\left(\frac{x - (m-\delta)}{\delta}\right) \,.  
$$
As $S_p(t) \in C^p$ on $(0, 1)$, it is immediate that $T_{m, p, \delta}(x) \in C^p$ for $m-\delta < x < m$. By exactly analogous argument, we can conclude that $T_{m, p, \delta}(x) \in C^p$ for $-m < x < -m + \delta$. Therefore, we only need to consider the endpoints of the intervals, i.e. $\{-m, -m + \delta, m-\delta, m\}$. 
% Let us consider $x = m$. Now as $T_{m, p, \delta}(x) = m$ for all $x \ge m$, it is immediate that right-hand-derivative of $T_{m, p, \delta}(x) = 0$ for all order. On the other hand, the left-hand derivative of $T_{m, p, \delta}(x)$ at $x = m$ is 
% $$
% T^{(1)_-}_{m, p, \delta}(m) = \lim_{h \downarrow 0} \frac{T_{m, p, \delta}(m) - T_{m, p, \delta}(m - h)}{h} = 1 - S_p(1) + (m - x) \lim_{h \downarrow 0} \frac{
% S_p(1) - S_p\left(1 - \frac{h}{\delta}\right)}{h}
% $$
% Since $0<\delta<m$, the function
% $h_{m, p, \delta}$ equals $r$ on a neighborhood of $0$. Hence the odd extension
% $T_{m, p, \delta}$ equals $x$ on the neighborhood $|x|<m-\delta$ of the origin,
% and therefore no loss of smoothness occurs at $x=0$.
%It remains to check the matching at $r=m-\delta$ and $r=m$. 
At $x=m-\delta$, we have $t=0$ and $h_{m, p, \delta}(x) = x+(m-x)S_p(t)$. 
Since $S_p(0)=0$ and $S_p^{(k)}(0)=0$ for $1\le k\le p$, all derivatives up to order $p$ of the correction term $(m - x)S_p((x - (m-\delta))/\delta)$ vanish at $x=m-\delta$. Hence, the transition piece matches the left piece
$x\mapsto x$ together with all derivatives up to order $p$. Similarly, at $x=m$, we have $t=1$ and may write $h_{m, p, \delta}(x) = m-(m-x)(1-S_p(t))$
Since $1-S_p(1)=0$ and $S_p^{(k)}(1)=0$ for $1\le k\le p$, all derivatives up to order $p$ of $(m-x)(1 - S_p((x - (m-\delta))/\delta))$ vanish at $x=m$. Hence, the transition piece matches the constant right piece $x\mapsto m$ together with all derivatives up to order $p$. Therefore $h_{m, p, \delta}\in C^p([0,\infty))$, and by the preceding observation at the origin, $T_{m, p, \delta}\in C^p(\reals)$.

We next prove monotonicity. It is enough to show that $h_{m, p, \delta}$ is
nondecreasing on $[0,\infty)$. This is immediate on $[0,m-\delta]$ and on $[m,\infty)$. On the transition interval $m-\delta<x<m$, with
$t=(x-(m-\delta))/\delta$, we have
\begin{equation}
\label{eq:deriv_Tm}
\frac{d}{dx}h_{m, p, \delta}(x) = 1-S_p(t)+\frac{m-x}{\delta}S_p'(t) = 1-S_p(t)+(1-t)S_p'(t).
\end{equation}
Because $0\le S_p(t)\le 1$, $S_p'(t)\ge 0$, and $1-t\ge 0$, this derivative is
nonnegative. Thus $h_{m, p, \delta}$ is nondecreasing. Since $T_{m, p, \delta}(x)=h_{m, p, \delta}(x)$ for all $x \ge 0$, $T_{m, p, \delta}(x)$ is non-decreasing on $[0, \infty)$. Furthermore, as $T_{m, p, \delta}(x)=-h_{m, p, \delta}(-x)$ for all $x < 0$ and consequently, it is non-decreasing on $(-\infty, 0)$ as well. Therefore, it is a monotone non-decreasing function on $\reals$.   

The Lipschitz bound of $T_{m, p, \delta}(x)$ essentially follows from Equation \eqref{eq:deriv_Tm}. On the transition interval, the derivative is $1-S_p(t)+(1-t)S_p'(t)$, while it is $1$ on $[-m+\delta, m - \delta]$ and $0$ on $[m, \infty) \cup (-\infty, -m]$. Thus
$$
\sup_{x\in\mathbb R}\left|(T_{m, p, \delta})'(x)\right| \le L_p :=\max\left\{1,\,\sup_{0\le t\le 1}\left[1-S_p(t)+(1-t)S_p'(t)\right]\right\}<\infty.
$$
This completes the proof. 
% By the mean value theorem,
% \[
% |T_{m, p, \delta}(x)-T_{m, p, \delta}(y)|
% \le L_p |x-y|,
% \qquad x,y\in\mathbb R.
% \]

% Finally, we prove the identity-recovery property. By oddness, it suffices to consider
% $x\ge 0$. If $0\le x\le m-\delta$, then $T_{m, p, \delta}(x)=x$. If
% $m-\delta<x<m$, then $t\in(0,1)$ and $S_p(t)>0$, so
% \[
% T_{m, p, \delta}(x)
% =
% x+(m-x)S_p(t)
% >
% x
% >
% m-\delta.
% \]
% If $x\ge m$, then $T_{m, p, \delta}(x)=m>m-\delta$. Hence
% $|T_{m, p, \delta}(x)|\le m-\delta$ can occur only when $|x|\le m-\delta$,
% and in that case $T_{m, p, \delta}(x)=x$. This proves the claim.
\end{proof}

%\begin{lemma}
%\label{lem:smoothness_comp_psi}
%   Fix $\theta\in\rset^q$, and $u=(u_1,\ldots,u_p)\in W^{\beta,2}(\Omega, L)$. Then, under the smoothness assumption of $\Psi$ (Assumption \ref{assm:psi_smoothness}), we have the following: 
%    \begin{align*}
%        \Psi_\theta\circ u & \in W^{\tau,s}(\Omega), \\
%        \frac{\partial\Psi_\theta}{\partial\theta_j}\circ u & \in W^{\tau,s}(\Omega), \quad \forall \ 1\leq j\leq d \\
%        \frac{\partial^2}{\partial x_k\partial\theta_j}\Psi_\theta\circ u & \in W^{\tau,s}(\Omega), \quad \forall 1\leq j\leq d, 1 \le k \le p \,.
%    \end{align*}
%\end{lemma}

\begin{lemma}[Donsker property of $\partial_\theta G_{\theta,u}$]
\label{lem:donsker-dthetaG}
Suppose Assumptions \ref{assm:dgp}, \ref{assm:psi_smoothness} and \ref{assm:w} holds and $\Theta\subset\mathbb R^q$ is compact. Let $U_n$ be the deterministic set used in the proof of Theorem~\ref{thm:main_freq},
namely
\[
U_n
=
\left\{
u:\ \|u\|_{W^{\beta,2}}\le C_U,\quad
\|\partial^\ell(u-u^\star)\|_2\le \delta_{n,|\ell|}
\ \text{for all }|\ell|\le 2\tau
\right\},
\]
for some constant $C_U<\infty$. Recall the definition of $\cH_{jk, n}$ and $\cA_{jk, n}$ from Equation \eqref{def:H_jkn} and \eqref{def:A_jkn} in Lemma \ref{lem:zero_root_db} for $1 \le j, k, \le q$. 
Then $\mathcal H_{jk,n}$ and $\mathcal A_{jk,n}$ are uniformly bounded and $P$-Donsker, uniformly in $n$. More precisely, for all sufficiently small $\varepsilon>0$,
\[
\log N\bigl(\varepsilon,\mathcal H_{jk,n},\|\cdot\|_\infty\bigr)
\lesssim
\varepsilon^{-d/(\beta\alpha_\star)}
+
q\log(1/\varepsilon),
\]
and the same bound holds for $\mathcal A_{jk,n}$.
\end{lemma}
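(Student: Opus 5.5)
The plan mirrors the covering argument already used for $\cM_j$ and $\cM'_n$ in the proof of Theorem~\ref{thm:main_freq}. The work is to show that $\partial_{\theta_k}\sG_{\theta,u,j}$ has the same two stability properties that Proposition~\ref{prop:def_G} gives for $\sG_{\theta,u,j}$: uniform boundedness, and H\"older/Lipschitz dependence on $(u,\theta)$ in $L_\infty$. The entropy bound and the Donsker conclusion then follow by the same product-cover argument.

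\emph{Step 1 (structure of $\partial_\theta \sG$).} Differentiating \eqref{eq:def_G} under the adjoint operators gives $\partial_{\theta_k}\sG_{\theta,u,j}$ as the same three-term formula, built from $K_0^\dagger$ and $K_1^\dagger$. The coefficients inside the adjoints are now products of $\omega$ with $\theta$-derivatives of $\Psi$ up to order three: for instance $\partial^2_{\theta_j\theta_k}\Psi$, the products $\partial_{\theta_k}\partial_{\theta_j}\Psi\,\partial_{z}\Psi + \partial_{\theta_j}\Psi\,\partial^2_{z\theta_k}\Psi$, and terms such as $\{K_0u+\Psi_\theta\circ K_1u\}\,\partial^3_{z\theta_j\theta_k}\Psi + \partial_{\theta_k}\Psi\,\partial^2_{z\theta_j}\Psi$. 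Each is evaluated at $(x,K_1u(x),\theta)$. Assumption~\ref{assm:psi_smoothness} allows $|a|\le 3$ $\theta$-derivatives together with $|b|+|c|\le\beta$ derivatives in $(x,z)$. The argument of Proposition~\ref{prop:def_G}, part (1), therefore applies verbatim. Because $\beta-2\tau>d$, Sobolev embedding bounds $K_1u$ and $K_0u$ uniformly in $L_\infty$, so each coefficient lies in $W^{\beta-\tau,2}$ with norm bounded uniformly over $u\in\cU$ and $\theta\in\Theta$. After applying the order-$\tau$ adjoints, the result lies in $W^{\beta-2\tau,2}$, and embedding yields $\sup\|\partial_{\theta_k}\sG_{\theta,u,j}\|_\infty\le B'$. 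Combined with $\|u-u_\star\|_\infty\le B$, this gives a uniform bound on $\cA_{jk,n}$ as well.

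\emph{Step 2 (stability).} Here I repeat the Part (3) proof of Proposition~\ref{prop:def_G} term by term. For each coefficient difference I use the Gagliardo--Nirenberg inequality (Theorem~\ref{thm:interpolation}), interpolating between the $W^{\beta-\tau,2}$ bound and the $L_\infty$ difference. The $L_\infty$ difference is in turn controlled by $\max_j\|K_{1j}(u-u')\|_\infty$, through Lipschitz continuity in $z$ of the $\theta$-derivatives of $\Psi$ on compacts. This gives $\|\partial_{\theta_k}\sG_{\theta,u,j}-\partial_{\theta_k}\sG_{\theta,u',j}\|_\infty\le C\|u-u'\|_\infty^{\alpha_*}$. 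The $\theta$-Lipschitz bound follows the same way, using the third-order $\theta$-derivative in Assumption~\ref{assm:psi_smoothness}. This step is where I expect the real obstacle. Part (3) of Proposition~\ref{prop:def_G} already requires one more $z$-derivative of the coefficient than appears in it, so differentiating once more in $\theta$ needs $\partial^3_{\theta}\partial_z\Psi$. That is still within $|a|\le 3$, $|c|\le\beta$, but the count has to be checked on the third summand, which involves $\partial^3_{z\theta_j\theta_k}\Psi$ and whose $z$-Lipschitz property needs $|a|=2$, $|c|=2$; both are covered.

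\emph{Step 3 (entropy and Donsker).} I take a $(\varepsilon/2C)^{1/\alpha_*}$-cover of $\cU_n\subseteq\cU$ in $L_\infty$, using the Birman--Solomyak bound $\varepsilon^{-d/\beta}$, together with an $\varepsilon/2C'$-cover of $\Theta$. By the triangle inequality this yields an $\varepsilon$-cover of $\cH_{jk,n}$, with $\log N\lesssim \varepsilon^{-d/(\beta\alpha_*)}+q\log(1/\varepsilon)$. For $\cA_{jk,n}$, I bound $(u-u_\star)h-(u_i-u_\star)h_i$ by $B'\|u-u_i\|_\infty+B\|h-h_i\|_\infty$, exactly as in the $\cM_j$ computation, which gives the same entropy order. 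Since $\beta\alpha_*>d/2$ under Assumption~\ref{assm:dgp}, the uniform entropy integral is finite. The bounds do not depend on $n$, because $\cU_n\subseteq\cU$. Uniform boundedness plus finite $L_\infty$ (hence bracketing) entropy integral then gives the $P$-Donsker property uniformly in $n$, and makes Lemma~\ref{lem:dudley} applicable.
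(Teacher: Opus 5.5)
Your proposal is correct and follows essentially the same route as the paper: differentiate the representer formula in $\theta_k$, bound the coefficients uniformly in $W^{\beta-\tau,2}$ (hence $\partial_{\theta_k}\mathsf{G}_{\theta,u,j}$ in $W^{\beta-2\tau,2}$ and in $L_\infty$), obtain $\alpha_*$-H\"older dependence on $u$ from two applications of Gagliardo--Nirenberg and Lipschitz dependence on $\theta$, and then take a product cover of $\mathcal{U}_n\times\Theta$ with the Birman--Solomyak entropy, handling $\mathcal{A}_{jk,n}$ by the same triangle inequality. One small detail to add: the coefficients containing $r_{\theta,u}=K_0u+\Psi_\theta\circ K_1u$ also contribute a term $\|K_0(u-u')\|_\infty$, which is controlled by the same interpolation because $K_0$ has order $\tau$.
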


\begin{proof}
Throughout the proof, constants may change from line to line but do not depend on $n$, $\theta$, or $u\in U_n$. For notational simplicity, define $r_{\theta,u}(x)=K_0(u)(x)+ (\Psi_\theta \circ K_1(u))(x)$. For a component $G_{\theta,u,j}$, Proposition~\ref{prop:def_G} gives
\begin{align*}
G_{\theta,u,j} & = K_0^\dagger\{\omega  \ \partial_{\theta_j}(\Psi_\theta \circ K_1(u))\} +  \sum_{\ell=1}^p K_{1,\ell}^\dagger\left[\omega \ \partial_{\theta_j}(\Psi_\theta \circ K_1(u)) \ \partial_{z_\ell}(\Psi_\theta \circ K_1(u))\right] \\
& \hspace{18em}+ \sum_{\ell=1}^p K_{1,\ell}^\dagger\left[\omega \ r_{\theta,u} \  \partial_{z_\ell\theta_j}^2(\Psi_\theta \circ K_1(u))\right].
\end{align*}
Differentiating this display with respect to $\theta_k$ gives $H_{\theta,u}^{jk} := \partial_{\theta_k}G_{\theta,u,j}$, 
where
\begin{align}
\label{eq:Hjk-formula}
H_{\theta,u}^{jk} = & K_0^\dagger\left[\omega \ \partial_{\theta_j\theta_k}^2(\Psi_\theta \circ K_1(u))\right]\notag \\
& + \sum_{\ell=1}^p K_{1,\ell}^\dagger \left[\omega \ \left\{\partial_{\theta_j\theta_k}^2(\Psi_\theta \circ K_1(u))  \ \partial_{z_\ell}(\Psi_\theta \circ K_1(u)) \right. \right. \notag \\
& \qquad \qquad \qquad \left. \left. + \partial_{\theta_j}(\Psi_\theta \circ K_1(u)) \ \partial_{z_\ell\theta_k}^2(\Psi_\theta \circ K_1(u)) \right\}\right] \notag \\
&+ \sum_{\ell=1}^p K_{1,\ell}^\dagger \left[\omega \ \left\{\partial_{\theta_k}(\Psi_\theta \circ K_1(u))\, \partial_{z_\ell\theta_j}^2(\Psi_\theta \circ K_1(u)) + r_{\theta,u}\,\partial_{z_\ell\theta_j\theta_k}^3(\Psi_\theta \circ K_1(u))\right\}\right].
\end{align}
We first prove uniform boundedness. Since $K_0$ and the components of $K_1$ have
order at most $\tau$ and $u\in U_n$ is uniformly bounded in $W^{\beta,2}$,
$$
\|K_0u\|_{W^{\beta-\tau,2}} + \|K_1u\|_{W^{\beta-\tau,2}} \le C.
$$
By Assumption \ref{assm:dgp}, $\beta-2\tau>d$, hence $\beta-\tau>d/2$. Using the smoothness of $\Psi_\theta$ and its derivatives (Assumption \ref{assm:psi_smoothness}), the fact that $\omega\in C_0^\beta(\Omega)$ (Assumption \ref{assm:w}) and $K_0(u), K_{1,j}(u) \in W^{\beta -\tau, 2}$, every coefficient inside $K_0^\dagger$ and $K_{1,\ell}^\dagger$ in \eqref{eq:Hjk-formula} is uniformly bounded in $W^{\beta-\tau,2}$ norm. Since the adjoint operators $K_0^\dagger$ and
$K_{1,\ell}^\dagger$ have order at most $\tau$, it follows that
\[
\sup_{\theta\in\Theta,\ u\in U_n}
\|H_{\theta,u}^{jk}\|_{W^{\beta-2\tau,2}}
\le C.
\]
Because $\beta-2\tau>d>d/2$, Sobolev embedding (Theorem \ref{thm:sob_emb}) gives
\begin{equation}
\sup_{\theta\in\Theta,\ u\in U_n}
\|H_{\theta,u}^{jk}\|_\infty
\le C.
\label{eq:Hjk-unif-bdd}
\end{equation}
Now, from Theorem \ref{thm:interpolation} we know that if $L$ is a differential operator of order at most $m$ and
$s-m>d/2$, then, for functions uniformly bounded in $W^{s,2}$,
\begin{equation}
\|Lf\|_\infty
\le
C\|f\|_{W^{s,2}}^{m/(s-d/2)}
\|f\|_\infty^{1-m/(s-d/2)}.
\label{eq:interp}
\end{equation}
Applying \eqref{eq:interp} with $s=\beta$ and $m=\tau$ gives, for any differential
operator $L$ of order at most $\tau$,
\begin{equation}
\|L(u-u')\|_\infty
\le
C\|u-u'\|_\infty^\nu,
\qquad
\nu
=
\frac{\beta-\tau-d/2}{\beta-d/2},
\label{eq:u-interp}
\end{equation}
because $u,u'\in U_n$ are uniformly bounded in $W^{\beta,2}$.
Consider one of the coefficient functions appearing inside an adjoint operator in
\eqref{eq:Hjk-formula}; denote it generically by $a_{\theta,u}$. By our previous argument, we have: 
\begin{equation*}
\sup_{\theta\in\Theta,\ u\in U_n}
\|a_{\theta,u}\|_{W^{\beta-\tau,2}}
\le C.
\end{equation*}
Moreover, using the Lipschitz bounds of the $\Psi$ and its partial derivatives in the $z$-variable (Assumption \ref{assm:psi_smoothness}), together with \eqref{eq:u-interp}, we have, for fixed $\theta$,
\begin{equation}
\label{eq:a_theta_u_bound}
\|a_{\theta,u}-a_{\theta,u'}\|_\infty
\le
C\left(
\|K_0(u-u')\|_\infty
+
\|K_1(u-u')\|_\infty
\right)
\le
C\|u-u'\|_\infty^\nu .
\end{equation}
Now apply \eqref{eq:interp} again, this time with $s=\beta-\tau$ and $m=\tau$,
to the difference $a_{\theta,u}-a_{\theta,u'}$. Since the latter is uniformly
bounded in $W^{\beta-\tau,2}$ by \eqref{eq:Hjk-formula}--\eqref{eq:Hjk-unif-bdd},
we obtain
\begin{equation}
\label{eq:k_a_theta_u_bound}
\|K^\dagger(a_{\theta,u}-a_{\theta,u'})\|_\infty
\le
C\|a_{\theta,u}-a_{\theta,u'}\|_\infty^\rho,
\qquad
\rho
=
\frac{\beta-2\tau-d/2}{\beta-\tau-d/2},
\end{equation}
where $K^\dagger$ denotes any one of $K_0^\dagger,K_{1,1}^\dagger,\ldots,K_{1,p}^\dagger$.
Combining \eqref{eq:Hjk-formula}, \eqref{eq:Hjk-unif-bdd}, \eqref{eq:a_theta_u_bound} and \eqref{eq:k_a_theta_u_bound}, we have: 
\begin{equation}
\|H_{\theta,u}^{jk}-H_{\theta,u'}^{jk}\|_\infty
\le
C\|u-u'\|_\infty^{\rho\nu}
=
C\|u-u'\|_\infty^{\alpha_\star},
\label{eq:Hjk-u-holder}
\end{equation}
as
\[
\rho\nu
=
\frac{\beta-2\tau-d/2}{\beta-\tau-d/2}
\cdot
\frac{\beta-\tau-d/2}{\beta-d/2}
=
\frac{\beta-2\tau-d/2}{\beta-d/2}
=
\alpha_\star.
\]
Furthermore, by Assumption \ref{assm:psi_smoothness}, $a_{\theta, u}$ are Lipschitz continuous in $\theta$, which implies: 
\begin{equation}
\label{eq:Hjk-theta-holder}
\|H_{\theta,u}^{jk}-H_{\theta',u}^{jk}\|_\infty \le C\|\theta-\theta'\|_2.
\end{equation}
Combining \eqref{eq:Hjk-u-holder} and \eqref{eq:Hjk-theta-holder}, we have
\begin{equation}
\label{eq:Hjk-holder}
\|H_{\theta,u}^{jk}-H_{\theta',u'}^{jk}\|_\infty \le C\left(\|u-u'\|_\infty^{\alpha_\star} + \|\theta-\theta'\|_2\right)
\end{equation}
We now bound the covering number of $\mathcal H_{jk,n}$. Fix
$\varepsilon\in(0,1)$, take an $L_\infty$-cover of $U_n$ with radius $r_u = (\varepsilon/2C)^{1/\alpha_\star}$ and a Euclidean cover of $\Theta$ with radius $r_\theta = (\varepsilon/2C)$. Since $U_n$ is contained in a fixed ball of $W^{\beta,2}$, the Sobolev $L_\infty$ entropy bound gives
\[
\log N(r_u,U_n,\|\cdot\|_\infty)
\lesssim
r_u^{-d/\beta}
=
\varepsilon^{-d/(\beta\alpha_\star)}.
\]
Since $\Theta\subset\mathbb R^q$ is compact, we also have: 
\[
\log N(r_\theta,\Theta,\|\cdot\|_2)
\lesssim
q\log(1/\varepsilon).
\]
The product cover together with \eqref{eq:Hjk-holder} gives
\begin{equation}
\label{eq:Hjk-entropy}
\log N\bigl(\varepsilon,\mathcal H_{jk,n},\|\cdot\|_\infty\bigr)
\lesssim
\varepsilon^{-d/(\beta\alpha_\star)}
+
q\log(1/\varepsilon).
\end{equation}
We now show that the same entropy bound holds for $\mathcal A_{jk,n}$. With a slight abuse of notation, let us now define $a_{\theta,u}(x) = (u(x)-u^\star(x))H_{\theta,u}^{jk}(x)$. Then, we have: 
\[
\begin{aligned}
\|a_{\theta,u}-a_{\theta',u'}\|_\infty
&\le
\|(u-u')H_{\theta,u}^{jk}\|_\infty
+
\|(u'-u^\star)(H_{\theta,u}^{jk}-H_{\theta',u'}^{jk})\|_\infty
\\
&\le
C\|u-u'\|_\infty
+
C\|H_{\theta,u}^{jk}-H_{\theta',u'}^{jk}\|_\infty
\\
&\le
C\left(
\|u-u'\|_\infty^{\alpha_\star}
+
\|\theta-\theta'\|_2
\right),
\end{aligned}
\]
where we used $\alpha_\star\le 1$ and the uniform boundedness of $u,u^\star$ and
$H_{\theta,u}^{jk}$. Therefore, by the same calculation as before: 
\begin{equation}
\label{eq:Ajk-entropy}
\log N\bigl(\varepsilon,\mathcal A_{jk,n},\|\cdot\|_\infty\bigr)
\lesssim
\varepsilon^{-d/(\beta\alpha_\star)}
+
q\log(1/\varepsilon).
\end{equation}
As $d/(\beta \alpha_\star) < 2$, the function class is Donsker. This completes the proof. 
% It remains to verify the entropy exponent. Let
% \[
% \gamma=\frac{d}{\beta\alpha_\star}.
% \]
% We need $\gamma<2$, equivalently $\beta\alpha_\star>d/2$. This follows from
% Assumption 1:
% \[
% \beta\alpha_\star-\frac d2
% =
% \frac{
% \beta(\beta-2\tau-d)+d^2/4
% }{
% \beta-d/2
% }
% >0,
% \]
% because $\beta-2\tau>d$. Hence $\gamma<2$. Therefore
% \[
% \int_0^1
% \sqrt{
% 1+\log N(\varepsilon,\mathcal H_{jk,n},\|\cdot\|_\infty)
% }
% \,d\varepsilon
% <\infty,
% \]
% and the same integral is finite for $\mathcal A_{jk,n}$. Since
% $L_2(Q)$ is dominated by $L_\infty$ for every probability measure $Q$, the same
% entropy integral bound holds uniformly in $L_2(Q)$. Thus, by the standard
% uniform entropy criterion for bounded classes, both $\mathcal H_{jk,n}$ and
% $\mathcal A_{jk,n}$ are $P$-Donsker.
% In particular,
% \[
% \sup_{h\in\mathcal H_{jk,n}}
% \left|
% \frac1{\sqrt n}\sum_{i=1}^n \epsilon_i h(X_i)
% \right|
% =
% O_p(1),
% \]
% for independent sub-Gaussian multipliers $\epsilon_i$, and
% \[
% \sup_{a\in\mathcal A_{jk,n}}
% |\mathbb G_n a|
% =
% O_p(1).
% \]
% This proves the claim.
\end{proof}

\end{appendix}

\end{document}